\newtheorem{theorem}{Theorem}[]
\newtheorem{corollary}[theorem]{Corollary}
\newtheorem{remark1}[theorem]{Remark}
\DeclareMathOperator{\E}{\mathop{}\mathbb{E}}
\def\epsilon{\varepsilon}
\newlength{\vertsep}
\newlength{\imsize}
\newlength{\imsizes}
\newlength{\imsized}
\title{Metrics of calibration for probabilistic predictions}
\author{Imanol Arrieta-Ibarra, Paman Gujral, Jonathan Tannen, Mark Tygert,
and Cherie Xu}
\begin{document}

\maketitle

\begin{abstract}
Many predictions are probabilistic in nature; for example,
a prediction could be for precipitation tomorrow, but with only a 30\% chance.
Given such probabilistic predictions together with the actual outcomes,
``reliability diagrams'' (also known as ``calibration plots'') help detect
and diagnose statistically significant discrepancies
--- so-called ``miscalibration'' ---
between the predictions and the outcomes.
The canonical reliability diagrams are based on histogramming
the observed and expected values of the predictions;
replacing the hard histogram binning with soft kernel density estimation
using smooth convolutional kernels is another common practice.
But, which widths of bins or kernels are best?
Plots of the cumulative differences between the observed and expected values
largely avoid this question, by displaying miscalibration directly
as the slopes of secant lines for the graphs. Slope is easy to perceive
with quantitative precision, even when the constant offsets
of the secant lines are irrelevant; there is no need to bin
or perform kernel density estimation.

The existing standard metrics of miscalibration each summarize
a reliability diagram as a single scalar statistic.
The cumulative plots naturally lead to scalar metrics
for the deviation of the graph of cumulative differences away from zero;
good calibration corresponds to a horizontal, flat graph
which deviates little from zero.
The cumulative approach is currently unconventional,
yet offers many favorable statistical properties,
guaranteed via mathematical theory backed by rigorous proofs
and illustrative numerical examples.
In particular, metrics based on binning or kernel density estimation
unavoidably must trade-off statistical confidence
for the ability to resolve variations as a function
of the predicted probability or vice versa.
Widening the bins or kernels averages away random noise
while giving up some resolving power.
Narrowing the bins or kernels enhances resolving power
while not averaging away as much noise.
The cumulative methods do not impose such an explicit trade-off.
Considering these results, practitioners probably should adopt
the cumulative approach as a standard for best practices.
\end{abstract}

\begin{figure}
\begin{center}
\parbox{\imsizes}{\includegraphics[width=\imsizes]
       {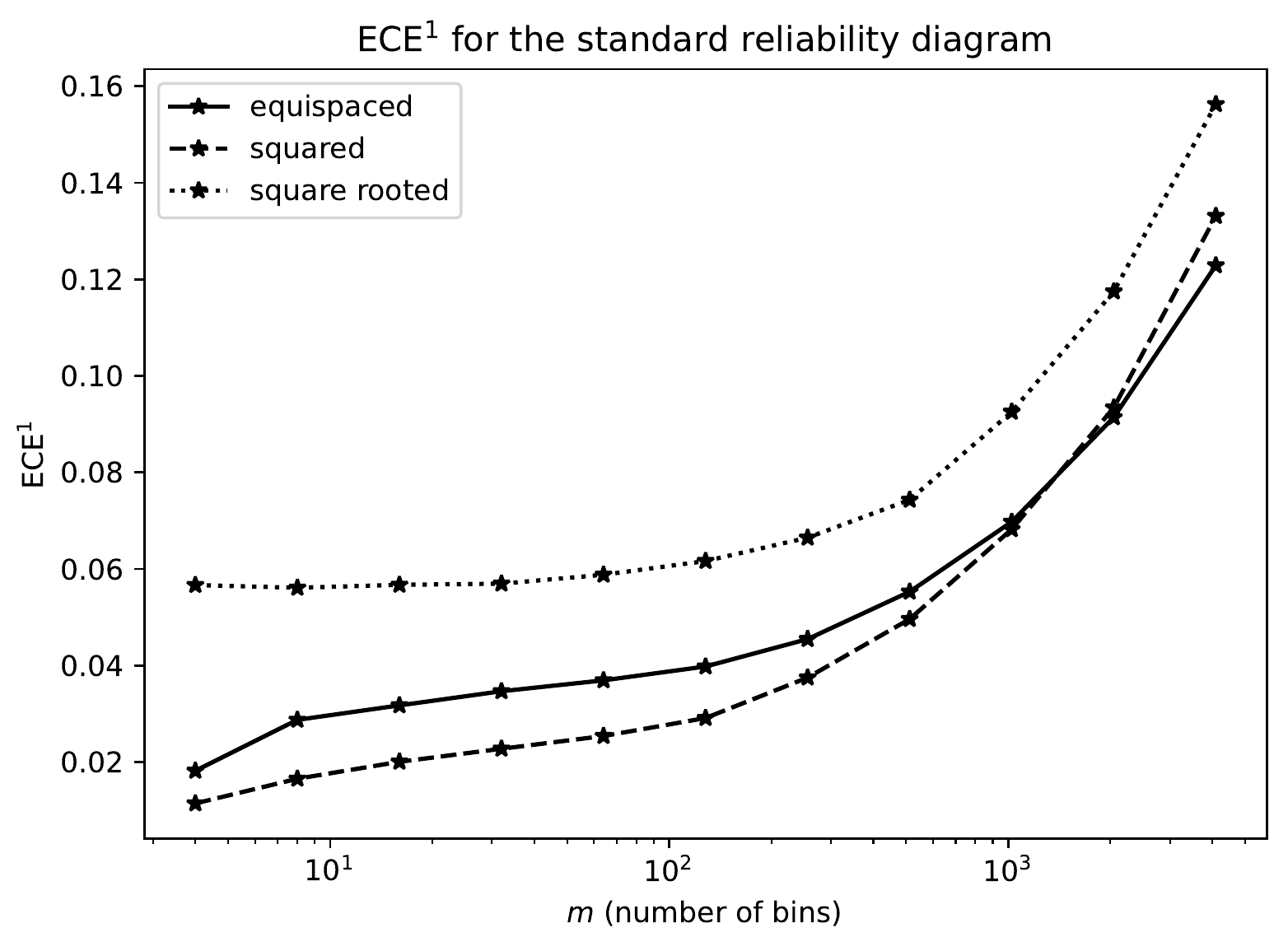}}
\hfil
\parbox{\imsizes}{\includegraphics[width=\imsizes]
       {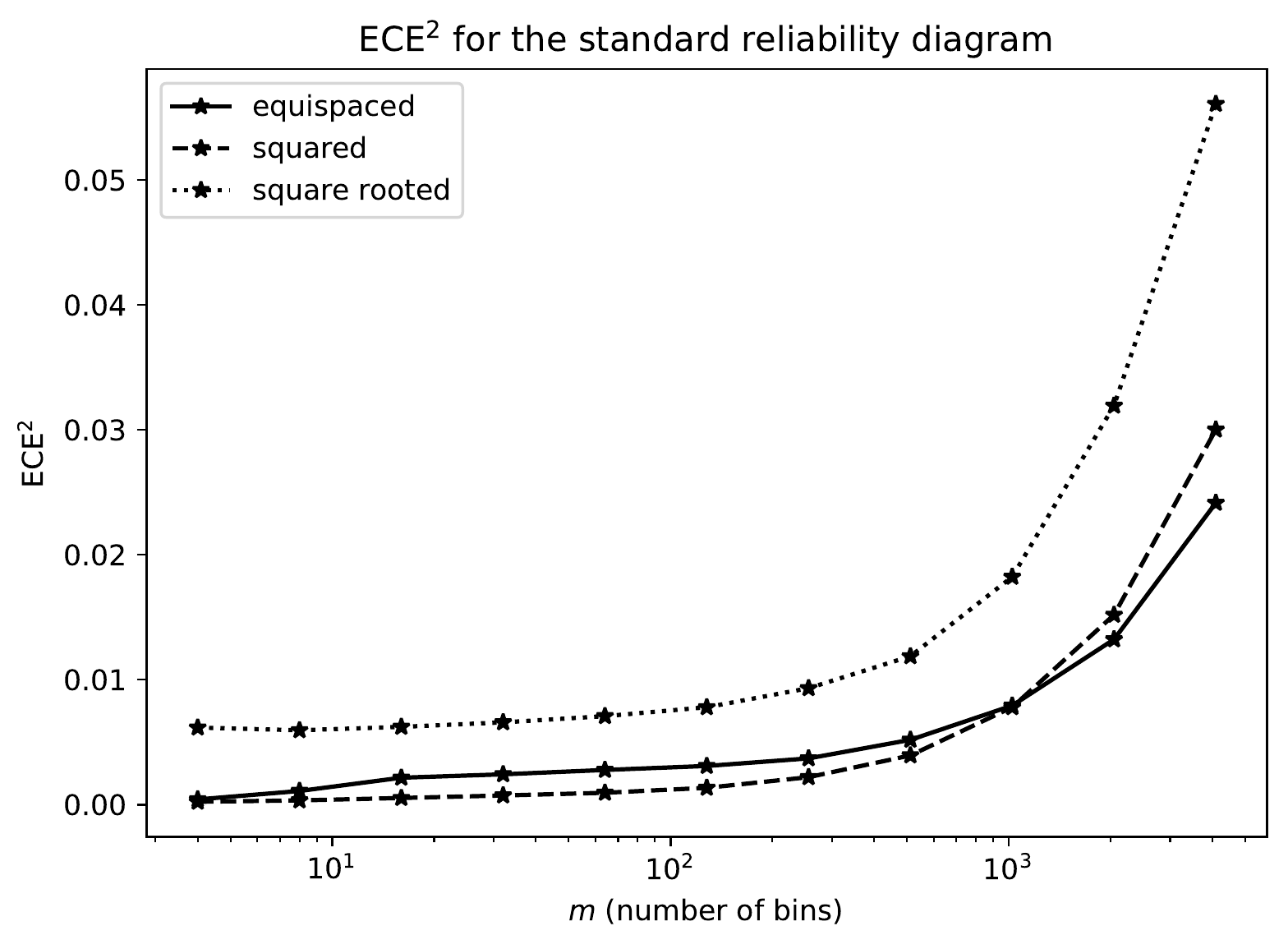}}

\parbox{\imsizes}{\includegraphics[width=\imsizes]
       {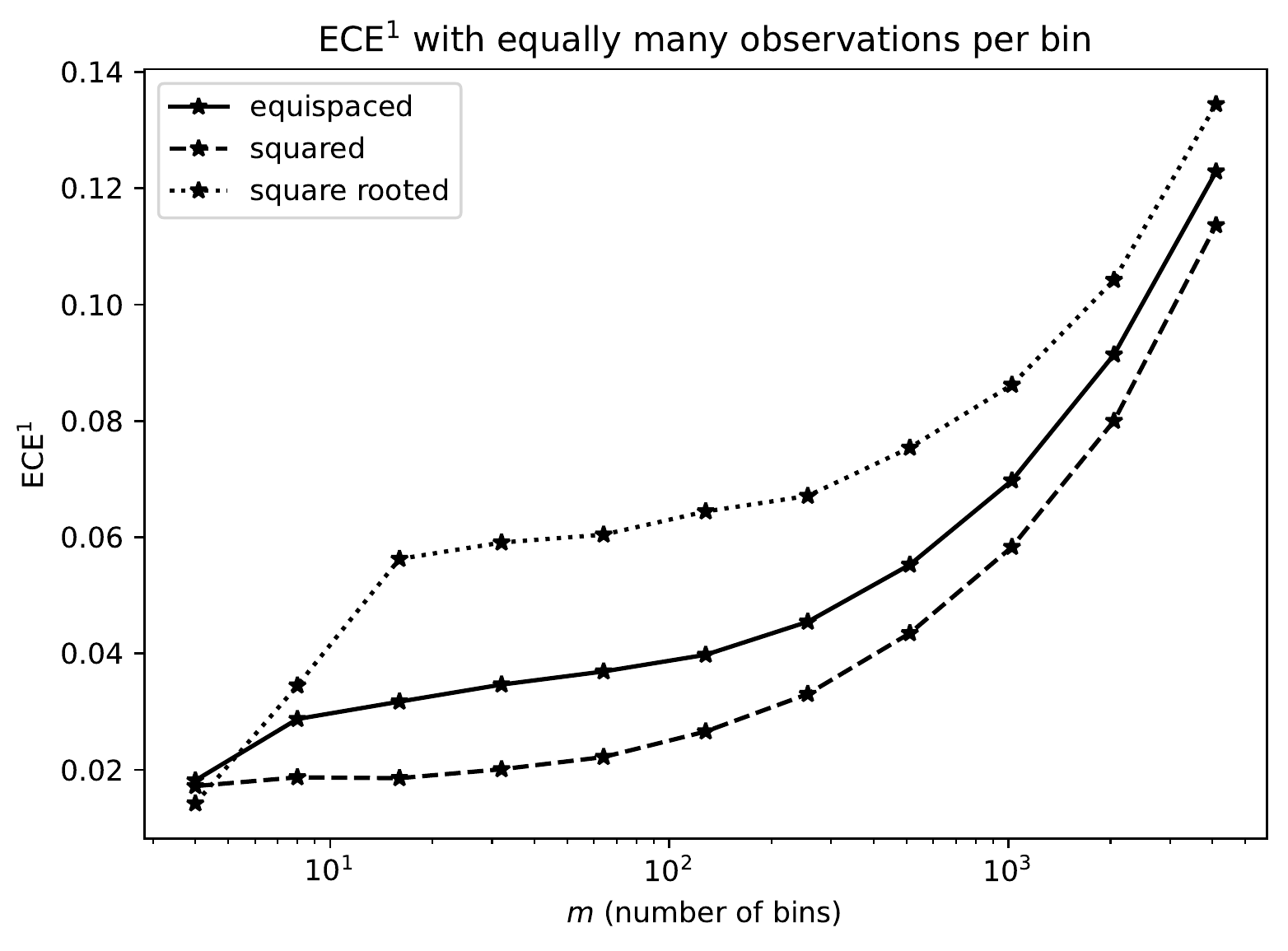}}
\hfil
\parbox{\imsizes}{\includegraphics[width=\imsizes]
       {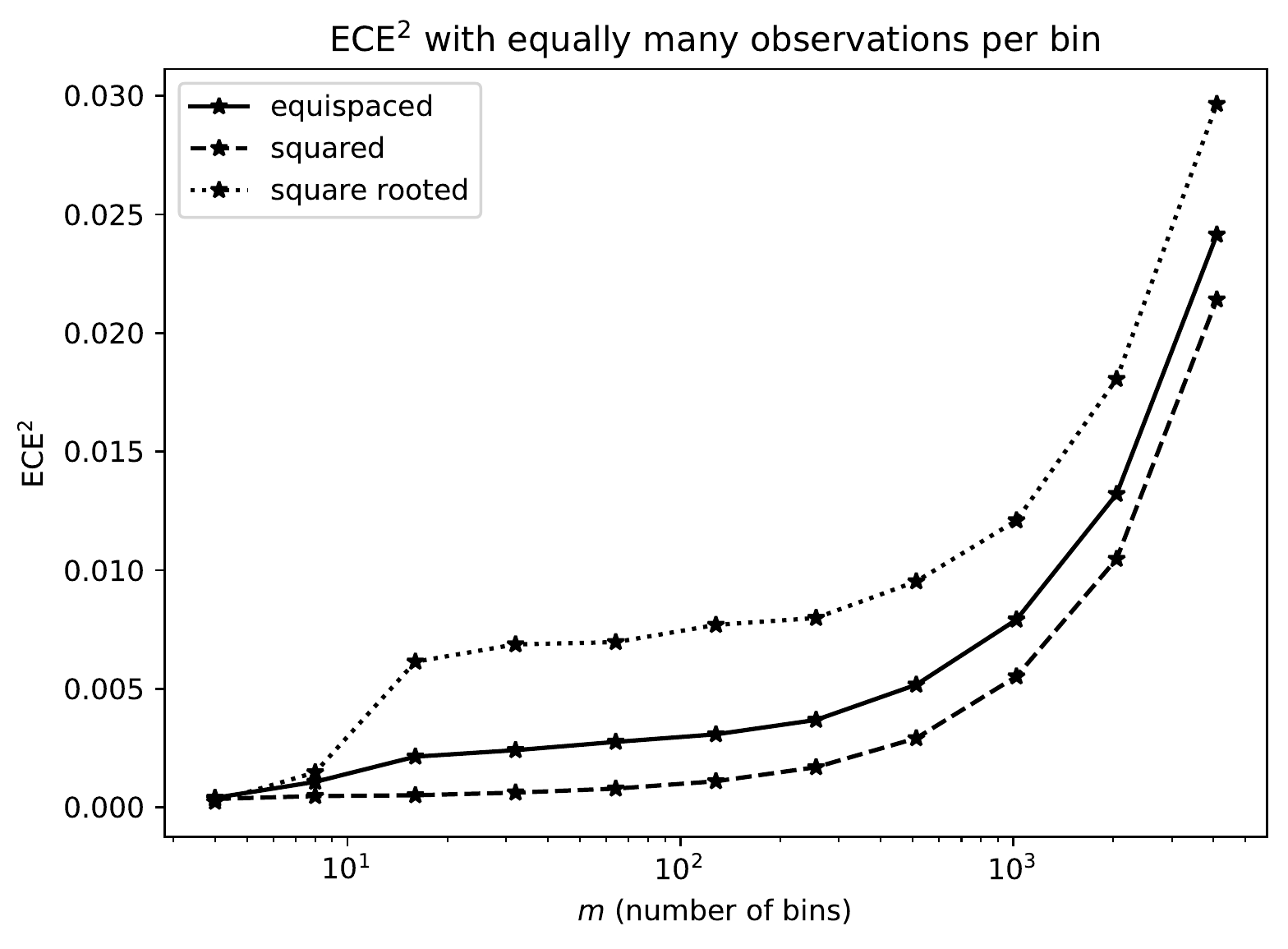}}
\end{center}
\caption{Empirical calibration errors for the data set
         of Subsection~\ref{synthetic} with sample size $n =$ 32,768;
         the scores are equispaced, squared after initially being equispaced,
         or square rooted after initially being equispaced,
         as indicated in the legends.}
\label{32768ece}
\end{figure}

\section{Introduction}

Given 100 independent observations of outcomes (``success'' or ``failure'')
of Bernoulli trials that are forecast to have an 80\% chance of success,
the forecasts are perfectly {\it calibrated} when 80 of the observations
report success. More generally, given some number, say $n$,
of independent observations of outcomes of Bernoulli trials
that are forecast to have a probability $S$ of success,
the predictions are perfectly calibrated when $nS$ of the observations
report success. Needless to say, the actual number of observations of success
is likely to vary around $nS$ randomly, so in practice we test not whether
$nS$ is exactly equal to the observed number of successes, but rather
whether the difference between $nS$ and the observed number of successes
is statistically significant. Such significance tests can be found
in any standard textbook on statistics.

The present paper considers the following more general setting:
Suppose we have $n$ observations $R_1$, $R_2$, \dots, $R_n$
of the outcomes of independent Bernoulli trials
with corresponding predicted probabilities
of success, say $S_1$, $S_2$, \dots, $S_n$.
For instance, each $S_k$ could be a classifier's probabilistic score
and the corresponding $R_k$ could be the indicator of correct classification,
with $R_k = 1$ when the classification is correct
and $R_k = 0$ when the classification is incorrect
(so $R_k$ could also be regarded as a class label,
where class 1 corresponds to ``the classifier succeeded''
and class 0 corresponds to ``the classifier erred'').
We would then want to test the hypothesis that the response $R_k$
is distributed as a Bernoulli variate with expected value $S_k$
for all $k = 1$, $2$, \dots, $n$, which is the same null hypothesis
considered in the previous paragraph when $S_1 = S_2 = \dots = S_n = S$.
The remainder of this paper simplifies the analysis by reordering the samples
(preserving the pairing of $R_k$ with $S_k$ for every $k$)
such that $S_1 \le S_2 \le \dots \le S_n$, with any ties ordered randomly,
perturbing so that $S_1 < S_2 < \dots < S_n$.

One method for measuring calibration is to histogram the responses
$R_1$, $R_2$, \dots, $R_n$ as a function of the scores
$S_1$, $S_2$, \dots, $S_n$; this involves partitioning the scores
into $m$ sets known as ``bins'' (or ``buckets'')
and calculating both the average score and the average response
for the scores and corresponding responses falling in each bin.
Summing over every bin either the absolute value of the difference
between the average response and the average score,
or else the square of the difference, each weighted by the width of the bin,
then estimates the area (or squared differences) between the observed responses
and ideal calibration. These sums are known as ``empirical calibration errors''
or ``estimated calibration errors,'' with the sum of the absolute values
being denoted ``ECE$^1$'' and the sum of the squares being denoted ``ECE$^2$''.

Another method for measuring calibration is to graph the cumulative differences
between the responses and the scores.
The expected slope of any secant line connecting two points on the graph
is equal to the average miscalibration over the scores between those points.
In the case of perfect calibration (for which each response
is equal to the corresponding score), the resulting graph
is a horizontal, flat line at zero.
Both the maximum deviation of the graph from zero and the range
(maximum minus minimum) of the graph measure the deviation
of the graph from the horizontal, flat ideal of perfect calibration.
We call these metrics ``empirical cumulative calibration errors''
or ``estimated cumulative calibration errors'' (ECCEs),
with the maximum absolute deviation being denoted ``ECCE-MAD''
and the range of deviations being denoted ``ECCE-R''.
Our earlier papers referred to ``ECCE-MAD''
as the ``Kolmogorov-Smirnov'' statistic
and to ``ECCE-R'' as the ``Kuiper'' statistic.

The present paper follows up and elaborates on problems highlighted earlier
by~\cite{gupta-rahimi-ajanthan-mensink-sminchisescu-hartley},
\cite{roelofs-cain-shlens-mozer}, \cite{tygert_full}, and~\cite{tygert_two};
they point out that the classical empirical calibration errors
based on binning vary significantly based on the choice of bins.
The choice of bins is fairly arbitrary and enables the analyst to fudge results
(whether purposefully or unintentionally).
Having to make such a critical yet arbitrary choice is especially fraught
when dealing with laws and regulators seeking a universal standard
for compliance.
Also concerning is bias observed by~\cite{roelofs-cain-shlens-mozer}
in the estimates given by empirical calibration errors based on binning.

The results of the present paper are all implicit in those
of~\cite{brocker}, \cite{gupta-rahimi-ajanthan-mensink-sminchisescu-hartley},
\cite{kumar-liang-ma}, \cite{nixon-dusenberry-zhang-jerfel-tran}, 
\cite{roelofs-cain-shlens-mozer}, \cite{simonoff-udina}, \cite{srihera-stute},
\cite{stute}, and~\cite{vaicenavicius-widmann-andersson-lindsten-roll-schoen}.
The purpose of the present paper is to provide a simple, rigorous exposition
of what might be viewed as a unifying thread throughout the other works.
In particular, this paper directly compares the ECEs to the ECCEs,
more extensively than prior work has.

The empirical calibration errors based on binning intrinsically
trade-off resolution for statistical confidence or vice versa.
Widening the bins averages away more noise in the estimates, while sacrificing
some of the power to resolve variations as a function of score.
Narrowing the bins resolves finer variations as a function of score,
while not averaging away as much noise in the estimates.
In contrast, the empirical {\it cumulative} calibration errors exhibit
no such explicit trade-off, with no parameters to adjust.
The empirical cumulative calibration errors are fully non-parametric
and uniquely, fully specified, statistically powerful and reliable.

The present paper directly treats unweighted samples.
To be sure, the results of the present paper generalize to the case
of weighted samples, in which each observation comes
with a positive real number that indicates how heavily
to weight the observation when combining it with the other observations
in expected values.
However, weighted sampling introduces additional complications that distract
from the comparison between the standard binned metrics
and the cumulative metrics, so the present paper focuses
on the case of unweighted sampling.
(Of course, unweighted sampling is equivalent to uniform weighting,
in which all weights are equal.)
Extensive graphical comparisons for the case of weighted sampling
are available from~\cite{tygert_full} and~\cite{tygert_two}.

The remainder of the present paper has the following structure:
the next section, Section~\ref{methods},
details the methodologies and proves theorems about their performance.
Then, Section~\ref{results} illustrates the methodologies and theory
of Section~\ref{methods} via several examples, using both synthetic
and measured data sets.\footnote{Permissively licensed open-source software
that automatically reproduces all figures and statistics reported below
is available at \url{https://github.com/facebookresearch/ecevecce}}
Finally, Section~\ref{conclusion} 
reviews the results, drawing conclusions, and the appendix supplements
the plots of Section~\ref{results} with a couple additional plots.

\section{Methods}
\label{methods}

This section gives theorems characterizing advantages of cumulative metrics
over the standard binned metrics. A full exposition requires
the detailed, rigorous proofs provided in this section; however,
the high-level strategy of all the derivations is quite simple
and straightforward, summarized as follows (please note that all sections
and subsections of the present paper use the same notation
that Subsection~\ref{defsub} introduces):

First, Subsection~\ref{defsub} defines both standard binned metrics
and cumulative metrics for assessing deviation from perfect calibration.
For a {\it perfectly} calibrated underlying distribution,
both the ideal, underlying calibration error
and the ideal, underlying cumulative calibration error are equal to 0,
whereas for an {\it imperfectly} calibrated underlying distribution,
both are greater than 0.

Next, Subsection~\ref{perfectsub} proves that,
for a {\it perfectly} calibrated underlying distribution,
as the sample size increases without bound
the cumulative metrics converge to 0
if the empirical cumulative distribution function of the scores
converges uniformly to a continuous cumulative distribution function,
while the expected values
of the standard binned metrics stay bounded away from 0
if the number of draws per bin remains bounded on some fixed range of scores,
as the maximum bin width becomes arbitrarily small.

Then, Subsection~\ref{imperfectsub} proves that,
for an {\it imperfectly} calibrated underlying distribution,
as the sample size becomes arbitrarily large
the expected values of the cumulative metrics stay bounded away from 0
if the empirical cumulative distribution function of the scores
converges uniformly to a cumulative distribution function,
and the expected values of the standard binned metrics stay bounded away from 0
if the maximum bin width becomes arbitrarily small.
This shows that the cumulative metrics can distinguish
any imperfectly calibrated underlying distribution from perfect calibration,
given enough observed scores and associated responses,
while in contrast there exist imperfectly calibrated distributions
which the standard binned metrics cannot distinguish
from perfect calibration as the maximum width of a bin approaches 0,
if the number of draws per bin remains bounded on some fixed range of scores.

Subsection~\ref{consequences} then derives the consequences, namely,
that the standard binned metrics are intrinsically subject
to an unavoidable trade-off, requiring infinitely denser observations
than the cumulative statistics in the limit required
for asymptotic consistency. 
Moreover, the trade-off is even more unwieldy when the number of observations
is limited: the standard binned metrics vary significantly
with the (rather arbitrary) choice of bins, whereas the cumulative metrics
work uniformly well without any tuning.
In fact, the cumulative metrics require no tuning at all ---
the cumulative metrics have no tuning parameters whatsoever;
fudging their results is simply impossible.

Finally, Subsection~\ref{graphical} summarizes the usual motivations
for the constructions of these particular metrics,
reviewing the associated graphical methods.

Most of the rest of the present section,
Subsections~\ref{defsub}--\ref{consequences},
now provides rigorous details of this strategy
(with Subsection~\ref{defsub} setting the notational conventions
used throughout the present paper).
Subsection~\ref{graphical} briefly reviews the principal motivations
for considering the particular metrics studied here.

\subsection{Definitions}
\label{defsub}

This subsection defines in detail the metrics analyzed below.
Subsubsection~\ref{binnedsub} defines the standard binned statistics,
while Subsubsection~\ref{cumsub} defines the cumulative statistics.
Graphical methods summarized in Subsection~\ref{graphical} motivate
all these definitions, though the present subsection omits discussion
of the motivation in order to keep the exposition concise and easily digestible
for those already familiar with the motivations.

\subsubsection{Binned}
\label{binnedsub}

The observations come as pairs of scores and responses.
Each score is a real number in the unit interval $[0, 1]$;
each response is a random variable whose value is either 0 or 1.
The positive integer $m$ will denote the number of bins;
bin $j$ contains the scores $S_j^k$ for $k = 1$, $2$, \dots, $n_j$,
for $j = 1$, $2$, \dots, $m$.
Each score $S_j^k$ comes with a response $R_j^k$;
the responses are independent and, under the null hypothesis
of perfect calibration, $R_j^k$ is a Bernoulli variate whose expected value
is $S_j^k$ (the variance is then $S_j^k \left( 1 - S_j^k \right)$).
We order the scores such that $S_i^k < S_j^\ell$ whenever $i < j$,
and $S_i^k < S_j^\ell$ when $i = j$ and $k < \ell$.
For notational convenience,
we define $S_0^1 = 0$ and $S_{m+1}^1 = 1$.
We denote by $n$ the total number of observations, that is,
$n = n_1 + n_2 + \dots + n_m$.
{\it We suppress the sequence index $n$ for $S_j^k$, $R_j^k$, $n_j$, and $m$;
in principle the full notation would be $S_j^k(n)$, $R_j^k(n)$, $n_j(n)$,
and $m(n)$, where $n = 1$, $2$, $3$, \dots};
below, ``convergence'' refers to convergence with respect
to the sample size $n$ increasing without bound.
The scores for each sample size $n$ are assumed to be distinct.
The average score in bin $j$ is
\begin{equation}
\label{avgscore}
\tilde{S}_j = \frac{1}{n_j} \sum_{k=1}^{n_j} S_j^k,
\end{equation}
while the average response in bin $j$ is
\begin{equation}
\label{avgresponse}
\tilde{R}_j = \frac{1}{n_j} \sum_{k=1}^{n_j} R_j^k.
\end{equation}
The mean-square empirical calibration error (ECE$^2$) is the Riemann sum
\begin{equation}
\label{l2}
\hbox{ECE}^2 = \sum_{j=1}^m \left( S_{j+1}^1 - S_j^1 \right)
\left( \tilde{R}_j - \tilde{S}_j \right)^2
= \sum_{j=1}^m \left(S_{j+1}^1 - S_j^1 \right)
\left( \sum_{k=1}^{n_j} \frac{R_j^k - S_j^k}{n_j} \right)^2;
\end{equation}
analogously, the $l^1$ empirical calibration error (ECE$^1$) is the Riemann sum
\begin{equation}
\label{l1}
\hbox{ECE}^1 = \sum_{j=1}^m \left( S_{j+1}^1 - S_j^1 \right)
\left| \tilde{R}_j - \tilde{S}_j \right|
= \sum_{j=1}^m \left(S_{j+1}^1 - S_j^1 \right)
\left| \sum_{k=1}^{n_j} \frac{R_j^k - S_j^k}{n_j} \right|.
\end{equation}

\subsubsection{Cumulative}
\label{cumsub}

We now define $S_1$, $S_2$, \dots, $S_n$ to be all $n = \sum_{j=1}^m n_j$
of the scores $S_j^k$, sorted in ascending order, so that
$S_1 < S_2 < \dots < S_n$; if $S_\ell$ is the score equal to $S_j^k$,
then we set $R_\ell$ to be equal to the corresponding response, $R_j^k$;
for notational convenience, we define $S_0 = 0$ and $S_{n+1} = 1$.
{\it As in the binned case, we suppress the sequence index $n$
for $S_\ell$ and $R_\ell$; in principle the full notation would be
$S_\ell(n)$ and $R_\ell(n)$, where $n = 1$, $2$, $3$, \dots};
as mentioned earlier, ``convergence'' will refer to convergence
with respect to the index $n$ increasing without bound. 
The scores for each sample size $n$ are assumed to be distinct.
The cumulative differences are
\begin{equation}
\label{cumulative}
C_k = \frac{1}{n} \sum_{j=1}^k (R_j - S_j)
\end{equation}
for $k = 1$, $2$, \dots, $n$.
The maximum absolute deviation of the empirical cumulative calibration error
(ECCE-MAD) is
\begin{equation}
\label{ks}
\hbox{ECCE-MAD} = \max_{1 \le k \le n} |C_k|
\end{equation}
and the range of the empirical cumulative calibration error (ECCE-R) is
\begin{equation}
\label{ku}
\hbox{ECCE-R} = \max_{0 \le k \le n} C_k - \min_{0 \le k \le n} C_k,
\end{equation}
where $C_k$ is defined in~(\ref{cumulative}) and $C_0 = 0$.
Another term for ``ECCE-MAD'' is the ``Kolmogorov-Smirnov metric,''
and another term for ``ECCE-R'' is the ``Kuiper metric'' ---
\cite{Kolmogorov}, \cite{Smirnov}, and~\cite{Kuiper} introduced
these statistics in order to solve a similar problem, namely,
determining the statistical significance of observed differences
in empirical probability distributions.

The absolute value of the total miscalibration
$\sum_{j \in I} (R_j - S_j)/n$
over any interval $I$ of indices is less than or equal to the ECCE-R;
indeed, the ECCE-R is the maximum of the absolute value
of the total miscalibration over any interval of indices:
$\hbox{ECCE-R} = \max_I \left| \sum_{j \in I} (R_j - S_j)/n \right|$.

\subsection{Perfectly calibrated responses}
\label{perfectsub}

This subsection analyzes the expected values of the metrics when the responses
arise from a perfectly calibrated distribution, that is,
under the assumption of the null hypothesis of perfect calibration.
The principal results of this subsection
are Corollaries~\ref{l2co} and~\ref{l1co} for the ECE
and Corollary~\ref{ecceco} for the ECCE.

\subsubsection{Binned}

The following theorem provides the lower limit of the ECE$^2$,
yielding the very useful Corollary~\ref{l2co}.

\begin{theorem}
\label{perfectly}
Assume the null hypothesis that the expected value of the response $R_j^k$ is
equal to the corresponding score $S_j^k$ for all $j = 1$, $2$, \dots, $m$;
$k = 1$, $2$, \dots, $n_j$.
Suppose also that $\max_{0 \le j \le m} (S_{j+1}^1 - S_j^1)$ converges to 0
as the sample size $n$ increases without bound,
and that $\nu = \nu_n$ is the step function
starting from $\nu(0) = \nu(S_0^1) = n_1$ and jumping to $\nu(S_j^k) = n_j$
for all $j = 1$, $2$, \dots, $m$; $k = 1$, $2$, \dots, $n_j$.
Then, as $n$ becomes arbitrarily large
the lower limit of the expected value of the ECE$^2$ defined in~(\ref{l2})
converges to
\begin{equation}
\label{lowerbound}
\liminf_{n \to \infty} \int_0^1 \frac{s (1 - s)}{\nu(s)} \, ds,
\end{equation}
where $\liminf$ denotes the lower limit and the sequence index $n$
of the function $\nu = \nu_n$ is suppressed in the notation.
\end{theorem}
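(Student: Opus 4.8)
The plan is to show that $\E[\hbox{ECE}^2]$ and the integral appearing in~(\ref{lowerbound}) differ by a quantity that tends to $0$ as the sample size $n$ grows; a difference tending to $0$ cannot alter a lower limit, so this suffices.

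First I would evaluate $\E[\hbox{ECE}^2]$ exactly. Under the null hypothesis the increments $R_j^k - S_j^k$ are independent, have mean zero, and have variance $S_j^k(1 - S_j^k)$, so expanding the square in~(\ref{l2}) and taking expectations annihilates every cross term, leaving
\begin{equation*}
\E[\hbox{ECE}^2] = \sum_{j=1}^m \frac{S_{j+1}^1 - S_j^1}{n_j^2} \sum_{k=1}^{n_j} S_j^k\left(1 - S_j^k\right).
\end{equation*}
Abbreviating $w_j = S_{j+1}^1 - S_j^1$ and $f(s) = s(1-s)$, the $j$-th summand is $(w_j/n_j)$ times the average of $f$ over the $n_j$ sample points of bin $j$. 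On the other side, $\nu$ is piecewise constant, equal to $n_j$ throughout the interval spanned by bin $j$ (with the value $n_1$ extended down to $0$), so
\begin{equation*}
\int_0^1 \frac{s(1-s)}{\nu(s)}\,ds = \frac{1}{n_1}\int_0^{S_1^1} f(s)\,ds + \sum_{j=1}^m \frac{1}{n_j}\int_{S_j^1}^{S_{j+1}^1} f(s)\,ds,
\end{equation*}
and the stray first term is at most $(S_1^1)^2/2 = (S_1^1 - S_0^1)^2/2$, which vanishes by the hypothesis on $\max_{0\le j\le m}(S_{j+1}^1 - S_j^1)$.

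It then remains, for each $j$, to compare the sample average $\frac{1}{n_j}\sum_{k=1}^{n_j} f(S_j^k)$ with the interval average $\frac{1}{w_j}\int_{S_j^1}^{S_{j+1}^1} f$. Here I would use that $f(s) = s(1-s)$ is $1$-Lipschitz on $[0,1]$, since $|f'(s)| = |1 - 2s| \le 1$: all the sample points of bin $j$ lie in $[S_j^1, S_{j+1}^1)$, so both averages lie within $w_j$ of $f(S_j^1)$ and hence within $2w_j$ of each other. Multiplying by $w_j/n_j$ and using $n_j \ge 1$ bounds the $j$-th discrepancy by $2w_j^2/n_j \le 2w_j^2$; summing and using $\sum_{j=1}^m w_j \le 1$ bounds the total discrepancy by $2\bigl(\max_j w_j\bigr)\sum_j w_j \le 2\max_{0\le j\le m}(S_{j+1}^1 - S_j^1) \to 0$. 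Together with the vanishing boundary term this gives $\E[\hbox{ECE}^2] - \int_0^1 s(1-s)/\nu(s)\,ds \to 0$, so both sequences share the $\liminf$ in~(\ref{lowerbound}).

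The argument is in essence a routine uniform-continuity / Riemann-sum estimate, and the step I would watch most carefully is that it must succeed with no hypothesis at all on how the per-bin counts $n_j$ behave: the decisive inequality is $w_j^2/n_j \le w_j^2$, valid merely because $n_j \ge 1$, so the proof uses only the shrinking of the bin widths and never the density of samples within a bin. The two extreme bins (scores near $0$ and near $1$) also call for a moment's attention, but the hypothesis controls $S_{j+1}^1 - S_j^1$ at $j = 0$ and $j = m$ too, so they are covered.
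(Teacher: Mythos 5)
Your proposal is correct and follows essentially the same route as the paper: compute $\E[\hbox{ECE}^2]$ exactly via independence of the responses, then recognize the result as a Riemann sum for $\int_0^1 s(1-s)/\nu(s)\,ds$ whose error is controlled by the maximum bin width (the paper references each bin's terms to $S_j^1$ and absorbs the rest into a vanishing correction, which is the same uniform-continuity estimate you make explicit via the Lipschitz bound on $s(1-s)$). Your write-up is in fact somewhat more careful than the paper's, quantifying the discrepancy as $O(\max_j(S_{j+1}^1-S_j^1))$ and handling the boundary piece over $[0,S_1^1]$ explicitly.
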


\begin{proof}
Since the responses are all independent, the expected value of the ECE$^2$
from~(\ref{l2}) under the assumption of the null hypothesis is
\begin{equation}
\label{approx}
\sum_{j=1}^m \left( S_{j+1}^1 - S_j^1 \right)
\sum_{k=1}^{n_j} \frac{S_j^k \left( 1 - S_j^k \right)}{(n_j)^2}
= \sum_{j=1}^m \left( S_{j+1}^1 - S_j^1 \right)
\left( \frac{S_j^1 \left( 1 - S_j^1 \right)}{n_j}
+ \sum_{k=1}^{n_j}\frac{S_j^k \left( 1 - S_j^k \right)
                        - S_j^1 \left( 1 - S_j^1 \right)}{(n_j)^2} \right),
\end{equation}
which is a Riemann sum for which the second sum in the right-hand side
of~(\ref{approx}) converges uniformly to 0
as $\max_{0 \le j \le m} (S_{j+1}^1 - S_j^1)$ tends to 0
(uniformly over $j$ and independent of the values for $n_j$).
The lower limit of the right-hand side of~(\ref{approx})
converges to~(\ref{lowerbound}).
\end{proof}

The main consequence of this theorem is the following,
stating that the ECE$^2$ hits a ``noise floor.''

\begin{corollary}
\label{l2co}
If $\nu = \nu_n$ is bounded from above on some interval,
independent of the sample size $n$,
and is the step function starting from $\nu(0) = \nu(S_0^1) = n_1$
and jumping to $\nu(S_j^k) = n_j$
for all $j = 1$, $2$, \dots, $m$; $k = 1$, $2$, \dots, $n_j$,
then the expected value of the ECE$^2$ defined in~(\ref{l2}) is greater
than a fixed strictly positive real number for all sufficiently large $n$,
assuming the null hypothesis of perfect calibration
and that the bin width $(S_{j+1}^1 - S_j^1)$ converges uniformly to 0
(uniformly over $j = 0$, $1$, \dots, $m(n)$).
\end{corollary}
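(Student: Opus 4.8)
The plan is to deduce Corollary~\ref{l2co} directly from Theorem~\ref{perfectly}, by showing that the integral $\int_0^1 \frac{s(1-s)}{\nu(s)}\,ds$ in~(\ref{lowerbound}) is bounded below by a fixed positive constant once $\nu$ is bounded above on a fixed subinterval. First I would invoke Theorem~\ref{perfectly}: its hypotheses are exactly the present ones (the bin widths $S_{j+1}^1 - S_j^1$ converge uniformly to $0$, and $\nu = \nu_n$ is the stated step function), so the expected value of the ECE$^2$ has lower limit equal to~(\ref{lowerbound}). It therefore suffices to show that $\liminf_{n\to\infty}\int_0^1 \frac{s(1-s)}{\nu(s)}\,ds > 0$, since the expected value is eventually larger than, say, half of this lower limit for all sufficiently large $n$.

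Next I would exploit the boundedness hypothesis. By assumption there is an interval $[a,b] \subseteq [0,1]$ with $0 < a < b < 1$ and a constant $M$, both independent of $n$, such that $\nu(s) \le M$ for all $s \in [a,b]$ (after shrinking $[a,b]$ slightly we may keep it away from the endpoints $0$ and $1$, where $s(1-s)$ vanishes). Since the integrand $\frac{s(1-s)}{\nu(s)}$ is nonnegative on all of $[0,1]$, I would discard everything outside $[a,b]$ and bound
\[
\int_0^1 \frac{s(1-s)}{\nu(s)}\,ds \;\ge\; \int_a^b \frac{s(1-s)}{\nu(s)}\,ds
\;\ge\; \frac{1}{M}\int_a^b s(1-s)\,ds \;=\; \frac{c}{M},
\]
where $c = \int_a^b s(1-s)\,ds > 0$ is a fixed positive constant depending only on $a$ and $b$. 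This bound holds for every $n$, hence the $\liminf$ is at least $c/M > 0$, which is a fixed strictly positive real number.

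Combining the two steps: the expected value of the ECE$^2$ converges along its lower limit to something at least $c/M$, so for all sufficiently large $n$ it exceeds $c/(2M) > 0$, establishing the claim. The only subtlety worth care is the interaction between ``$\nu$ bounded on some interval'' and the vanishing of $s(1-s)$ at the endpoints: one must choose the subinterval $[a,b]$ on which $\nu$ is controlled to be strictly interior, or intersect the given interval of boundedness with a strictly interior interval, so that $s(1-s)$ stays bounded away from $0$ there. This is the main (and essentially only) obstacle, and it is minor; everything else is a positivity-of-the-integrand argument feeding into the already-proved Theorem~\ref{perfectly}.
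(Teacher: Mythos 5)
Your proposal is correct and follows exactly the route the paper intends: the corollary is stated as an immediate consequence of Theorem~\ref{perfectly}, and your lower bound $\int_0^1 s(1-s)/\nu(s)\,ds \ge \frac{1}{M}\int_a^b s(1-s)\,ds > 0$ on a strictly interior subinterval where $\nu$ is bounded is the natural way to fill in the omitted details. Your handling of the endpoint subtlety (intersecting the interval of boundedness with a strictly interior interval so that $s(1-s)$ does not vanish) is the right and only point requiring care.
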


A similar corollary holds for the ECE$^1$ defined in~(\ref{l1}),
due to the following theorem.

\begin{theorem}
\label{dominance}
The ECE$^1$ defined in~(\ref{l1}) is greater than or equal to the ECE$^2$
defined in~(\ref{l2}).
\end{theorem}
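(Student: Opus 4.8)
The plan is to bound the two Riemann sums against each other bin by bin. First I would introduce shorthand for the two ingredients common to both definitions: for each $j = 1$, $2$, \dots, $m$, let $w_j = S_{j+1}^1 - S_j^1$ denote the width of bin $j$ and let $d_j = \tilde{R}_j - \tilde{S}_j$ denote the average miscalibration in bin $j$, so that $\hbox{ECE}^1 = \sum_{j=1}^m w_j \, |d_j|$ and $\hbox{ECE}^2 = \sum_{j=1}^m w_j \, d_j^2$ by~(\ref{l1}) and~(\ref{l2}). Because the scores are sorted and distinct, with $S_0^1 = 0$ and $S_{m+1}^1 = 1$, every width $w_j$ is nonnegative.

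The key observation is that $|d_j| \le 1$ for every $j$. Indeed, the average response $\tilde{R}_j$ in~(\ref{avgresponse}) is an average of numbers in $\{0, 1\}$, hence $\tilde{R}_j \in [0, 1]$; likewise the average score $\tilde{S}_j$ in~(\ref{avgscore}) is an average of scores lying in $[0, 1]$, hence $\tilde{S}_j \in [0, 1]$. Therefore $d_j = \tilde{R}_j - \tilde{S}_j$ lies in $[-1, 1]$. Since $x^2 = |x|^2 \le |x|$ whenever $|x| \le 1$, taking $x = d_j$ yields $d_j^2 \le |d_j|$ for each $j$.

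To finish, I would multiply the termwise inequality $d_j^2 \le |d_j|$ by the nonnegative weight $w_j$ and sum over $j = 1$, $2$, \dots, $m$, which gives $\hbox{ECE}^2 = \sum_{j=1}^m w_j \, d_j^2 \le \sum_{j=1}^m w_j \, |d_j| = \hbox{ECE}^1$, as desired.

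I do not expect any genuine obstacle: the argument reduces to the elementary inequality $x^2 \le |x|$ on $[-1, 1]$, applied termwise. The only points warranting a sentence of justification are that the bin widths are nonnegative (from the ordering convention on the scores together with the boundary values $S_0^1 = 0$ and $S_{m+1}^1 = 1$) and that each per-bin miscalibration $d_j$ really does lie in $[-1, 1]$, which is precisely what makes the range restriction in $x^2 \le |x|$ available, since the responses are Bernoulli and the scores lie in the unit interval.
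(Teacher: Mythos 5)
Your proof is correct and follows essentially the same route as the paper's: establish $|\tilde{R}_j - \tilde{S}_j| \le 1$ for each bin, deduce the termwise inequality $(\tilde{R}_j - \tilde{S}_j)^2 \le |\tilde{R}_j - \tilde{S}_j|$, and sum against the nonnegative bin widths. The only cosmetic difference is that you bound $|d_j|$ by noting both averages lie in $[0,1]$, whereas the paper applies the triangle inequality to the averaged differences; both are immediate.
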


\begin{proof}
It follows from the fact that both the scores and the responses fall
on the unit interval $[0, 1]$ that
\begin{equation}
\left| \sum_{k=1}^{n_j} \frac{R_j^k - S_j^k}{n_j} \right|
\le \sum_{k=1}^{n_j} \frac{|R_j^k - S_j^k|}{n_j}
\le \sum_{k=1}^{n_j} \frac{1}{n_j} = 1,
\end{equation}
so
\begin{equation}
\label{squared}
\left| \sum_{k=1}^{n_j} \frac{R_j^k - S_j^k}{n_j} \right|^2
\le \left| \sum_{k=1}^{n_j} \frac{R_j^k - S_j^k}{n_j} \right|.
\end{equation}
It follows from~(\ref{squared}) that
\begin{equation}
\sum_{j=1}^m (S_{j+1}^1 - S_j^1)
\left| \sum_{k=1}^{n_j} \frac{R_j^k - S_j^k}{n_j} \right|
\ge \sum_{j=1}^m (S_{j+1}^1 - S_j^1)
\left( \sum_{k=1}^{n_j} \frac{R_j^k - S_j^k}{n_j} \right)^2,
\end{equation}
which is the statement of the theorem expressed in terms
of the definitions in~(\ref{l2}) and~(\ref{l1}).
\end{proof}

Combining Theorem~\ref{dominance} and Corollary~\ref{l2co}
yields the following, stating that the ECE$^1$ hits a ``noise floor.''

\begin{corollary}
\label{l1co}
If $\nu = \nu_n$ is bounded from above on some interval,
independent of the sample size $n$,
and is the step function starting from $\nu(0) = \nu(S_0^1) = n_1$
and jumping to $\nu(S_j^k) = n_j$
for all $j = 1$, $2$, \dots, $m$; $k = 1$, $2$, \dots, $n_j$,
then the expected value of the ECE$^1$ defined in~(\ref{l1}) is greater
than a fixed strictly positive real number for all sufficiently large $n$,
assuming the null hypothesis of perfect calibration
and that the bin width $(S_{j+1}^1 - S_j^1)$ converges uniformly to 0
(uniformly over $j = 0$, $1$, \dots, $m(n)$).
\end{corollary}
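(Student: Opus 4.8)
The plan is to obtain the corollary by directly combining the two results that immediately precede it, so that no fresh estimation is required. First I would observe that Theorem~\ref{dominance} gives the inequality $\hbox{ECE}^1 \ge \hbox{ECE}^2$ as a \emph{pointwise} (sure) inequality between the random variables: for every realization of the responses $R_j^k$ the bound holds, since its proof uses only that the scores and responses lie in $[0,1]$ together with the elementary implication that $|x| \le 1$ forces $x^2 \le |x|$. Consequently, monotonicity of expectation yields $\E\,\hbox{ECE}^1 \ge \E\,\hbox{ECE}^2$ under the null hypothesis of perfect calibration.

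Next I would invoke Corollary~\ref{l2co} under precisely the hypotheses assumed here --- namely that $\nu = \nu_n$ is the specified step function, that it is bounded from above on some interval independent of the sample size $n$, that the null hypothesis holds, and that the bin width $S_{j+1}^1 - S_j^1$ converges uniformly to $0$. These are verbatim the hypotheses of Corollary~\ref{l2co}, so that corollary supplies a fixed strictly positive real number, say $c$, with $\E\,\hbox{ECE}^2 > c$ for all sufficiently large $n$. Chaining this with the previous paragraph gives $\E\,\hbox{ECE}^1 \ge \E\,\hbox{ECE}^2 > c$ for all sufficiently large $n$, which is exactly the assertion.

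There is essentially no obstacle to overcome; the only points needing care are (i) confirming that Theorem~\ref{dominance} is a sure inequality, not merely one in expectation, so that taking expectations of both sides is legitimate, and (ii) checking that the hypothesis lists of Corollaries~\ref{l1co} and~\ref{l2co} coincide, so that the constant $c$ from the $\hbox{ECE}^2$ bound transfers unchanged. With those observations in place the proof is a single line.
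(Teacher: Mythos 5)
Your proposal is correct and matches the paper exactly: the paper derives this corollary by combining Theorem~\ref{dominance} (the pointwise inequality $\hbox{ECE}^1 \ge \hbox{ECE}^2$) with Corollary~\ref{l2co}, just as you do. Your added care about the inequality being sure (so expectations can be taken) and about the hypothesis lists coinciding is sound and consistent with the paper's one-line argument.
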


\subsubsection{Cumulative}

The independence of the responses yields the following theorem,
which yields Corollary~\ref{ecceco} when combined with Theorem~\ref{Brownian}.

\begin{theorem}
\label{varthm}
Assume the null hypothesis that the expected value of the response $R_k$ is
equal to the corresponding score $S_k$ for all $k = 1$, $2$, \dots, $n$.
Then, the variance of $C_k$ defined in~(\ref{cumulative}) is
\begin{equation}
\label{var}
(\sigma_k)^2 = \frac{1}{n^2} \sum_{j=1}^k S_j (1 - S_j) \le \frac{k}{4n^2}
\le \frac{1}{4n}
\end{equation}
for $k = 1$, $2$, \dots, $n$.
\end{theorem}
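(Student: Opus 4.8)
The plan is to exploit the stipulated independence of the responses directly, since $C_k$ is simply a scaled sum of independent random variables. First I would record that, under the null hypothesis, each $R_j$ is a Bernoulli variate with $\mathbb{E}[R_j] = S_j$, so that $\mathbb{E}[R_j - S_j] = 0$ and $\mathrm{Var}(R_j - S_j) = \mathrm{Var}(R_j) = S_j(1 - S_j)$; here the scores $S_j$ carry no randomness of their own, having been fixed by the ascending sort of the draws. Because $C_k = \frac{1}{n}\sum_{j=1}^k (R_j - S_j)$ is a linear combination of the mutually independent, mean-zero terms $\frac{1}{n}(R_j - S_j)$, the variance of the sum equals the sum of the variances, whence $(\sigma_k)^2 = \mathrm{Var}(C_k) = \frac{1}{n^2}\sum_{j=1}^k \mathrm{Var}(R_j - S_j) = \frac{1}{n^2}\sum_{j=1}^k S_j(1 - S_j)$, which is the asserted identity.

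Next I would establish the two inequalities in~(\ref{var}). The quadratic $s \mapsto s(1-s)$ satisfies $s(1-s) = \tfrac14 - (s - \tfrac12)^2 \le \tfrac14$ for every $s \in [0,1]$, so each summand obeys $S_j(1-S_j) \le \tfrac14$ and therefore $\sum_{j=1}^k S_j(1-S_j) \le k/4$, giving $(\sigma_k)^2 \le k/(4n^2)$. Finally, since $1 \le k \le n$ we have $k/(4n^2) \le n/(4n^2) = 1/(4n)$, which closes the chain of bounds for all $k = 1$, $2$, \dots, $n$.

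I do not anticipate any genuine obstacle: the argument is a one-line variance computation plus an elementary bound on a parabola. The only point meriting a moment's care is invoking independence correctly — namely, that mutual independence of the $R_j$ forces the cross terms $\mathbb{E}\bigl[(R_i - S_i)(R_j - S_j)\bigr] = 0$ for $i \ne j$ to vanish, so that no covariance contributions survive in $\mathrm{Var}(C_k)$.
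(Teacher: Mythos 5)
Your proof is correct and matches the paper's intent exactly: the paper gives no written proof for this theorem, merely noting that ``the independence of the responses yields'' it, and your argument --- additivity of variance over independent summands, the Bernoulli variance $S_j(1-S_j)$, the bound $s(1-s)\le\tfrac14$, and $k\le n$ --- is precisely the routine computation being left implicit. Nothing further is needed.
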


The following theorem summarizes Sections~2.3 and~3 of~\cite{tygert_pvals}.

\begin{theorem}
\label{Brownian}
Assume the null hypothesis that the expected value of the response $R_k$ is
equal to the corresponding score $S_k$ for all $k = 1$, $2$, \dots, $n$.
Suppose again that the scores $S_1$, $S_2$, \dots, $S_n$ are all distinct
for each sample size $n$, and also that
$\max_{1 \le k \le n} S_k (1 - S_k) / \sum_{j=1}^n S_j (1 - S_j)$
converges to 0 as $n$ increases without bound.
Then, as $n$ becomes arbitrarily large the ECCE-MAD divided by $\sigma_n$
converges in distribution to the maximum of the absolute value
of the standard Brownian motion over the unit interval $[0, 1]$.
The ECCE-MAD is defined in~(\ref{ks}) and $\sigma_n$ is defined in~(\ref{var}).
Moreover, as $n$ increases without bound the ECCE-R divided by $\sigma_n$
converges in distribution to the range of the standard Brownian motion
over the unit interval $[0, 1]$.
The ECCE-R is defined in~(\ref{ku}) and $\sigma_n$ is defined in~(\ref{var}).
The expected value of the maximum of the absolute value
of the standard Brownian motion over $[0, 1]$ is $\sqrt{\pi/2} \approx 1.25$,
and the expected value of the range of the standard Brownian motion
over $[0, 1]$ is $2 \sqrt{2 / \pi} \approx 1.60$.
\end{theorem}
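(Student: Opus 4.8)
The plan is to recognize the cumulative differences as a reparametrized random walk and apply a functional central limit theorem. Under the null hypothesis the summands $R_j - S_j$ are independent, have mean zero, are bounded by $1$ in absolute value, and have variance $S_j(1-S_j)$. Set $T_k = \sum_{j=1}^k (R_j - S_j)$ and $v_k = \sum_{j=1}^k S_j(1-S_j)$ (with $T_0 = 0$ and $v_0 = 0$), so that $C_k = T_k/n$ and, by Theorem~\ref{varthm}, $(\sigma_n)^2 = v_n/n^2$; hence $C_k/\sigma_n = T_k/\sqrt{v_n}$. First I would construct the continuous, piecewise-linear process $W_n$ on $[0,1]$ that takes the value $T_k/\sqrt{v_n}$ at the node $t = v_k/v_n$ and interpolates linearly between consecutive nodes. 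By construction $W_n(0) = 0$, the process $W_n$ has independent increments run on the natural variance clock, and $\mathrm{Var}\,W_n(1) = 1$; moreover the nodes $\{v_k/v_n\}_{k=0}^n$ become dense in $[0,1]$ precisely because the hypothesis forces the successive gaps $S_k(1-S_k)/v_n$ to vanish uniformly.

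Next I would invoke Donsker's invariance principle in its Lindeberg--Feller form for triangular arrays of independent summands, which gives $W_n \Rightarrow W$ weakly in $C[0,1]$ (with the uniform topology), $W$ a standard Brownian motion, provided two conditions hold. The Feller asymptotic-negligibility condition $\max_{1 \le k \le n} S_k(1-S_k)/v_n \to 0$ is exactly the stated hypothesis; note in passing that this condition also forces $v_n \to \infty$, since otherwise the largest single variance could not become negligible. The Lindeberg condition is then automatic: because $|R_j - S_j| \le 1$ while $\sqrt{v_n} \to \infty$, for every $\epsilon > 0$ we have $|R_j - S_j| \le \epsilon \sqrt{v_n}$ for all $j \le n$ once $n$ is large, so $\frac{1}{v_n} \sum_{j=1}^n \mathbb{E}\bigl[(R_j - S_j)^2 \mathbf{1}\{|R_j - S_j| > \epsilon\sqrt{v_n}\}\bigr] = 0$ eventually. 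The invariance principle already supplies the tightness needed, so nothing further is required there.

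Then I would transfer the conclusion to the statistics via the continuous mapping theorem. Since $W_n$ is piecewise linear through its nodes, both $|W_n|$ and $W_n$ attain their extrema over $[0,1]$ at nodes, so $\max_{0 \le t \le 1} |W_n(t)| = \max_{0 \le k \le n} |T_k|/\sqrt{v_n} = \hbox{ECCE-MAD}/\sigma_n$ (including the node $k = 0$ leaves the maximum unchanged because $C_0 = 0$), and $\max_{0 \le t \le 1} W_n(t) - \min_{0 \le t \le 1} W_n(t) = \hbox{ECCE-R}/\sigma_n$. The functionals $f \mapsto \sup_{[0,1]} |f|$ and $f \mapsto \sup_{[0,1]} f - \inf_{[0,1]} f$ are continuous on $C[0,1]$, so $\hbox{ECCE-MAD}/\sigma_n$ converges in distribution to $\max_{0 \le t \le 1} |W(t)|$ and $\hbox{ECCE-R}/\sigma_n$ converges in distribution to the range $\max_{0 \le t \le 1} W(t) - \min_{0 \le t \le 1} W(t)$, as claimed.

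Finally, for the expected values: $\max_{[0,1]} W \stackrel{\mathrm d}{=} |W(1)|$ is half-normal with mean $\sqrt{2/\pi}$, and since $-W \stackrel{\mathrm d}{=} W$ implies $-\min_{[0,1]} W \stackrel{\mathrm d}{=} \max_{[0,1]} W$, linearity of expectation (no independence needed) gives $\mathbb{E}\bigl[\max_{[0,1]} W - \min_{[0,1]} W\bigr] = 2\sqrt{2/\pi} \approx 1.60$. For $\mathbb{E}\,\max_{[0,1]} |W| = \sqrt{\pi/2} \approx 1.25$, I would use the classical law of the two-sided maximum --- obtained from the reflection principle, equivalently from the heat kernel on $[-x,x]$ with absorbing endpoints, which is a theta-function series --- and integrate $\int_0^\infty \mathbb{P}(\max_{[0,1]}|W| > x)\,dx$. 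I expect the main obstacle to be twofold: first, keeping the discrete-to-continuous identification clean under the variance time-change, so that the continuous mapping theorem applies with no residual error terms; and second, the one genuinely non-elementary ingredient, the closed-form distribution of the two-sided maximum of Brownian motion. Both are carried out in detail in Sections~2.3 and~3 of~\cite{tygert_pvals}, which this theorem summarizes.
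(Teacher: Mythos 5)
The paper itself supplies no proof of this theorem; it explicitly defers to Sections~2.3 and~3 of the cited reference. Your route --- linear interpolation of the partial sums on the variance clock $t = v_k/v_n$, the Lindeberg--Feller functional central limit theorem for triangular arrays of independent bounded summands, the continuous mapping theorem applied to $f \mapsto \sup|f|$ and $f \mapsto \sup f - \inf f$, and the classical formulas for the Brownian functionals --- is the standard argument and is surely the one the citation carries out. The discrete-to-continuous identification is handled cleanly (a piecewise-linear function attains the extrema of itself and of its absolute value at nodes, so there are no residual error terms), and the expected-value computations at the end are correct.

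There is, however, one genuine gap: the parenthetical claim that the hypothesis $\max_{1\le k\le n} S_k(1-S_k)/v_n \to 0$ forces $v_n = \sum_{j=1}^n S_j(1-S_j) \to \infty$. That implication holds when the scores form a nested increasing family (then $v_n$ is nondecreasing and the numerator is bounded below by the fixed positive quantity $S_1(1-S_1)$), but the paper's setup is explicitly a triangular array --- the full notation is $S_k(n)$ --- and for triangular arrays the implication fails. Take, for instance, $S_k = k/n^2$ for $k = 1$, \dots, $n$: then $v_n \to 1/2$ while $\max_k S_k(1-S_k)/v_n = O(1/n) \to 0$. In that regime your verification of the Lindeberg condition collapses (one cannot say $|R_j - S_j| \le \epsilon\sqrt{v_n}$ eventually, since $|R_j - S_j|$ is close to $1$ on the event $R_j = 1$), and indeed the Gaussian conclusion itself fails there: the number of successes is asymptotically Poisson, so $C_n/\sigma_n$ has a non-Gaussian limit. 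Thus $v_n \to \infty$ is a genuinely additional hypothesis that your proof (and the theorem) needs; it does follow if, say, the empirical distribution of the scores converges to a law not concentrated on $\{0,1\}$, but it does not follow from the stated ratio condition alone. Everything else in your argument goes through once $v_n \to \infty$ is granted.
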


Combining Theorems~\ref{varthm} and~\ref{Brownian} yields the following.

\begin{corollary}
\label{ecceco}
As $n$ becomes arbitrarily large both the ECCE-MAD defined in~(\ref{ks})
and the ECCE-R defined in~(\ref{ku}) converge to 0,
assuming both the null hypothesis of perfect calibration
and that the scores $S_1$, $S_2$, \dots, $S_n$ are all distinct
for each sample size $n$, as well as that
$\max_{1 \le k \le n} S_k (1 - S_k) / \sum_{j=1}^n S_j (1 - S_j)$
converges to 0 as $n$ increases without bound.
\end{corollary}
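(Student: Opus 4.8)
The plan is to write each cumulative metric as the product of the deterministic normalizing scale $\sigma_n$, which tends to $0$, and the normalized metric, which Theorem~\ref{Brownian} shows converges in distribution to an almost-surely finite limit; a Slutsky-type (converging-together) argument then forces the product to $0$. First I would invoke Theorem~\ref{varthm} to obtain the crude but sufficient bound $\sigma_n \le 1/(2\sqrt{n})$, so that $\sigma_n \to 0$ as $n$ increases without bound; this step uses only the null hypothesis.

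Next I would observe that the two hypotheses imposed in the corollary --- that the scores $S_1, S_2, \dots, S_n$ are distinct for each sample size $n$, and that $\max_{1 \le k \le n} S_k(1-S_k)/\sum_{j=1}^n S_j(1-S_j)$ converges to $0$ --- are precisely the hypotheses of Theorem~\ref{Brownian}. Hence that theorem applies and yields that $\hbox{ECCE-MAD}/\sigma_n$ converges in distribution to $M := \max_{0 \le t \le 1} |W(t)|$ and that $\hbox{ECCE-R}/\sigma_n$ converges in distribution to the range $\max_{0 \le t \le 1} W(t) - \min_{0 \le t \le 1} W(t)$ of a standard Brownian motion $W$. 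The key fact I would extract is that both limit random variables are finite almost surely --- indeed the theorem even records their finite expectations $\sqrt{\pi/2}$ and $2\sqrt{2/\pi}$ --- so in particular the sequences $\hbox{ECCE-MAD}/\sigma_n$ and $\hbox{ECCE-R}/\sigma_n$ are tight, that is, bounded in probability.

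I would then finish with the elementary converging-together lemma: a sequence of nonnegative random variables that is bounded in probability, when multiplied by a deterministic sequence tending to $0$, converges to $0$ in probability. Applying this with the bounded-in-probability factors above and the deterministic factor $\sigma_n \to 0$ gives $\hbox{ECCE-MAD} = \sigma_n \cdot (\hbox{ECCE-MAD}/\sigma_n) \to 0$ and $\hbox{ECCE-R} = \sigma_n \cdot (\hbox{ECCE-R}/\sigma_n) \to 0$, each in probability, equivalently each in distribution to the constant $0$, which is the assertion of the corollary.

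I do not expect a genuine obstacle here; the only real care needed is bookkeeping about the mode of convergence --- making explicit that ``converges to $0$'' means convergence in probability (equivalently, weak convergence to the degenerate law at $0$) and justifying the Slutsky step from tightness of the normalized metrics. As a remark I would note that Theorem~\ref{Brownian} can be bypassed entirely: under the null, $C_0, C_1, \dots, C_n$ is a martingale with increments $(R_k - S_k)/n$, so Doob's $L^2$ maximal inequality gives $\E[\hbox{ECCE-MAD}^2] \le 4\,\E[(C_n)^2] = 4(\sigma_n)^2 \le 1/n \to 0$, and $\hbox{ECCE-R} \le 2\,\hbox{ECCE-MAD}$ then handles the range; but since the corollary is phrased as a consequence of the two cited theorems, I would present the Slutsky argument as the main proof and relegate the martingale route to a remark.
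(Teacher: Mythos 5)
Your proposal is correct and matches the paper's approach: the paper proves this corollary simply by stating that it follows from combining Theorems~\ref{varthm} and~\ref{Brownian}, which is exactly the Slutsky-type argument ($\sigma_n \le 1/(2\sqrt{n}) \to 0$ times a normalized metric that is tight by the distributional limit) that you spell out. Your martingale/Doob remark is a nice elementary alternative, but the main line of reasoning is the same as the paper's.
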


\subsection{Imperfectly calibrated responses}
\label{imperfectsub}

This subsection analyzes the expected values of the metrics when the responses
arise from an imperfectly calibrated distribution, that is,
under the assumption of an ``alternative'' hypothesis that differs
nontrivially from the null hypothesis of perfect calibration.
The principal results of this subsection
are Corollaries~\ref{altl2co} and~\ref{altl1co} for the ECE
and Theorem~\ref{eccethm} for the ECCE.

\subsubsection{Binned}

The following theorem generalizes Theorem~\ref{perfectly}
beyond the case where the function $r$ in the theorem satisfies $r(s) = s$.

\begin{theorem}
\label{imperfectly}
Suppose that $r : [0, 1] \to [0, 1]$ is piecewise continuous
and independent of the sample size $n$.
Suppose also that the response $R_j^k$ is drawn from the Bernoulli distribution
whose expected value is $r(S_j^k)$ for all $j = 1$, $2$, \dots, $m$;
$k = 1$, $2$, \dots, $n_j$.
Suppose finally that $\max_{0 \le j \le m} (S_{j+1}^1 - S_j^1)$ converges to 0
as $n$ increases without bound,
and that $\nu = \nu_n$ is the step function
starting from $\nu(0) = \nu(S_0^1) = n_1$ and jumping to $\nu(S_j^k) = n_j$
for all $j = 1$, $2$, \dots, $m$; $k = 1$, $2$, \dots, $n_j$.
Then, as $n$ becomes arbitrarily large
the lower limit of the expected value of the ECE$^2$ defined in~(\ref{l2})
converges to
\begin{equation}
\label{lowerbound2}
\liminf_{n \to \infty} \int_0^1
\left( (r(s) - s)^2 + \frac{r(s) \, (1 - r(s))}{\nu(s)} \right) \, ds,
\end{equation}
where $\liminf$ denotes the lower limit and the sequence index $n$
of the function $\nu = \nu_n$ is suppressed in the notation.
\end{theorem}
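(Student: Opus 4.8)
The plan is to follow the proof of Theorem~\ref{perfectly}: first compute the expected value of the ECE$^2$ exactly using the independence of the responses, then recognize the result as a Riemann sum for the integral in~(\ref{lowerbound2}), and finally pass to the lower limit in $n$. The only genuinely new feature relative to Theorem~\ref{perfectly}, where $r(s) = s$, is that $r$ is merely piecewise continuous, so the bins whose scores straddle a discontinuity of $r$ must be handled separately from the rest.

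First I would expand the expected value of the summand in~(\ref{l2}). Since the responses in bin $j$ are independent, each with expected value $r(S_j^k)$ and variance $r(S_j^k)\,(1 - r(S_j^k))$, the expected value of $\bigl( \sum_{k=1}^{n_j} (R_j^k - S_j^k)/n_j \bigr)^2$ equals
\[
\left(\frac{1}{n_j}\sum_{k=1}^{n_j}\bigl(r(S_j^k) - S_j^k\bigr)\right)^{\!2}
+ \frac{1}{(n_j)^2}\sum_{k=1}^{n_j} r(S_j^k)\,\bigl(1 - r(S_j^k)\bigr),
\]
so the expected value of the ECE$^2$ equals $\sum_{j=1}^m (S_{j+1}^1 - S_j^1)$ times this quantity. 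Replacing the bin averages by their values at the left endpoint $S_j^1$, exactly as in~(\ref{approx}), I would rewrite the expected value of the ECE$^2$ as
\[
\sum_{j=1}^m (S_{j+1}^1 - S_j^1)
\left((r(S_j^1) - S_j^1)^2 + \frac{r(S_j^1)\,(1 - r(S_j^1))}{n_j}\right)
\]
plus a remainder, and then compare this sum with $\int_0^1 \bigl((r(s) - s)^2 + r(s)(1 - r(s))/\nu(s)\bigr)\,ds$, using that $\nu(s) = n_j$ on the interval $[S_j^1, S_{j+1}^1)$.

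Next I would split the bins into two groups. When the closed interval $[S_j^1, S_{j+1}^1]$ lies inside the closure of a single interval of continuity of $r$, the function $r$ is uniformly continuous there; using in addition that $t \mapsto t(1 - t)$ is $1$-Lipschitz on $[0, 1]$ and that the scores, the values of $r$, and the bin averages all lie in $[0, 1]$, both the remainder terms above and the discrepancy between the Riemann sum and the integral, restricted to bin $j$, are bounded by $(S_{j+1}^1 - S_j^1)$ times a quantity that tends to $0$ with $\max_{0 \le j \le m}(S_{j+1}^1 - S_j^1)$, uniformly over $j$ and independently of the values of the $n_j$; summing over these bins therefore contributes at most the total length $1$ times that vanishing quantity. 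For each of the remaining bins --- and, because $r$ is piecewise continuous, there are only a bounded number of them, bounded independently of $n$ --- every term in sight is bounded by $1$, so its contribution to both the expected value of the ECE$^2$ and the integral is $O(S_{j+1}^1 - S_j^1)$, hence again vanishes as the maximal bin width does. Combining the two groups shows that the expected value of the ECE$^2$ equals $\int_0^1 \bigl((r(s) - s)^2 + r(s)(1 - r(s))/\nu(s)\bigr)\,ds$ up to a term that converges to $0$, and taking the lower limit over $n$ then yields~(\ref{lowerbound2}).

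The step I expect to be the main obstacle is the bookkeeping around the discontinuities of $r$: one must check that the ``bad'' bins are genuinely bounded in number (this is exactly where piecewise continuity, rather than mere continuity off a set of measure zero, is needed), and that the modulus-of-continuity estimates on the ``good'' bins are uniform in $j$ and independent of the $n_j$, precisely as in the parenthetical remark after~(\ref{approx}). Once that is in place, the remainder is the same Riemann-sum estimate already carried out for Theorem~\ref{perfectly}.
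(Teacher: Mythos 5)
Your proposal is correct and follows essentially the same route as the paper's proof: the bias--variance decomposition of $\E[\hbox{ECE}^2]$ via independence, the Bernoulli variance formula, referencing each bin to its left endpoint $S_j^1$, and the observation that the remainder is a Riemann sum converging to 0 as the maximal bin width vanishes. Your explicit bookkeeping of the finitely many bins straddling discontinuities of $r$ merely fills in a detail the paper leaves implicit in its claim that the second Riemann sum in~(\ref{bigsum}) converges to 0.
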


\begin{proof}
Since the responses are all independent, the expected value of the ECE$^2$
from~(\ref{l2}) is
\begin{multline}
\label{bias-variance}
\sum_{j=1}^m \left( S_{j+1}^1 - S_j^1 \right)
\E\left[ \left( \sum_{k=1}^{n_j}
\frac{R_j^k - r(S_j^k) + r(S_j^k) - S_j^k}{n_j} \right)^2 \right] \\
= \sum_{j=1}^m \left( S_{j+1}^1 - S_j^1 \right)
\left( \left(\tilde{r}_j - \tilde{S}_j\right)^2 + \sum_{k=1}^{n_j}
\frac{\E\left[\left(R_j^k - r(S_j^k)\right)^2\right]}{(n_j)^2} \right),
\end{multline}
where $\tilde{r}_j$ and $\tilde{S}_j$ denote the averages
\begin{equation}
\tilde{r}_j = \frac{1}{n_j} \sum_{k=1}^{n_j} r(S_j^k)
\end{equation}
and
\begin{equation}
\tilde{S}_j = \frac{1}{n_j} \sum_{k=1}^{n_j} S_j^k
\end{equation}
for $j = 1$, $2$, \dots, $m$.
The fact that the variance of a Bernoulli distribution whose expected value
is $r(S_j^k)$ is $r(S_j^k) \left( 1 - r(S_j^k) \right)$ yields
\begin{multline}
\label{Bernoulli_variance}
\sum_{j=1}^m \left( S_{j+1}^1 - S_j^1 \right)
\left( \left(\tilde{r}_j - \tilde{S}_j\right)^2 + \sum_{k=1}^{n_j}
\frac{\E\left[\left(R_j^k - r(S_j^k)\right)^2\right]}{(n_j)^2} \right) \\
= \sum_{j=1}^m \left( S_{j+1}^1 - S_j^1 \right)
\left( \left(\tilde{r}_j - \tilde{S}_j\right)^2 + \sum_{k=1}^{n_j}
\frac{r(S_j^k) \left( 1 - r(S_j^k) \right)}{(n_j)^2} \right).
\end{multline}
Referencing terms in each bin to the same score yields
\begin{multline}
\label{bigsum}
\sum_{j=1}^m \left( S_{j+1}^1 - S_j^1 \right)
\left( \left(\tilde{r}_j - \tilde{S}_j\right)^2
+ \sum_{k=1}^{n_j} \frac{r(S_j^k) \left( 1 - r(S_j^k) \right)}{(n_j)^2} \right)
\\ = \sum_{j=1}^m \left( S_{j+1}^1 - S_j^1 \right) \left(
\left(r(S_j^1) - S_j^1\right)^2
+ \frac{r(S_j^1) \left( 1 - r(S_j^1) \right)}{n_j} \right) \\
+ \sum_{j=1}^m \left( S_{j+1}^1 - S_j^1 \right) \left(
\left(\tilde{r}_j - \tilde{S}_j\right)^2
- \left(r(S_j^1) - S_j^1\right)^2
+ \sum_{k=1}^{n_j} \frac{r(S_j^k) \left( 1 - r(S_j^k) \right)
                        -r(S_j^1) \left( 1 - r(S_j^1) \right)}{(n_j)^2}\right),
\end{multline}
which is the sum of two Riemann sums, the latter of which converges to 0
as $\max_{0 \le j \le m} (S_{j+1}^1 - S_j^1)$ tends to 0.
The lower limit of the right-hand side of~(\ref{bigsum})
converges to~(\ref{lowerbound2}),
so combining~(\ref{bias-variance})--(\ref{bigsum}) completes the proof.
\end{proof}

As with Theorem~\ref{perfectly} and Corollary~\ref{l2co}, the main consequence
of Theorem~\ref{imperfectly} is the following.

\begin{corollary}
\label{altl2co}
Suppose that $r : [0, 1] \to [0, 1]$ is piecewise continuous
and independent of the sample size $n$.
Suppose also that the response $R_j^k$ is drawn from the Bernoulli distribution
whose expected value is $r(S_j^k)$ for all $j = 1$, $2$, \dots, $m$;
$k = 1$, $2$, \dots, $n_j$.
If $r$ is also imperfectly calibrated,
that is, $\int_0^1 (r(s) - s)^2 \, ds > 0$,
then the expected value of the ECE$^2$ defined in~(\ref{l2}) is greater
than a fixed strictly positive real number for all sufficiently large $n$,
assuming that the bin width $(S_{j+1}^1 - S_j^1)$ converges uniformly to 0
(uniformly over $j = 0$, $1$, \dots, $m(n)$).
\end{corollary}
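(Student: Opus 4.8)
The plan is to obtain this as an immediate consequence of Theorem~\ref{imperfectly}. The hypotheses imposed here --- that $r$ is piecewise continuous and independent of $n$, that $R_j^k$ is Bernoulli with mean $r(S_j^k)$, and that the bin width $(S_{j+1}^1 - S_j^1)$ converges uniformly to $0$ (equivalently, $\max_{0 \le j \le m}(S_{j+1}^1 - S_j^1) \to 0$) --- are precisely those of Theorem~\ref{imperfectly}, so I may invoke its conclusion: $\liminf_{n\to\infty} \E[\hbox{ECE}^2]$ equals $\liminf_{n\to\infty} \int_0^1 \bigl( (r(s) - s)^2 + r(s)(1 - r(s))/\nu_n(s) \bigr) \, ds$, with $\hbox{ECE}^2$ as in~(\ref{l2}) and $\nu_n$ the step function from that theorem.

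The second step is to bound the integrand from below by discarding the variance term. Each value $\nu_n(s) = n_j$ is a positive integer, so $\nu_n(s) \ge 1$, and $r(s) \in [0, 1]$ forces $r(s)(1 - r(s)) \ge 0$; hence $r(s)(1 - r(s))/\nu_n(s) \ge 0$ for every $s$ and every $n$. Consequently $\int_0^1 \bigl( (r(s) - s)^2 + r(s)(1 - r(s))/\nu_n(s) \bigr) \, ds \ge \int_0^1 (r(s) - s)^2 \, ds$ for all $n$, and taking lower limits gives $\liminf_{n\to\infty} \E[\hbox{ECE}^2] \ge \int_0^1 (r(s) - s)^2 \, ds =: c$. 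Note that, in contrast to Corollary~\ref{l2co}, no assumption bounding $\nu_n$ from above is needed here, because the bias term $(r(s) - s)^2$ --- which vanishes precisely in the perfectly calibrated case $r(s) = s$ --- already supplies a positive floor on its own, so the variance term may simply be thrown away.

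The third step is routine: since $r$ is piecewise continuous, $(r(s) - s)^2$ is piecewise continuous hence Riemann integrable, so $c$ is well defined, and the imperfect-calibration hypothesis $\int_0^1 (r(s) - s)^2 \, ds > 0$ is exactly $c > 0$. By the definition of the lower limit, choosing $\epsilon = c/2$ yields an $N$ with $\E[\hbox{ECE}^2] > c - \epsilon = c/2$ for all $n \ge N$, and $c/2$ is the desired fixed strictly positive real number. I do not anticipate any genuine obstacle: the only points needing a moment's care are verifying that $\nu_n \ge 1$ so the discarded term is truly nonnegative, and the elementary passage from ``$\liminf \ge c$'' to ``eventually exceeds $c/2$''; all of the analytic content --- the bias--variance/Riemann-sum decomposition and the uniform vanishing of the remainder --- is already packaged inside Theorem~\ref{imperfectly}.
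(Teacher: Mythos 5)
Your proposal is correct and follows exactly the route the paper intends: the paper presents this corollary as the immediate consequence of Theorem~\ref{imperfectly} (offering no separate written proof), and your deduction --- discard the nonnegative variance term $r(s)(1-r(s))/\nu_n(s)$, bound the lower limit below by the bias term $\int_0^1 (r(s)-s)^2\,ds > 0$, and pass from the lower limit to an eventual bound --- is precisely the intended one-line argument. Your remark that no upper bound on $\nu_n$ is needed here, unlike in Corollary~\ref{l2co}, correctly identifies why the hypotheses of the two corollaries differ.
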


Combining Theorem~\ref{dominance} and Corollary~\ref{altl2co}
yields the following similar result for the ECE$^1$.

\begin{corollary}
\label{altl1co}
Suppose that $r : [0, 1] \to [0, 1]$ is piecewise continuous
and independent of the sample size $n$.
Suppose also that the response $R_j^k$ is drawn from the Bernoulli distribution
whose expected value is $r(S_j^k)$ for all $j = 1$, $2$, \dots, $m$;
$k = 1$, $2$, \dots, $n_j$.
If $r$ is also imperfectly calibrated,
that is, $\int_0^1 (r(s) - s)^2 \, ds > 0$,
then the expected value of the ECE$^1$ defined in~(\ref{l1}) is greater
than a fixed strictly positive real number for all sufficiently large $n$,
assuming that the bin width $(S_{j+1}^1 - S_j^1)$ converges uniformly to 0
(uniformly over $j = 0$, $1$, \dots, $m(n)$).
\end{corollary}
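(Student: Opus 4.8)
The plan is to deduce Corollary~\ref{altl1co} directly from two ingredients already available: Theorem~\ref{dominance}, which gives the pointwise (pathwise) inequality $\hbox{ECE}^1 \ge \hbox{ECE}^2$ for every realization of the responses, and Corollary~\ref{altl2co}, which says that under the stated hypotheses on $r$ and the bin widths there is a fixed $\epsilon > 0$ with $\E[\hbox{ECE}^2] > \epsilon$ for all sufficiently large $n$. The key observation is that the inequality in Theorem~\ref{dominance} holds sample by sample, hence survives taking expectations: $\E[\hbox{ECE}^1] \ge \E[\hbox{ECE}^2]$.

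First I would restate the hypotheses of the corollary verbatim so that the proof is self-contained: $r$ is piecewise continuous and independent of $n$, each $R_j^k$ is Bernoulli with mean $r(S_j^k)$, the miscalibration is nontrivial in the sense $\int_0^1 (r(s)-s)^2\,ds > 0$, and the bin width $S_{j+1}^1 - S_j^1$ converges to $0$ uniformly over $j$. Then I would invoke Corollary~\ref{altl2co} to obtain a strictly positive constant $\epsilon$ (depending only on $r$, not on $n$) and an index $N$ such that $\E[\hbox{ECE}^2] > \epsilon$ for all $n \ge N$. Next I would apply Theorem~\ref{dominance}, noting that it is an inequality between random variables valid for every outcome of the Bernoulli trials, so monotonicity of expectation gives $\E[\hbox{ECE}^1] \ge \E[\hbox{ECE}^2]$. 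Chaining the two yields $\E[\hbox{ECE}^1] > \epsilon$ for all $n \ge N$, which is exactly the claim, with the same $\epsilon$ serving as the fixed strictly positive lower bound.

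There is essentially no obstacle here: the corollary is a one-line composition of the preceding theorem and corollary, and the only thing worth spelling out is the (trivial) point that a deterministic inequality between two random variables is preserved under $\E$. No new hypotheses are needed beyond those inherited from Corollary~\ref{altl2co}, and the hypothesis that $r$ is independent of $n$ is what makes $\epsilon$ a genuine constant rather than a vanishing sequence. I would therefore present the proof in just two or three sentences rather than repeating the Riemann-sum analysis of Theorem~\ref{imperfectly}.

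\begin{proof}
Theorem~\ref{dominance} asserts that, for every realization of the responses, the $\hbox{ECE}^1$ of~(\ref{l1}) is at least the $\hbox{ECE}^2$ of~(\ref{l2}). Taking expectations of this deterministic inequality and using monotonicity of the expectation gives $\E[\hbox{ECE}^1] \ge \E[\hbox{ECE}^2]$. Under the hypotheses of the present corollary --- which are precisely those of Corollary~\ref{altl2co} --- that corollary provides a fixed strictly positive real number $\epsilon$ (depending only on $r$, since $r$ is independent of $n$) and an integer $N$ such that $\E[\hbox{ECE}^2] > \epsilon$ for all $n \ge N$. Combining the two displays, $\E[\hbox{ECE}^1] > \epsilon$ for all $n \ge N$, which is the assertion of the corollary.
\end{proof}
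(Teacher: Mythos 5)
Your proof is correct and is exactly the paper's argument: the paper introduces Corollary~\ref{altl1co} with the sentence ``Combining Theorem~\ref{dominance} and Corollary~\ref{altl2co} yields the following,'' which is precisely your chain of the pathwise inequality $\hbox{ECE}^1 \ge \hbox{ECE}^2$ through monotonicity of expectation into the lower bound from Corollary~\ref{altl2co}. Your write-up merely makes explicit the (trivial) step of passing the deterministic inequality through the expectation, which the paper leaves implicit.
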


\subsubsection{Cumulative}

The following theorem is an analogue
for the ECCE of Corollaries~\ref{altl2co} and~\ref{altl1co}.

\begin{theorem}
\label{eccethm}
Suppose that the empirical cumulative distribution function
of the scores $S_1$, $S_2$, \dots, $S_n$ converges uniformly
to some cumulative distribution function $F$ as the sample size $n$
increases without bound.
Suppose further that $r : [0, 1] \to [0, 1]$ is independent
of the sample size $n$, is Riemann-Stieltjes integrable with respect to $F$,
and is imperfectly calibrated, that is,
$\int_0^1 |r(s) - s| \, dF(s) > 0$.
Suppose also that the response $R_k$ is drawn from the Bernoulli distribution
whose expected value is $r(S_k)$ for all $k = 1$, $2$, \dots, $n$, and that
$r(S_0) = S_0$ and $r(S_{n+1}) = S_{n+1}$ (where $S_0 = 0$ and $S_{n+1} = 1$).
Then, the expected values of the ECCE-MAD defined in~(\ref{ks})
and of the ECCE-R defined in~(\ref{ku}) stay bounded away from 0
for all sufficiently large $n$.
\end{theorem}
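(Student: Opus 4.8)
The plan is to reduce the claim to a lower bound on the \emph{expected} cumulative differences $\E[C_k]$, and then to recognize that as the sample size grows these converge, for a well-chosen index, to a quantity that is nonzero precisely because $r$ is imperfectly calibrated. Write $h(s) = r(s) - s$, so that $\E[R_j - S_j] = h(S_j)$ and $\E[C_k] = \frac{1}{n} \sum_{j=1}^k h(S_j)$ for every $k$. Because $C_0 = 0$, for any index $\ell$ with $1 \le \ell \le n$ we have $\hbox{ECCE-MAD} \ge |C_\ell|$ and $\hbox{ECCE-R} = \max_{0 \le k \le n} C_k - \min_{0 \le k \le n} C_k \ge \max(C_\ell, 0) - \min(C_\ell, 0) = |C_\ell|$; taking expectations and applying Jensen's inequality gives $\E[\hbox{ECCE-MAD}] \ge |\E[C_\ell]|$ and $\E[\hbox{ECCE-R}] \ge |\E[C_\ell]|$. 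Hence it suffices to produce indices $\ell = \ell_n$ with $1 \le \ell_n \le n$ for which $\liminf_{n \to \infty} |\E[C_{\ell_n}]| > 0$.

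Next I would identify the limiting quantity. Let $P$ be the probability measure on $[0, 1]$ with $P([0, t]) = F(t)$, let $P_n = \frac{1}{n} \sum_{j=1}^n \delta_{S_j}$ be the empirical measure of the scores, and set $\psi(t) = \int_{[0, t]} h \, dP$. Since $h$ takes values in $[-1, 1]$, the map $A \mapsto \int_A h \, dP$ is a finite signed measure of total variation $\int |h| \, dP = \int_0^1 |r(s) - s| \, dF(s) > 0$, hence not the zero measure. The function $\psi$ is right-continuous and is continuous at every continuity point of $F$; moreover the boundary conventions force $h(0) = r(0) - 0 = 0$ and $h(1) = r(1) - 1 = 0$, so $\psi(0) = 0$ and $\psi$ has no jump at $1$. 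I claim $\psi$ is nonzero at some continuity point $t_0$ of $F$: if instead $\psi$ vanished at every continuity point of $F$, then approaching each $t \in [0, 1)$ from the right by continuity points of $F$ and invoking right-continuity would force $\psi \equiv 0$ on $[0, 1)$, whence $\psi(1) = \int_{[0, 1)} h \, dP + h(1)\, P(\{1\}) = 0$ as well; thus $\psi \equiv 0$, so the signed measure above vanishes on every interval $[0, t]$ and therefore, by a standard uniqueness argument, on every Borel set, contradicting $\int |h| \, dP > 0$. (This is exactly where the boundary conventions $r(S_0) = S_0$ and $r(S_{n+1}) = S_{n+1}$ are used: they prevent all of the miscalibration from being concentrated at an atom of $F$ sitting at an endpoint.) Fix such a continuity point $t_0$; note $F(t_0) > 0$, since otherwise $[0, t_0]$ would be $P$-null.

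Then I would take $\ell_n = \#\{j : S_j \le t_0\} = n\, G_n(t_0)$, where $G_n$ is the empirical cumulative distribution function of the scores. Since the scores are sorted, $\{j : S_j \le t_0\} = \{1, 2, \dots, \ell_n\}$, so $\E[C_{\ell_n}] = \frac{1}{n} \sum_{j : S_j \le t_0} h(S_j) = \int g \, dP_n$, where $g = h \cdot \mathbf{1}_{[0, t_0]}$. Uniform convergence of $G_n$ to $F$ gives weak convergence $P_n \Rightarrow P$. Because $r$, and hence $h$, is Riemann--Stieltjes integrable with respect to $F$, the discontinuity set of $h$ is $P$-null (the Riemann--Stieltjes analogue of Lebesgue's criterion for Riemann integrability; when $F$ has atoms this also encodes that $h$ is continuous at each atom), and since $t_0$ is a continuity point of $F$ the bounded function $g$ has $P$-null discontinuity set as well. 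The portmanteau theorem then gives $\int g \, dP_n \to \int g \, dP = \psi(t_0)$, so $\E[C_{\ell_n}] \to \psi(t_0) \ne 0$. Consequently, for all sufficiently large $n$ we have $1 \le \ell_n \le n$ and $|\E[C_{\ell_n}]| \ge \frac{1}{2} |\psi(t_0)| > 0$, which together with the first paragraph shows that $\E[\hbox{ECCE-MAD}]$ and $\E[\hbox{ECCE-R}]$ are bounded away from $0$ for all sufficiently large $n$.

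I expect the main obstacle to be the convergence $\int g \, dP_n \to \int g \, dP$, that is, turning ``$r$ is Riemann--Stieltjes integrable with respect to $F$'' together with ``$G_n \to F$ uniformly'' into convergence of these (partial) integrals. Making this rigorous requires the correct regularity statement --- that Riemann--Stieltjes integrability forces the discontinuity set of $h$ to be $P$-null, including the subtleties introduced by atoms of $F$ --- together with the version of the portmanteau theorem valid for bounded functions whose discontinuity set is merely null, and a little care to guarantee that $t_0$ can be chosen to be a continuity point of $F$ at which $\psi$ is nonzero. The remaining ingredients (the reduction to bounding $|\E[C_{\ell_n}]|$, and the fact that $\psi$ is nonzero somewhere among the continuity points of $F$) are routine.
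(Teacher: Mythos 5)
Your proof is correct, but it takes a genuinely different route from the paper's. The paper establishes almost-sure convergence of the full statistics: it splits $R_j - S_j$ into the stochastic part $R_j - r(S_j)$ and the deterministic part $r(S_j) - S_j$, controls the former uniformly over $k$ via Hoeffding plus a union bound and Borel--Cantelli, identifies the limits of the ECCE-MAD and ECCE-R as $\max_{0\le t\le 1} |\int_0^t (r(s)-s)\,dF(s)|$ and the corresponding range, passes to expectations by dominated convergence, and finally argues these limits are positive. You instead bypass all of the probabilistic machinery with a single application of Jensen's inequality, $\E[\hbox{ECCE-MAD}] \ge |\E[C_{\ell_n}]|$ and likewise for the range, which reduces everything to the deterministic quantity $\frac{1}{n}\sum_{j: S_j \le t_0} (r(S_j) - S_j)$; no concentration inequality, Borel--Cantelli, or dominated convergence is needed. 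What you give up is the exact limiting value of the expectations (the paper's~(\ref{rsmad}) and~(\ref{rsr})); what you gain is economy and, notably, more rigor at the one delicate point: the paper's claim that $\int_0^t (r(s)-s)\,dF(s) \equiv 0$ implies $r = s$ a.e.\ $dF$ ``by differentiating'' is only heuristic when $F$ is not absolutely continuous, whereas your argument --- vanishing on all intervals $[0,t]$ plus uniqueness of finite signed measures on a generating $\pi$-system forces the measure $h\,dP$ to vanish --- is the correct rigorous version, and your insistence that $t_0$ be a continuity point of $F$ (so that the portmanteau theorem applies to $h\cdot\mathbf{1}_{[0,t_0]}$) addresses a subtlety the paper's proof leaves implicit in its convergence of Riemann--Stieltjes sums. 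The one step you should spell out if writing this up is the claim that Riemann--Stieltjes integrability of $r$ with respect to $F$ forces the discontinuity set of $r$ to be $P$-null (the oscillation/Darboux-sum argument: if the closed set $\{x: \mathrm{osc}_r(x) \ge \epsilon\}$ had positive $\mu_F$-measure, upper and lower sums could never come within $\epsilon$ times that measure of each other); you correctly identify this as the crux, and it does go through.
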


\begin{proof}
Applying the Chernoff or Hoeffding bound
for averages of independent Bernoulli variates
to deviate from their expected values by more than $n^{-1/4}$
(or for the unnormalized sums to deviate from their expected values
by more than $n^{3/4}$),
then union bounding across the averages for $k = 1$, $2$, \dots, $n$,
and finally applying the Borel-Cantelli Lemma over the sample size $n$
yields that as $n$ becomes arbitrarily large the ECCE-MAD defined in~(\ref{ks})
converges almost surely to
\begin{equation}
\label{rsmad}
\lim_{n \to \infty} \max_{1 \le k \le n}
\left| \frac{1}{n} \sum_{j=1}^k \left( r(S_j) - S_j \right) \right|
= \max_{0 \le t \le 1} \left| \int_0^t (r(s) - s) \, dF(s) \right|
\end{equation}
and that the ECCE-R defined in~(\ref{ku}) converges almost surely to
\begin{multline}
\label{rsr}
\lim_{n \to \infty} \left(
\max_{0 \le k \le n} \frac{1}{n} \sum_{j=0}^k \left( r(S_j) - S_j \right)
- \min_{0 \le k \le n} \frac{1}{n} \sum_{j=0}^k \left( r(S_j) - S_j \right)
\right) \\
= \max_{0 \le t \le 1} \int_0^t (r(s) - s) \, dF(s)
- \min_{0 \le t \le 1} \int_0^t (r(s) - s) \, dF(s);
\end{multline}
a less elementary means of proving convergence to~(\ref{rsmad}) and~(\ref{rsr})
is to use the Glivenko-Cantelli Theorem for Glivenko-Cantelli classes
(or other uniform strong laws of large numbers).
The dominated convergence theorem then yields convergence
of the expected values of the ECCE-MAD and the ECCE-R to the same values
in~(\ref{rsmad}) and~(\ref{rsr}), courtesy of the domination
\begin{equation}
\left| \frac{1}{n} \sum_{j=1}^k \left( R_j - S_j \right) \right|
\le \frac{1}{n} \sum_{j=1}^n \left| R_j - S_j \right| \le 1
\end{equation}
for $k = 1$, $2$, \dots, $n$ (recall that both $R_j$ and $S_j$ lie
in the unit interval $[0, 1]$, so $\left| R_j - S_j \right| \le 1$).

Now, if~(\ref{rsmad}) were 0, then
\begin{equation}
\int_0^t (r(s) - s) \, dF(s) = 0
\end{equation}
for all $t$ in the unit interval $[0, 1]$; differentiating with respect to $t$
would then show that $r(s) = s$ except on a set of measure 0 relative to $dF$,
making $\int_0^1 |r(s) - s| \, dF(s)$ be 0, too.
The assumption that $\int_0^1 |r(s) - s| \, dF(s) > 0$ thus implies that
the limit~(\ref{rsmad}) of the expected value of the ECCE-MAD
must be strictly positive, completing the proof for the ECCE-MAD.

Similarly, if~(\ref{rsr}) were 0, then
\begin{equation}
\label{constant}
\int_0^t (r(s) - s) \, dF(s) = c
\end{equation}
for all $t$ in the unit interval $[0, 1]$, for some real number $c$
(after all, the maximum and minimum of a function are the same
only if the function is equal to some constant $c$); as before,
differentiating both sides of~(\ref{constant}) with respect to $t$
would then show that $r(s) = s$ except on a set of measure 0 relative to $dF$,
making $\int_0^1 |r(s) - s| \, dF(s)$ be 0, too.
The assumption that $\int_0^1 |r(s) - s| \, dF(s) > 0$ thus implies that
the limit~(\ref{rsr}) of the expected value of the ECCE-R
must be strictly positive, completing the proof for the ECCE-R.
\end{proof}

\subsection{Consequences}
\label{consequences}

This subsection puts together the main results of this section.

For a {\it perfectly} calibrated underlying distribution,
both the ideal, underlying calibration error
and the ideal, underlying cumulative calibration error are equal to 0,
whereas for an {\it imperfectly} calibrated underlying distribution,
both are greater than 0.
For a {\it perfectly} calibrated underlying distribution,
Corollaries~\ref{l2co} and~\ref{l1co} show that the expected values
of the ECE$^1$ and of the ECE$^2$ stay bounded away from 0
if the number of draws per bin remains bounded on some fixed range of scores,
as the maximum bin width becomes arbitrarily small,
while Corollary~\ref{ecceco} shows that as the sample size $n$
increases without bound both the ECCE-MAD and the ECCE-R converge to 0
if the empirical cumulative distribution function of the scores
converges uniformly to a continuous cumulative distribution function.
For an {\it imperfectly} calibrated underlying distribution,
Corollaries~\ref{altl2co} and~\ref{altl1co} show that the expected values
of both the ECE$^1$ and the ECE$^2$ stay bounded away from 0
as the maximum bin width becomes arbitrarily small,
and Theorem~\ref{eccethm} shows that the expected values
of both the ECCE-MAD and the ECCE-R stay bounded away from 0
for all sufficiently large $n$
if the empirical cumulative distribution function of the scores
converges uniformly to a cumulative distribution function.
Thus, both the ECCE-MAD and the ECCE-R can distinguish
every imperfectly calibrated underlying distribution from perfect calibration,
given enough observed scores and corresponding responses,
while in contrast there are imperfectly calibrated distributions which
neither the ECE$^1$ nor the ECE$^2$ can distinguish from perfect calibration
as the maximum width of a bin approaches 0, if the number of draws per bin
remains bounded on some fixed range of scores.

Hence, any significance test based on the ECE$^1$ or the ECE$^2$
with a bounded number of draws per bin on some fixed range of scores
has no power asymptotically for some alternatives
or is asymptotically inconsistent.
This exposes a fundamental trade-off inherent in the ECE$^1$ and the ECE$^2$:
in order to attain nontrivial power and asymptotic consistency,
the number of draws per bin must not stay bounded on any fixed range of scores,
thus necessarily squandering observations that otherwise could have
contributed additional power to the significance test
(whereas the ECCE-MAD and the ECCE-R have no such explicit trade-off).
The trade-off becomes especially hard to handle when the number of observations
is limited; the ECCE-MAD and the ECCE-R work well without requiring
any hard decisions, whereas the ECE$^1$ and the ECE$^2$ depend on the choice
of bins, and that choice makes a big difference even asymptotically,
in the limit of large numbers of observations (with no obvious best setting
for finitely many observations).

To emphasize: to attain asymptotic consistency together
with nontrivial asymptotic power against the fixed alternative distributions
discussed above, both the ECE$^1$ and the ECE$^2$ require infinitely many draws 
per bin for almost all bins (where ``almost all'' refers
to ``almost everywhere'' on the unit interval $[0, 1]$)
--- denser almost everywhere by an unbounded factor more than for the ECCE.

\subsection{Graphical methods}
\label{graphical}

This subsection reviews the primary motivations for the definitions
of the ECEs and the ECCEs --- the ECEs and the ECCEs
are scalar summary statistics for certain graphical methods
of assessing calibration reviewed here.

\subsubsection{Motivation for the empirical calibration error}
\label{calibrationplot}

Formulae~(\ref{avgscore}) and~(\ref{avgresponse}) above
express the average score and average response in bin $j$ as
\begin{equation}
\label{avgscoredup}
\tilde{S}_j = \frac{1}{n_j} \sum_{k=1}^{n_j} S_j^k,
\end{equation}
and
\begin{equation}
\label{avgresponsedup}
\tilde{R}_j = \frac{1}{n_j} \sum_{k=1}^{n_j} R_j^k
\end{equation}
for $j = 1$, $2$, \dots, $m$, respectively.
Due to the central limit theorem, as $n_j$ becomes large,
$\tilde{R}_j$ concentrates around its expected value,
\begin{equation}
\label{eresponse}
\E[\tilde{R}_j] = \frac{1}{n_j} \sum_{k=1}^{n_j} \E[R_j^k],
\end{equation}
which is the average of the expected values of the responses in the bin.
For a perfectly calibrated underlying distribution, $\E[R_j^k] = S_j^k$,
and so~(\ref{eresponse}) would be equal to~(\ref{avgscoredup}).
The difference from perfect calibration in bin $j$ is therefore
the difference between $\tilde{R}_j$ from~(\ref{avgresponsedup})
and $\tilde{S}_j$ from~(\ref{avgscoredup}), in the limit that $n_j$ is large.
A so-called ``calibration plot'' or ``reliability diagram''
plots $\tilde{R}_j$ versus $\tilde{S}_j$ together
with the (diagonal) line through a graph of $\tilde{S}_j$ versus $\tilde{S}_j$
for $j = 1$, $2$, \dots, $m$, so that the difference from perfect calibration
is the vertical distance between the two graphs.
Section~\ref{results} presents many examples of such reliability diagrams;
see, for example, Figures~\ref{32768m16prob}, \ref{32768m16samp},
\ref{32768m64prob}, \ref{32768m64samp},
\ref{night-snakeprob}, \ref{night-snakesamp},
\ref{sidewinderprob}, \ref{sidewindersamp},
\ref{eskimo-dogprob}, \ref{eskimo-dogsamp},
\ref{wild-boarprob}, \ref{wild-boarsamp},
\ref{sunglassesprob}, \ref{sunglassessamp},
\ref{imagenetprob}, and~\ref{imagenetsamp}.

The ECE$^2$ from~(\ref{l2}) is the sum from $j = 1$ to $j = m$
of the bin width $(S_{j+1}^1 - S_j^1)$ times the square
of the difference between $\tilde{R}_j$ and $\tilde{S}_j$,
where the latter difference approaches the expected amount of miscalibration
in bin $j$ in the limit that $n_j$ is large; similarly,
the ECE$^1$ from~(\ref{l1}) is the sum from $j = 1$ to $j = m$
of the bin width $(S_{j+1}^1 - S_j^1)$ times the absolute value
of the difference between $\tilde{R}_j$ and $\tilde{S}_j$.
In the limit that the bin width $(S_{j+1}^1 - S_j^1)$ becomes small uniformly
over $j = 0$, $1$, \dots, $m$, and $n_j$ becomes large uniformly,
the ECE$^1$ becomes the total area between the graph
of $\tilde{R}_j$ versus $\tilde{S}_j$ and the graph
of $\tilde{S}_j$ versus itself, assuming that ``area'' is well-defined
in terms of the Riemann sum~(\ref{l1}), that is, that the ECE$^1$ converges
to a unique limit. This is the case when there exists
a fixed Riemann integrable function $r$ such that $\E[R_j^k(n)] = r(S_j^k(n))$
for all $n = 1$, $2$, $3$, \dots; $j = 1$, $2$, \dots, $m(n)$;
$k = 1$, $2$, \dots, $n_j(n)$;
again assuming that the bin width $(S_{j+1}^1 - S_j^1)$ becomes small uniformly
over $j = 1$, $2$, \dots, $m$, and $n_j$ becomes large uniformly over $j$.

\subsubsection{Motivation for the empirical cumulative calibration error}
\label{cumulativeplot}

Formula~(\ref{cumulative}) defines the cumulative difference $C_k$ as
\begin{equation}
\label{cumulativedup}
C_k = \frac{1}{n} \sum_{j=1}^k (R_j - S_j)
\end{equation}
for $k = 1$, $2$, \dots, $n$.
Plotting the cumulative sum $C_k$ versus $k/n$
results in a graph whose expected slope is the amount of miscalibration;
indeed, the expected slope from $j = k - 1$ to $j = k$ is
\begin{equation}
\frac{\E[C_k - C_{k-1}]}{k/n - (k-1)/n} = \E[R_k] - S_k
\end{equation}
for $k = 1$, $2$, \dots, $n$ --- and $\E[R_k] - S_k$ is precisely
the deviation from perfect calibration.
Thus, the slope of a secant line connecting two points on the graph
becomes the average miscalibration over the long range of $k$
between the two points.
Lack of miscalibration results in a fairly flat graph.
So, good calibration results in a flat, mainly horizontal graph
that deviates little from zero.
This motivates the definitions of the ECCE-MAD in~(\ref{ks})
and of the ECCE-R in~(\ref{ku}) --- they measure the deviation from zero,
the deviation from a perfectly flat, horizontal graph of perfect calibration.
The ECCE-MAD is simply the maximum absolute value of $C_k$,
while the ECCE-R is simply the range of $C_k$, where the range
is the maximum minus the minimum.
As mentioned in Subsubsection~\ref{cumsub} above,
the absolute value of the total miscalibration
$\sum_{j \in I} (R_j - S_j)/n$
over any interval $I$ of indices is less than or equal to the ECCE-R ---
the ECCE-R is the maximum of the absolute value
of the total miscalibration over any interval of indices:
$\hbox{ECCE-R} = \max_I \left| \sum_{j \in I} (R_j - S_j)/n \right|$.

Note that slope is easy to perceive independently
of any irrelevant constant vertical offset, and that slope
in the plot of $C_k$ versus $k/n$ is equal to the amount of miscalibration
(with the slope of a secant line becoming the average miscalibration
over the scores between two distant points
where the secant line intersects the graph).
Section~\ref{results} presents many examples of such graphs
of cumulative differences; see, for example,
Figures~\ref{32768cum}, \ref{night-snakecum}, \ref{sidewindercum},
\ref{eskimo-dogcum}, \ref{wild-boarcum}, \ref{sunglassescum},
and~\ref{imagenetcum}.

\section{Results and discussion}
\label{results}

This section illustrates the methods of the previous section
via analysis of both synthetic
and measured data sets.\footnote{Permissively licensed open-source software
that automatically reproduces all figures and statistics reported below
is available at \url{https://github.com/facebookresearch/ecevecce}}
The synthetic examples include ground-truths known by construction.
The synthetic examples first highlight practical problems with the ECEs,
then validate the theory of the previous section directly and explicitly.
The examples on measured data display even more extreme practical problems
with the ECEs, especially in comparison with the ECCEs.
Subsection~\ref{synthetic} presents the synthetic examples,
while Subsection~\ref{imagenet} analyzes in detail one of the most popular
data sets from computer vision, ImageNet.

The following are minor details common to both Subsection~\ref{synthetic}
and Subsection~\ref{imagenet}:
\begin{enumerate}
\item ``P-values'' (also known as ``attained significance levels'')
which are exact in the asymptotic limit that the sample size $n$ is large
accompany every value for the ECCE-MAD and for the ECCE-R
reported in the captions of the figures; efficient methods
for computing such P-values are detailed by~\cite{tygert_pvals}.
\item All reliability diagrams displayed in the present paper include
``error bars'' (actually lines, not bars) plotted in light gray.
Each diagram includes 20 such light-gray graphs,
obtained via bootstrap resampling, corresponding to a confidence level
of around 95\%. Details on their computation are available
in the appendix of~\cite{tygert_full}.
\item When the bins are equispaced with respect to the scores
(the scores are the predicted probabilities), we replace the bin width
$(S_{j+1}^1 - S_j^1)$ in~(\ref{l2}) and~(\ref{l1}) with $1/m$,
where $m$ is the number of bins;
both~(\ref{l2}) and~(\ref{l1}) are still Riemann sums with this change,
and so all the analysis given above remains valid without modification.
Such replacement is canonical in the classical reliability diagrams and ECEs
when the bins are equispaced.
\end{enumerate}

\subsection{Synthetic examples}
\label{synthetic}

This subsection presents the results of numerical experiments
on a toy example, generated synthetically so that the complete ground-truth
is known exactly. Knowing the ground-truth facilitates a fully rigorous
evaluation and validation of the methods of the previous section.
Figures~\ref{ecen}--\ref{cumnnorm} illustrate the theorems and corollaries
of Section~\ref{methods} as explicitly as possible, as detailed
in the penultimate paragraph of the present subsection.
Figures~\ref{32768ece}--\ref{32768cum} set the stage,
introducing the synthetic examples and some problems with binning encountered
in practice.

Figure~\ref{32768ece} displays the four kinds of empirical calibration errors,
as a function of $m$, the number of bins.
Whether any number $m$ of bins is optimal, much less ideal and representative,
is entirely unclear. The values of the empirical calibration errors
vary widely as the number $m$ of bins varies.
Figure~\ref{32768ece} corresponds to the sample size $n =$ 32,768
used in the present subsection; the appendix contains analogous plots
for the samples sizes $n =$ 8,192 and $n =$ 131,072.

Figure~\ref{32768exact} plots the probabilities of success
for the Bernoulli distributions from which the synthetic data set
draws responses at the specified scores, where the scores
are equispaced in the upper plot of the figure,
then squared in the middle plot, and finally square rooted
from the original equispaced values in the lower plot.
The sample size is $n =$ 32,768, which is the number of scores
(each paired with a response drawn from the Bernoulli distribution
whose probability of success is graphed) for each plot.

Figures~\ref{32768m16prob}--\ref{32768m64samp} display the reliability diagrams
from Subsubsection~\ref{calibrationplot}
(both with bins that are roughly equispaced with respect to the scores
and with each bin containing the same number of observations),
for $m = 16$ bins and $m = 64$.
Figure~\ref{32768cum} displays the cumulative plot
from Subsubsection~\ref{cumulativeplot},
along with the ground-truth ideal, constructing the ideal graph
using the exact expected values of the Bernoulli distributions
from which the observed responses are drawn; the empirical plot
closely resembles the ground-truth ideal.

Figures~\ref{ecen}--\ref{cumnnorm} illustrate explicitly the theory
of Section~\ref{methods}.
The upper plots of Figure~\ref{ecen}
correspond to Corollaries~\ref{l2co} and~\ref{l1co},
while the upper plots of Figure~\ref{cumn}
correspond to Corollary~\ref{ecceco};
the lower plots of Figure~\ref{ecen}
correspond to Corollaries~\ref{altl2co} and~\ref{altl1co},
while the lower plots of Figure~\ref{cumn}
correspond to Theorem~\ref{eccethm}.
Figure~\ref{cumnnorm} displays the ECCE-MAD and the ECCE-R
both normalized by $\sigma_n$ from~(\ref{var}); the perfectly calibrated data
of the upper plots in Figure~\ref{cumnnorm} results
in the ECCE-MAD / $\sigma_n$ hovering around its asymptotic expected value
$\sqrt{\pi/2} \approx 1.2533$
(asymptotic in the limit of large sample size $n$)
and in the ECCE-R / $\sigma_n$ hovering around its asymptotic expected value
$2 \sqrt{2/\pi} \approx 1.5958$.
Derivations of these expected values in the limit of large sample size $n$
are available in Section~3 of~\cite{tygert_pvals}.
Figure~\ref{ecen} displays graphically how the ECEs hit a noise floor,
staying around the same value for both the perfectly
and imperfectly calibrated distributions,
irrespective of the number of observations.
In contrast, Figure~\ref{cumn} illustrates how the ECCEs approach 0 rapidly
for the perfectly calibrated distribution as the sample size $n$ increases,
while staying well away from 0 for the imperfectly calibrated distribution.
Thus, the ECEs have trouble telling apart
the perfectly and imperfectly calibrated distributions,
whereas the power of the ECCEs increases indefinitely as the sample size grows.

The captions of the figures discuss the results and their consequences.

\subsection{ImageNet}
\label{imagenet}

This subsection presents the results of numerical experiments
on the standard training data set ``ImageNet-1000'' of~\cite{imagenet},
which is among the most popular data sets in computer vision.

The standard training data set ``ImageNet-1000''
contains a thousand labeled classes, each containing about 1,300 images
corresponding to a particular noun (often an animal such as a ``night snake,''
a ``sidewinder or horned rattlesnake,'' or an ``Eskimo dog or husky'');
the total number of images in the data set is $n =$ 1,281,167.
To generate the corresponding plots,
we calculate the scores $S_1$, $S_2$, \dots, $S_n$
using the pretrained ResNet18 classifier of~\cite{he-zhang-ren-sun}
from the computer-vision module, ``torchvision,''
in the PyTorch software library of~\cite{pytorch};
the score for an image is the probability assigned by the classifier
to the class predicted to be most likely,
with the scores randomly perturbed by about one part in $10^8$ to guarantee
their uniqueness.
For $k = 1$, $2$, \dots, $n$, the response $R_k$ corresponding to a score $S_k$
is $R_k = 1$ when the class predicted to be most likely is the correct class,
and $R_k = 0$ otherwise.

The figures presented below consider both the subsets of the full data set
corresponding to individual classes as well as the full data set
with all classes simultaneously.
Figures~\ref{night-snakeece}--\ref{sunglassescum} pertain
to individual classes, while Figures~\ref{imagenetece}--\ref{imagenetcum}
pertain to all classes together.

Figures~\ref{night-snakeece}--\ref{night-snakecum} consider the class
corresponding to the night snake,
Figures~\ref{sidewinderece}--\ref{sidewindercum} consider
the sidewinder or horned rattlesnake,
Figures~\ref{eskimo-dogece}--\ref{eskimo-dogcum} consider
the Eskimo dog or husky,
Figures~\ref{wild-boarece}--\ref{wild-boarcum} consider the wild boar,
and Figures~\ref{sunglassesece}--\ref{sunglassescum} consider sunglasses.
In each of these sets of four figures,
the first figure displays the four kinds of empirical calibration errors ---
the ECE$^1$ and the ECE$^2$ for when the bins are equispaced along the scores,
and the ECE$^1$ and the ECE$^2$ for when each bin (aside from the last)
contains the same number of observations.
The second figure in the set of four provides examples
of the reliability diagrams from Subsubsection~\ref{calibrationplot},
with the bins chosen such that $S_1^1$, $S_2^1$, \dots, $S_m^1$
are roughly equispaced on the unit interval $[0, 1]$
(with $m = 8$ in the upper plot and $m = 32$ in the lower plot).
The third figure in each set of four gives examples
of the reliability diagrams from Subsubsection~\ref{calibrationplot},
now with the bins chosen such that $n_1 = n_2 = \dots = n_{m-1} \approx n_m$
(again with $m = 8$ in the upper plot and $m = 32$ in the lower plot).
The fourth figure in the set of four provides an example
of the cumulative plot from Subsubsection~\ref{cumulativeplot}.
For each of Figures~\ref{night-snakeece}--\ref{sunglassescum},
the sample size is $n =$ 1,300.

Figure~\ref{imagenetece} displays the empirical calibration errors
for all classes analyzed simultaneously,
so that the sample size $n =$ 1,281,167.
Figure~\ref{imagenetprob} gives examples of the reliability diagrams
from Subsubsection~\ref{calibrationplot} for all 1,000 classes together,
with the bins chosen such that $S_1^1$, $S_2^1$, \dots, $S_m^1$
are roughly equispaced on the unit interval $[0, 1]$
(with $m = 128$ in the upper plot and $m = 1,024$ in the lower plot).
Figure~\ref{imagenetsamp} provides examples of the reliability diagrams
from Subsubsection~\ref{calibrationplot} for all 1,000 classes simultaneously,
with the bins chosen such that $n_1 = n_2 = \dots = n_{m-1} \approx n_m$
(again with $m = 128$ in the upper plot and $m = 1,024$ in the lower plot).
Figure~\ref{imagenetcum} provides an example of the cumulative plot
from Subsubsection~\ref{cumulativeplot} for all classes together.
Only the cumulative plot (Figure~\ref{imagenetcum}) conveniently reveals
that a third of all observations (specifically, those with probabilities
of at least 0.97) are well-calibrated.
The ranges of the graphs in Figure~\ref{imagenetece} are relatively narrow
as $m$, the number of bins, varies through the values 8, 16, 32, \dots, 1,024;
the empirical calibration errors could plausibly constitute decent metrics
in this setting, on account of their relatively stable values
as $m$ varies through the values 8, 16, 32, \dots, 1,024.

In contrast, all the empirical calibration errors vary enormously as a function
of $m$, the number of bins, for every individual class
from ImageNet investigated here
--- the range of the graphs in every one of Figures~\ref{night-snakeece},
\ref{sidewinderece}, \ref{eskimo-dogece}, \ref{wild-boarece},
and~\ref{sunglassesece} is wide even with merely modest variations in $m$.
Which choice of $m$, the number of bins, and binning strategy is best
--- if any --- is entirely unclear from these plots.
Whether any choice of $m$ or binning strategy yields a good metric
must be seriously questionable when the choice makes such a big difference
in the value of the metric, and which choices are better is unclear.

Thus, the empirical calibration errors may be most meaningful
when the probabilities of success for the Bernoulli distributions
underlying the observed data are smooth as a function of $m$,
the number of bins, and the sample size $n$ greatly exceeds
the minimum required to assess statistical significance reliably
by averaging away noise from sampling.
If the probabilities of success display multiscale behavior as a function
of $m$, with interesting variations present at finer and finer scales,
then any choice of $m$ will necessarily miss interesting variations
or fail to perform enough averaging to distinguish signal from noise.
In accord with the theory of Section~\ref{methods},
obtaining meaningful empirical calibration errors apparently requires
the sample size $n$ to be much larger than the ideal
attained by the empirical {\it cumulative} calibration errors.

More detailed discussion is available in the captions of the figures.

\begin{figure}
\begin{center}
\parbox{\imsized}{\includegraphics[width=\imsized]
{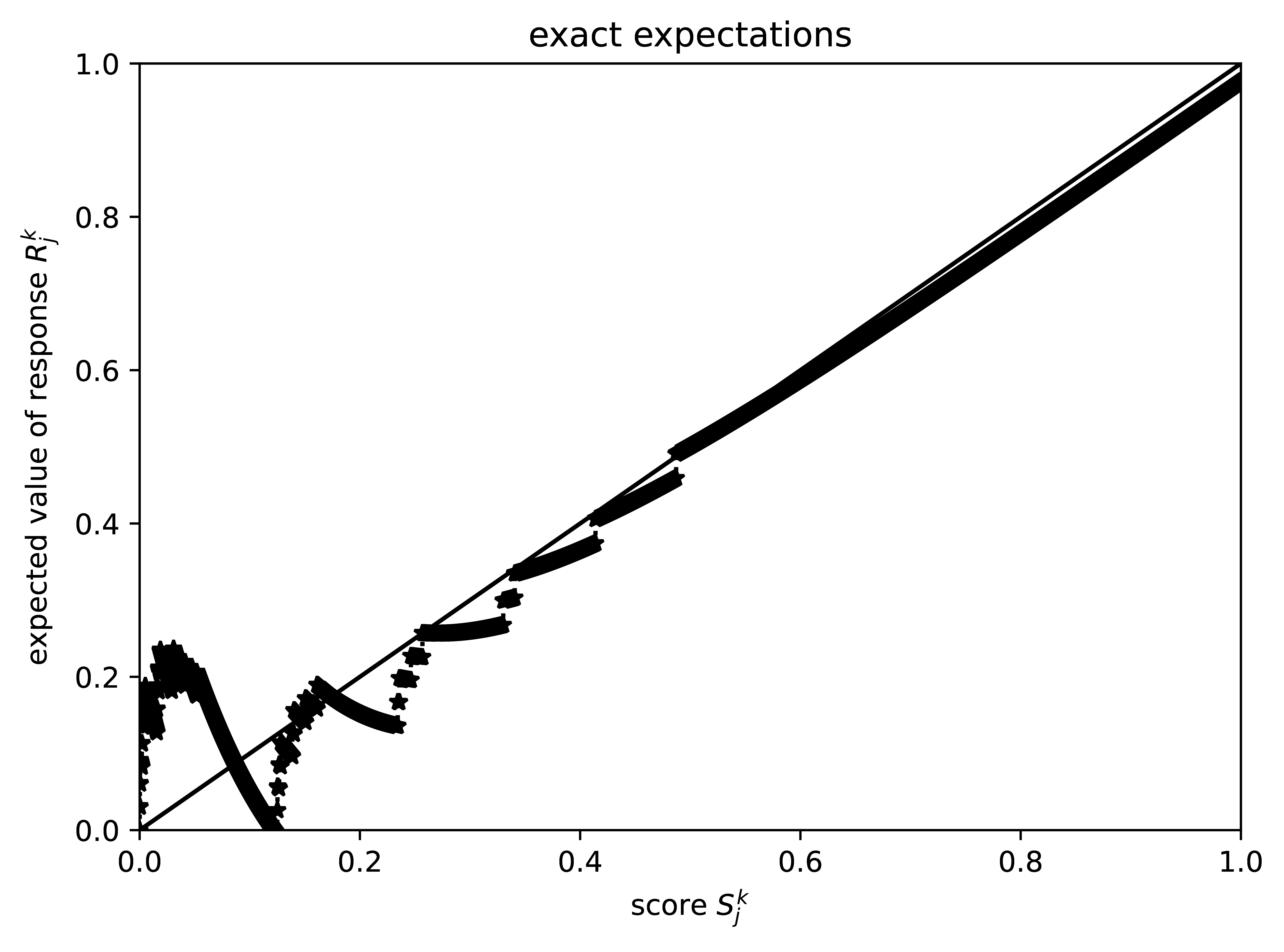}}

\parbox{\imsized}{\includegraphics[width=\imsized]
{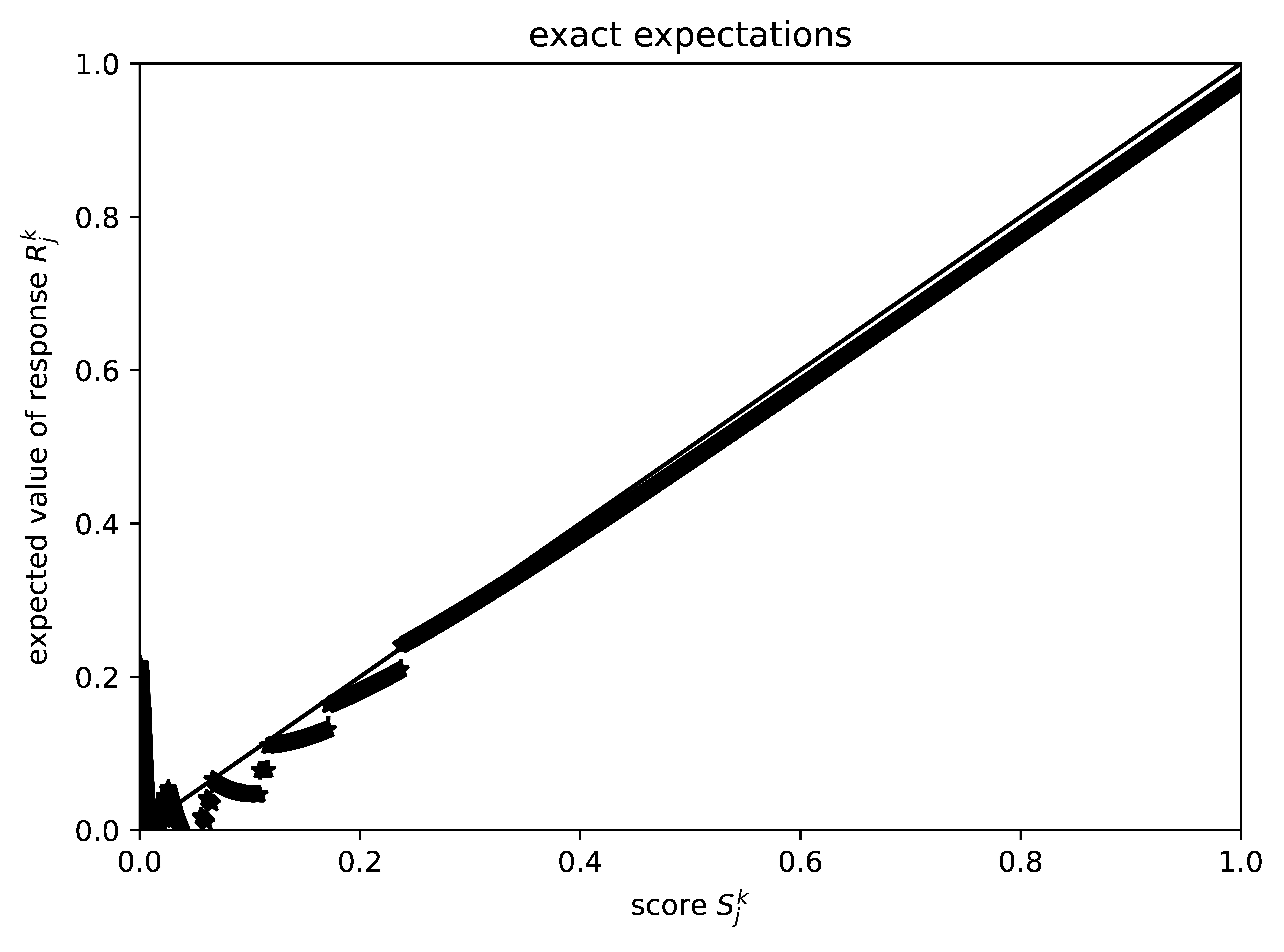}}

\parbox{\imsized}{\includegraphics[width=\imsized]
{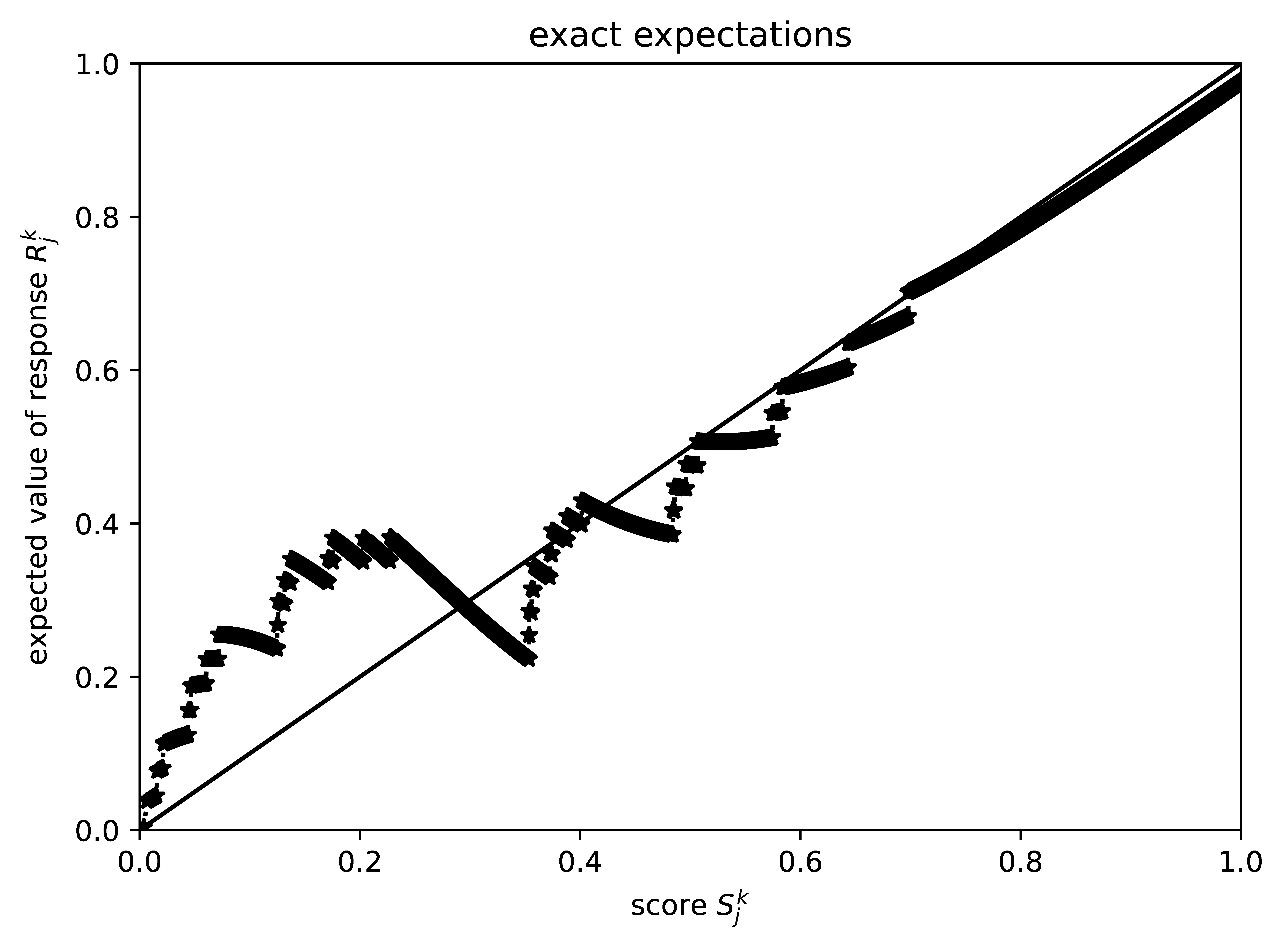}}
\end{center}
\caption{Probabilities of success for the Bernoulli distributions
         underlying the synthetic data set (which takes independent draws
         from these distributions to obtain the observed responses).
         The scores are equispaced in the top plot,
         squared in the middle plot, and square rooted in the bottom plot,
         with sample size $n =$ 32,768.}
\label{32768exact}
\end{figure}

\begin{figure}
\begin{center}
\parbox{\imsized}{\includegraphics[width=\imsized]
{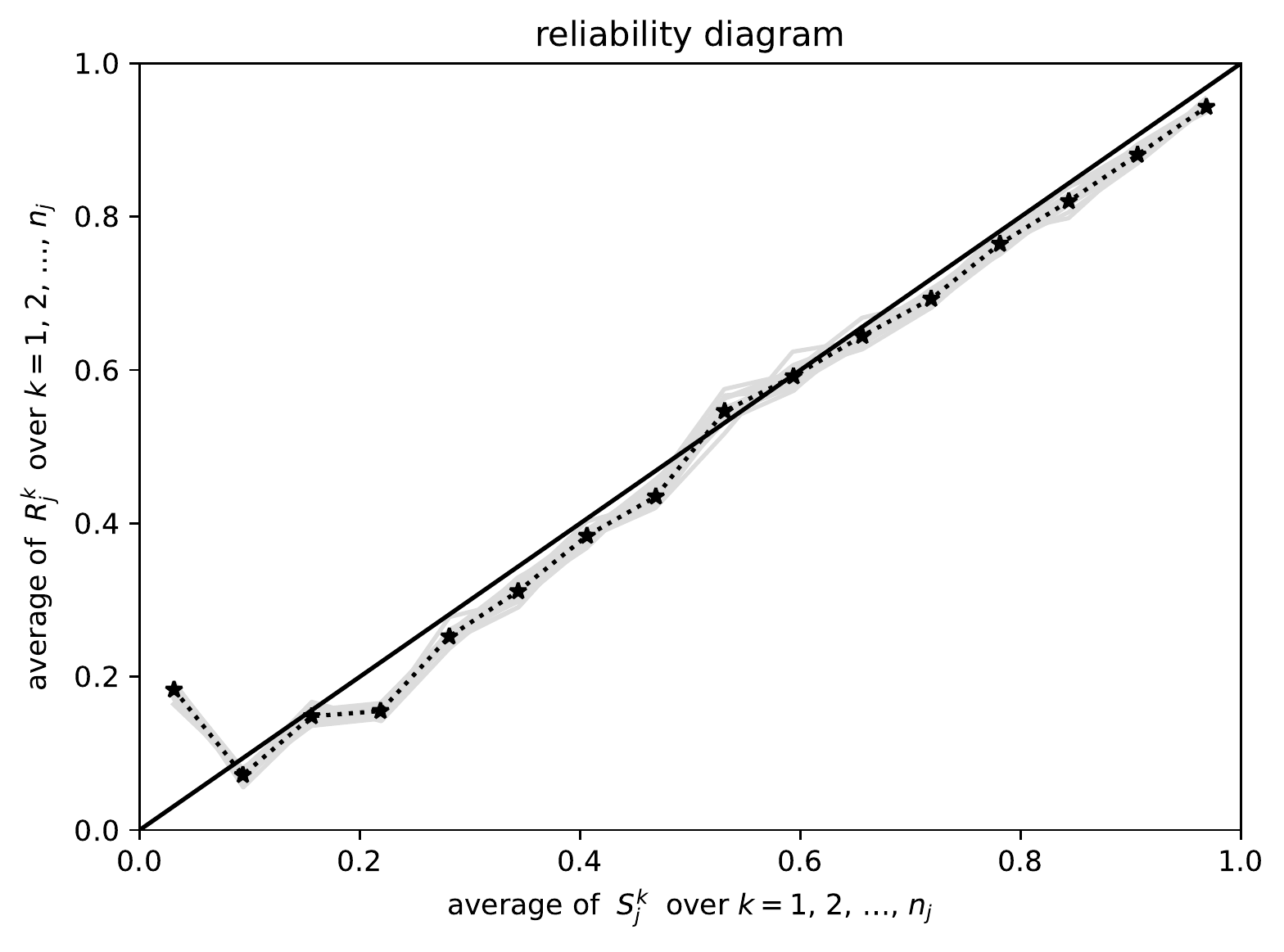}}

\parbox{\imsized}{\includegraphics[width=\imsized]
{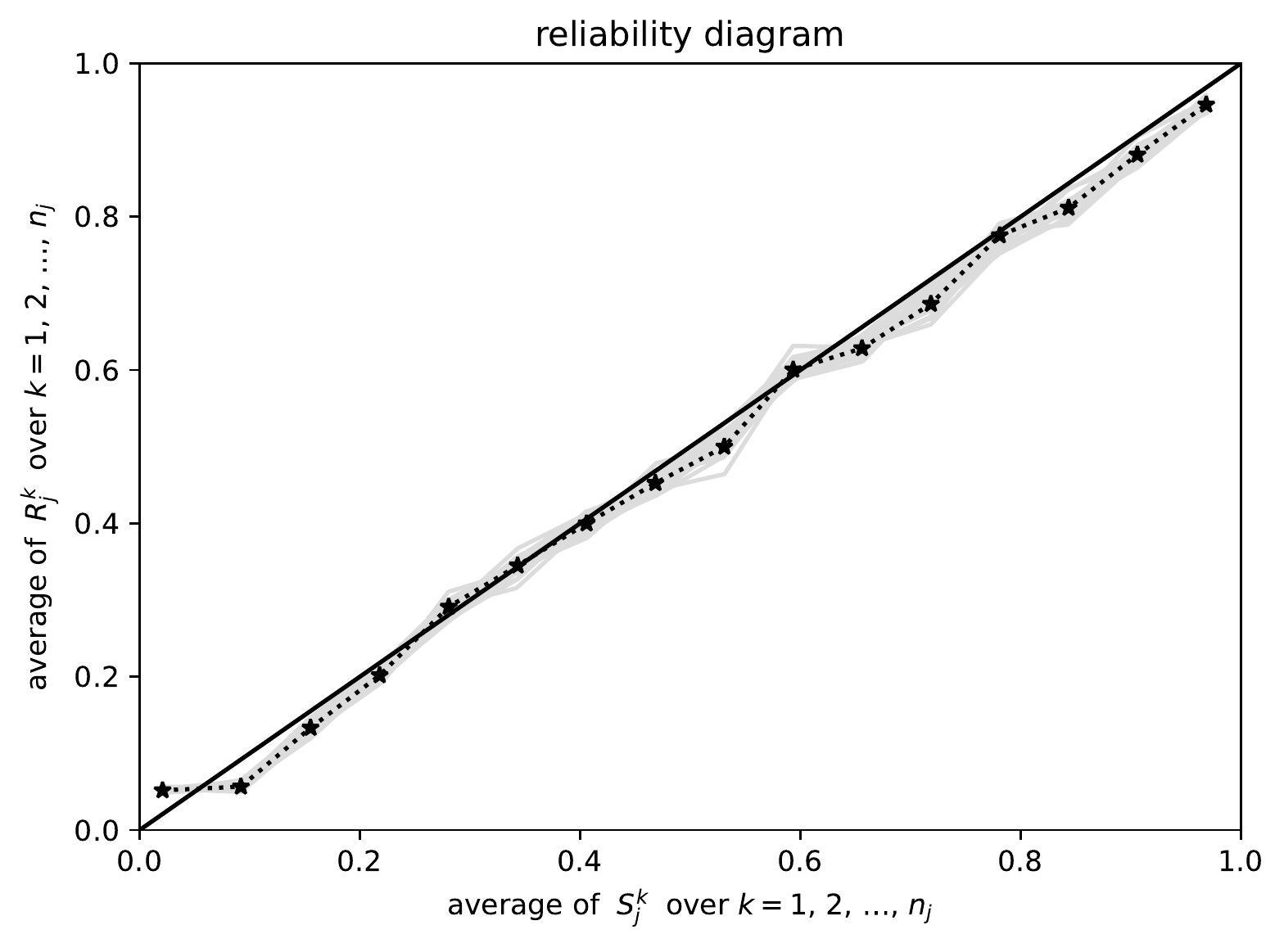}}

\parbox{\imsized}{\includegraphics[width=\imsized]
{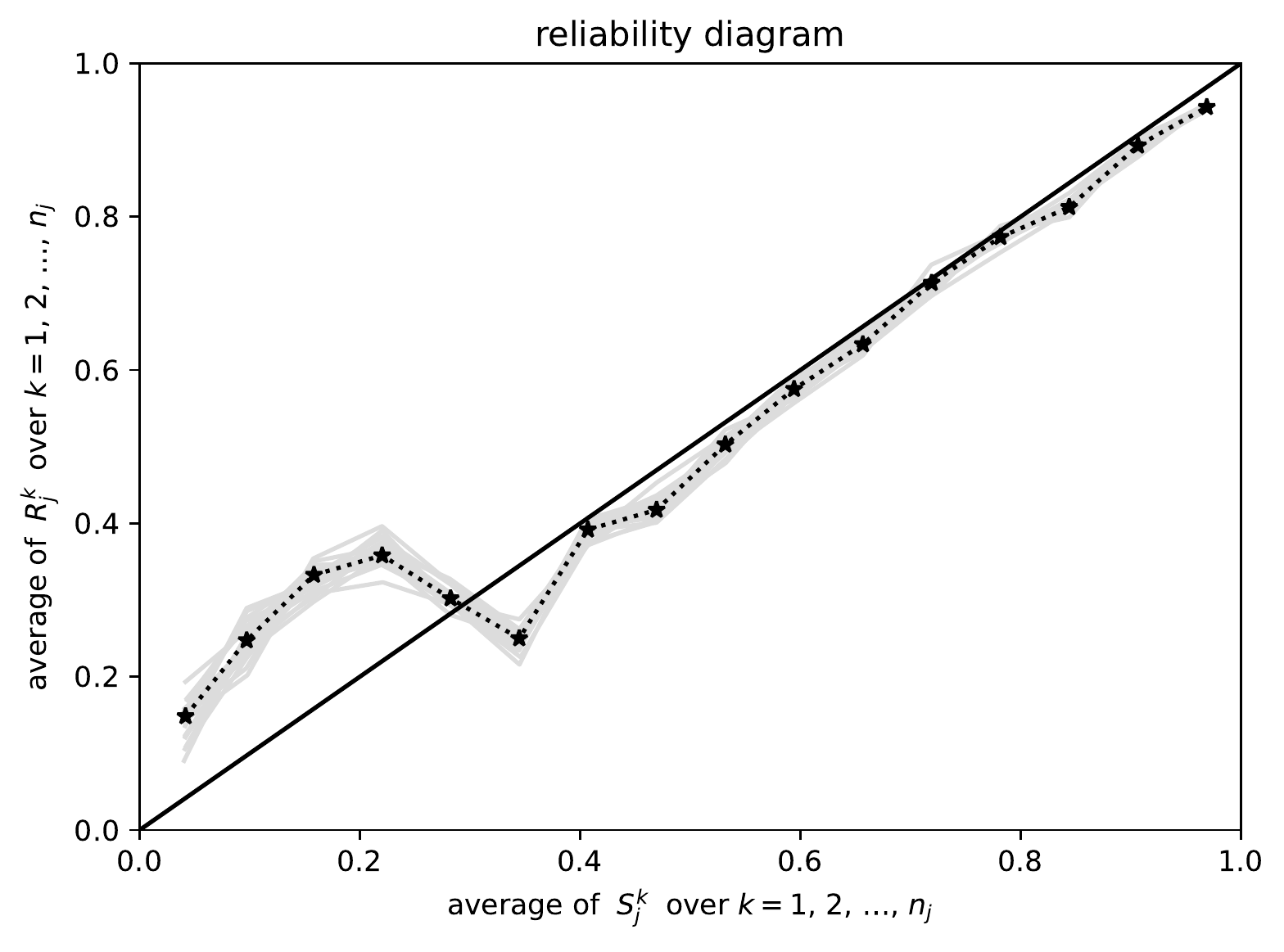}}
\end{center}
\caption{Reliability diagrams for the synthetic data set,
         with the bins roughly equispaced.
         The scores are equispaced in the top plot,
         squared in the middle plot, and square rooted in the bottom plot,
         with $m = 16$ bins and sample size $n =$ 32,768.}
\label{32768m16prob}
\end{figure}

\begin{figure}
\begin{center}
\parbox{\imsized}{\includegraphics[width=\imsized]
{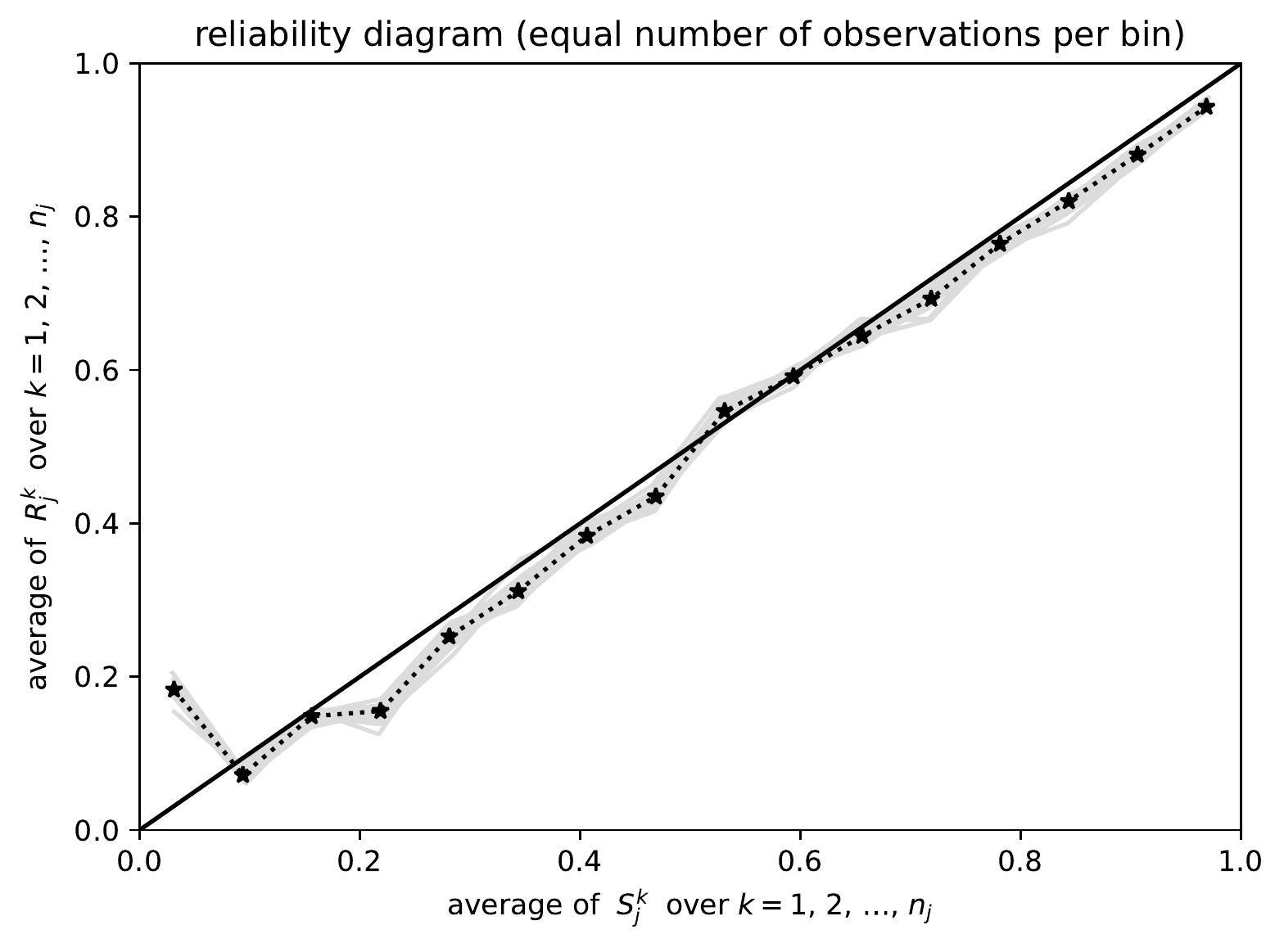}}

\parbox{\imsized}{\includegraphics[width=\imsized]
{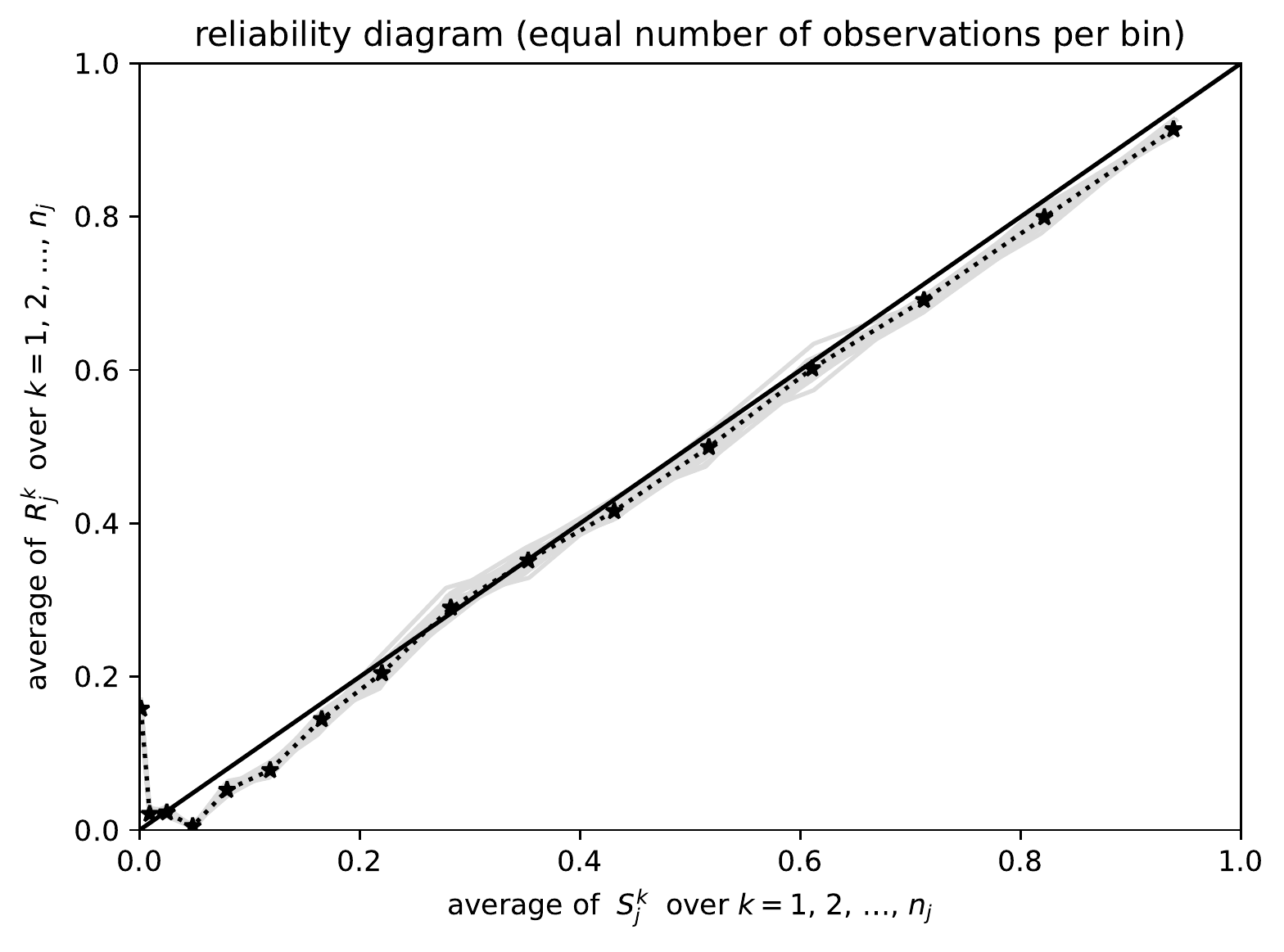}}

\parbox{\imsized}{\includegraphics[width=\imsized]
{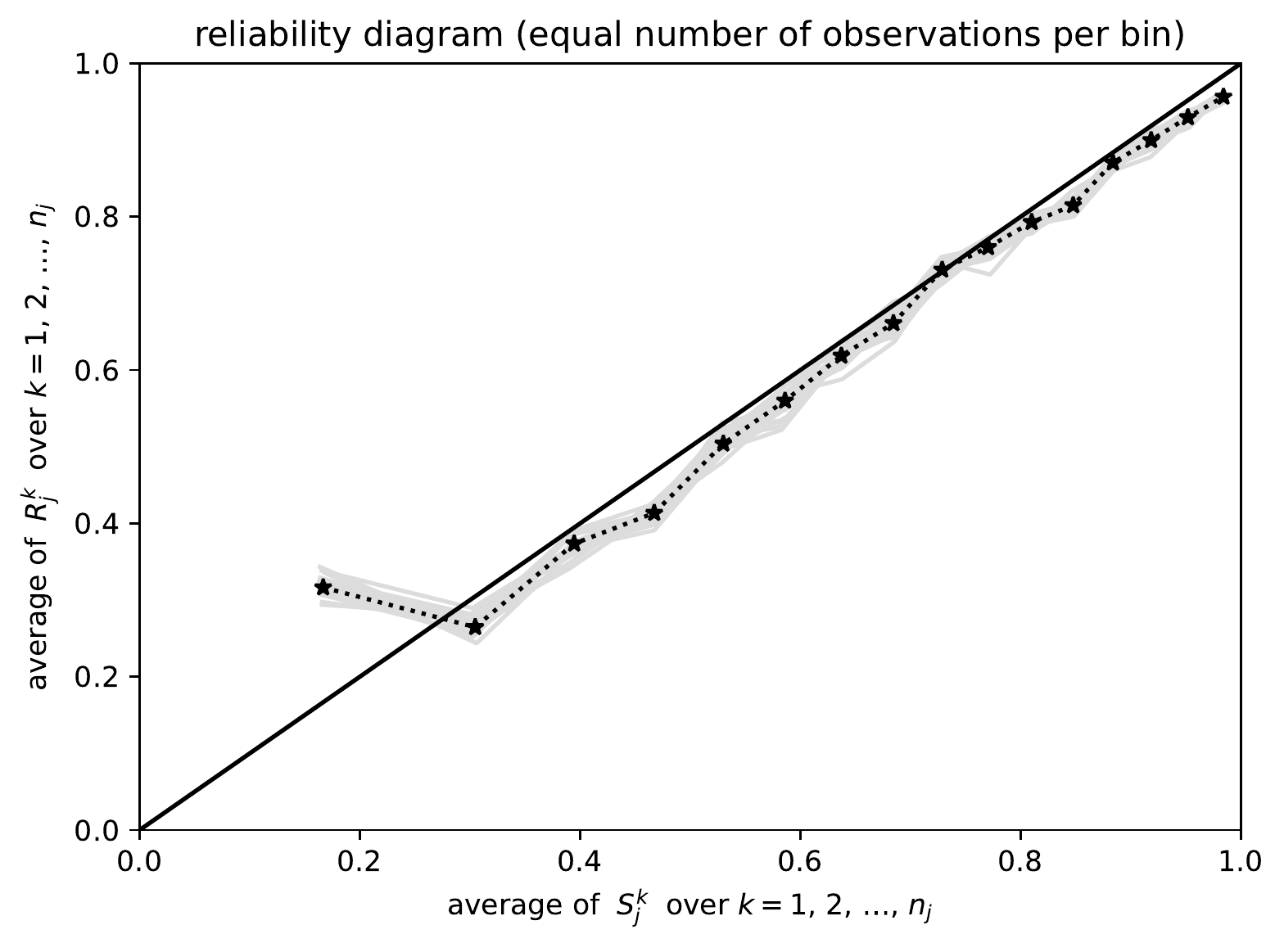}}
\end{center}
\caption{Reliability diagrams for the synthetic data set,
         with an equal number of observations per bin.
         The scores are equispaced in the top plot,
         squared in the middle plot, and square rooted in the bottom plot,
         with $m = 16$ bins and sample size $n =$ 32,768.}
\label{32768m16samp}
\end{figure}

\begin{figure}
\begin{center}
\parbox{\imsized}{\includegraphics[width=\imsized]
{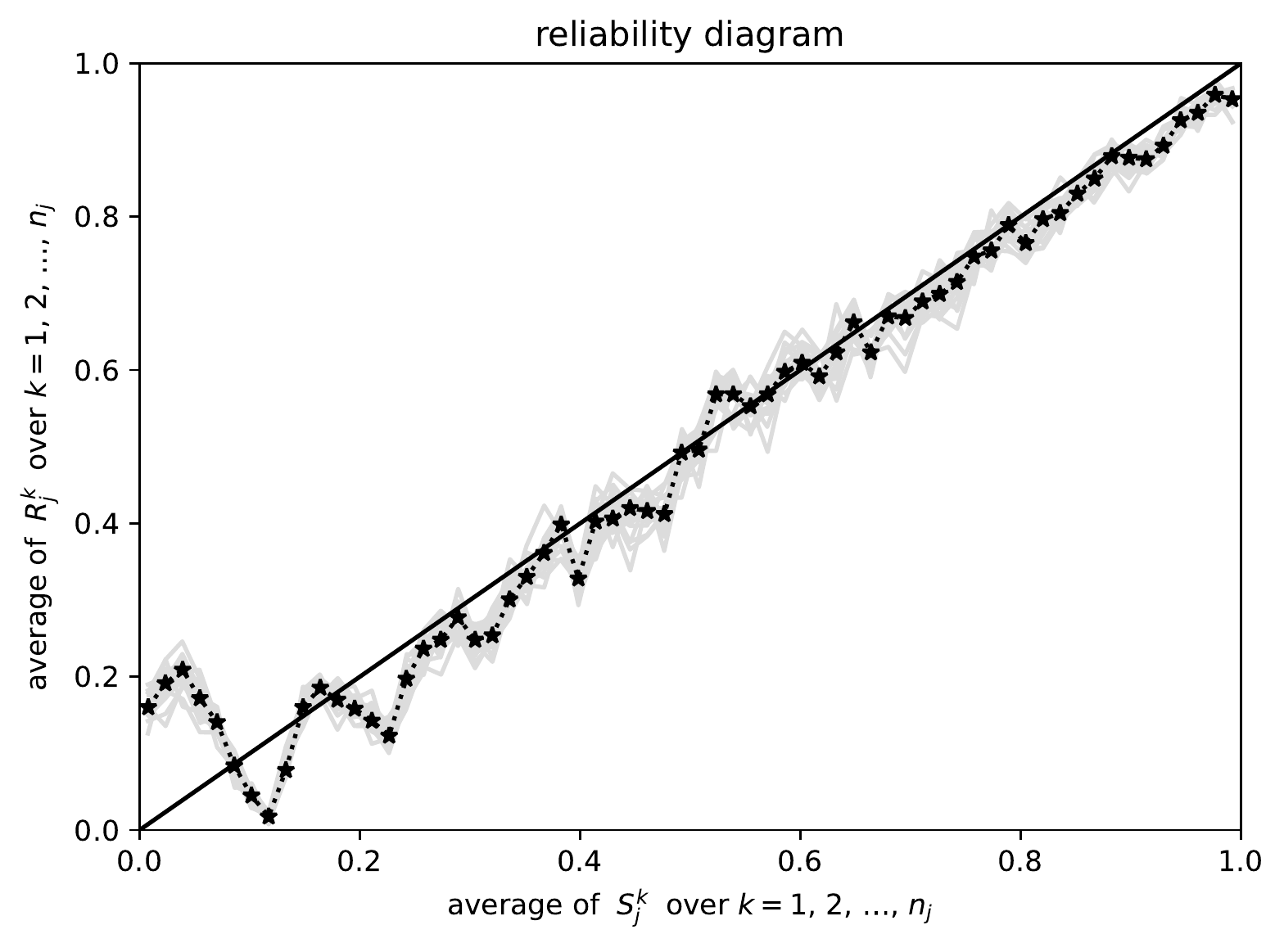}}

\parbox{\imsized}{\includegraphics[width=\imsized]
{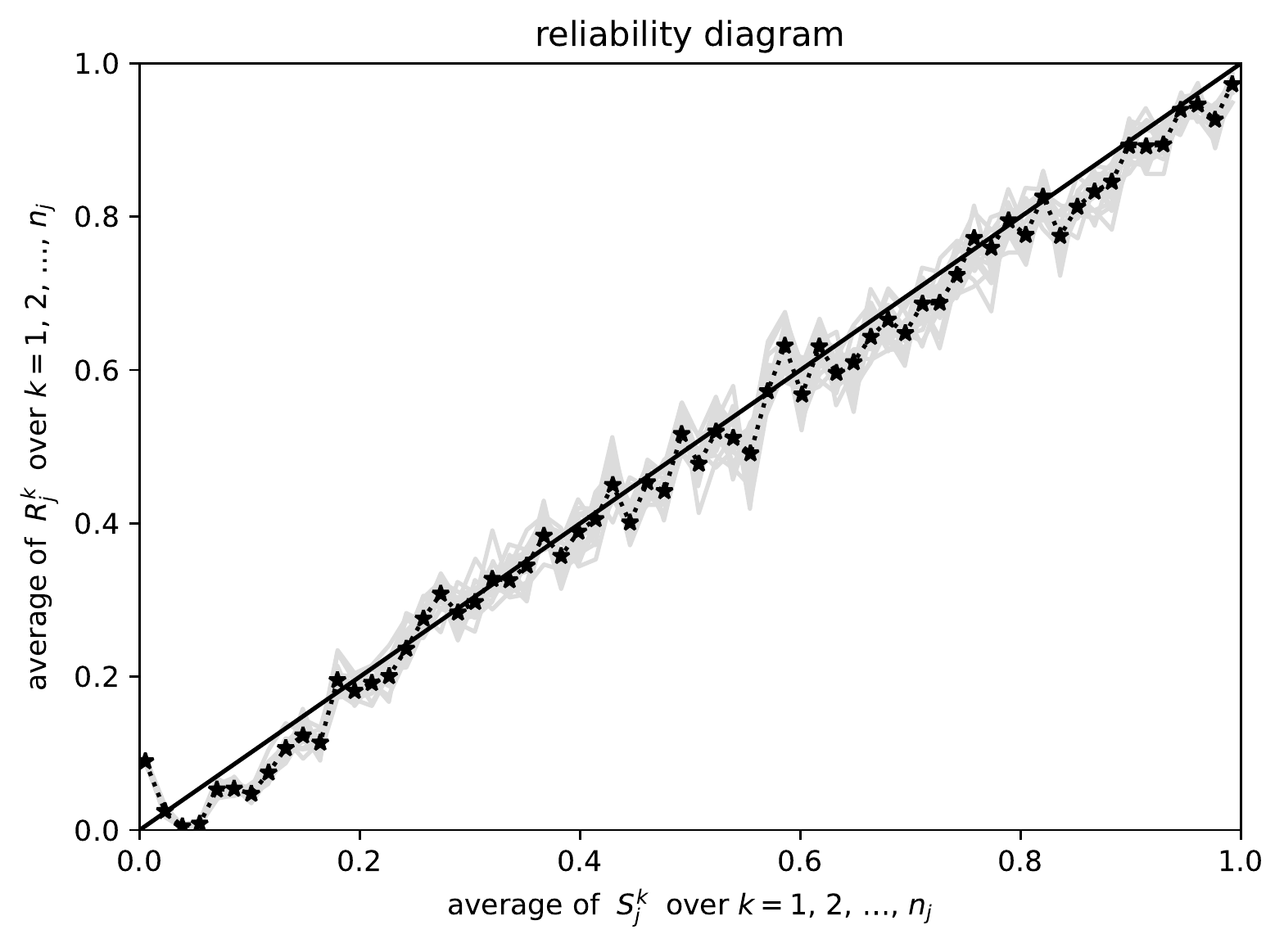}}

\parbox{\imsized}{\includegraphics[width=\imsized]
{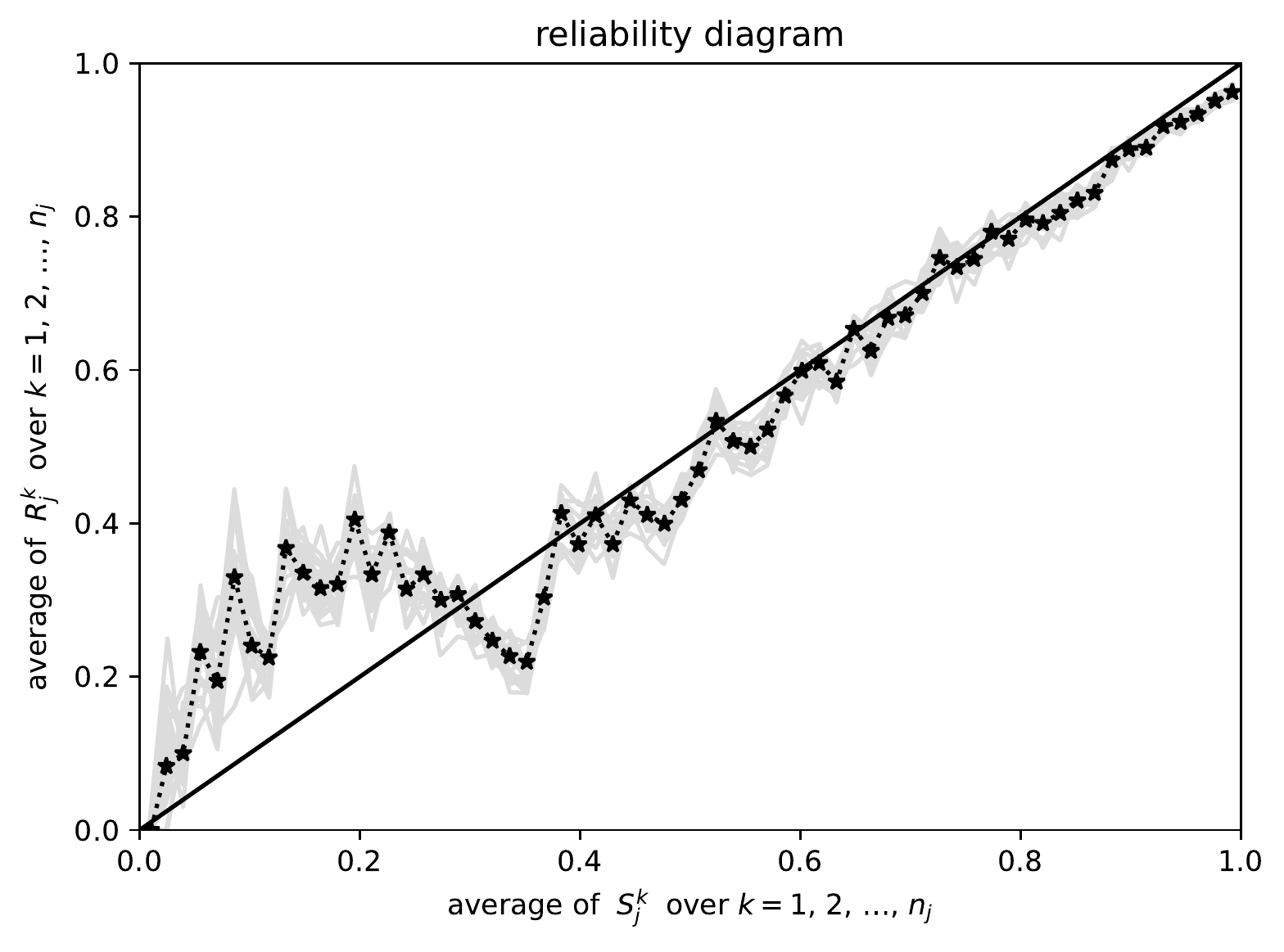}}
\end{center}
\caption{Reliability diagrams for the synthetic data set,
         with the bins roughly equispaced.
         The scores are equispaced in the top plot,
         squared in the middle plot, and square rooted in the bottom plot,
         with $m = 64$ bins and sample size $n =$ 32,768.}
\label{32768m64prob}
\end{figure}

\begin{figure}
\begin{center}
\parbox{\imsized}{\includegraphics[width=\imsized]
{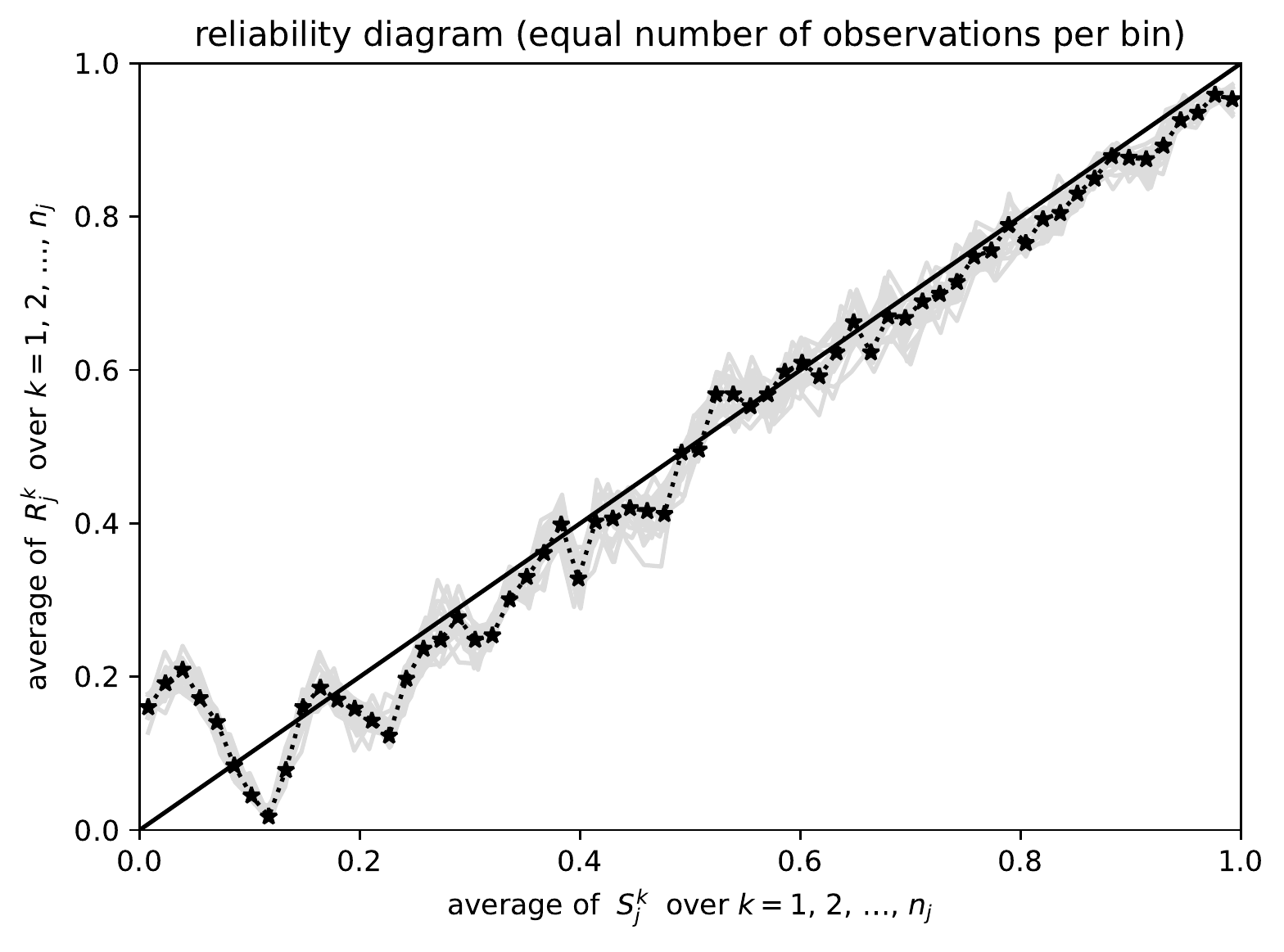}}

\parbox{\imsized}{\includegraphics[width=\imsized]
{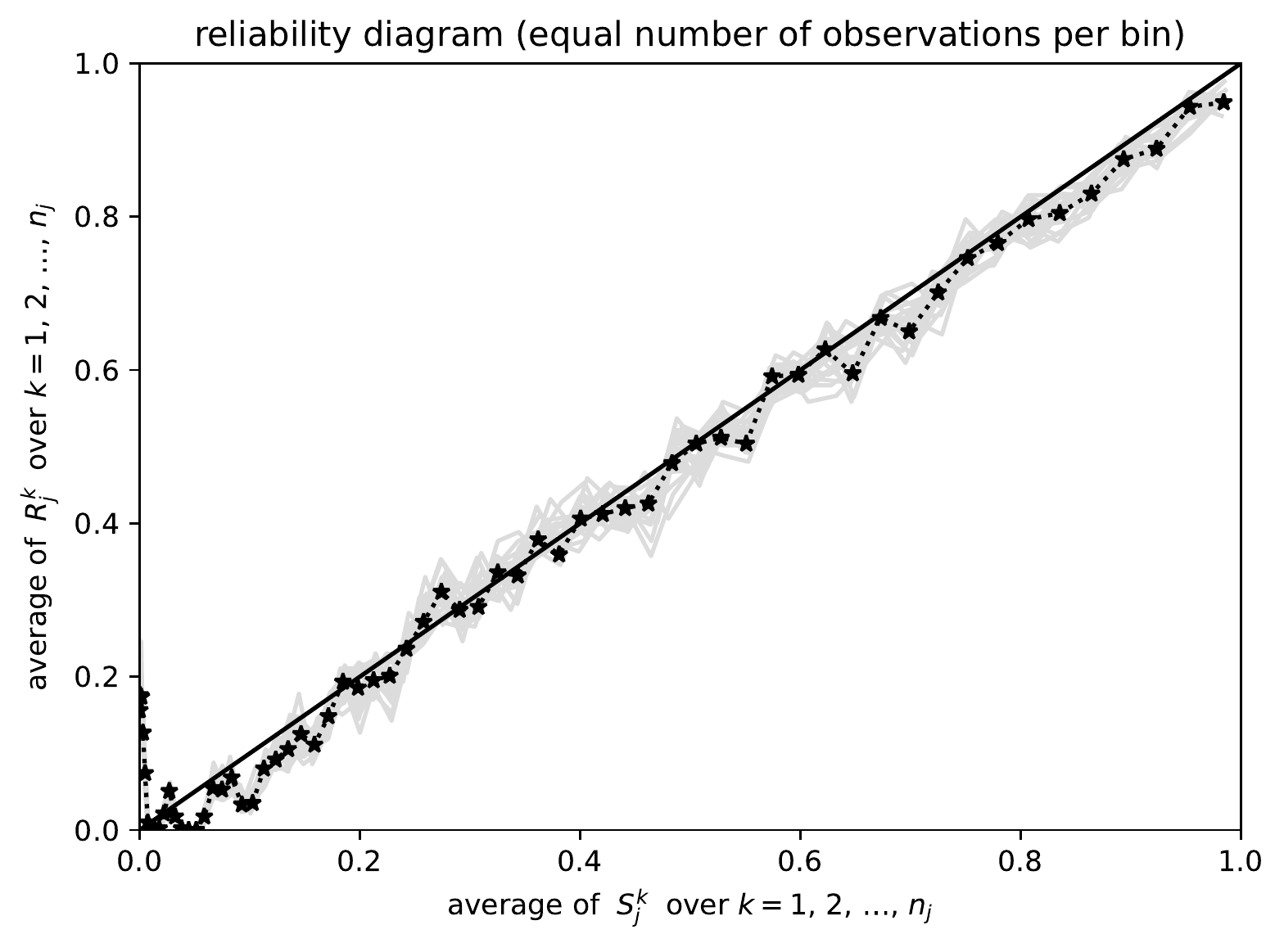}}

\parbox{\imsized}{\includegraphics[width=\imsized]
{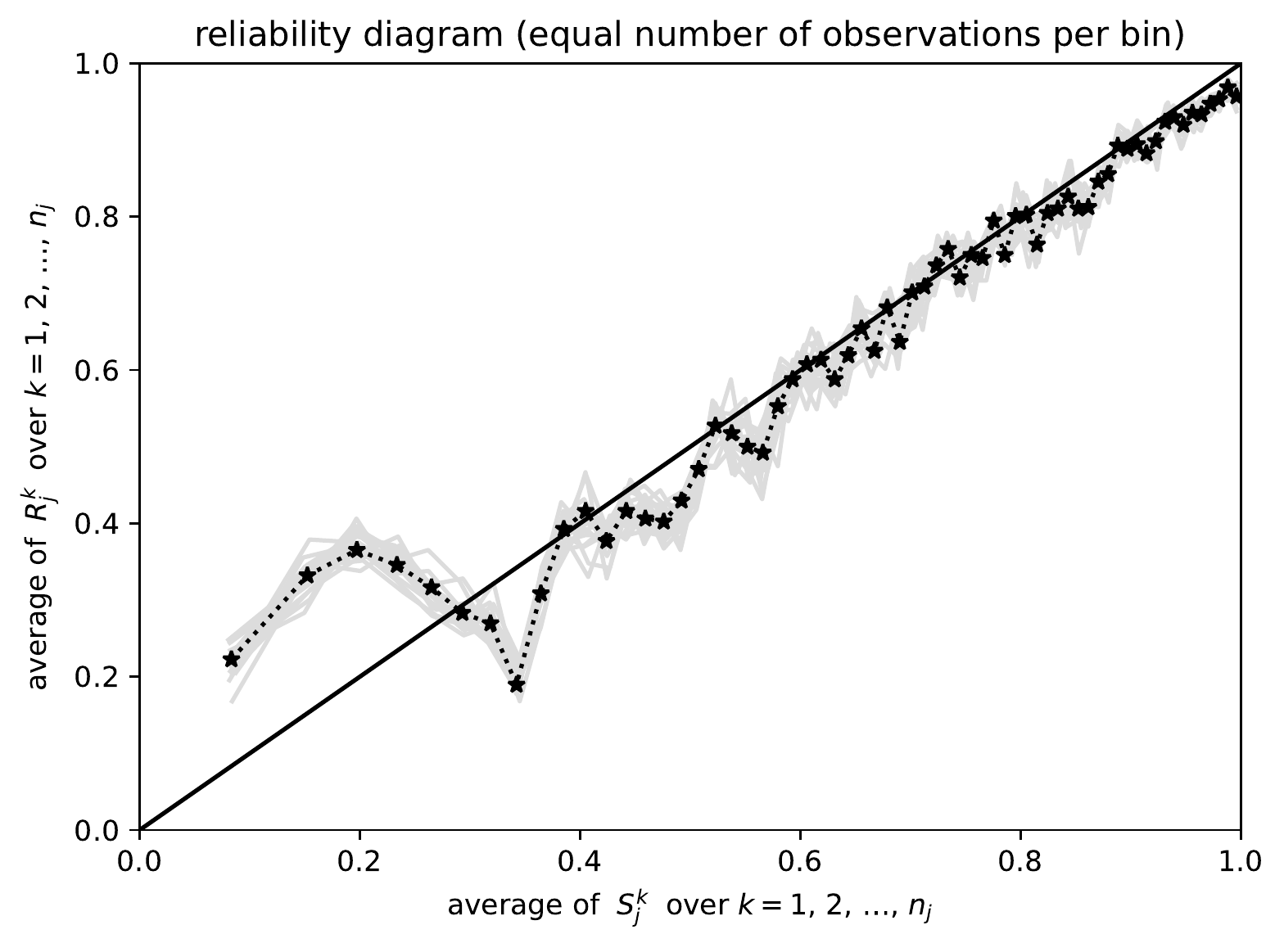}}
\end{center}
\caption{Reliability diagrams for the synthetic data set,
         with an equal number of observations per bin.
         The scores are equispaced in the top plot,
         squared in the middle plot, and square rooted in the bottom plot,
         with $m = 64$ bins and sample size $n =$ 32,768}
\label{32768m64samp}
\end{figure}

\begin{figure}
\begin{center}
\parbox{\imsize}{\includegraphics[width=\imsize]
{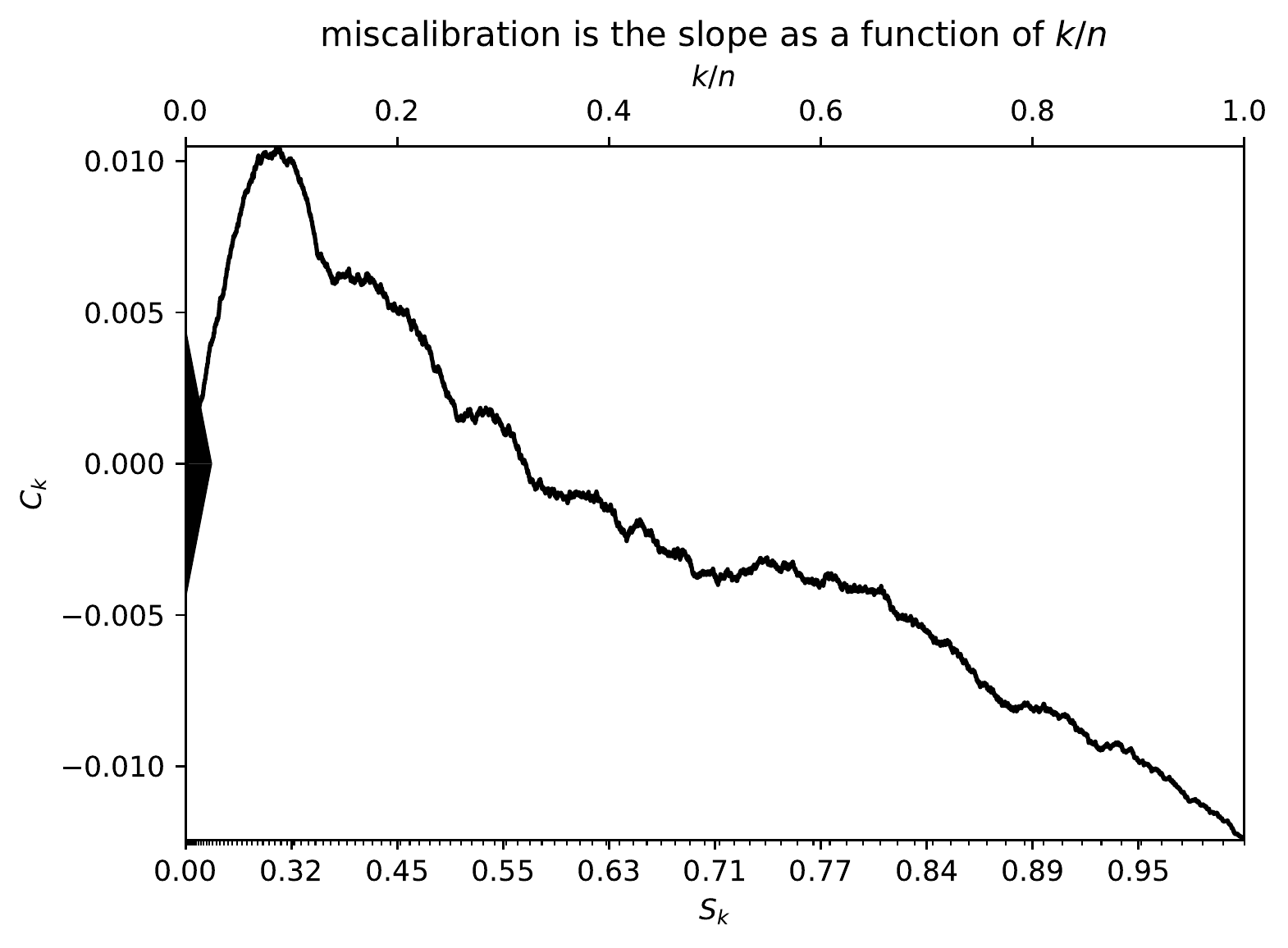}}

\parbox{\imsize}{\includegraphics[width=\imsize]
{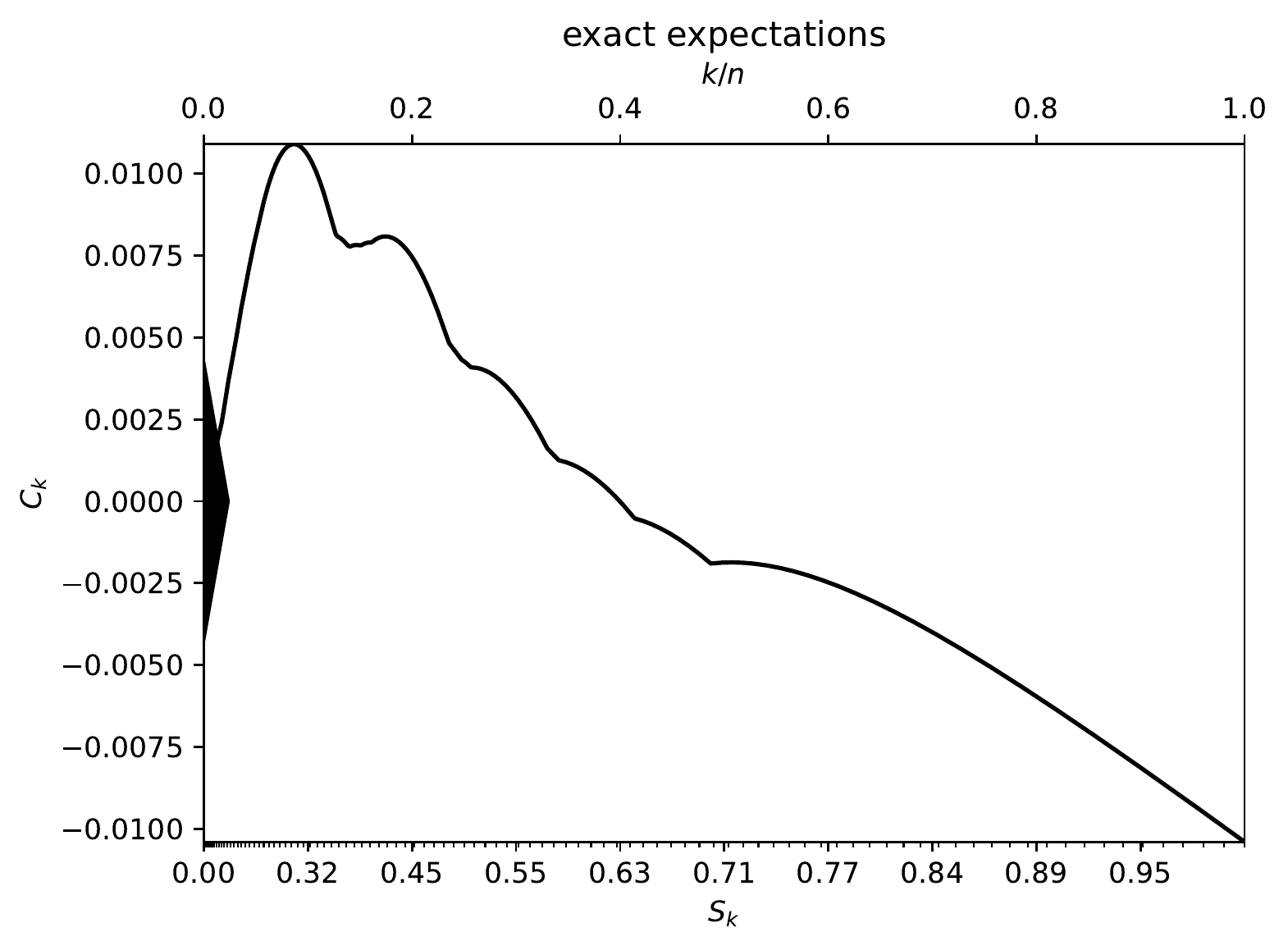}}
\end{center}
\caption{Cumulative plot for the synthetic data set with square-rooted scores
         and sample size $n =$ 32,768.
         The ECCE-MAD is $0.01243 / \sigma_n = 5.512$,
         and the ECCE-R is $0.02291 / \sigma_n = 10.16$;
         the associated asymptotic P-values are 7.1E--08
         and zero to double-precision accuracy, respectively.
         The upper plot is based on the empirical observations,
         while the lower plot is the ideal, based on full knowledge
         of the exact probabilities of success for the Bernoulli distributions
         from which the empirical observations were drawn.
         The slopes of secant lines in the upper plot appear to match
         the slopes of the corresponding secants in the lower plot
         reasonably well, aside from the expected statistical fluctuations
         (whose expected standard deviation is a quarter of the height
         of the triangle at the origin).
}
\label{32768cum}
\end{figure}

\begin{figure}
\begin{center}
\parbox{\imsizes}{\includegraphics[width=\imsizes]
{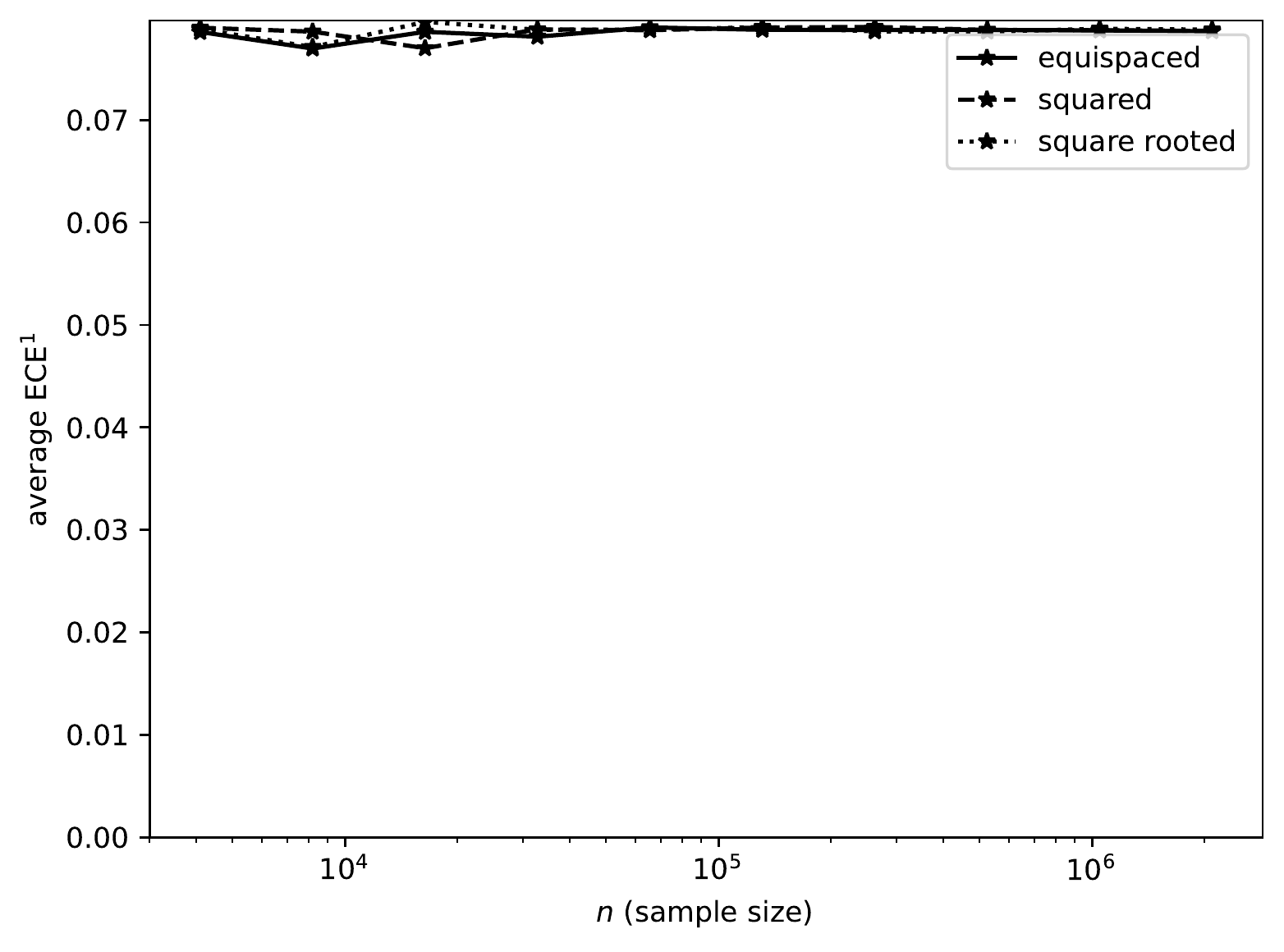}}
\hfil
\parbox{\imsizes}{\includegraphics[width=\imsizes]
{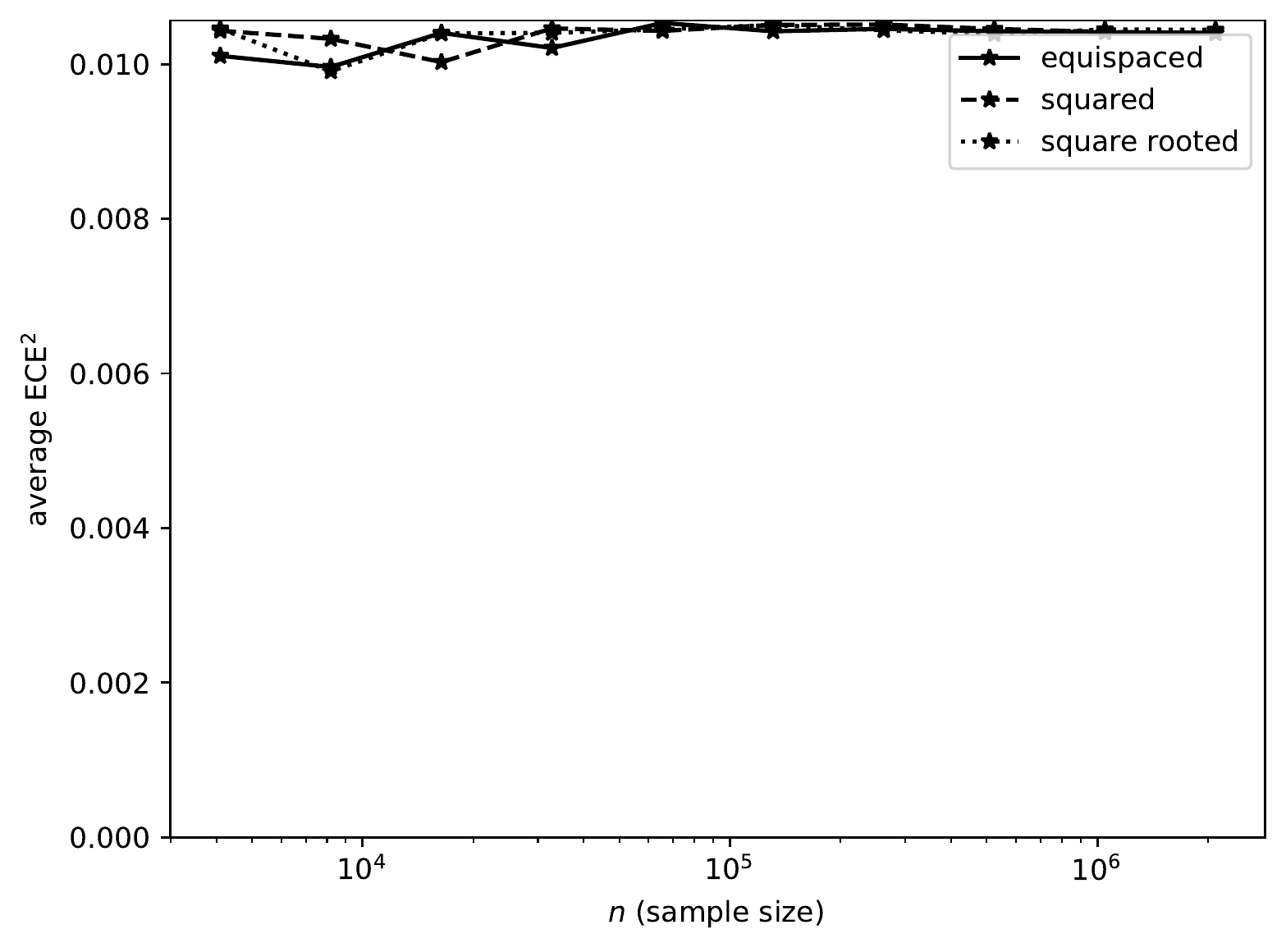}}

\parbox{\imsizes}{\includegraphics[width=\imsizes]
{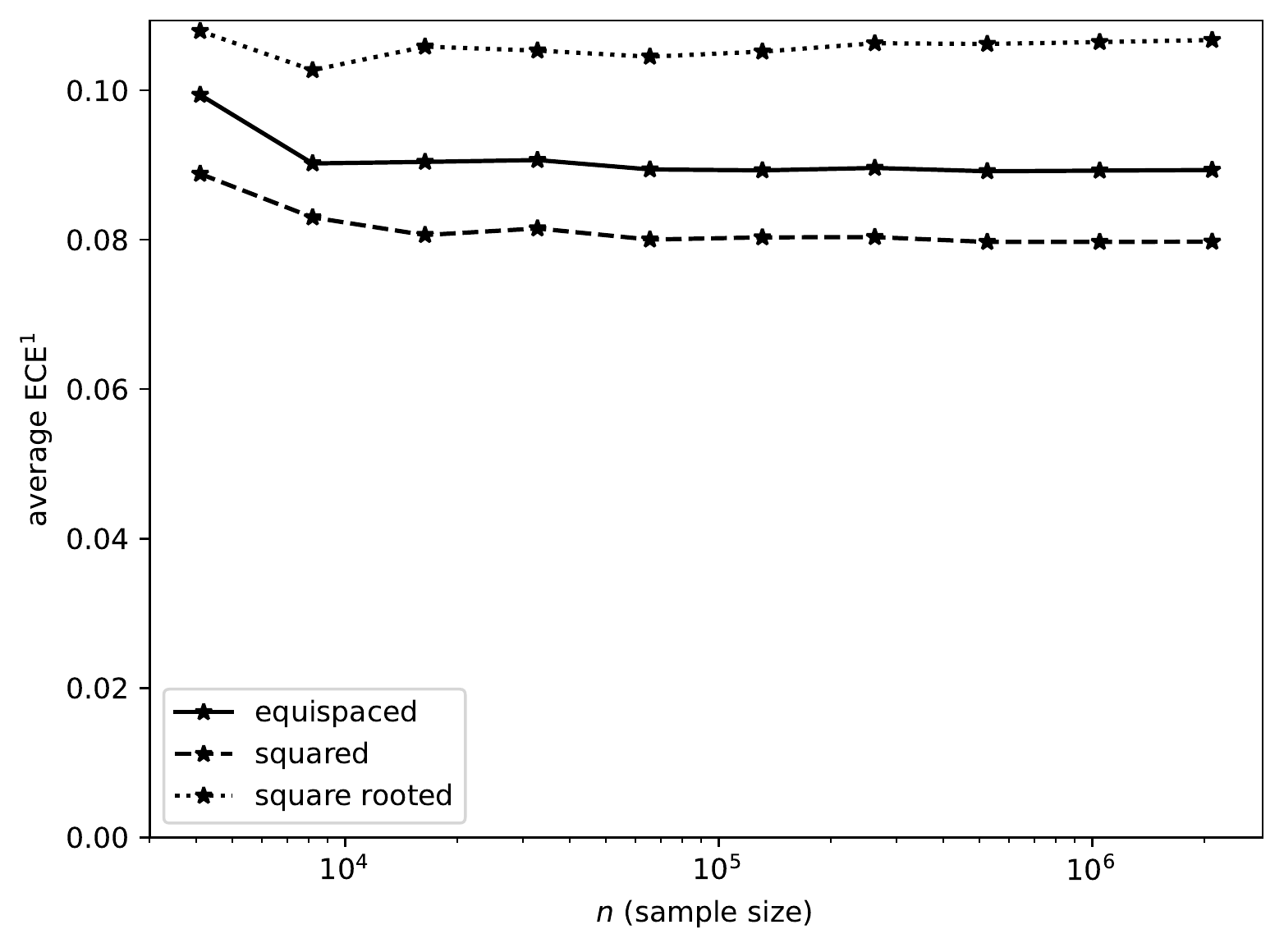}}
\hfil
\parbox{\imsizes}{\includegraphics[width=\imsizes]
{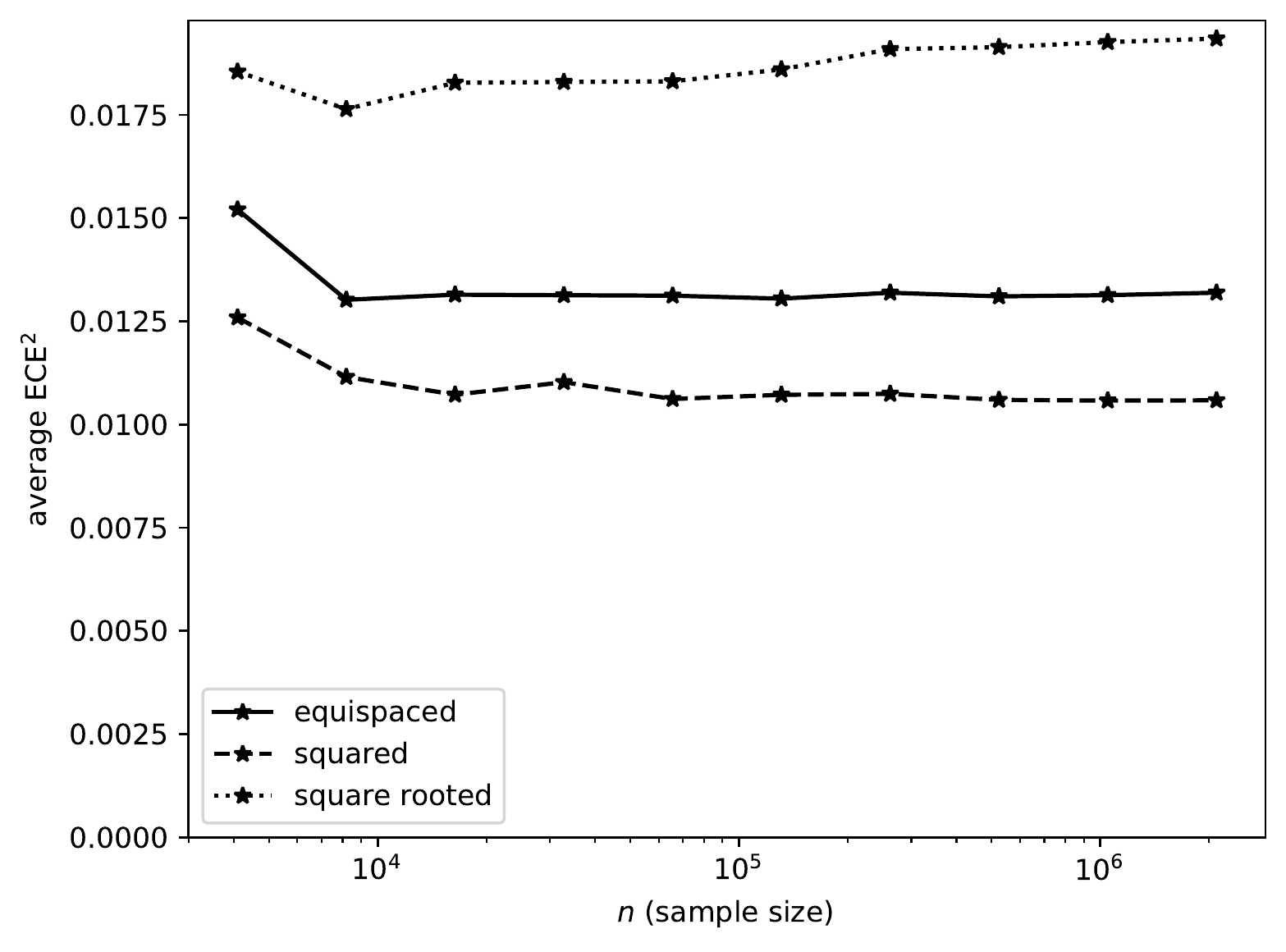}}
\end{center}
\caption{The upper plots display the ECE$^1$ and the ECE$^2$
         averaged over 9 synthetic data sets (reducing random variations
         by about $\sqrt{9} = 3$), each of which is perfectly calibrated.
         The lower plots display the ECE$^1$ and the ECE$^2$
         averaged over 9 synthetic data sets,
         each drawn from the distribution
         depicted in Figure~\ref{32768exact} for the sample size $n =$ 32,768.
         In all cases, each bin contains the same number of observations,
         namely 16; so $m$, the number of bins, is $n / 16$.
         The scores are equispaced, equispaced then squared,
         or equispaced and then square rooted, as indicated in the legends
         for the plots; the underlying alternative distributions of responses
         used for the lower plots here which correspond
         to these different distributions of scores
         are the upper, middle, and lower plots of Figure~\ref{32768exact}
         for $n =$ 32,768, respectively.
         Notice how all values for the ECE$^1$ are quite similar,
         as are all values for the ECE$^2$;
         distinguishing the perfectly calibrated data sets
         from the alternative distribution of Figure~\ref{32768exact}
         is very hard.}
\label{ecen}
\end{figure}

\begin{figure}
\begin{center}
\parbox{\imsizes}{\includegraphics[width=\imsizes]
{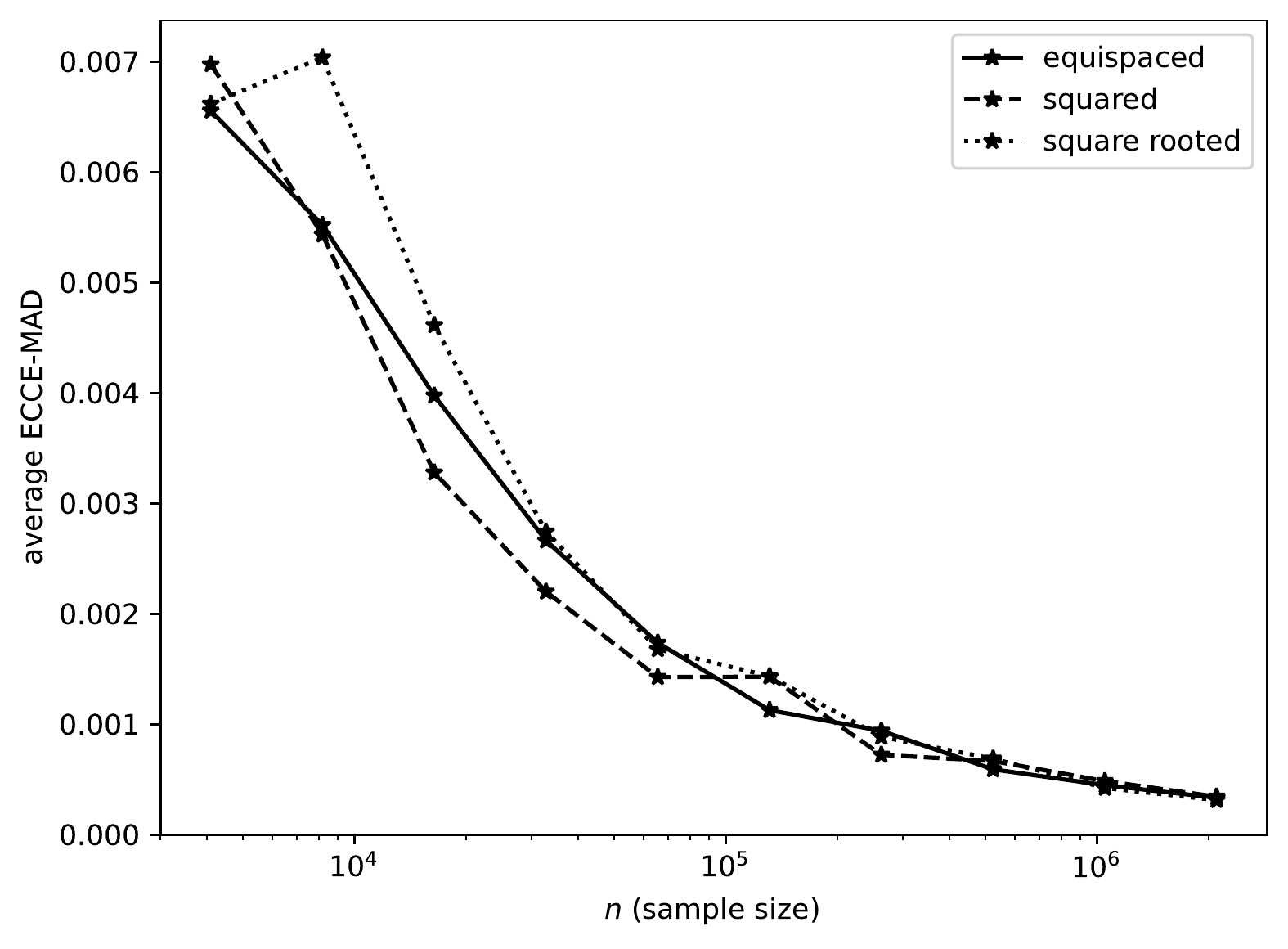}}
\hfil
\parbox{\imsizes}{\includegraphics[width=\imsizes]
{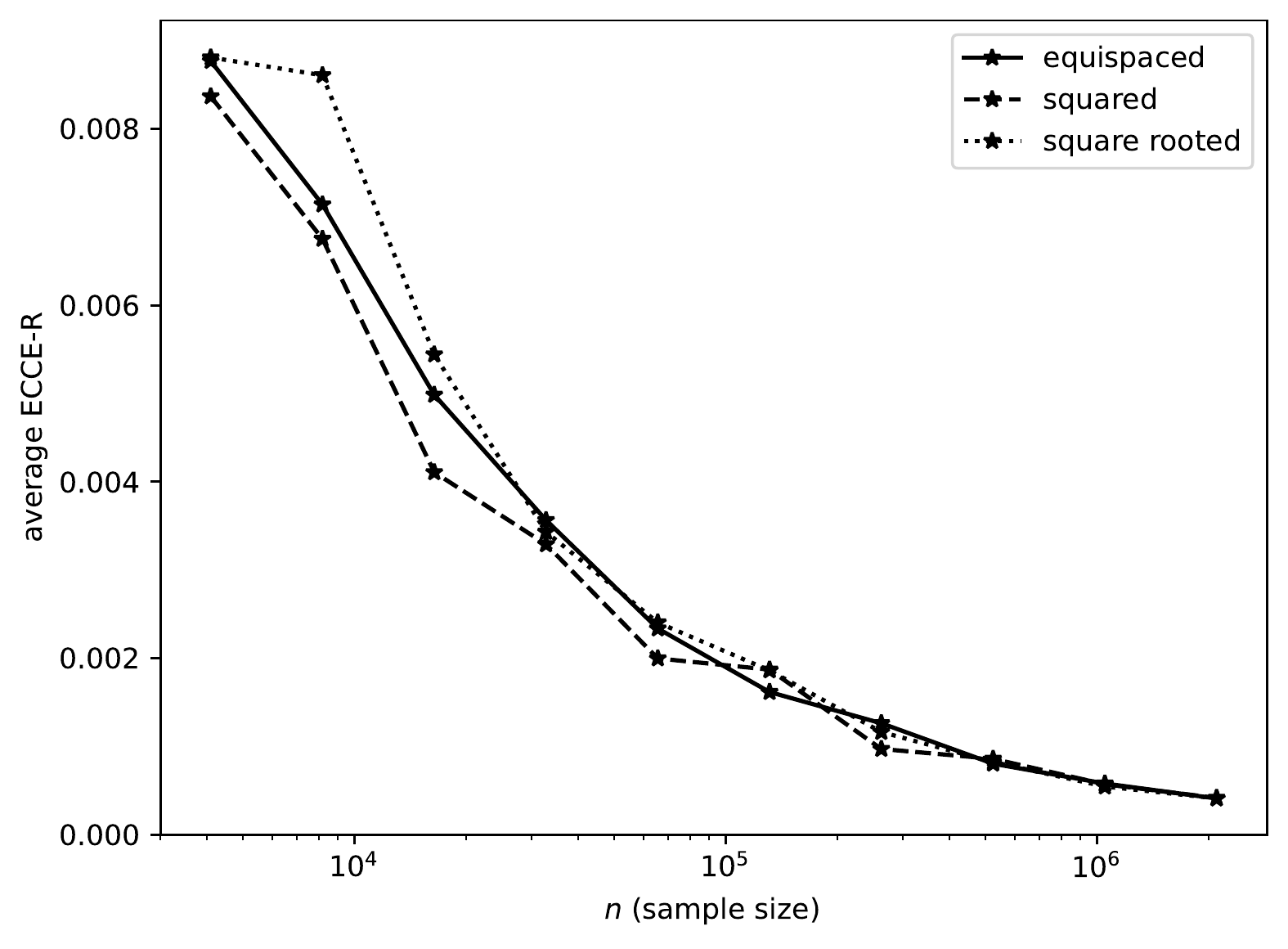}}

\parbox{\imsizes}{\includegraphics[width=\imsizes]
{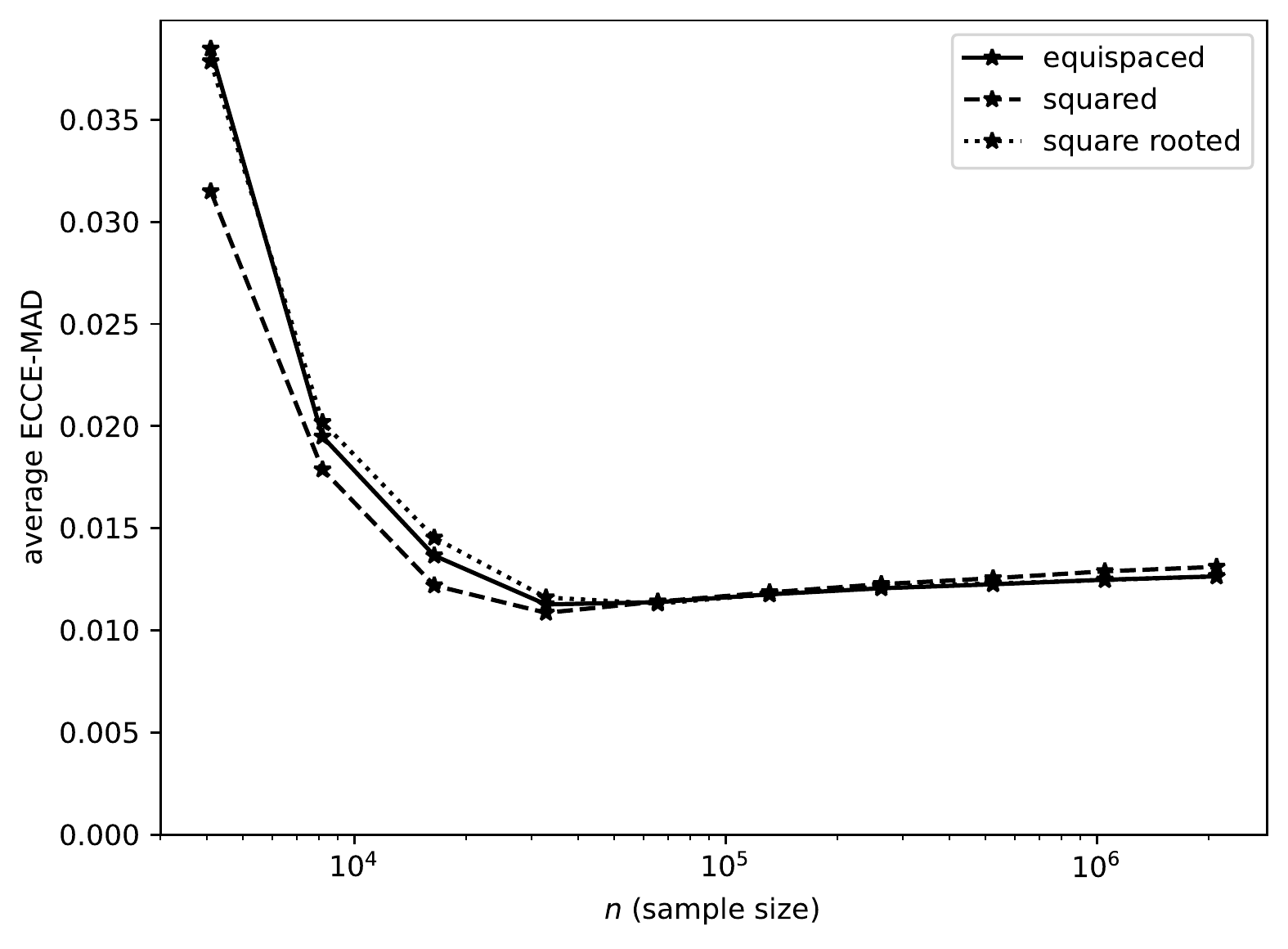}}
\hfil
\parbox{\imsizes}{\includegraphics[width=\imsizes]
{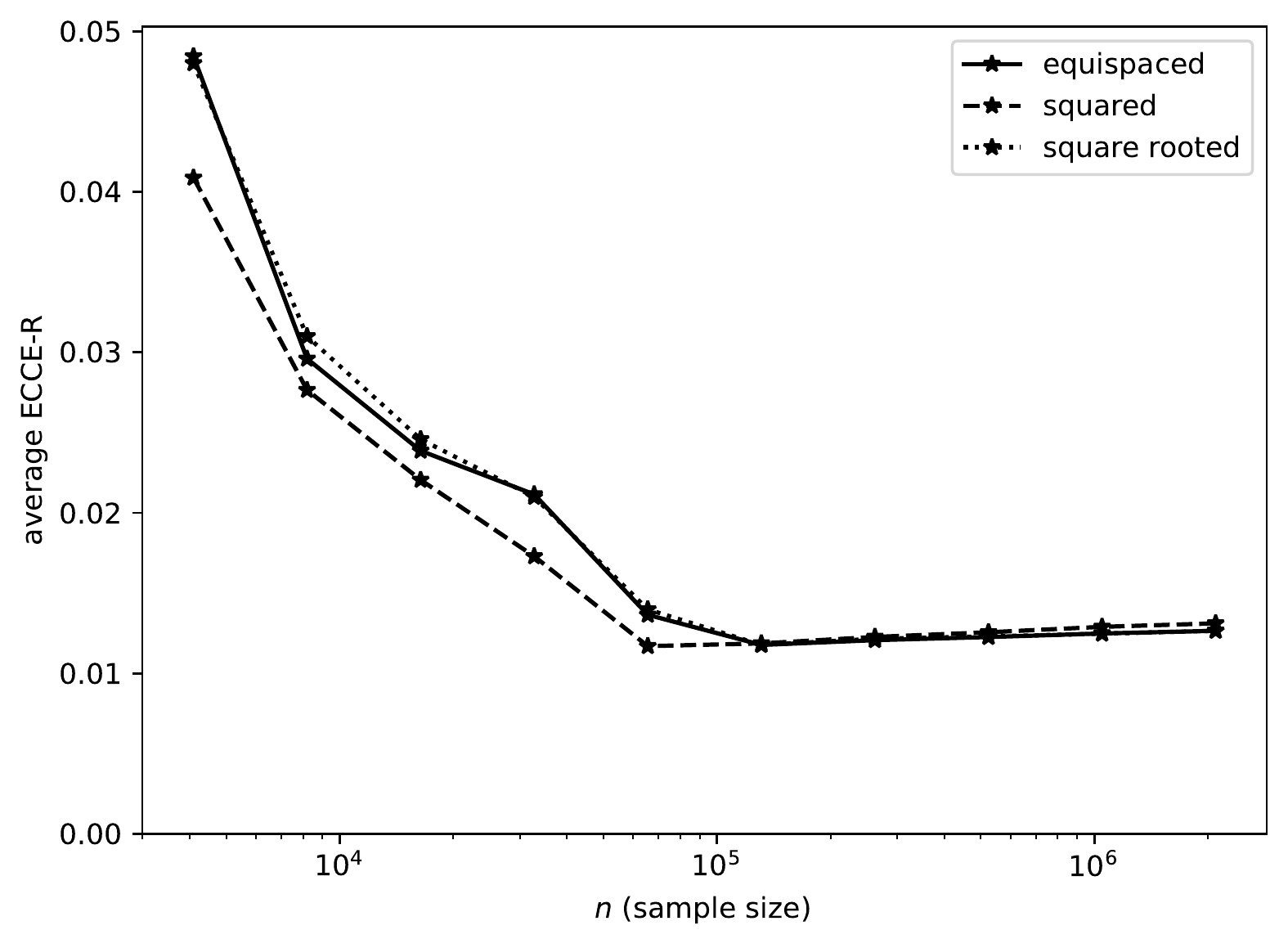}}
\end{center}
\caption{The upper plots display the ECCE-MAD and the ECCE-R
         averaged over 9 synthetic data sets (hence reducing random variations
         by a factor of about $\sqrt{9} = 3$),
         each of which is perfectly calibrated.
         The lower plots display the ECCE-MAD and the ECCE-R
         averaged over 9 synthetic data sets,
         each drawn from the distribution
         depicted in Figure~\ref{32768exact} for the sample size $n =$ 32,768.
         The scores are equispaced, equispaced then squared,
         or equispaced and then square rooted, as indicated in the legends
         for the plots; the underlying alternative distributions of responses
         used in the lower plots here which correspond
         to these different distributions of scores
         are the upper, middle, and lower plots of Figure~\ref{32768exact}
         for $n =$ 32,768, respectively.
         Notice how the values for the ECCE-MAD get much, much lower
         in the upper plot than in the lower plot, and, similarly,
         how the values for the ECCE-R get much, much lower in the upper plot
         than in the lower plot;
         distinguishing the perfectly calibrated data sets
         from the alternative distribution of Figure~\ref{32768exact}
         is easy with the ECCE-MAD or the ECCE-R,
         with high statistical confidence that increases
         as the sample size $n$ becomes large.
         The graphs in the lower plots stay flat as $n$ becomes large,
         while the graphs in the upper plots decay rapidly.}
\label{cumn}
\end{figure}

\begin{figure}
\begin{center}
\parbox{\imsizes}{\includegraphics[width=\imsizes]
{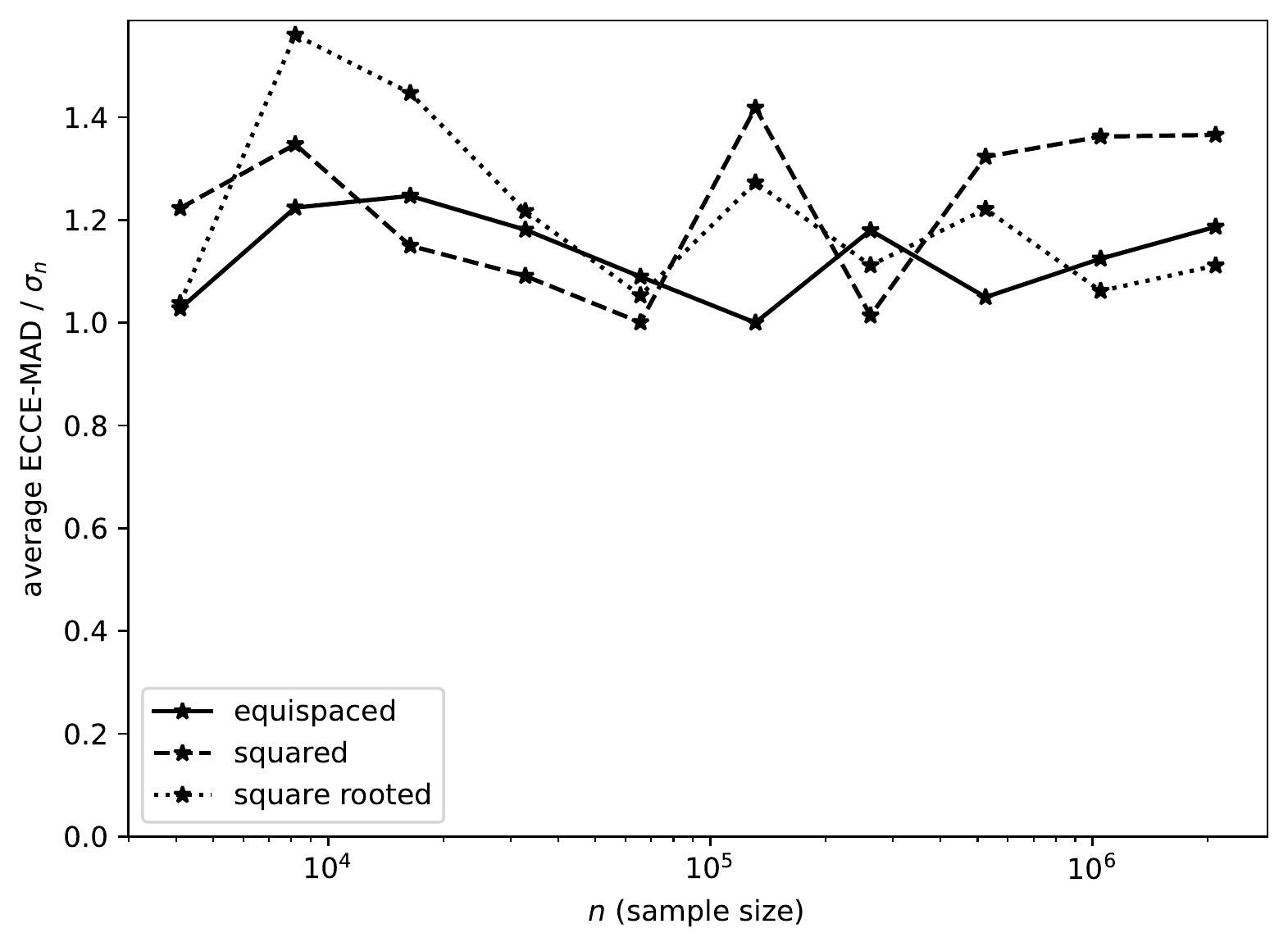}}
\hfil
\parbox{\imsizes}{\includegraphics[width=\imsizes]
{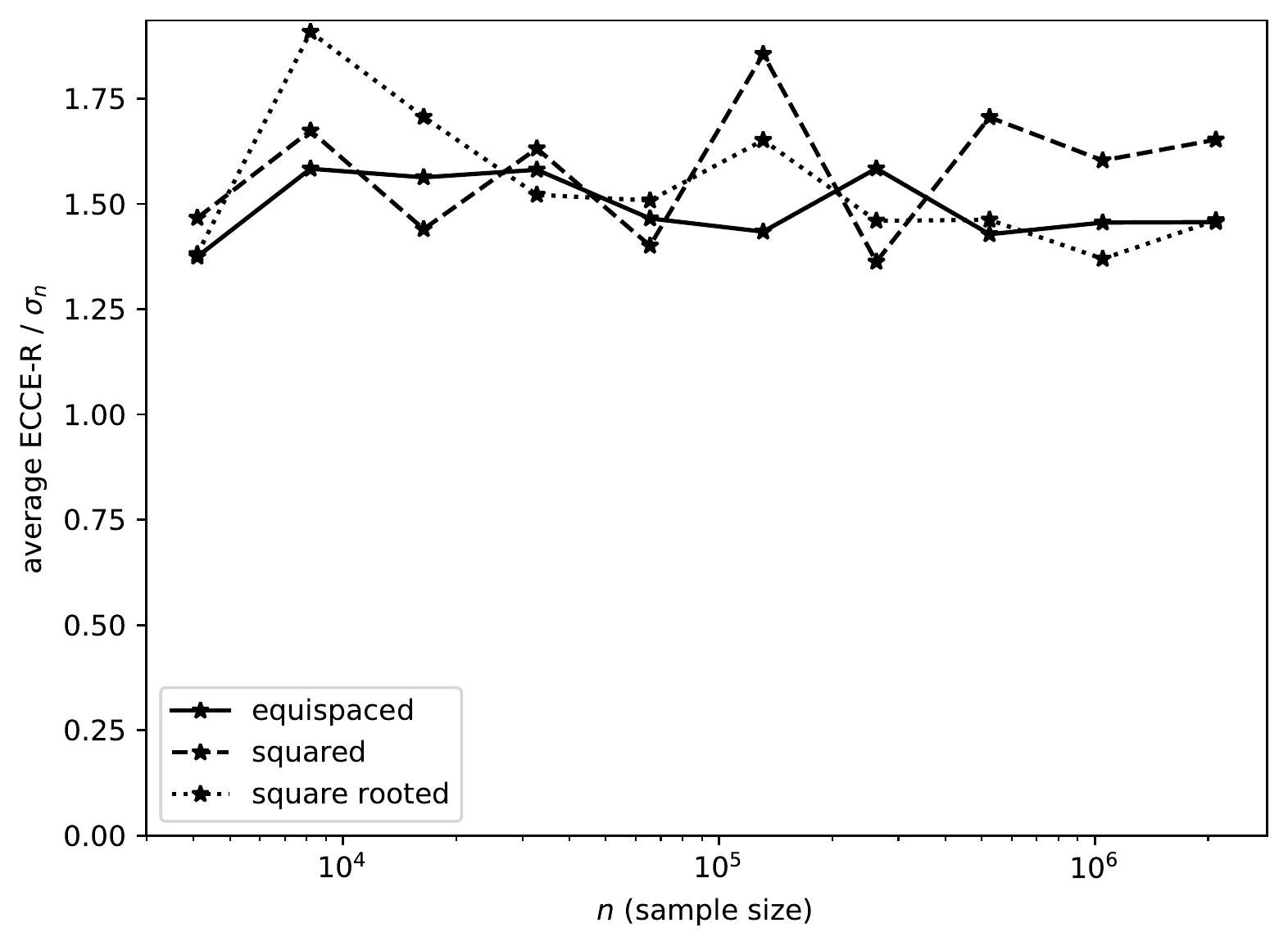}}

\parbox{\imsizes}{\includegraphics[width=\imsizes]
{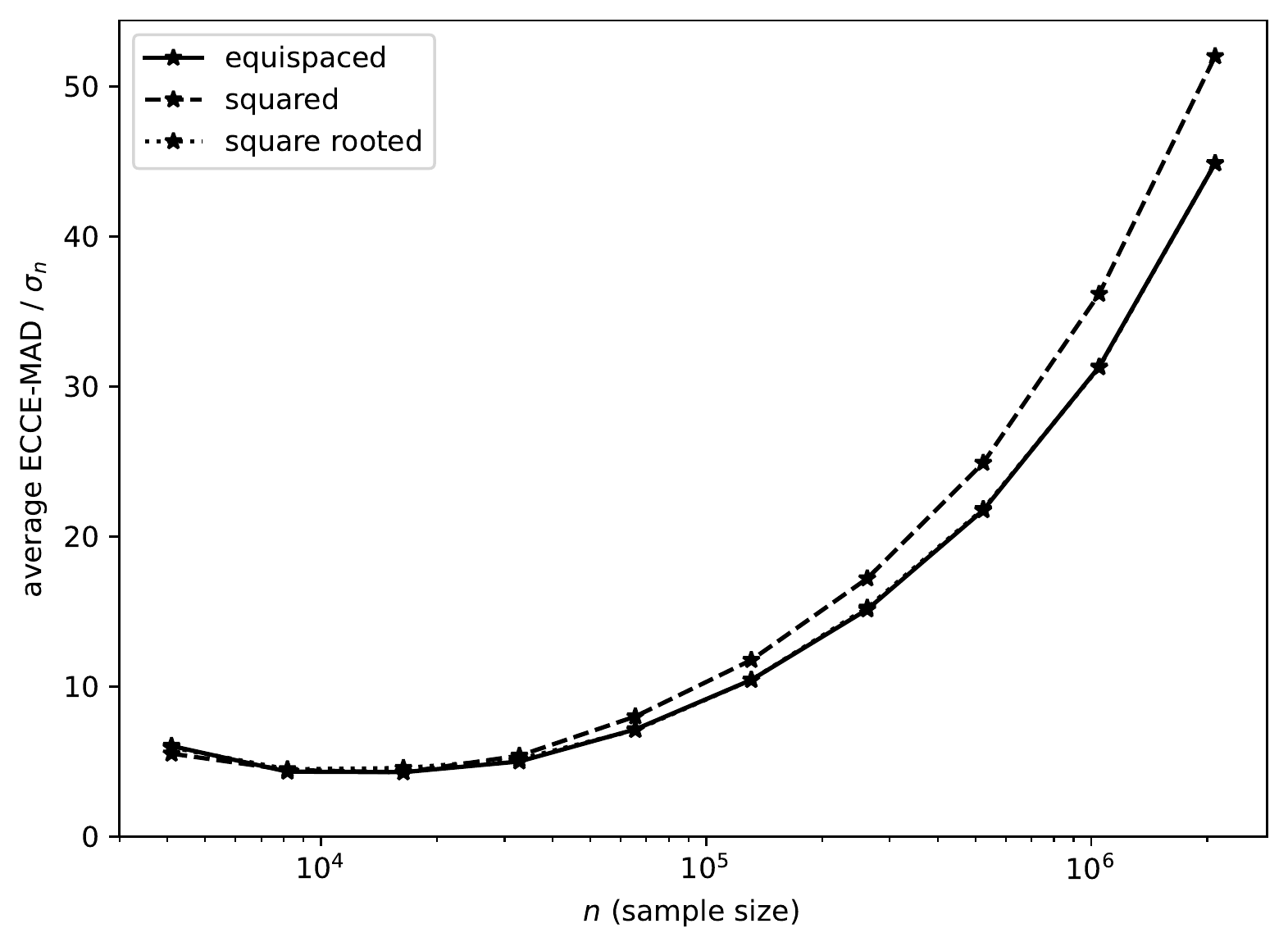}}
\hfil
\parbox{\imsizes}{\includegraphics[width=\imsizes]
{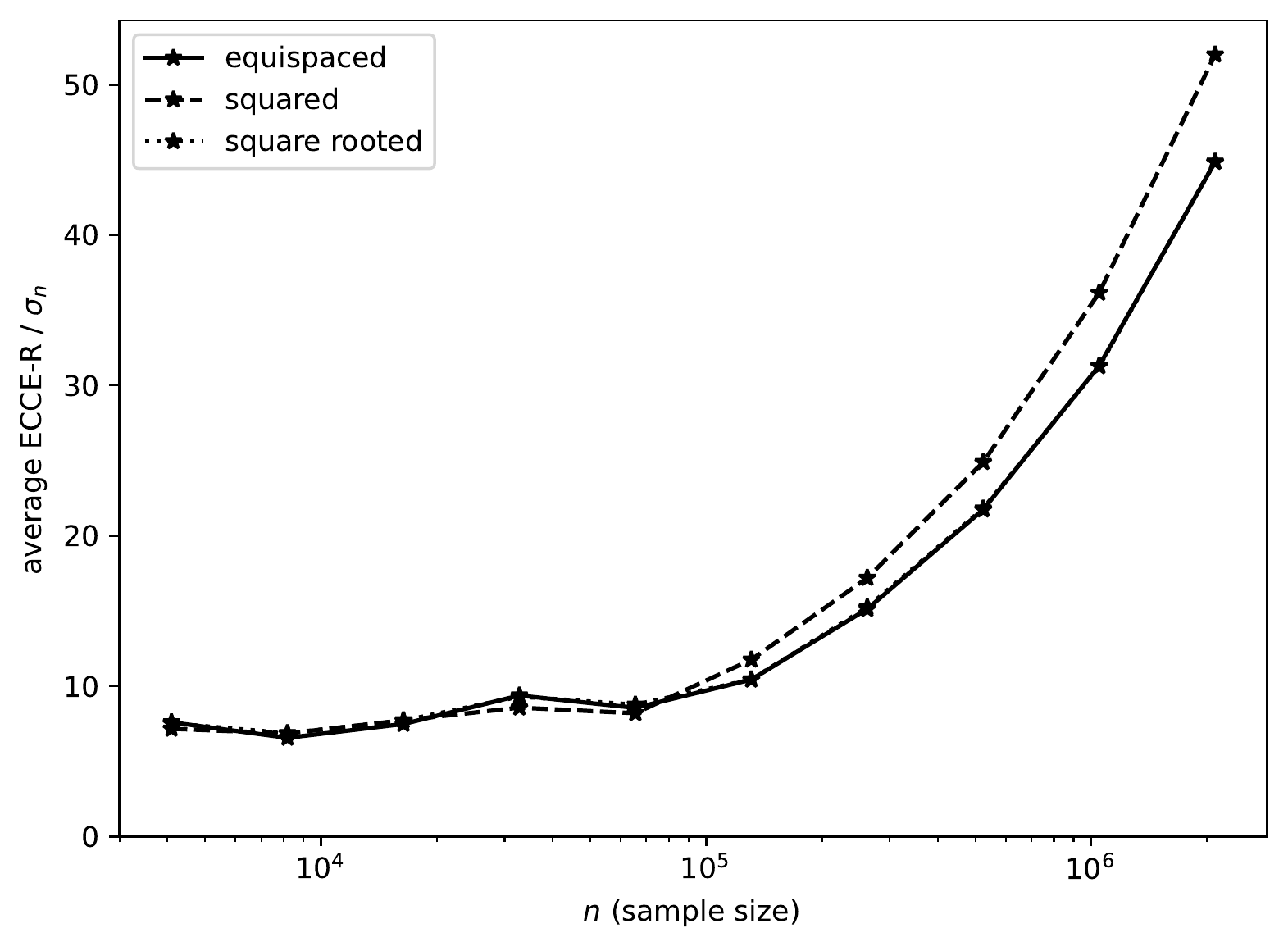}}
\end{center}
\caption{The upper plots display the normalized ECCE-MAD
         and the normalized ECCE-R
         averaged over 9 synthetic data sets (which reduces random variations
         by a factor of around $\sqrt{9} = 3$),
         each of which is perfectly calibrated.
         The lower plots display the normalized ECCE-MAD
         and the normalized ECCE-R averaged over 9 synthetic data sets,
         each drawn from the distribution
         depicted in Figure~\ref{32768exact} for the sample size $n =$ 32,768.
         The scores are equispaced, equispaced then squared,
         or equispaced and then square rooted, as indicated in the legends
         for the plots; the underlying alternative distributions of responses
         used for the lower plots here which correspond
         to these different distributions of scores
         are the upper, middle, and lower plots of Figure~\ref{32768exact}
         for $n =$ 32,768, respectively. The normalization factor $\sigma_n$
         is defined in~(\ref{var}). The average across the 9 realizations
         is over the quotient of the ECCE by $\sigma_n$, with a different value
         of $\sigma_n$ for every realization. Notice how the values
         for the ECCE-MAD / $\sigma_n$ get much, much higher
         in the lower plot than in the upper plot, and, similarly,
         how the values for the ECCE-R / $\sigma_n$ get much, much higher
         in the lower plot than in the upper plot;
         distinguishing the perfectly calibrated data sets
         from the alternative distribution of Figure~\ref{32768exact}
         is easy with the normalized ECCE-MAD or the normalized ECCE-R,
         with high statistical confidence that increases
         as the sample size $n$ becomes large.
         The graphs in the upper plots stay flat as $n$ becomes large,
         while the graphs in the lower plots increase explosively.}
\label{cumnnorm}
\end{figure}

\begin{figure}
\begin{center}
\parbox{\imsizes}{\includegraphics[width=\imsizes]
       {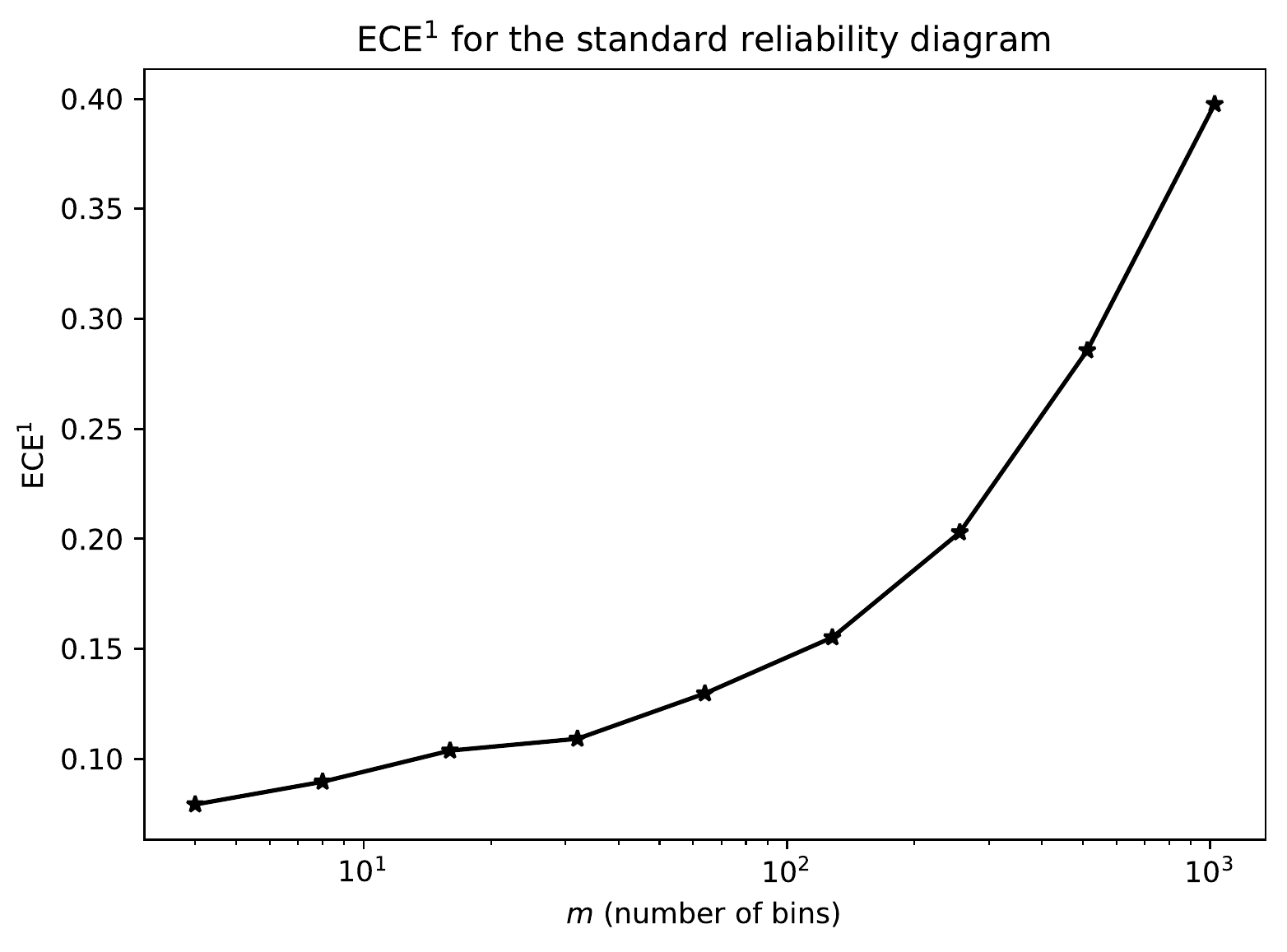}}
\hfil
\parbox{\imsizes}{\includegraphics[width=\imsizes]
       {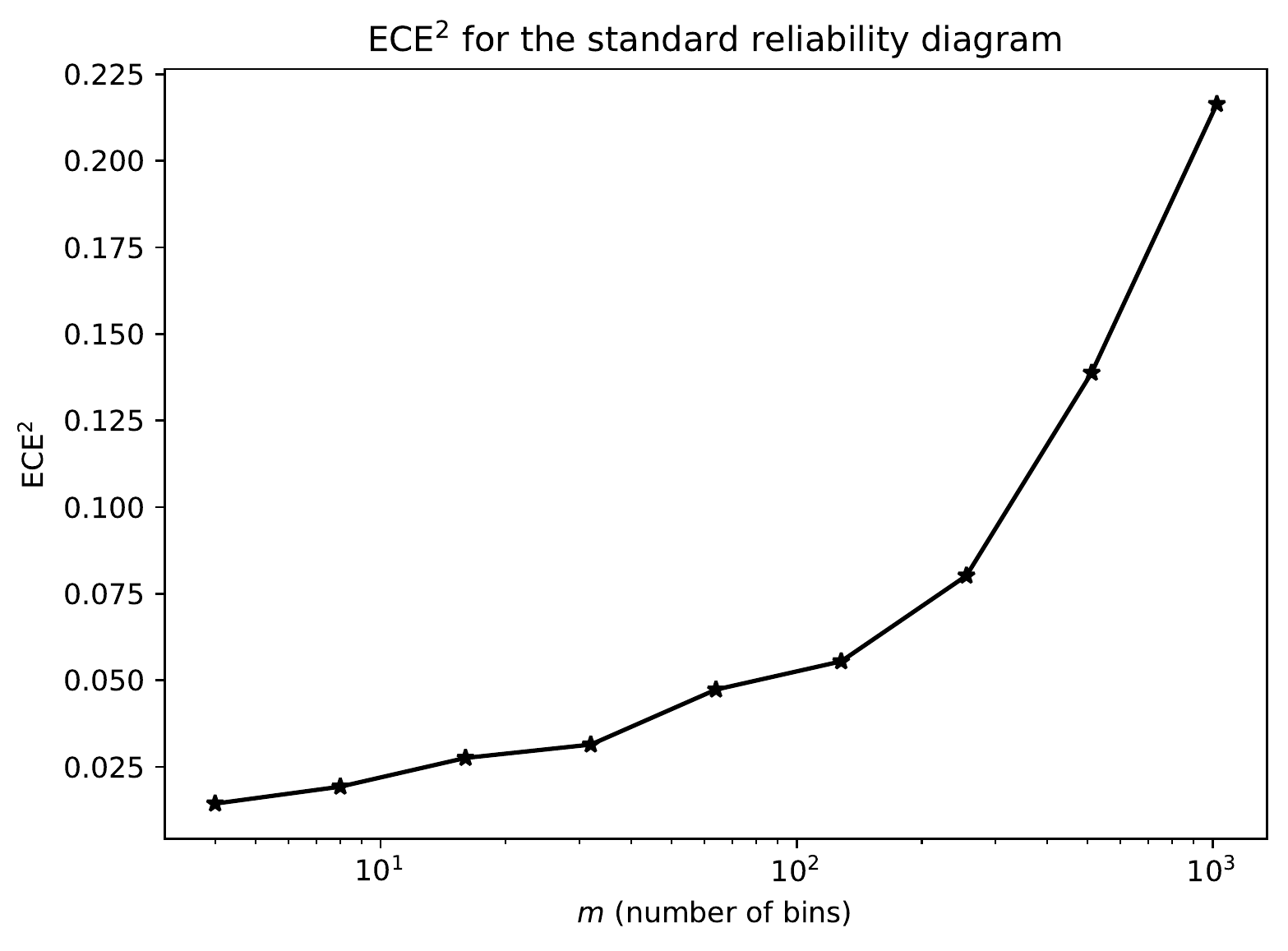}}

\parbox{\imsizes}{\includegraphics[width=\imsizes]
       {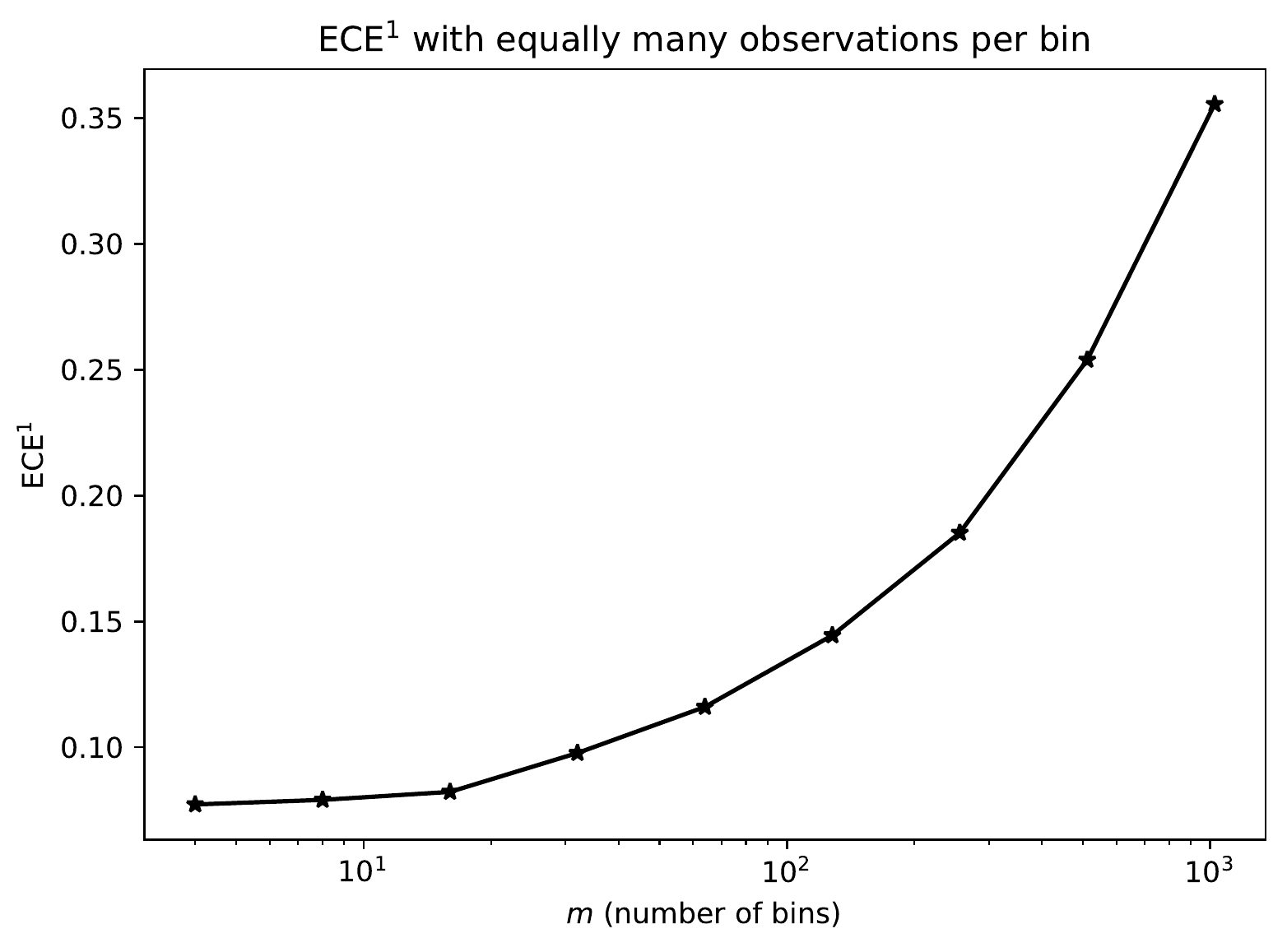}}
\hfil
\parbox{\imsizes}{\includegraphics[width=\imsizes]
       {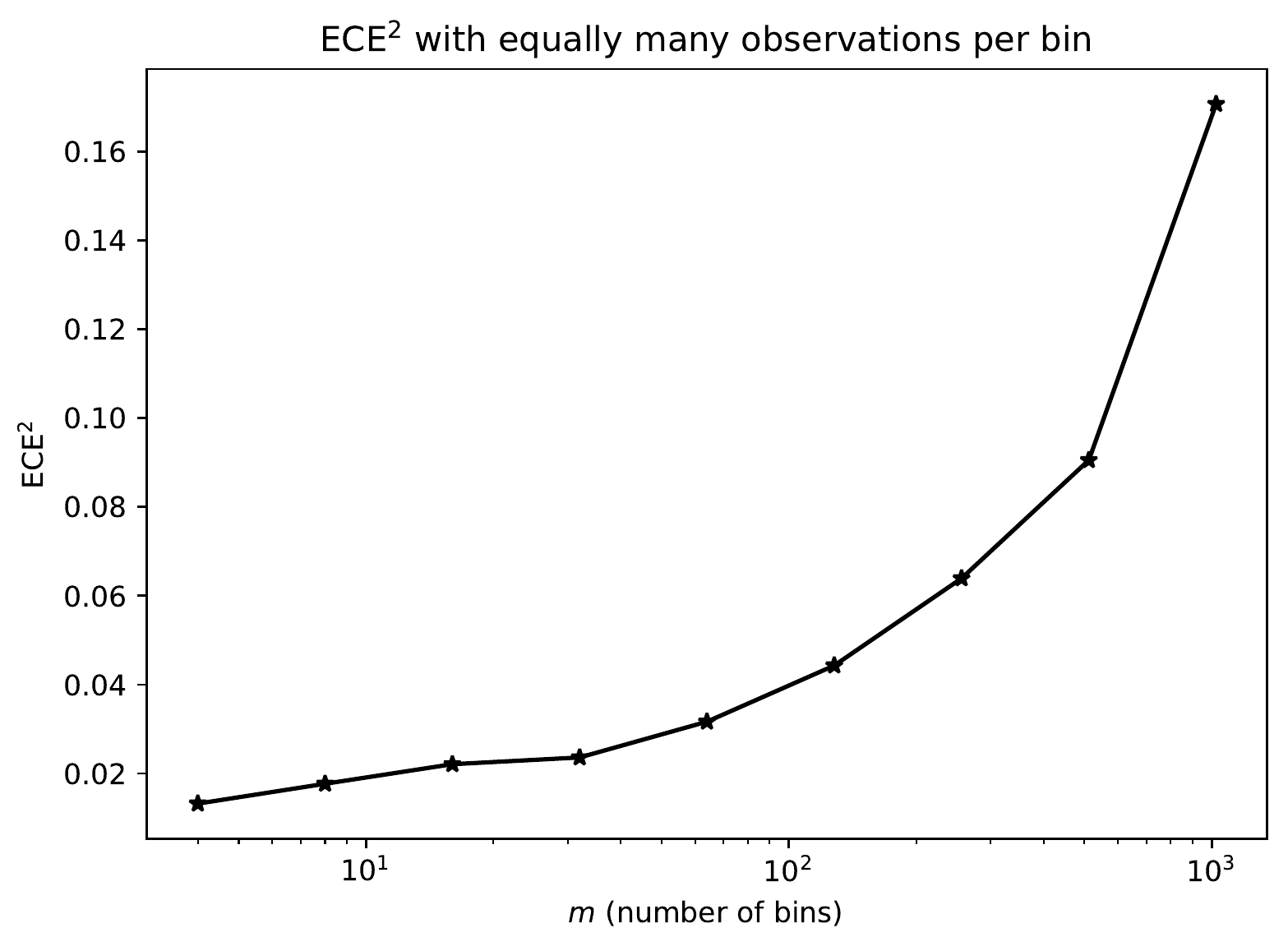}}
\end{center}
\caption{Empirical calibration errors for the night snake
         ({\it Hypsiglena torquata}), with sample size $n =$ 1,300.}
\label{night-snakeece}
\end{figure}

\begin{figure}
\begin{center}
\parbox{\imsize}{\includegraphics[width=\imsize]
       {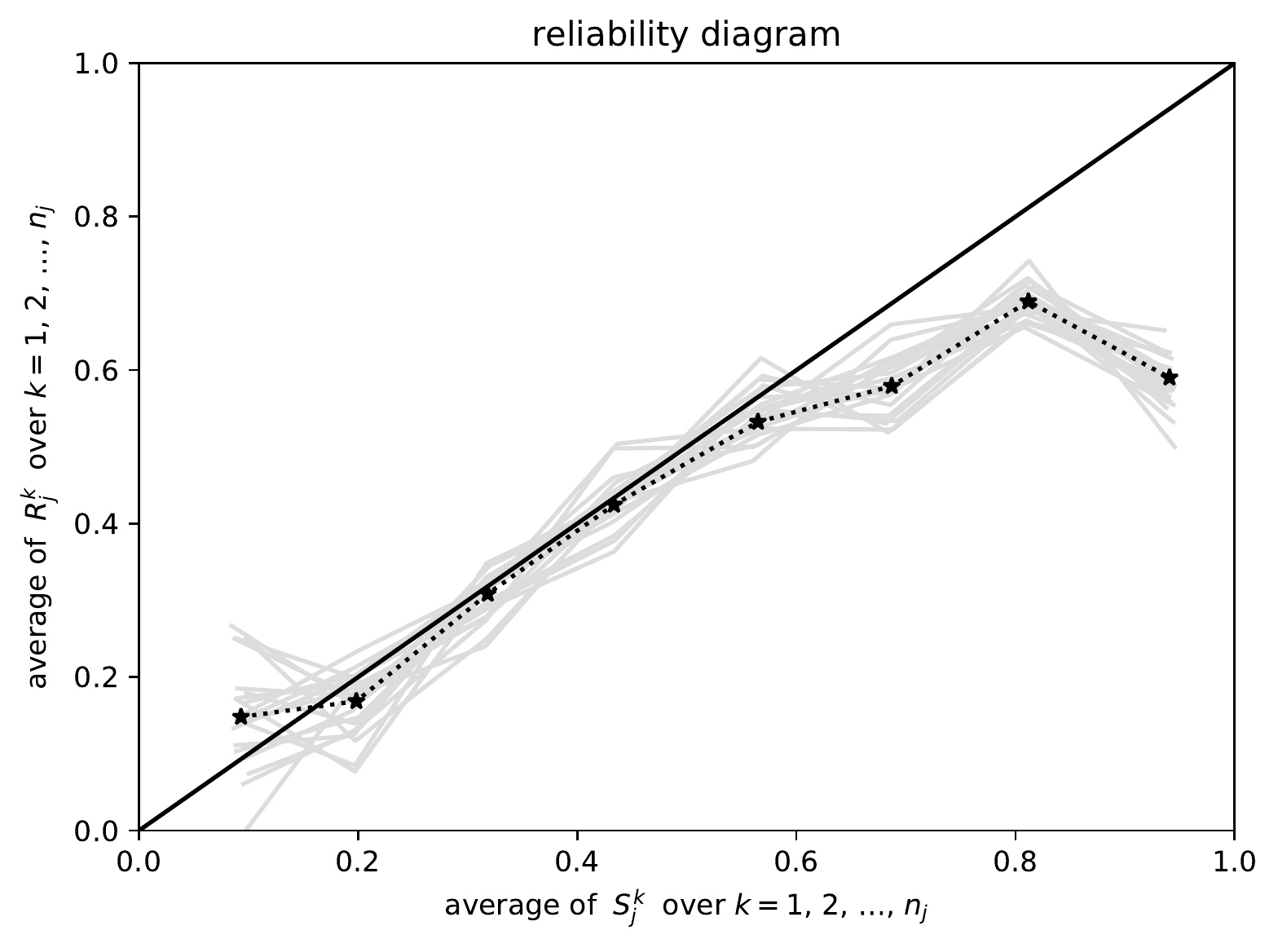}}

\parbox{\imsize}{\includegraphics[width=\imsize]
       {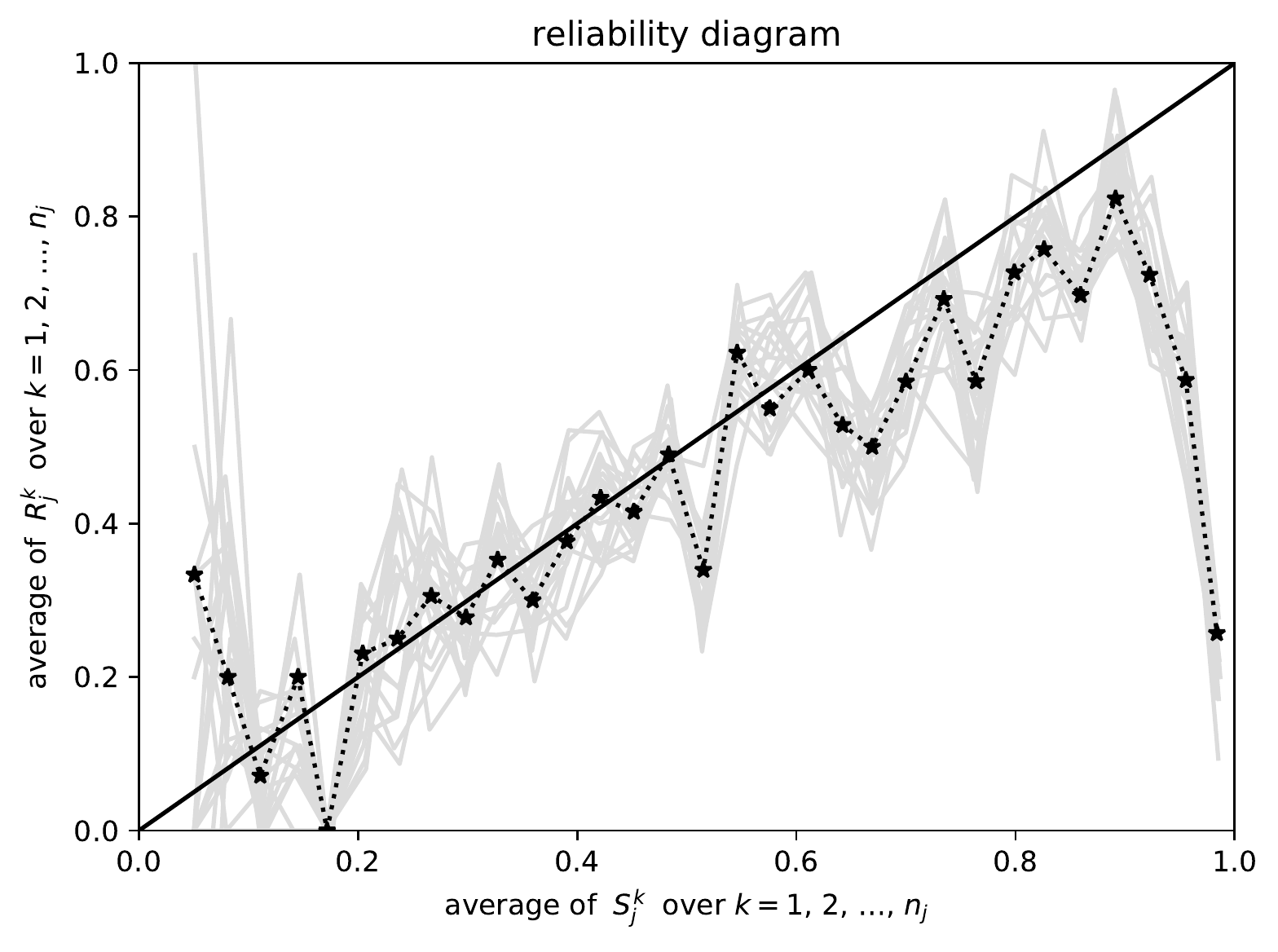}}
\end{center}
\caption{Reliability diagrams for the night snake ({\it Hypsiglena torquata}),
         with the bins roughly equispaced.
         There are $m = 8$ bins in the upper plot
         and $m = 32$ in the lower plot.}
\label{night-snakeprob}
\end{figure}

\begin{figure}
\begin{center}
\parbox{\imsize}{\includegraphics[width=\imsize]
       {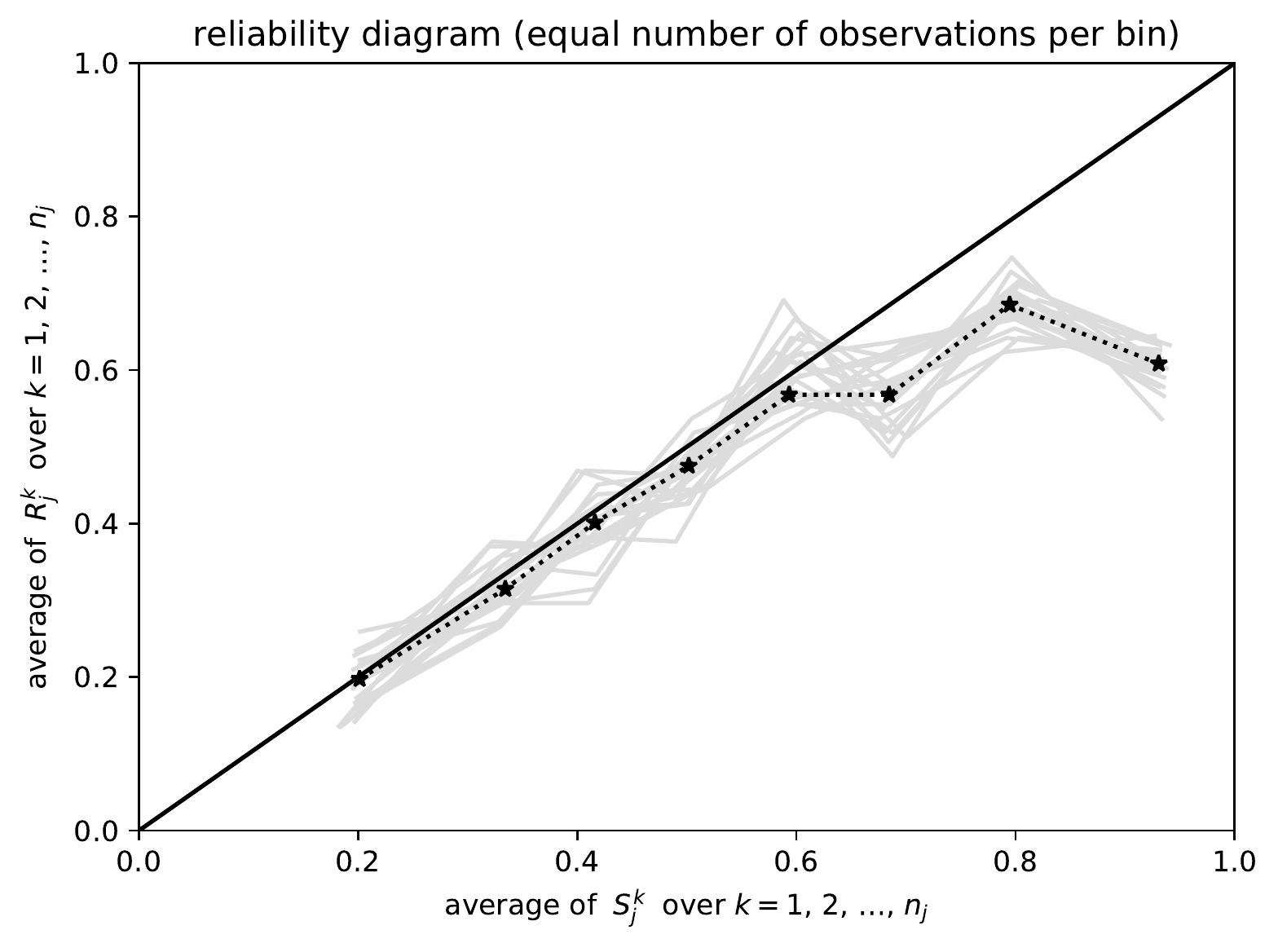}}

\parbox{\imsize}{\includegraphics[width=\imsize]
       {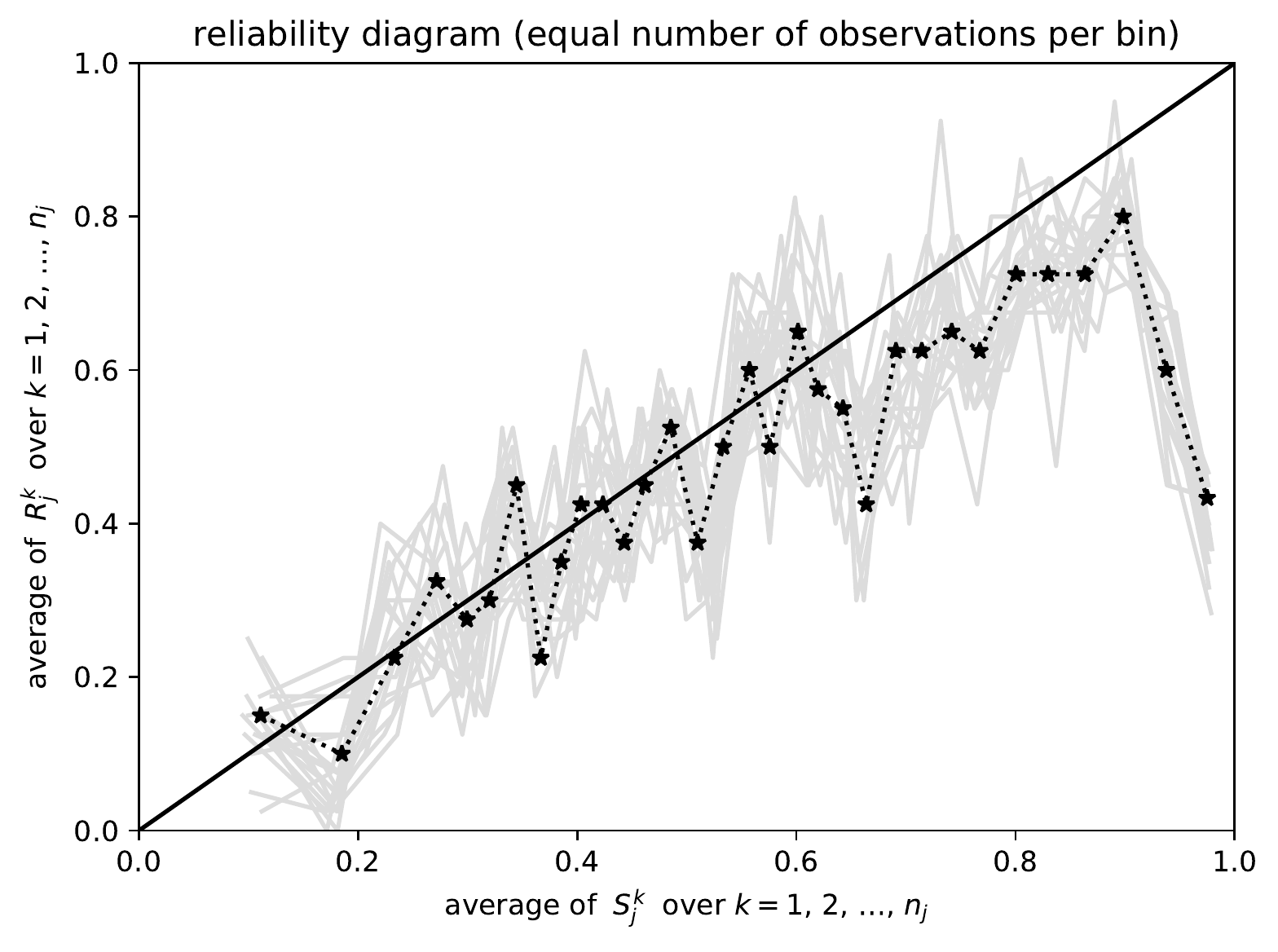}}
\end{center}
\caption{Reliability diagrams for the night snake ({\it Hypsiglena torquata}),
         with an equal number of observations per bin.
         There are $m = 8$ bins in the upper plot
         and $m = 32$ in the lower plot.}
\label{night-snakesamp}
\end{figure}

\begin{figure}
\begin{center}
\parbox{\imsize}{\includegraphics[width=\imsize]
       {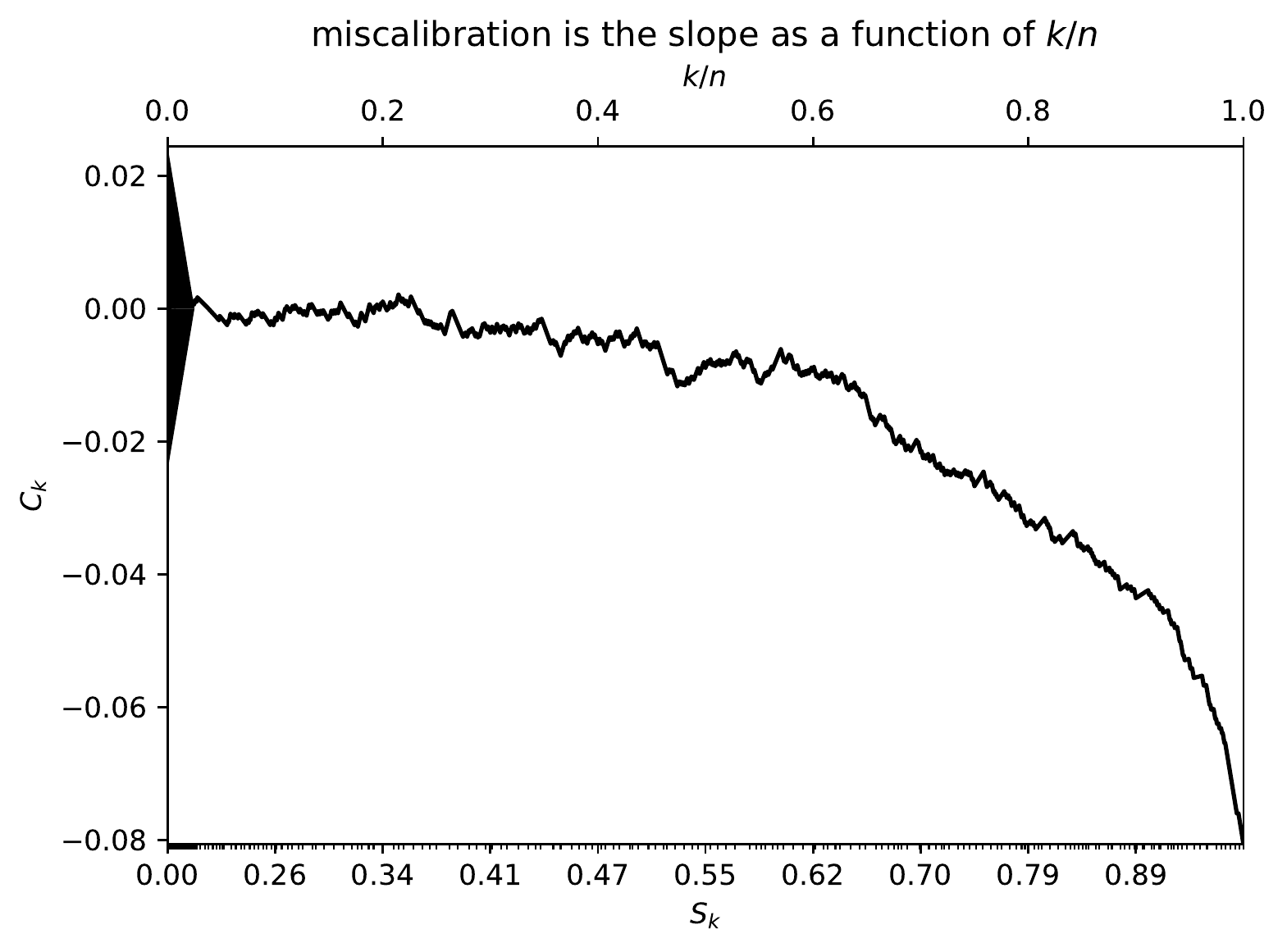}}
\end{center}
\caption{Cumulative plot for the night snake ({\it Hypsiglena torquata}),
         with sample size $n =$ 1,300.
         The ECCE-MAD is $0.08059 / \sigma_n = 6.607$,
         and the ECCE-R is $0.08270 / \sigma_n = 6.780$;
         the associated asymptotic P-values are 7.8E--11 and 4.8E--11,
         respectively.
}
\label{night-snakecum}
\end{figure}

\begin{figure}
\begin{center}
\parbox{\imsizes}{\includegraphics[width=\imsizes]
{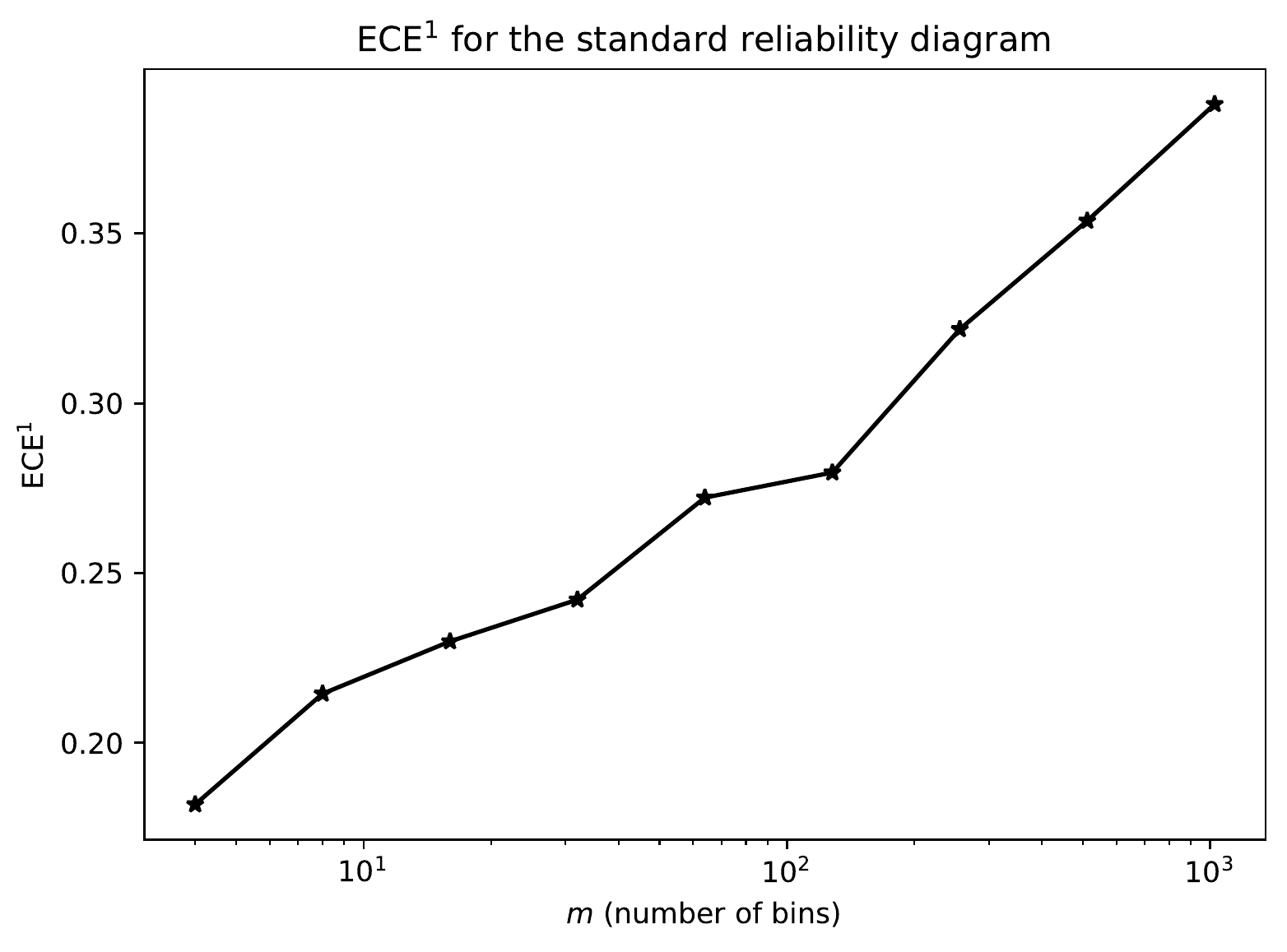}}
\hfil
\parbox{\imsizes}{\includegraphics[width=\imsizes]
{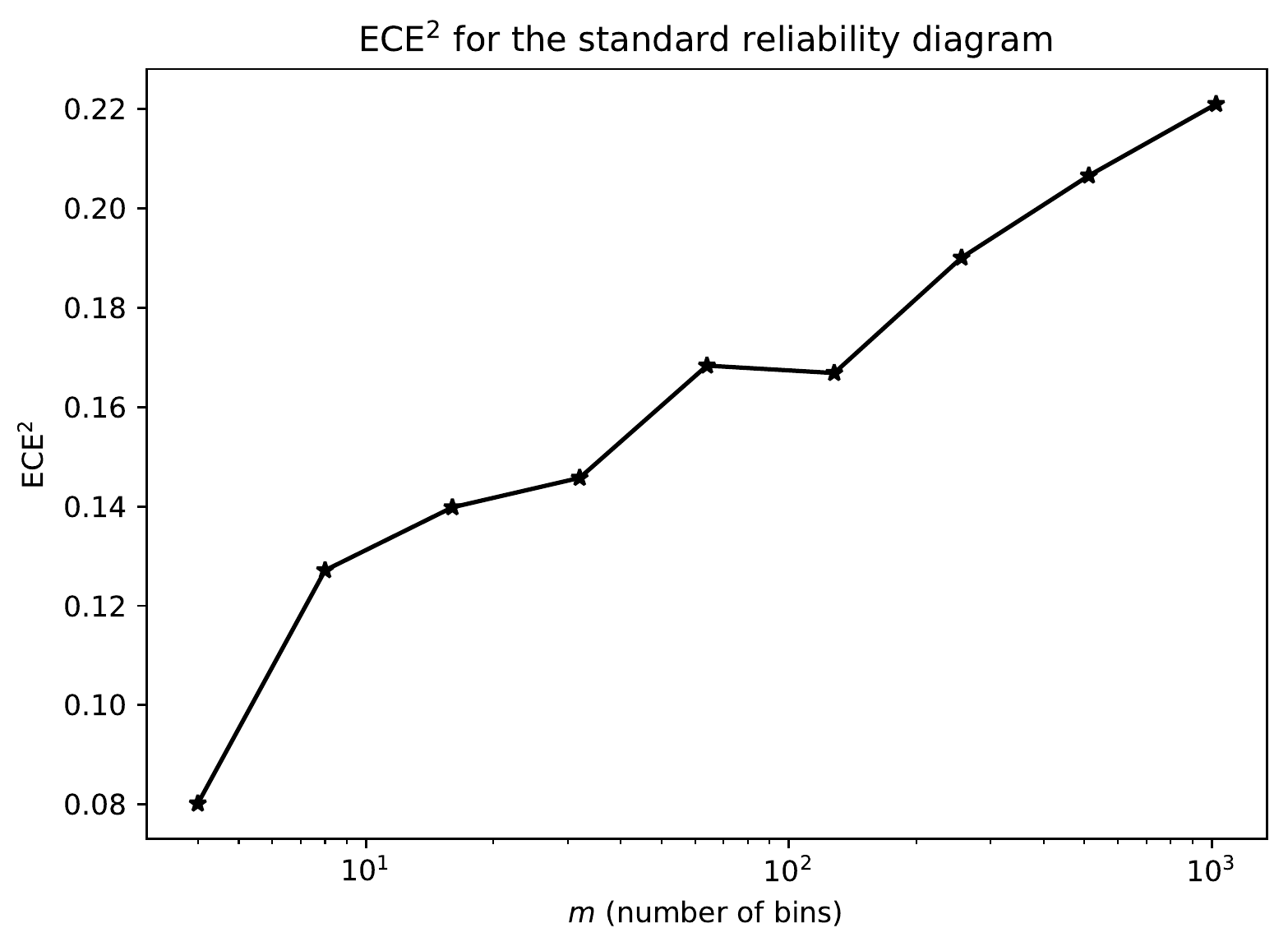}}

\parbox{\imsizes}{\includegraphics[width=\imsizes]
{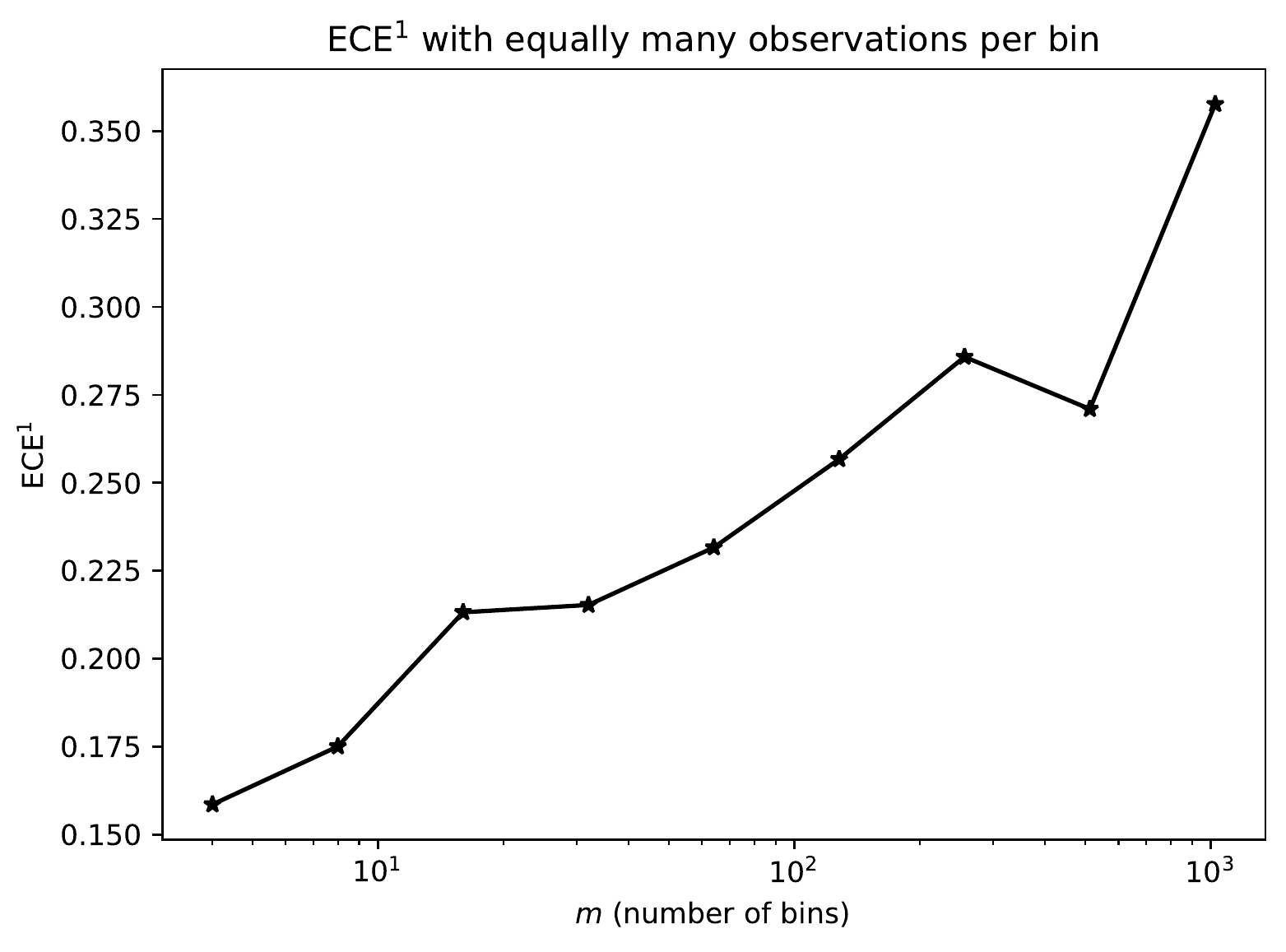}}
\hfil
\parbox{\imsizes}{\includegraphics[width=\imsizes]
{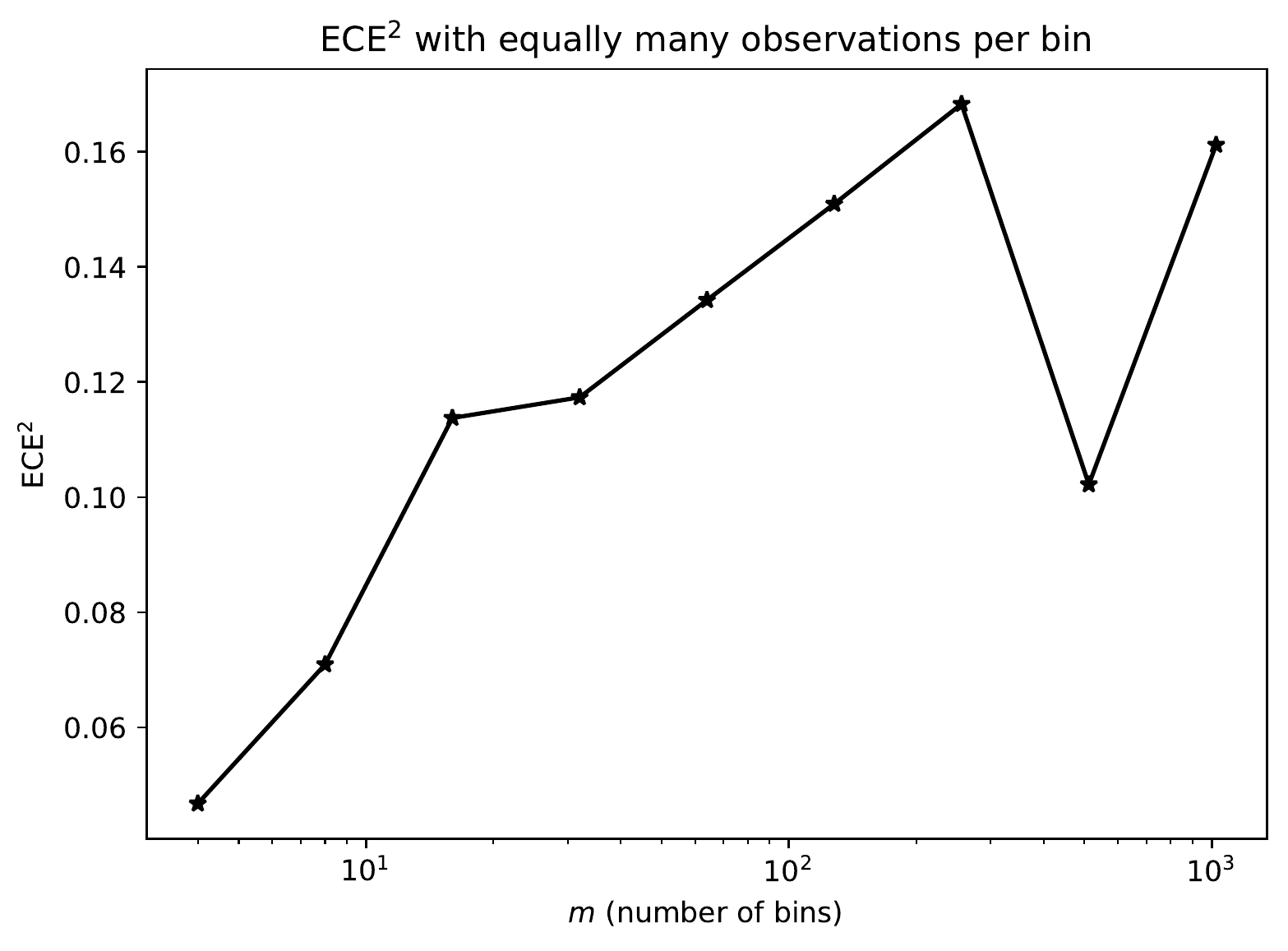}}
\end{center}
\caption{Empirical calibration errors for the sidewinder or horned rattlesnake
         ({\it Crotalus cerastes}), with sample size $n =$ 1,300.}
\label{sidewinderece}
\end{figure}

\begin{figure}
\begin{center}
\parbox{\imsize}{\includegraphics[width=\imsize]
{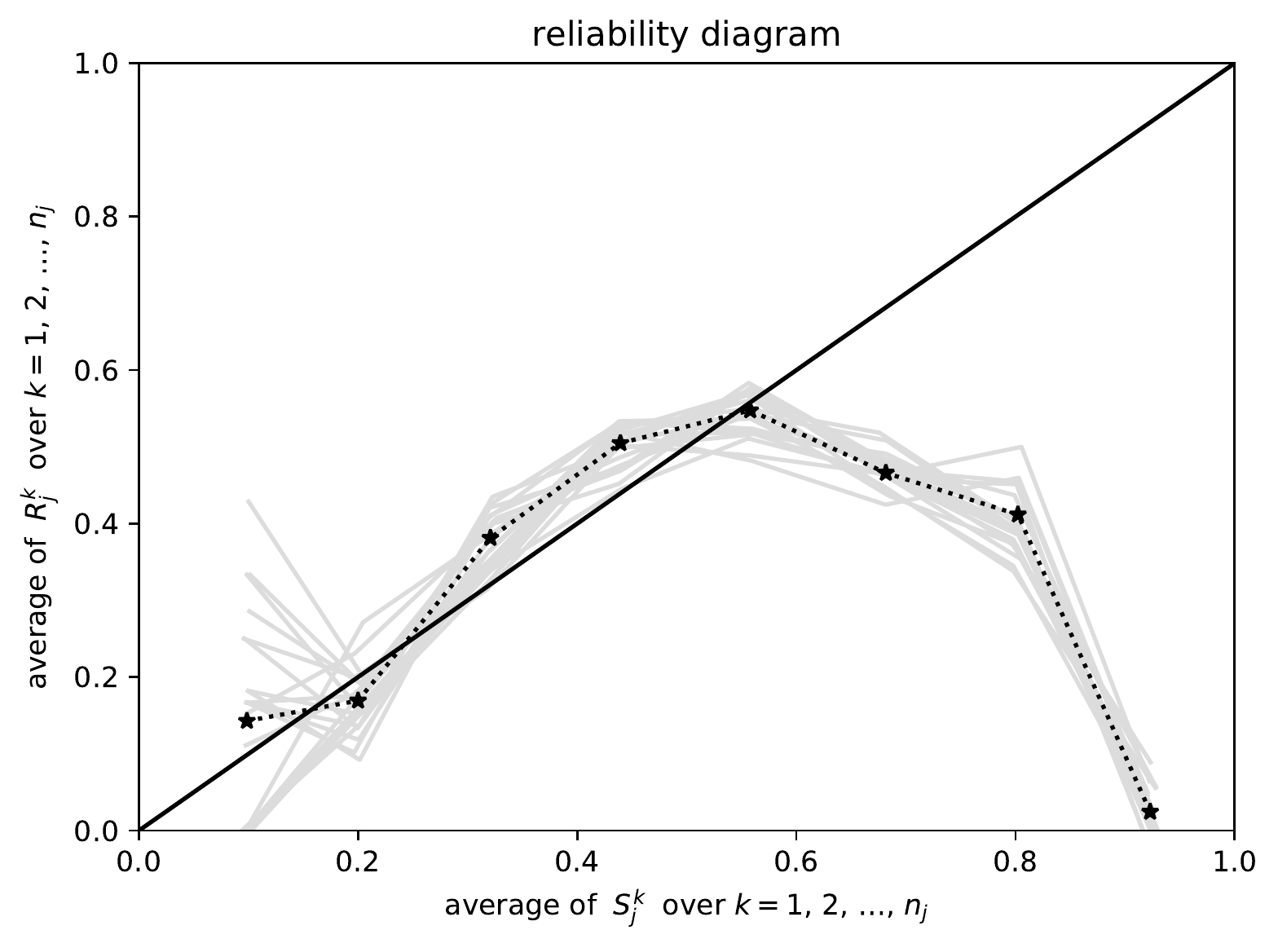}}

\parbox{\imsize}{\includegraphics[width=\imsize]
{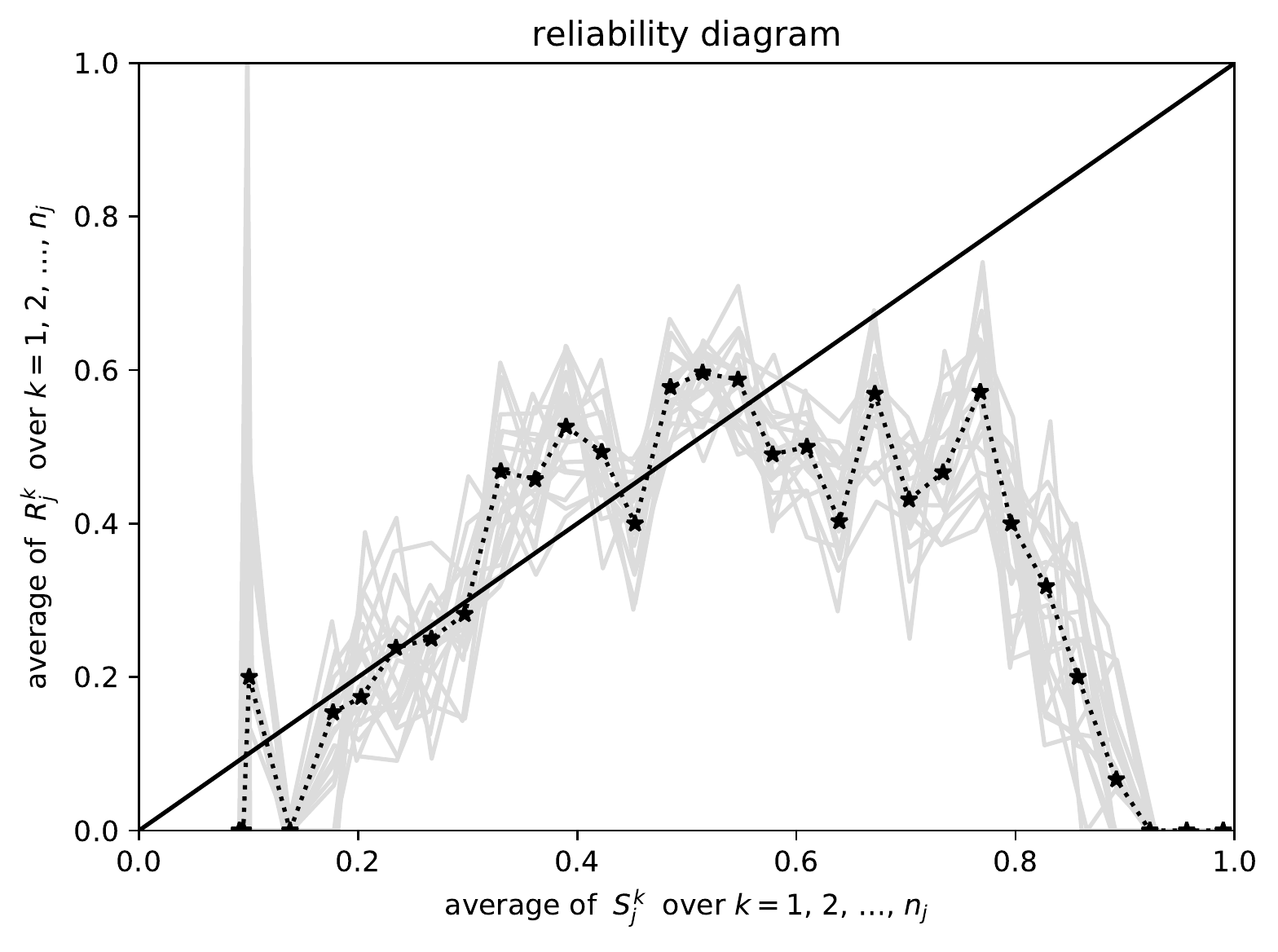}}
\end{center}
\caption{Reliability diagrams for the sidewinder or horned rattlesnake
         ({\it Crotalus cerastes}), with the bins roughly equispaced.
         There are $m = 8$ bins in the upper plot
         and $m = 32$ in the lower plot.}
\label{sidewinderprob}
\end{figure}

\begin{figure}
\begin{center}
\parbox{\imsize}{\includegraphics[width=\imsize]
{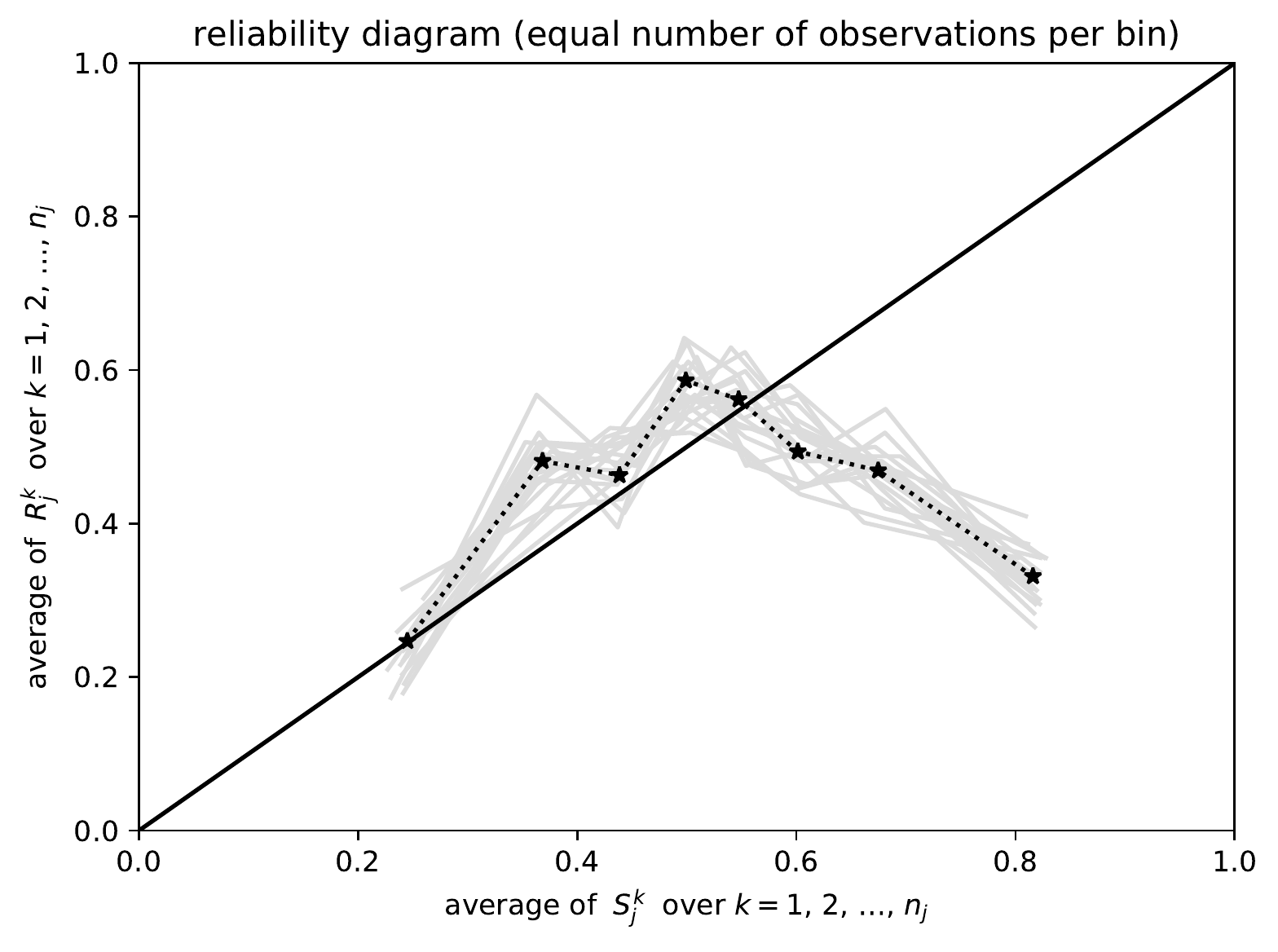}}

\parbox{\imsize}{\includegraphics[width=\imsize]
{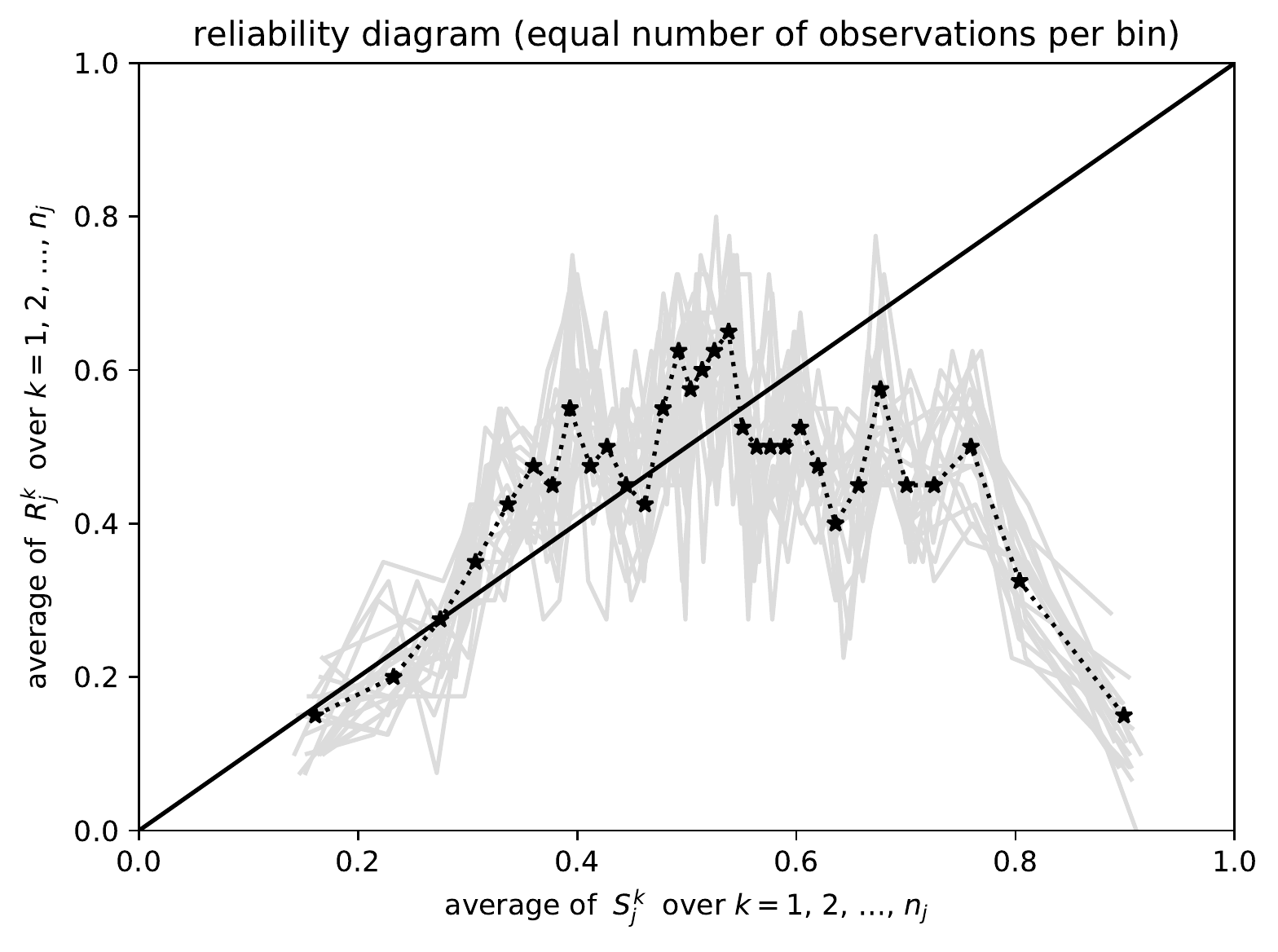}}
\end{center}
\caption{Reliability diagrams for the sidewinder or horned rattlesnake
         ({\it Crotalus cerastes}),
         with an equal number of observations per bin.
         There are $m = 8$ bins in the upper plot
         and $m = 32$ in the lower plot.}
\label{sidewindersamp}
\end{figure}

\begin{figure}
\begin{center}
\parbox{\imsize}{\includegraphics[width=\imsize]
{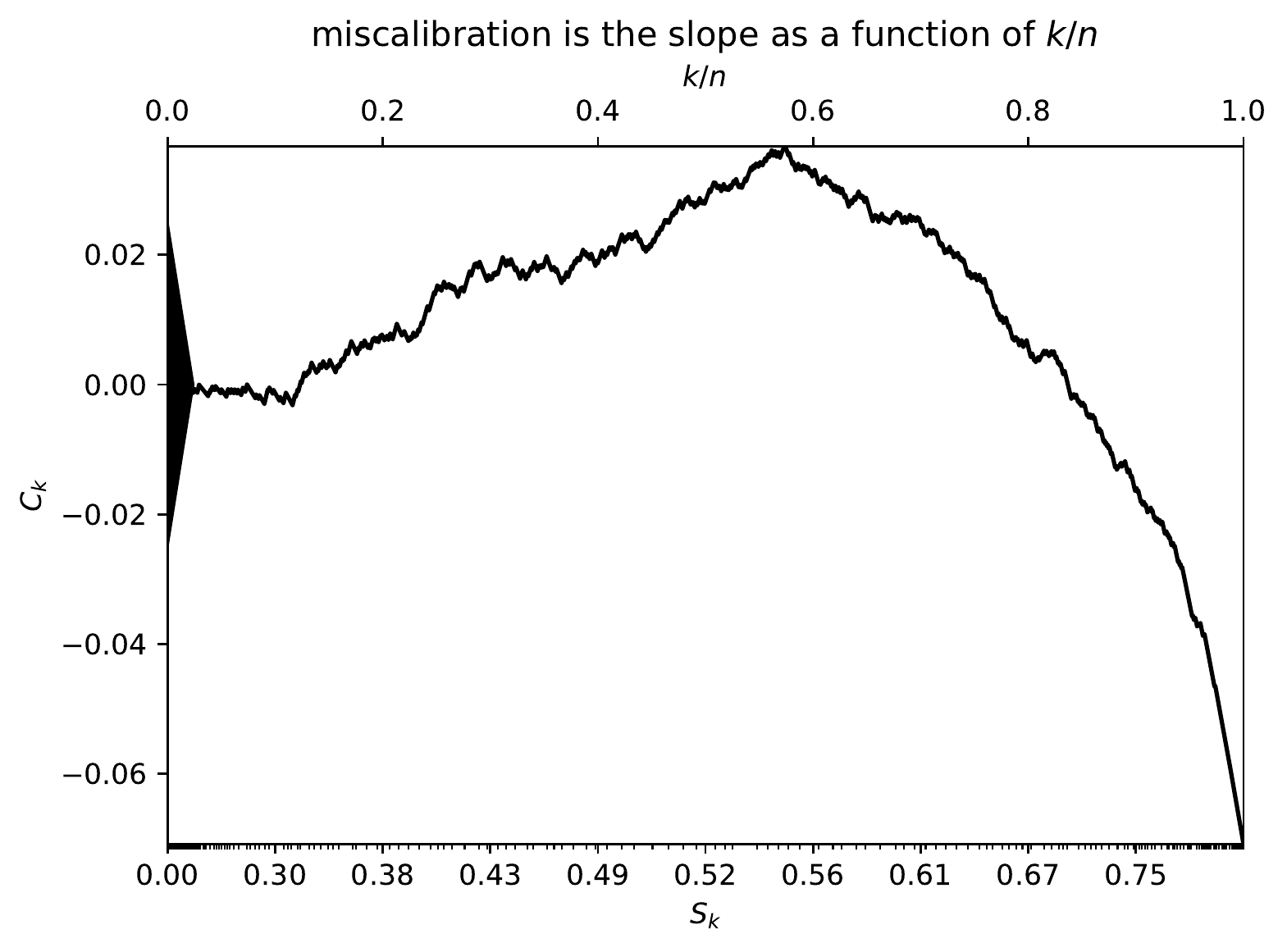}}
\end{center}
\caption{Cumulative plot for the sidewinder or horned rattlesnake
         ({\it Crotalus cerastes}), with sample size $n =$ 1,300.
         The ECCE-MAD is $0.07081 / \sigma_n = 5.446$,
         and the ECCE-R is $0.1075 / \sigma_n = 8.267$;
         the associated asymptotic P-values are 1.0E--7 for the ECCE-MAD
         and zero to double-precision accuracy for the ECCE-R.
}
\label{sidewindercum}
\end{figure}

\begin{figure}
\begin{center}
\parbox{\imsizes}{\includegraphics[width=\imsizes]
{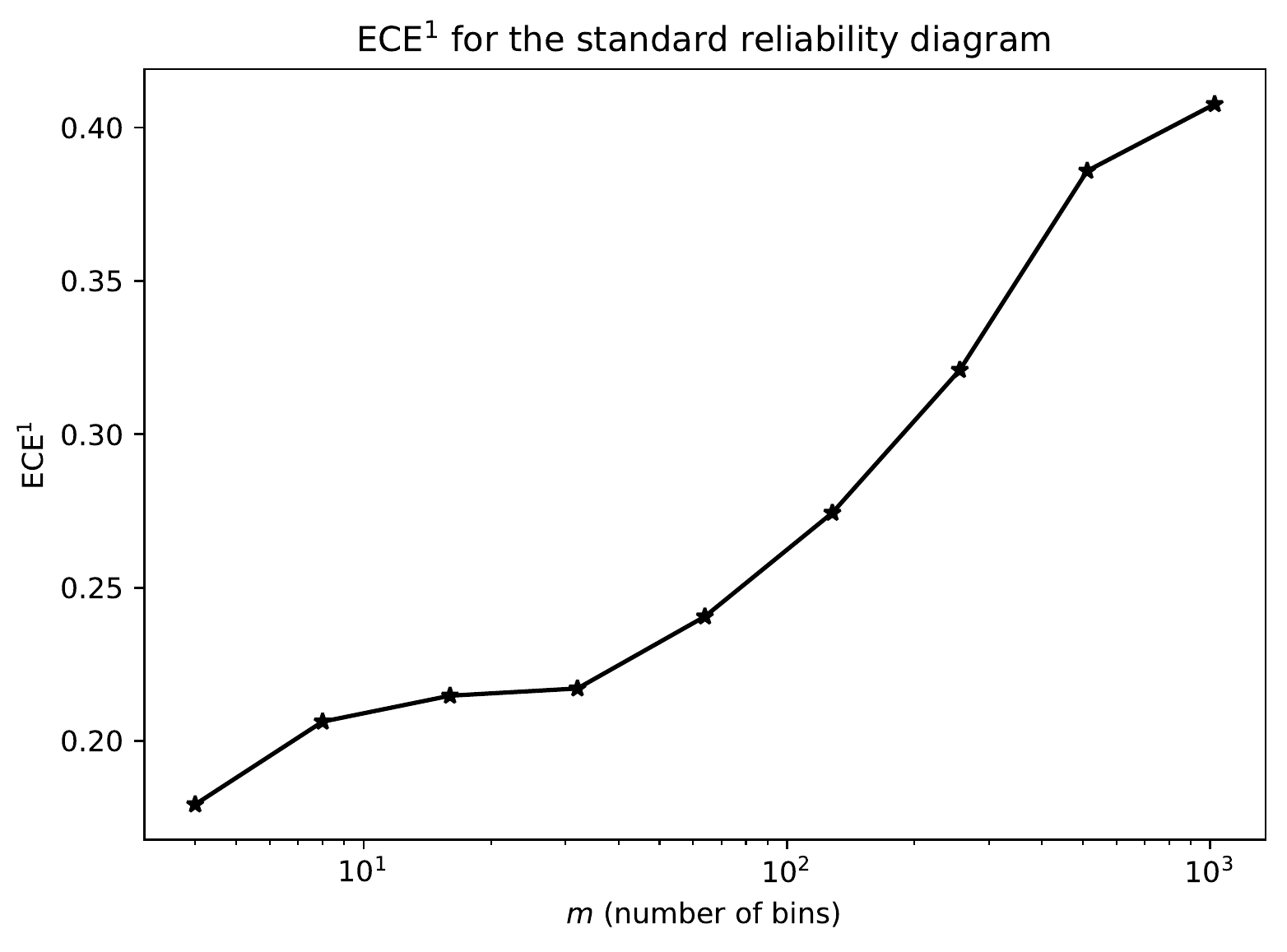}}
\hfil
\parbox{\imsizes}{\includegraphics[width=\imsizes]
{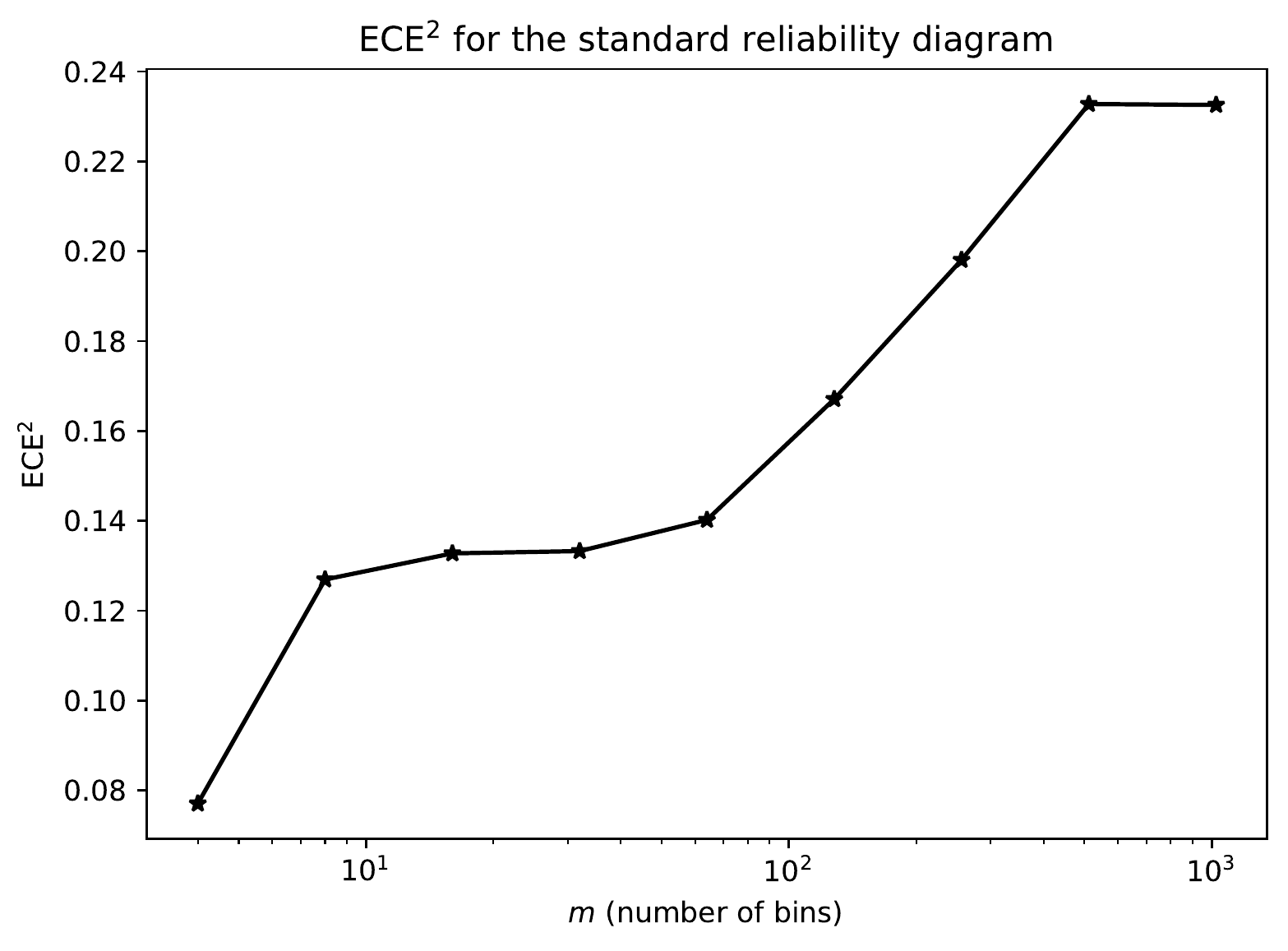}}

\parbox{\imsizes}{\includegraphics[width=\imsizes]
{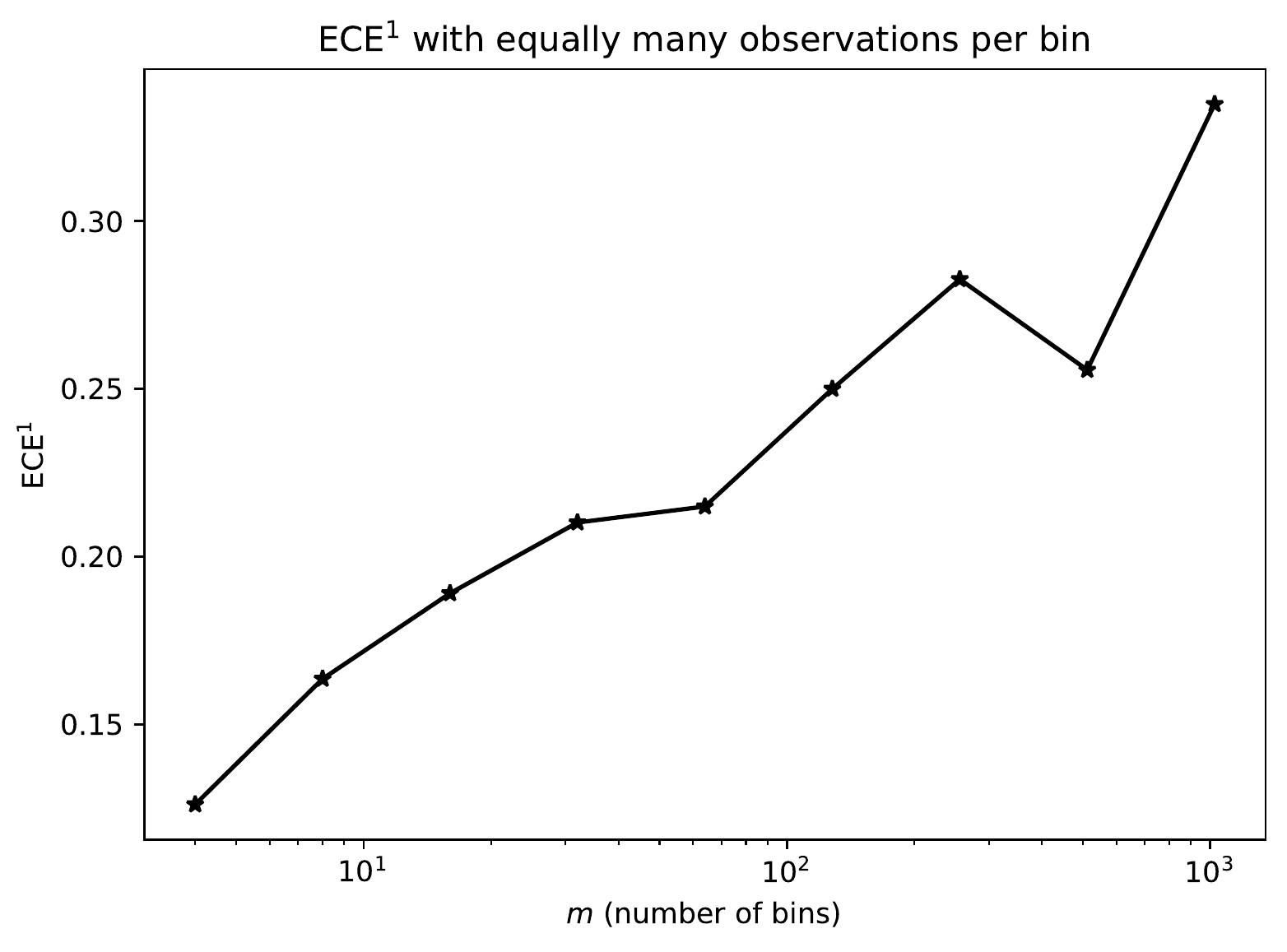}}
\hfil
\parbox{\imsizes}{\includegraphics[width=\imsizes]
{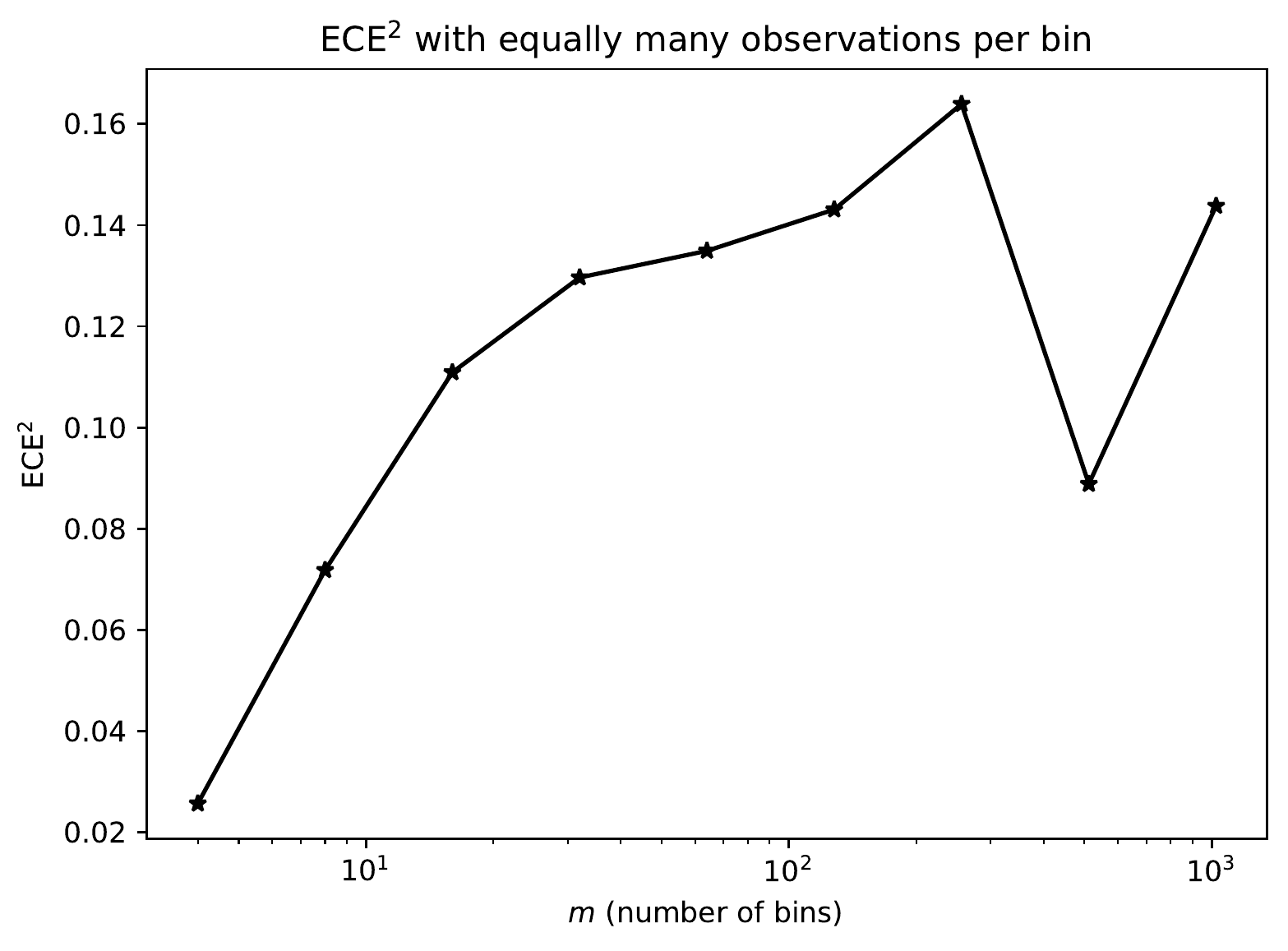}}
\end{center}
\caption{Empirical calibration errors for the Eskimo dog or husky,
         with sample size $n =$ 1,300.}
\label{eskimo-dogece}
\end{figure}

\begin{figure}
\begin{center}
\parbox{\imsize}{\includegraphics[width=\imsize]
{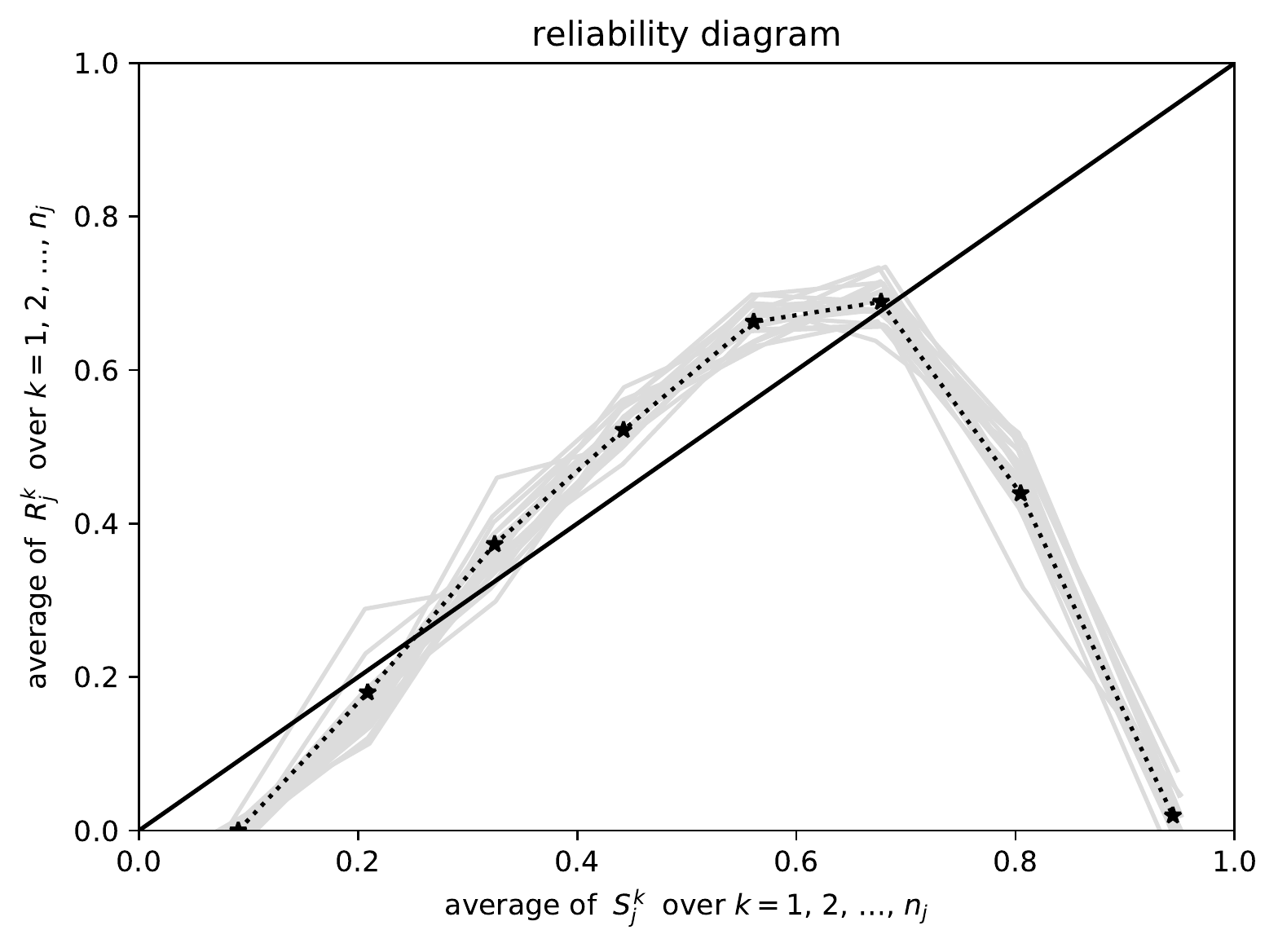}}

\parbox{\imsize}{\includegraphics[width=\imsize]
{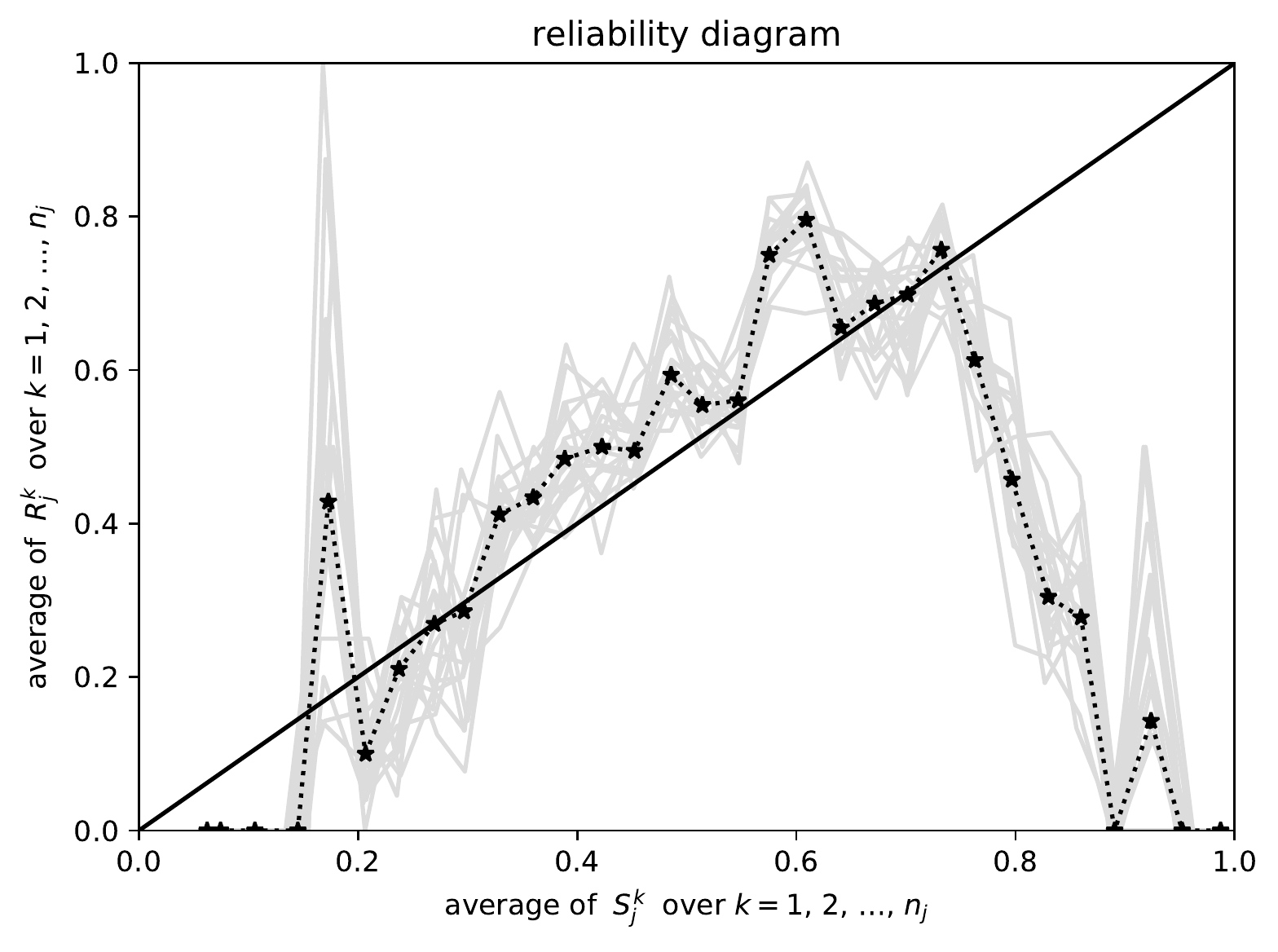}}
\end{center}
\caption{Reliability diagrams for the Eskimo dog or husky,
         with the bins roughly equispaced.
         There are $m = 8$ bins in the upper plot
         and $m = 32$ in the lower plot.}
\label{eskimo-dogprob}
\end{figure}

\begin{figure}
\begin{center}
\parbox{\imsize}{\includegraphics[width=\imsize]
{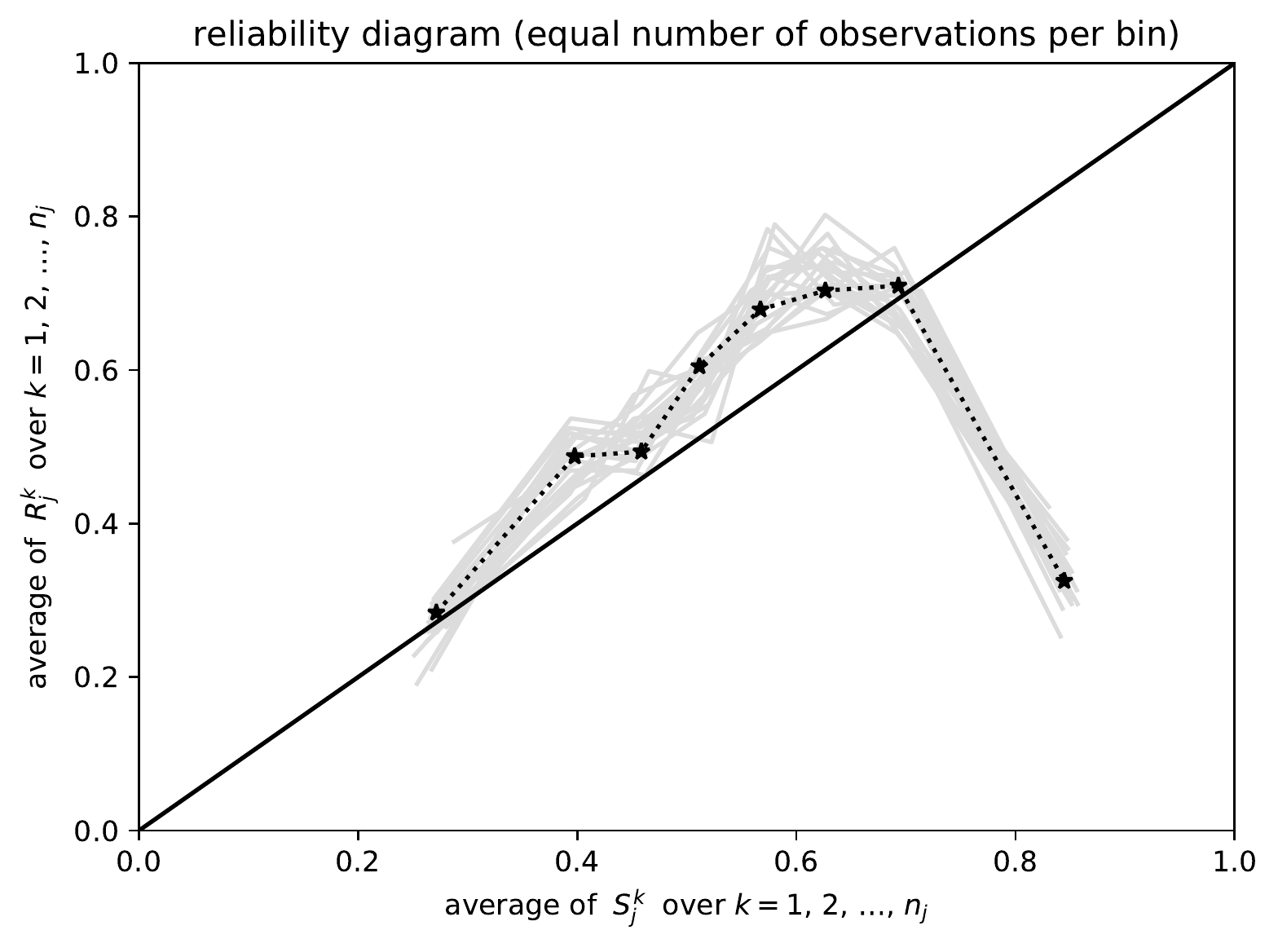}}

\parbox{\imsize}{\includegraphics[width=\imsize]
{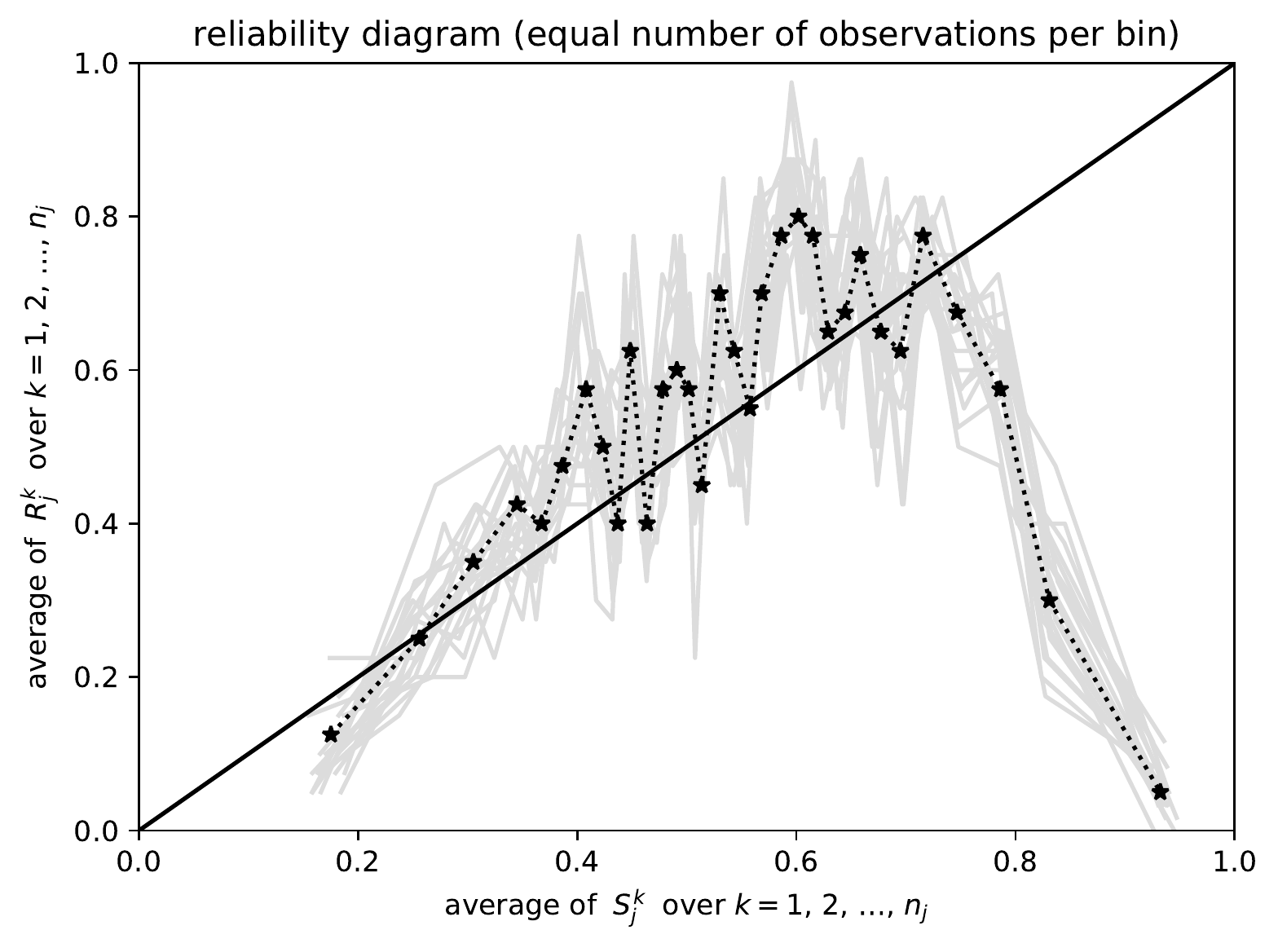}}
\end{center}
\caption{Reliability diagrams for the Eskimo dog or husky,
         with an equal number of observations per bin.
         There are $m = 8$ in the upper plot
         and $m = 32$ in the lower plot.}
\label{eskimo-dogsamp}
\end{figure}

\begin{figure}
\begin{center}
\parbox{\imsize}{\includegraphics[width=\imsize]
{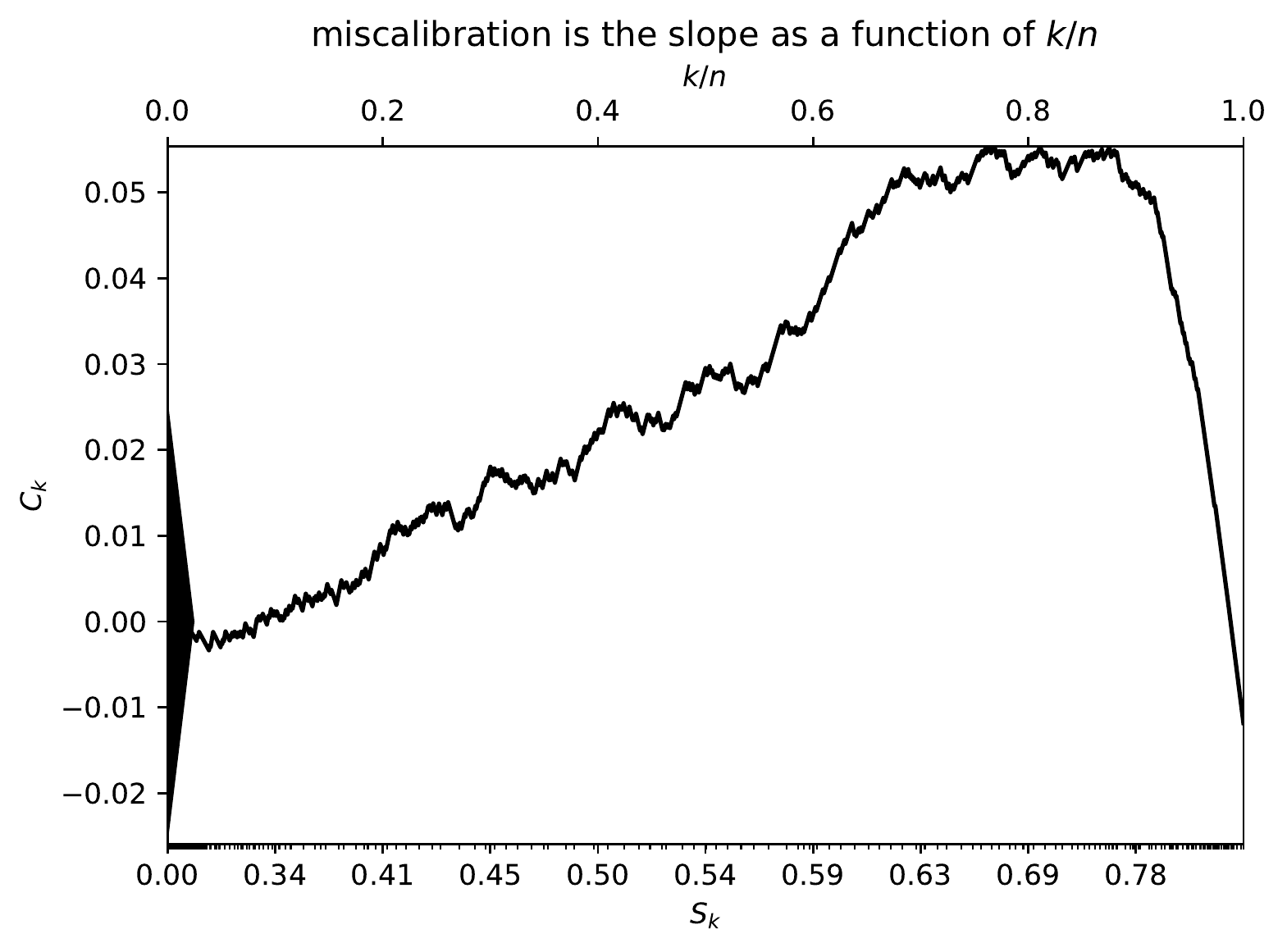}}
\end{center}
\caption{Cumulative plot for the Eskimo dog or husky,
         with sample size $n =$ 1,300.
         The ECCE-MAD is $0.05534 / \sigma_n = 4.274$,
         and the ECCE-R is $0.06715 / \sigma_n = 5.186$;
         the associated asymptotic P-values are 3.8E--5 and 8.6E--7,
         respectively.
}
\label{eskimo-dogcum}
\end{figure}

\begin{figure}
\begin{center}
\parbox{\imsizes}{\includegraphics[width=\imsizes]
{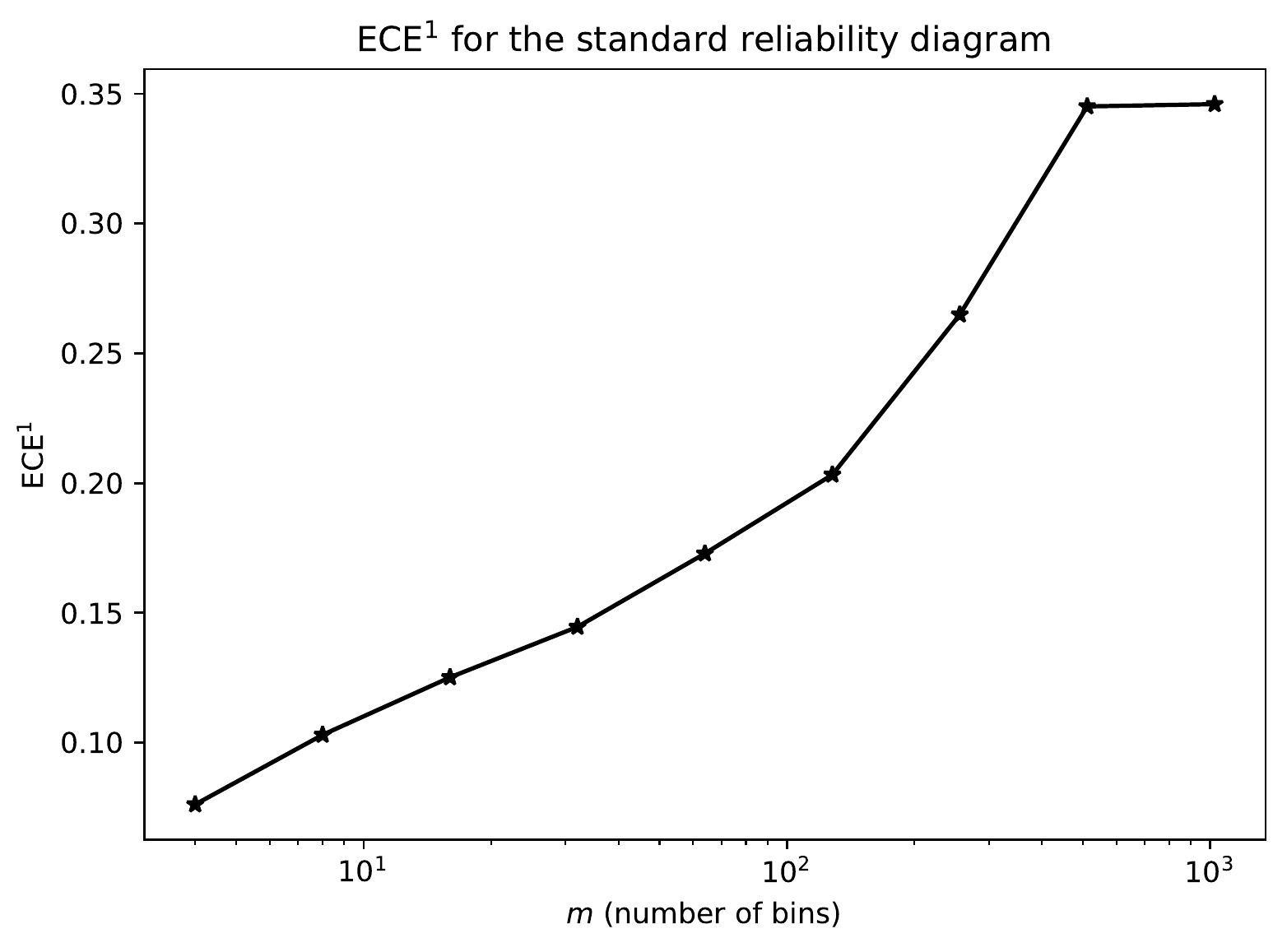}}
\hfil
\parbox{\imsizes}{\includegraphics[width=\imsizes]
{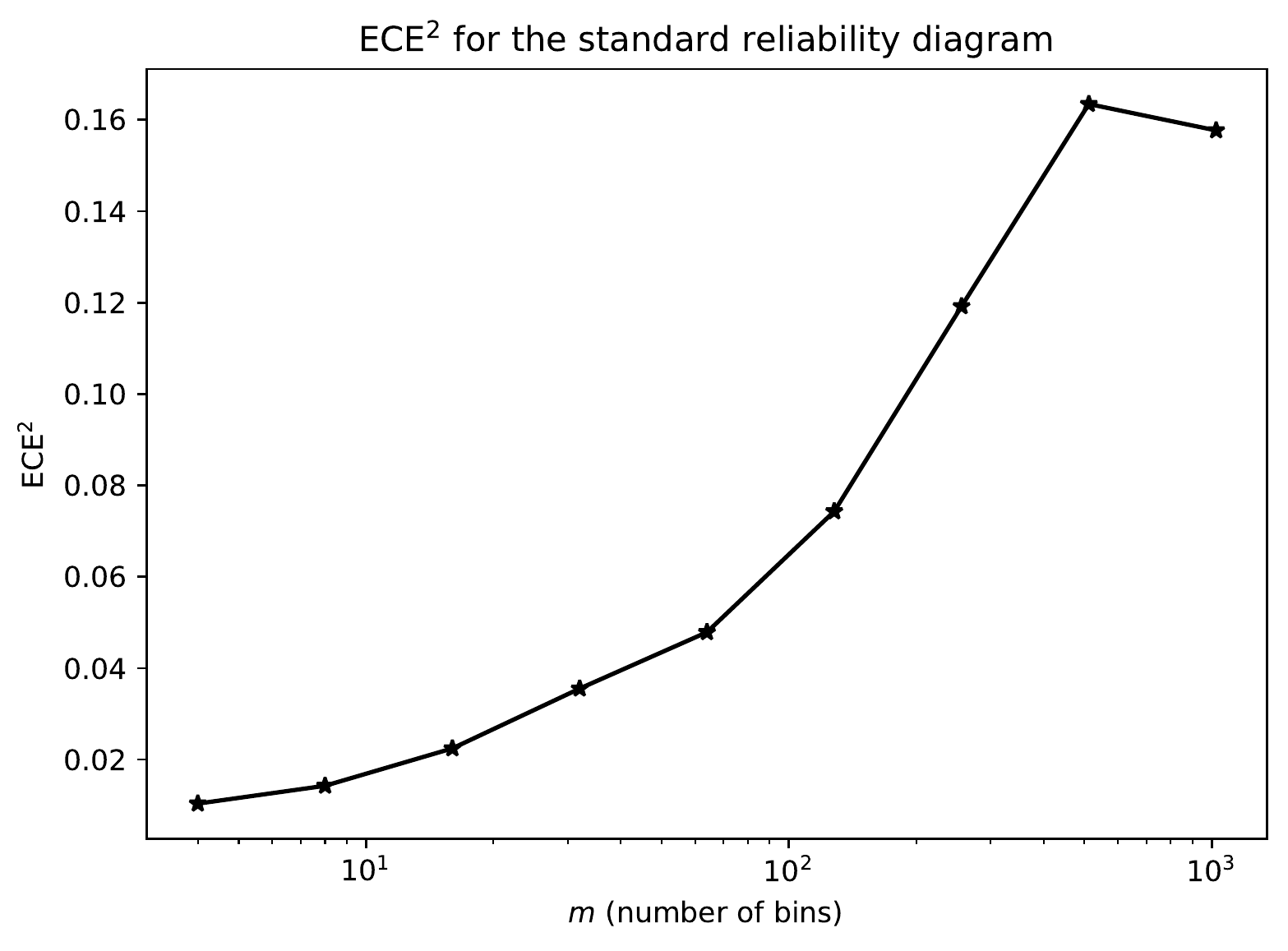}}

\parbox{\imsizes}{\includegraphics[width=\imsizes]
{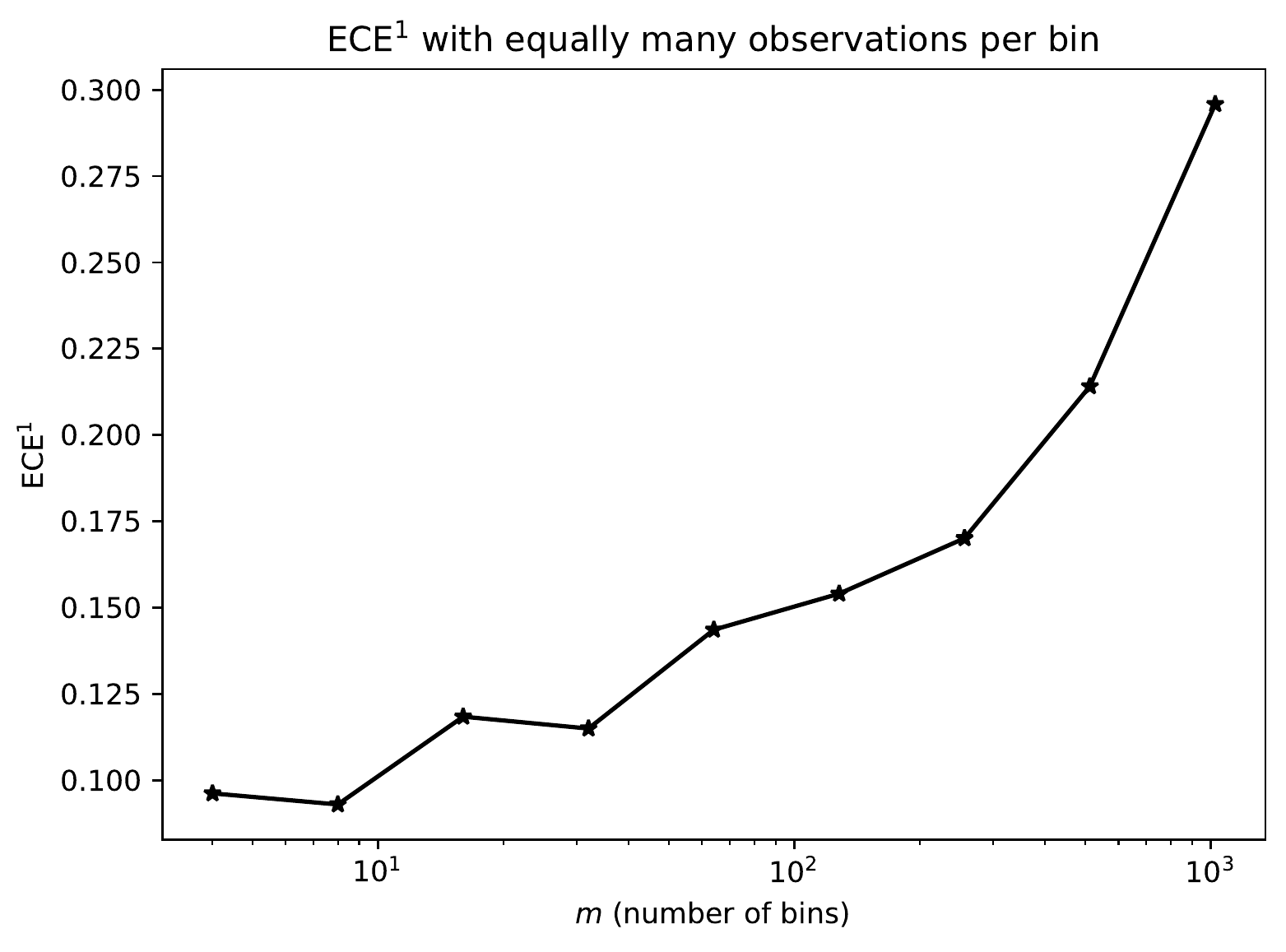}}
\hfil
\parbox{\imsizes}{\includegraphics[width=\imsizes]
{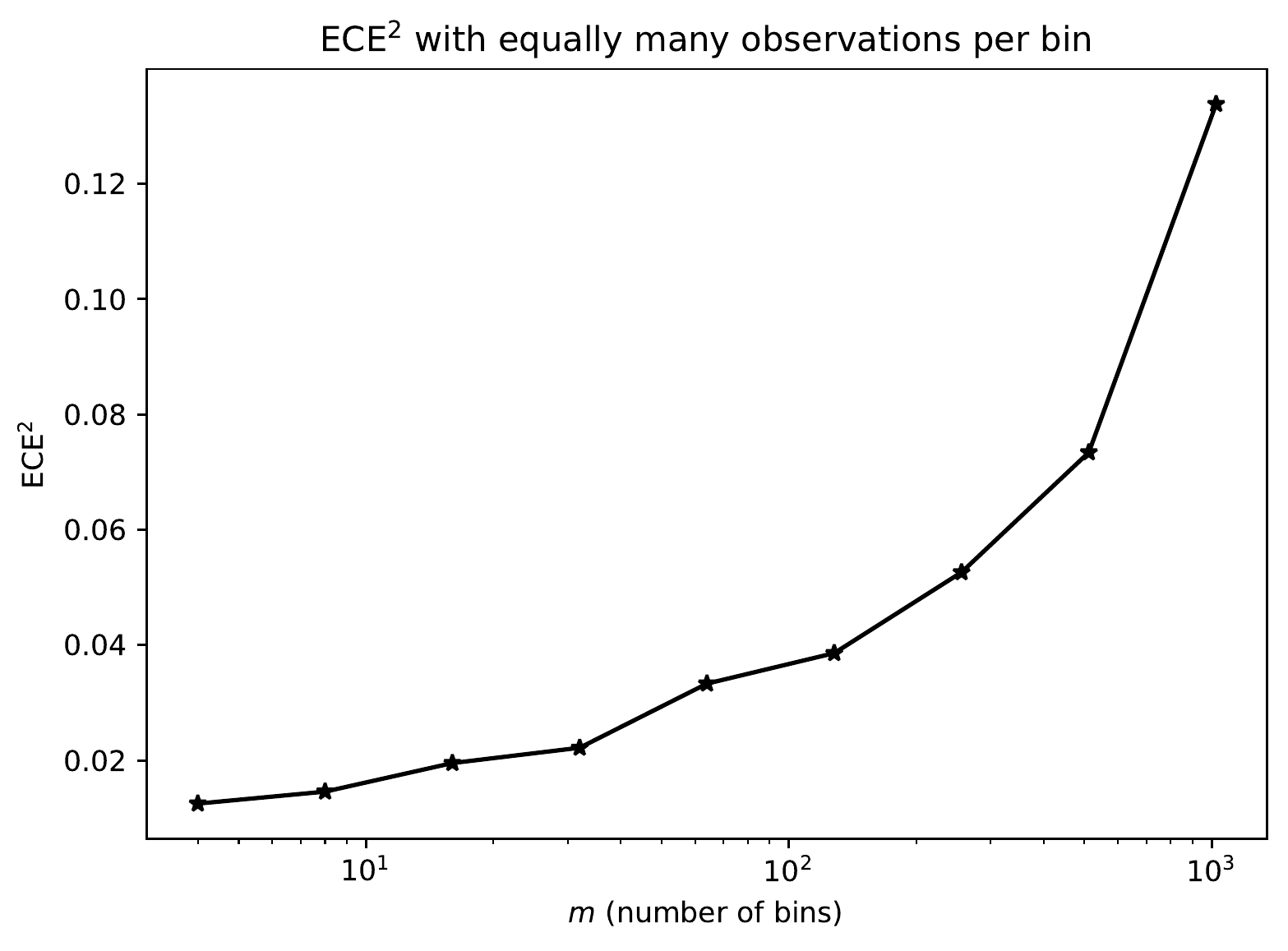}}
\end{center}
\caption{Empirical calibration errors for the wild boar ({\it Sus scrofa}),
         with sample size $n =$ 1,300.}
\label{wild-boarece}
\end{figure}

\begin{figure}
\begin{center}
\parbox{\imsize}{\includegraphics[width=\imsize]
{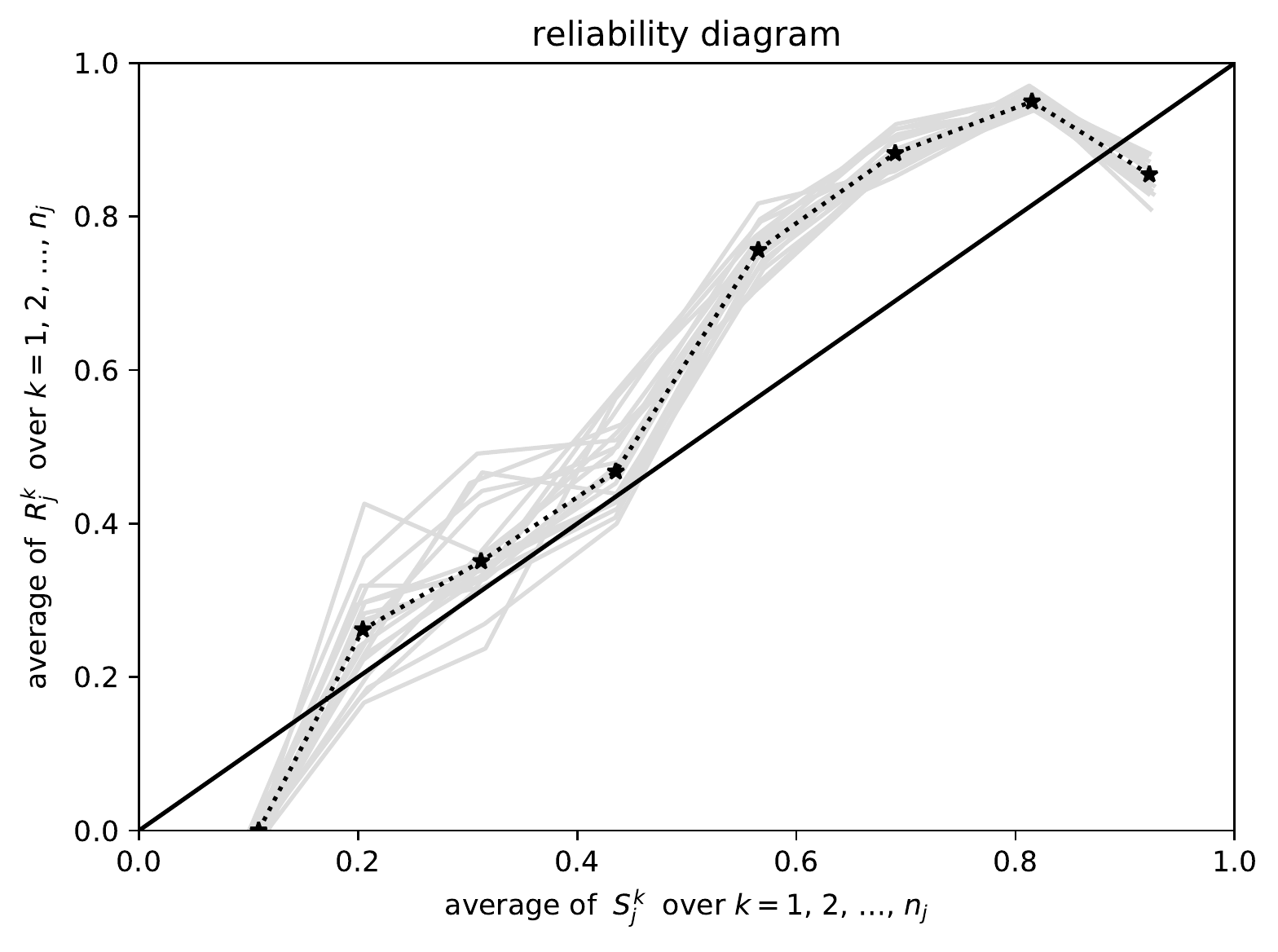}}

\parbox{\imsize}{\includegraphics[width=\imsize]
{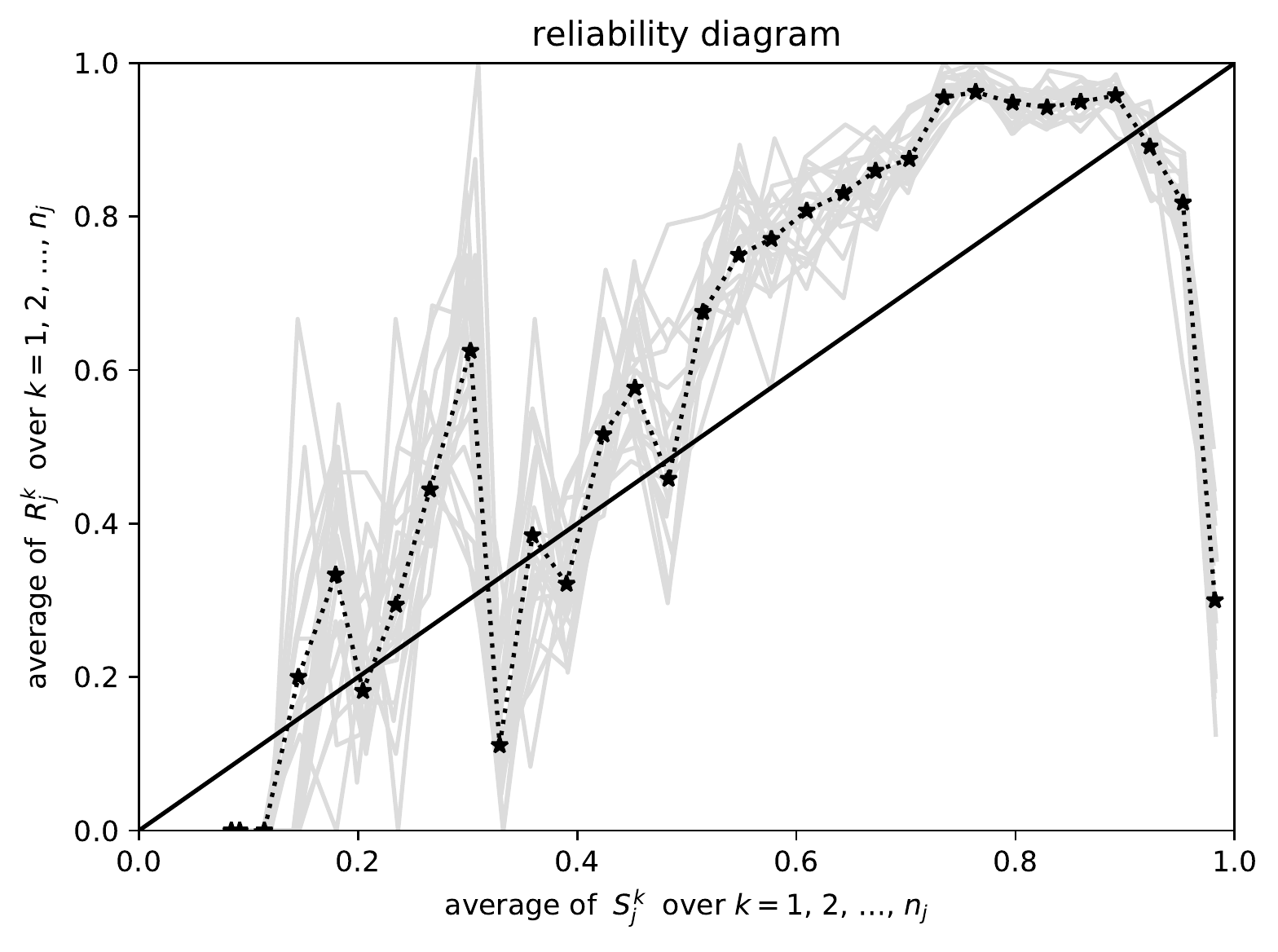}}
\end{center}
\caption{Reliability diagrams for the wild boar ({\it Sus scrofa}),
         with the bins roughly equispaced.
         There are $m = 8$ bins in the upper plot
         and $m = 32$ in the lower plot.}
\label{wild-boarprob}
\end{figure}

\begin{figure}
\begin{center}
\parbox{\imsize}{\includegraphics[width=\imsize]
{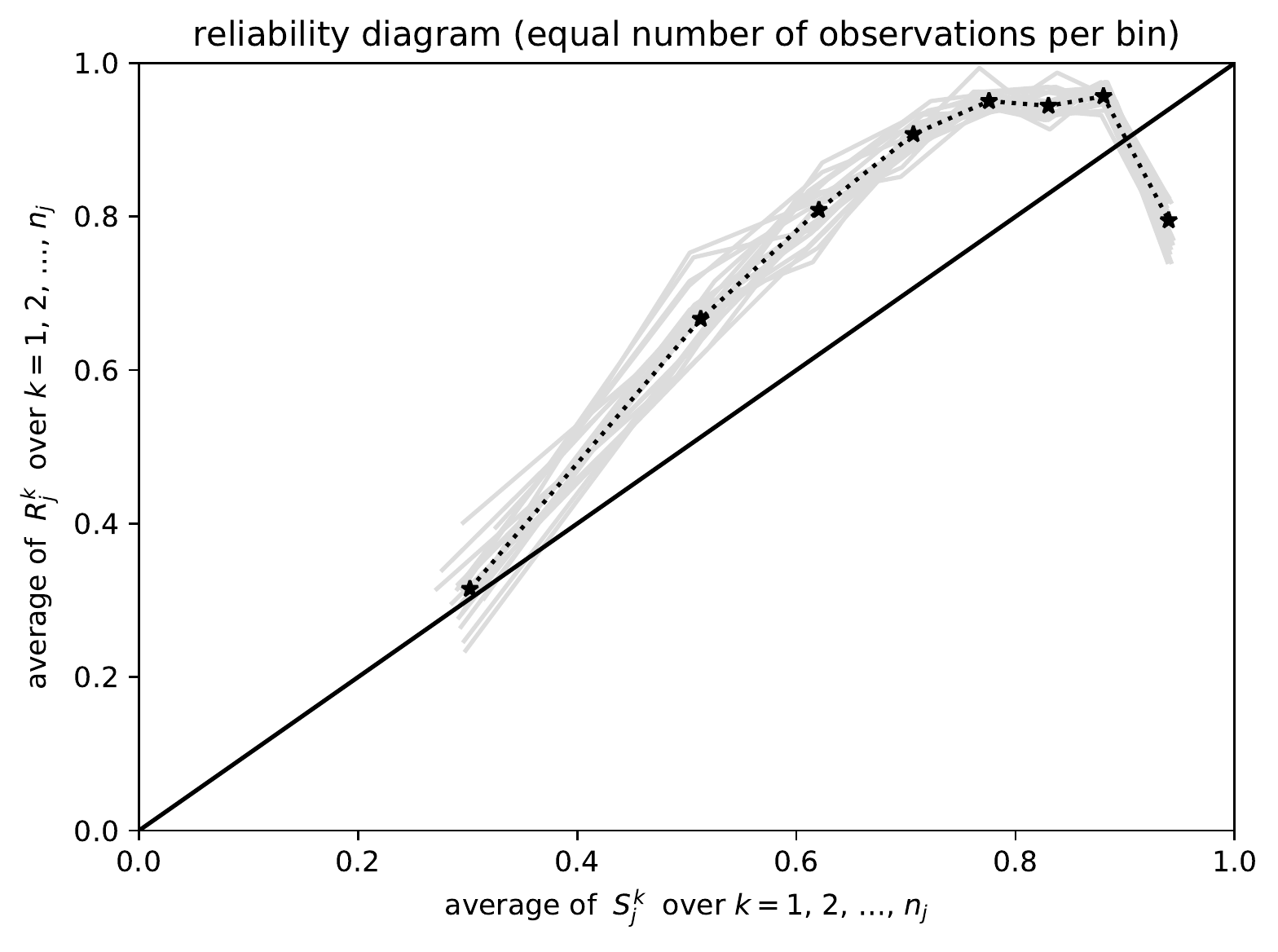}}

\parbox{\imsize}{\includegraphics[width=\imsize]
{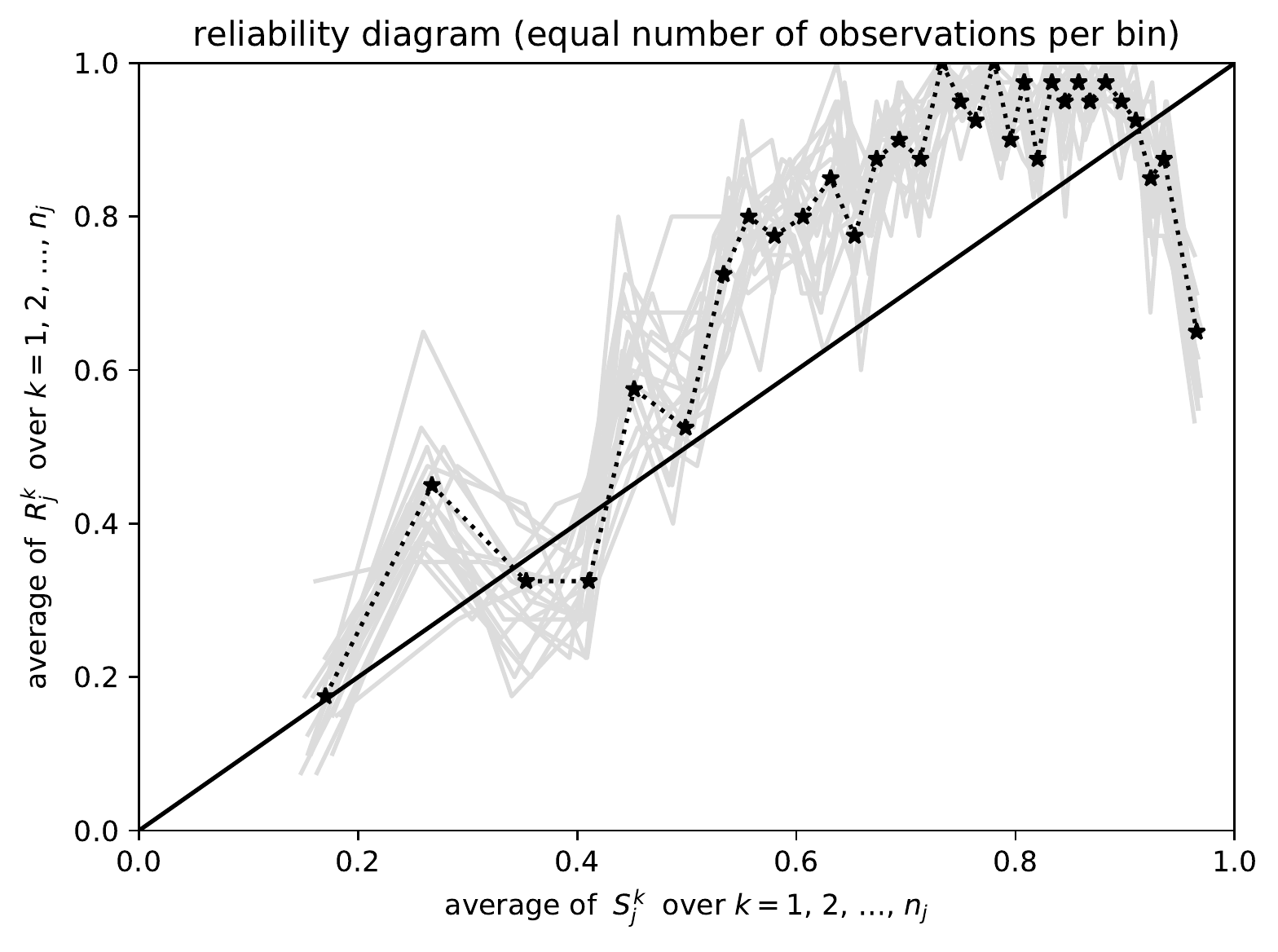}}
\end{center}
\caption{Reliability diagrams for the wild boar ({\it Sus scrofa}),
         with an equal number of observations per bin.
         There are $m = 8$ bins in the upper plot
         and $m = 32$ in the lower plot.}
\label{wild-boarsamp}
\end{figure}

\begin{figure}
\begin{center}
\parbox{\imsize}{\includegraphics[width=\imsize]
{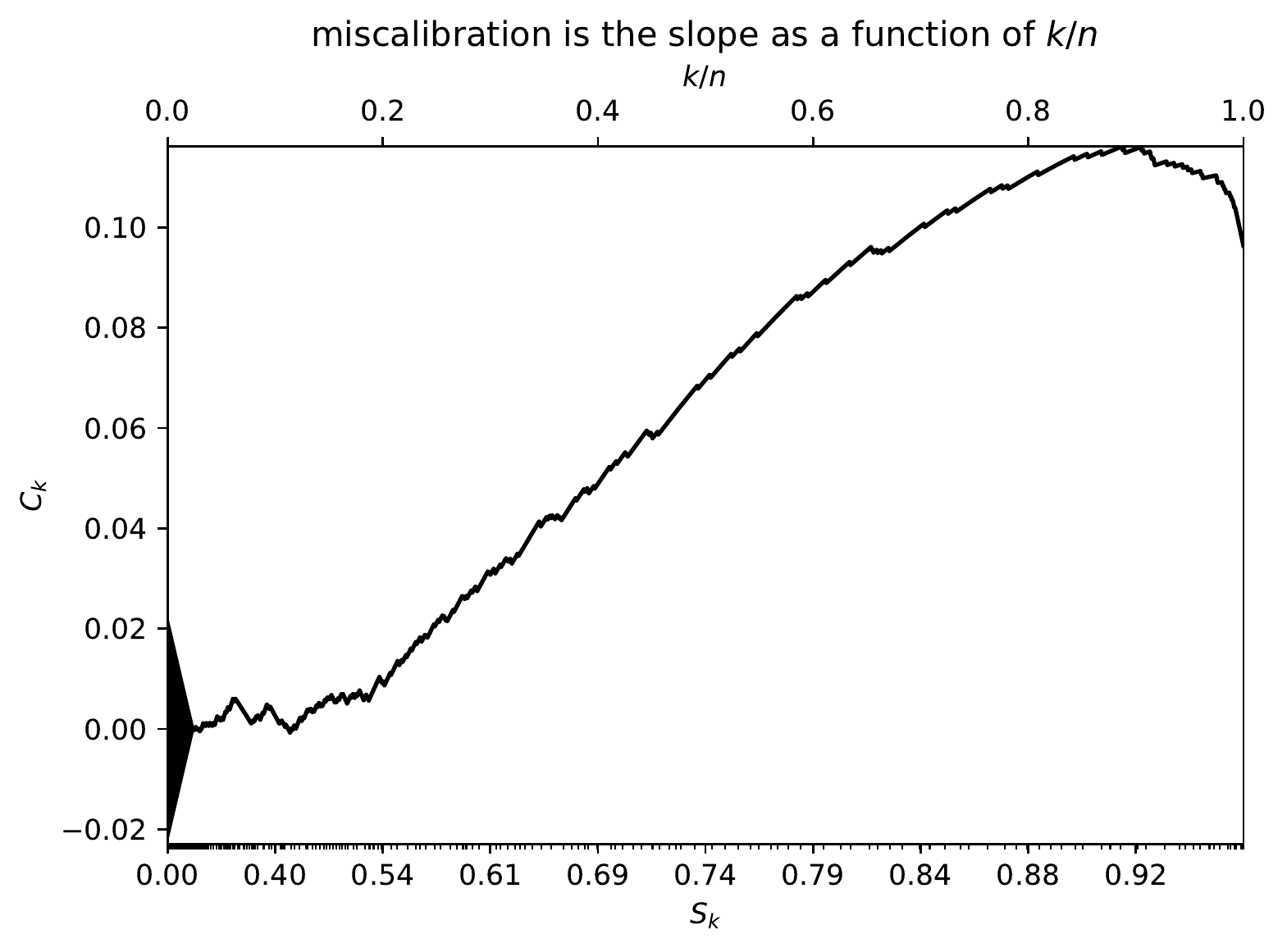}}
\end{center}
\caption{Cumulative plot for the wild boar ({\it Sus scrofa}),
         with sample size $n =$ 1,300.
         The ECCE-MAD is $0.1161 / \sigma_n = 10.14$,
         and the ECCE-R is $0.1172 / \sigma_n = 10.23$;
         both associated asymptotic P-values are zero
         to double-precision accuracy.
}
\label{wild-boarcum}
\end{figure}

\begin{figure}
\begin{center}
\parbox{\imsizes}{\includegraphics[width=\imsizes]
{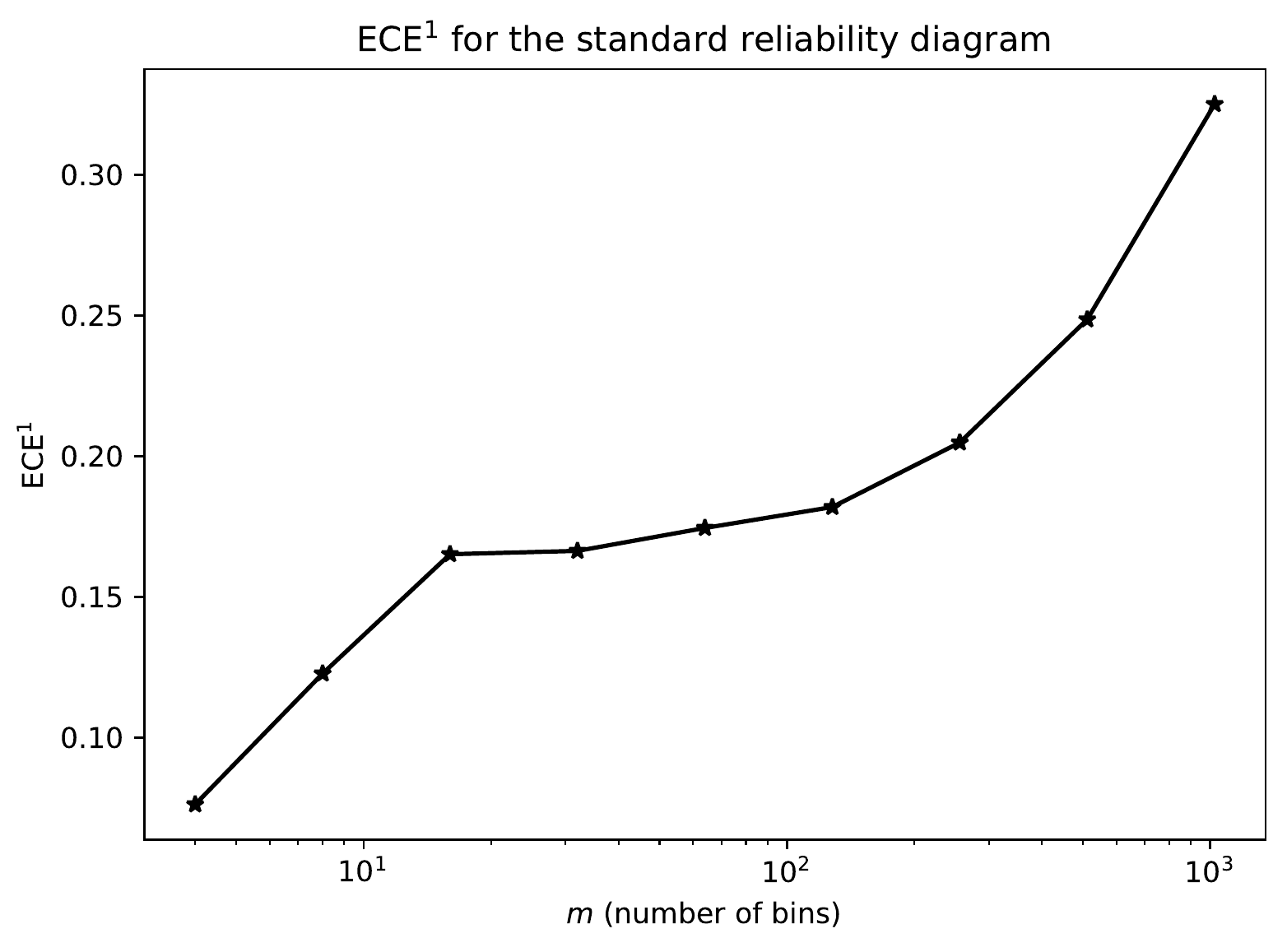}}
\hfil
\parbox{\imsizes}{\includegraphics[width=\imsizes]
{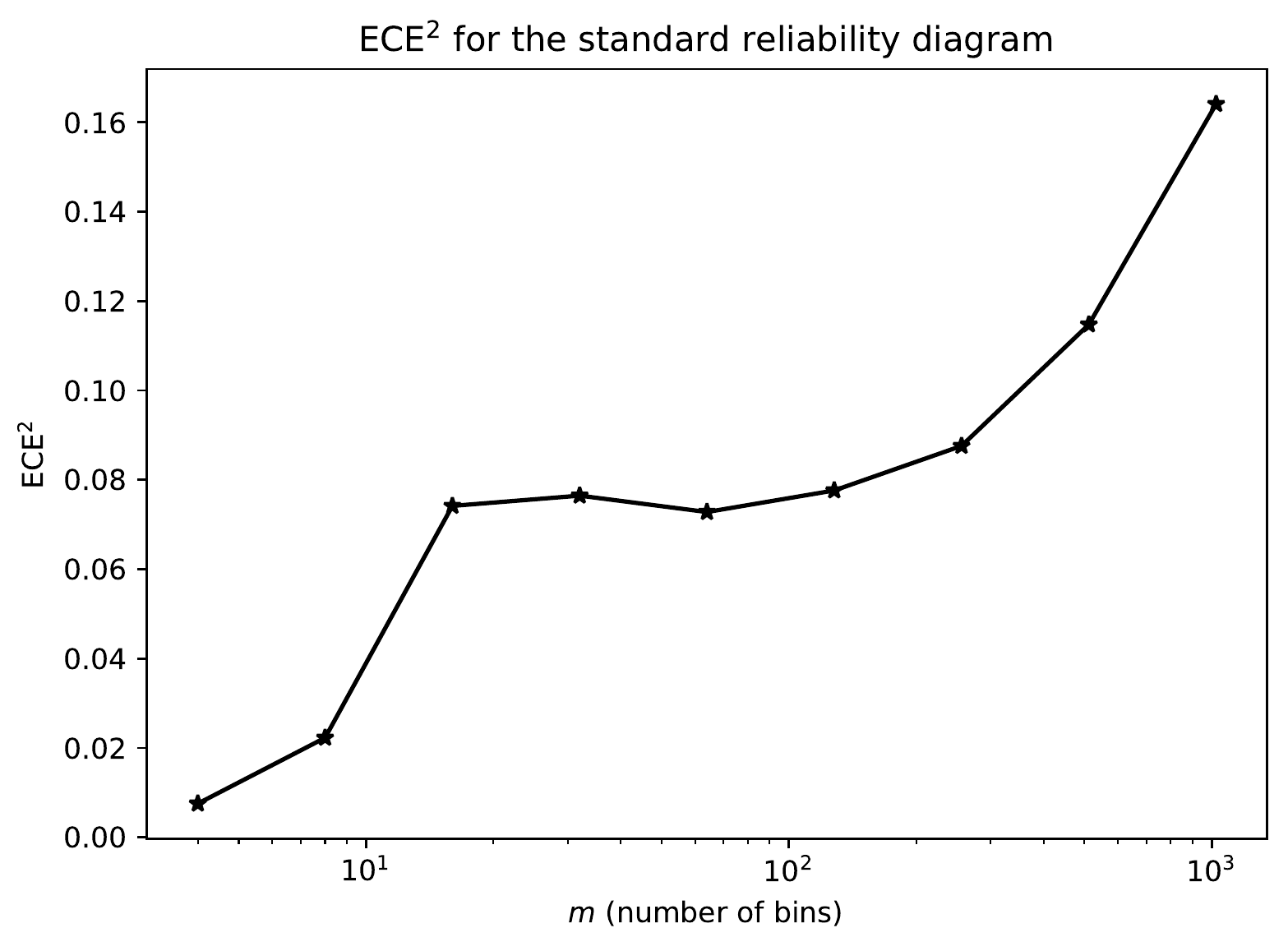}}

\parbox{\imsizes}{\includegraphics[width=\imsizes]
{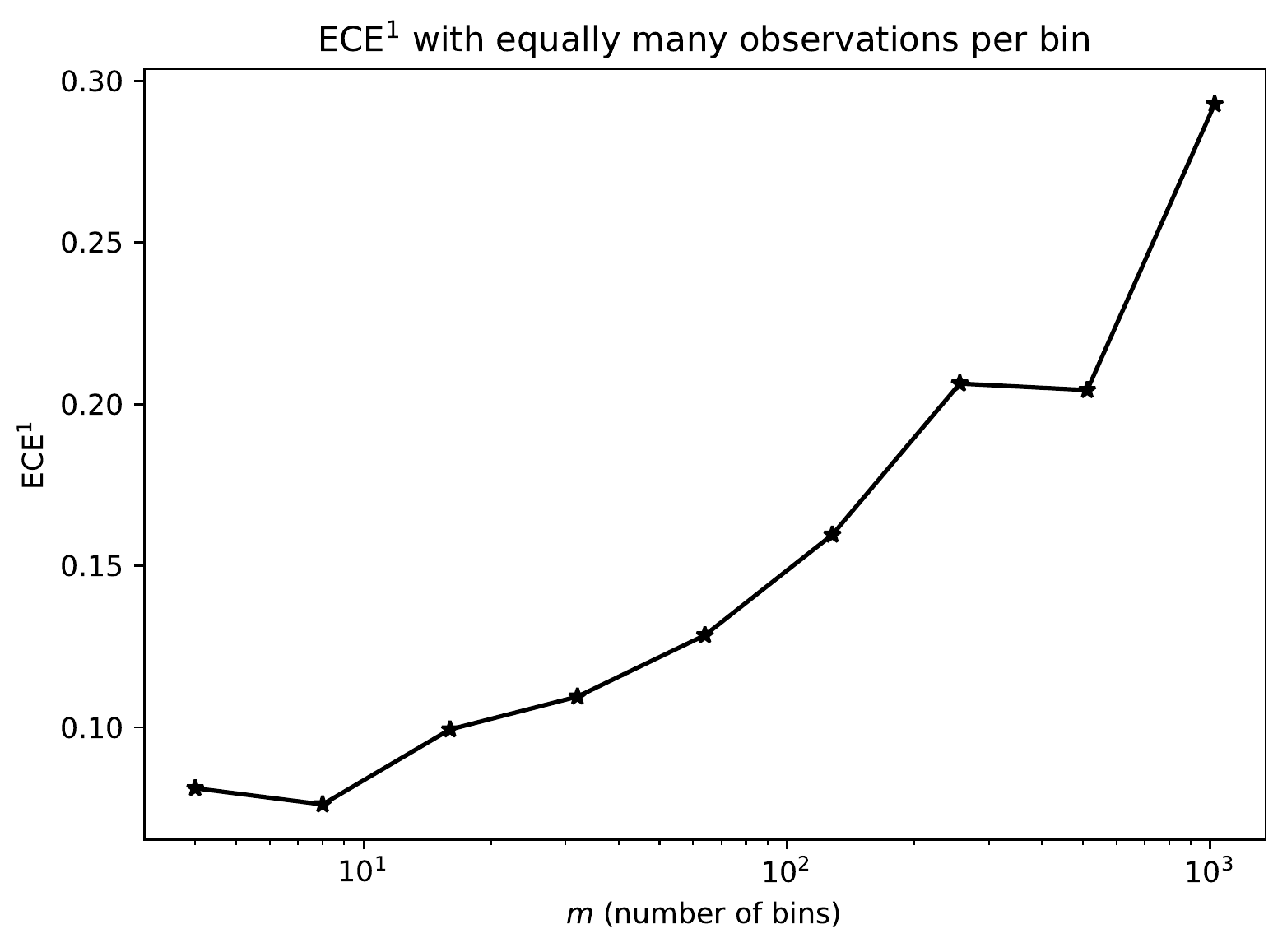}}
\hfil
\parbox{\imsizes}{\includegraphics[width=\imsizes]
{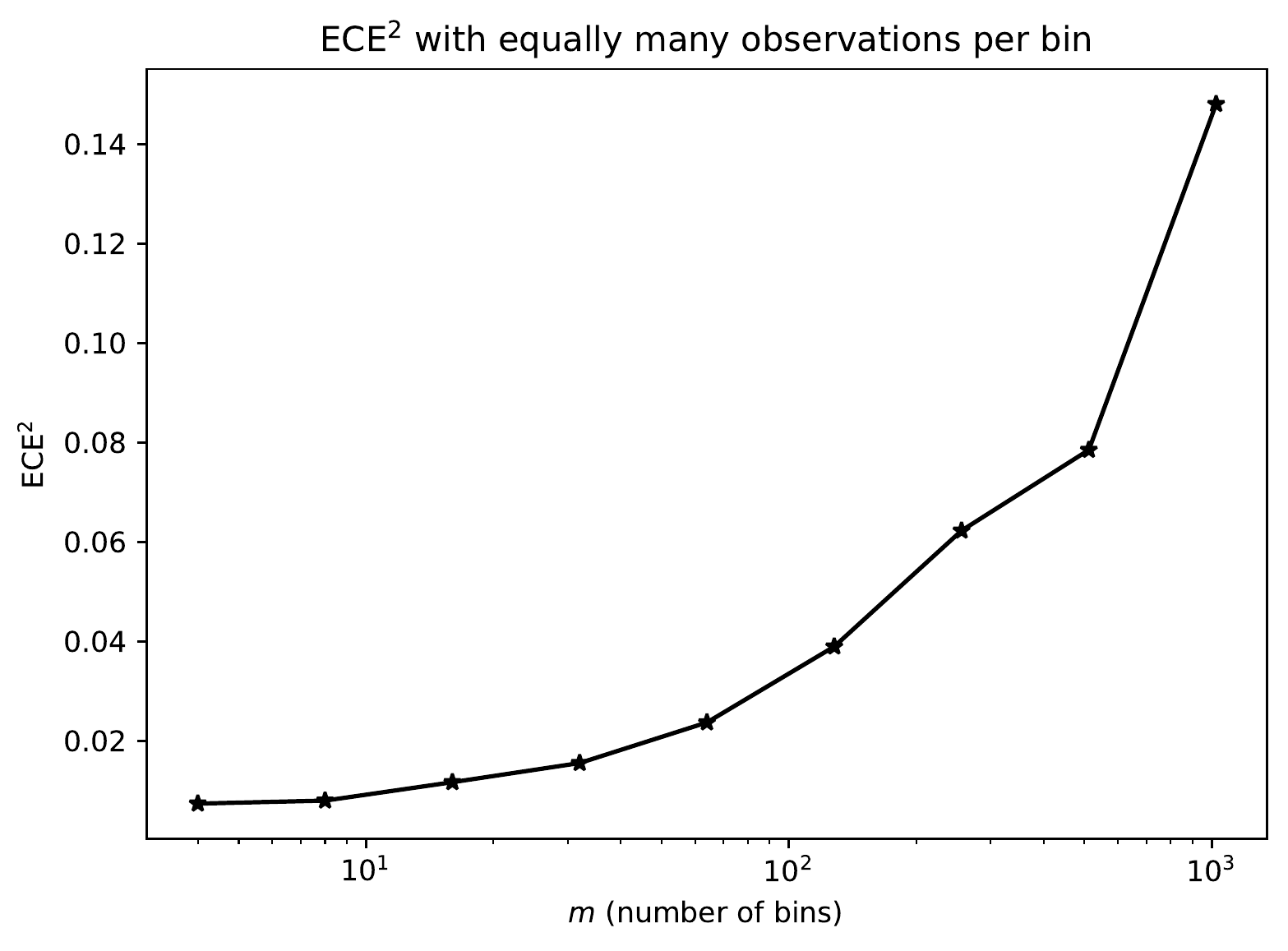}}
\end{center}
\caption{Empirical calibration errors for sunglasses,
         with sample size $n =$ 1,300.}
\label{sunglassesece}
\end{figure}

\begin{figure}
\begin{center}
\parbox{\imsize}{\includegraphics[width=\imsize]
{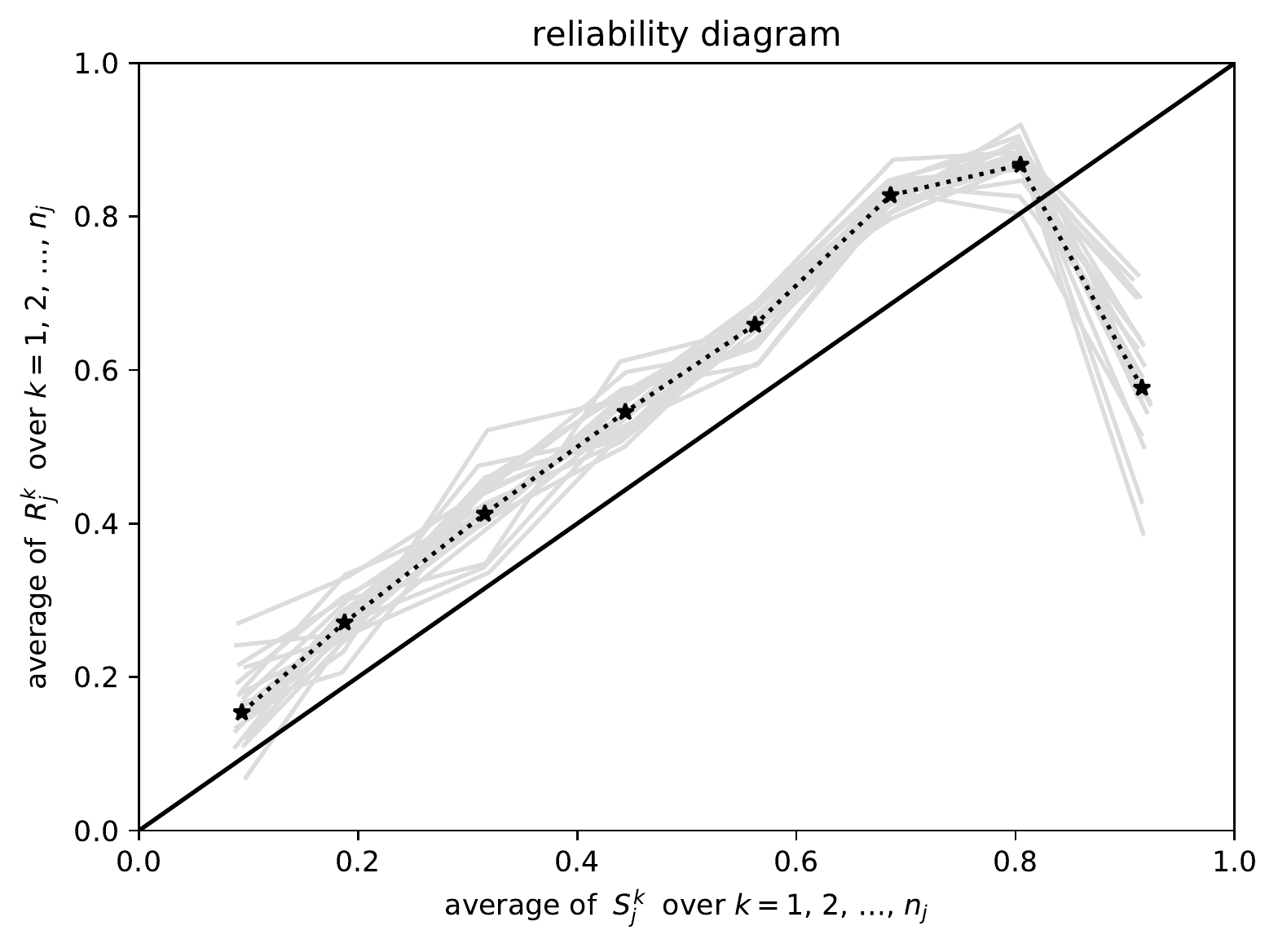}}

\parbox{\imsize}{\includegraphics[width=\imsize]
{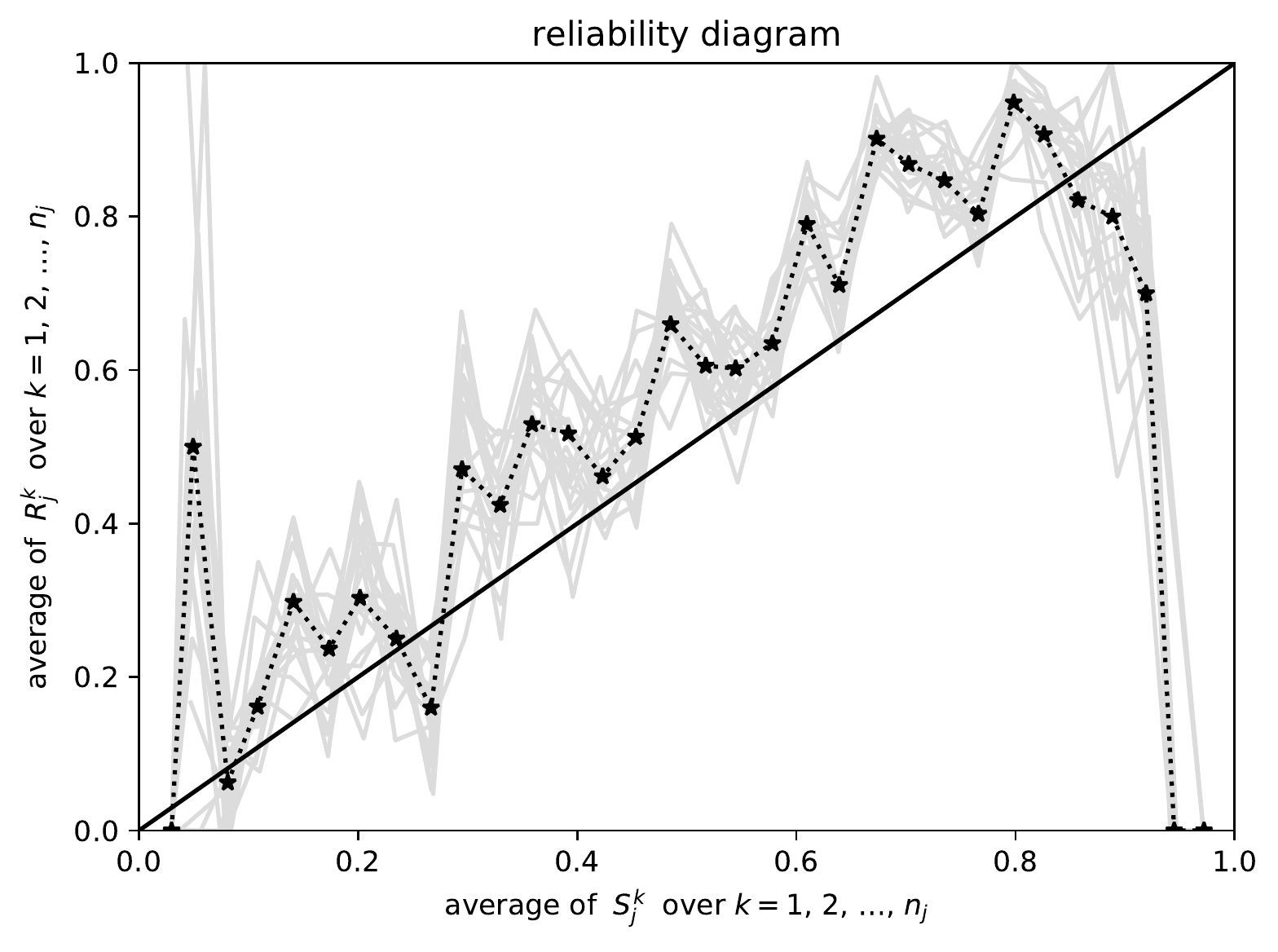}}
\end{center}
\caption{Reliability diagrams for sunglasses,
         with the bins roughly equispaced.
         There are $m = 8$ bins in the upper plot
         and $m = 32$ in the lower plot.}
\label{sunglassesprob}
\end{figure}

\begin{figure}
\begin{center}
\parbox{\imsize}{\includegraphics[width=\imsize]
{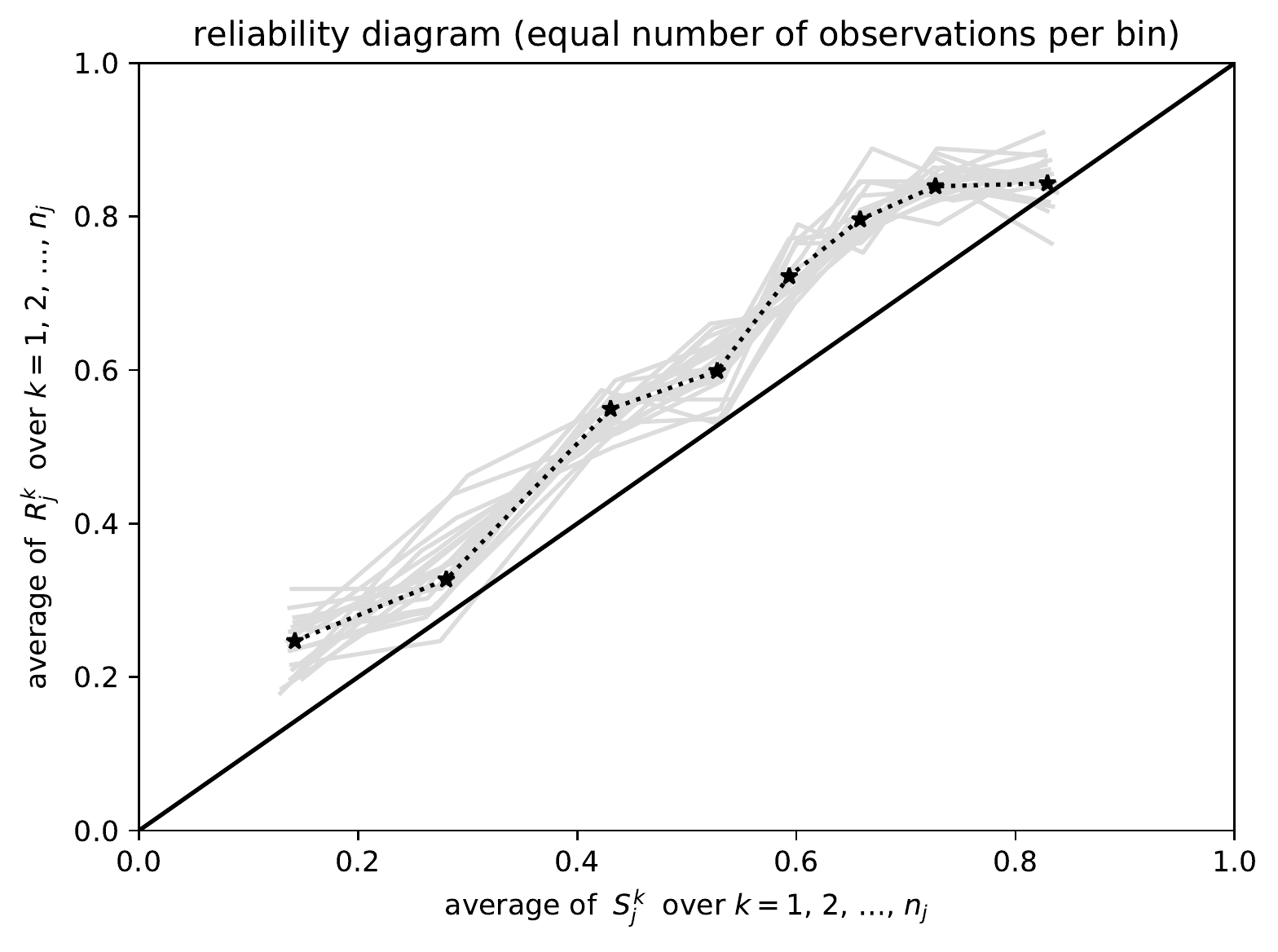}}

\parbox{\imsize}{\includegraphics[width=\imsize]
{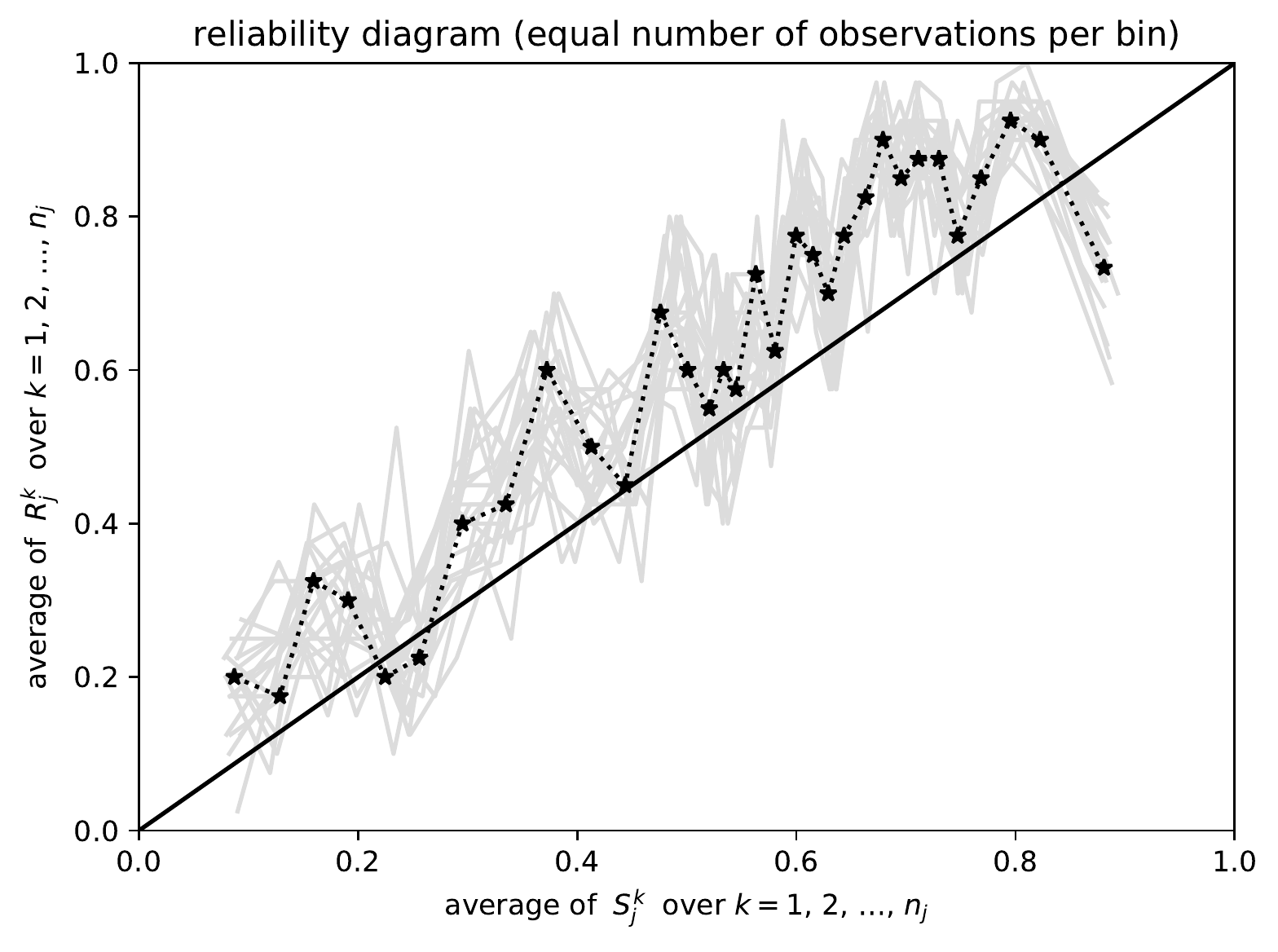}}
\end{center}
\caption{Reliability diagrams for sunglasses,
         with an equal number of observations per bin.
         There are $m = 8$ bins in the upper plot
         and $m = 32$ in the lower plot.}
\label{sunglassessamp}
\end{figure}

\begin{figure}
\begin{center}
\parbox{\imsize}{\includegraphics[width=\imsize]
{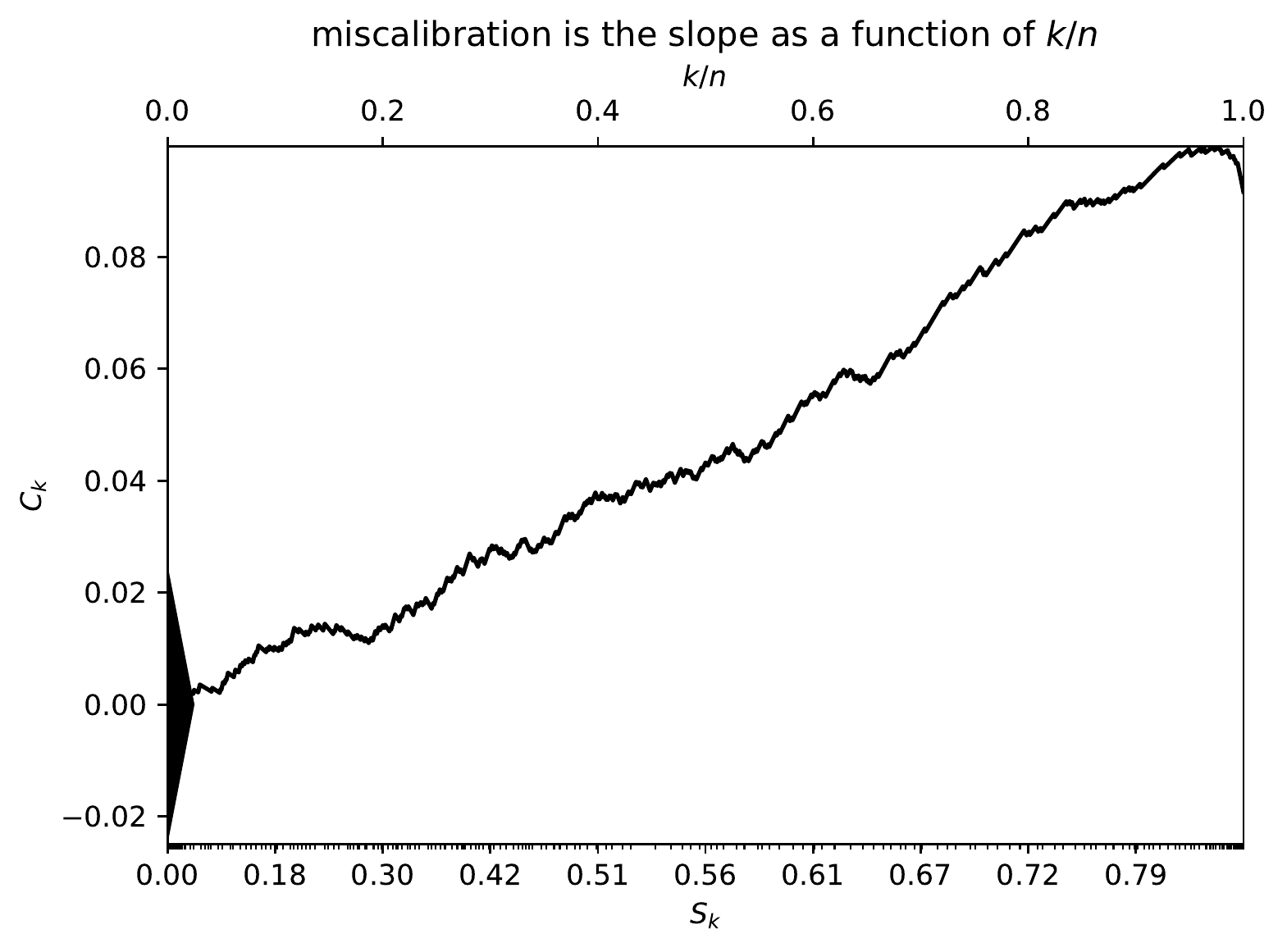}}
\end{center}
\caption{Cumulative plot for sunglasses, with sample size $n =$ 1,300.
         The ECCE-MAD is $0.09972 / \sigma_n = 8.004$,
         and the ECCE-R is $0.09977 / \sigma_n = 8.008$;
         both associated asymptotic P-values are zero
         to double-precision accuracy.
}
\label{sunglassescum}
\end{figure}

\begin{figure}
\begin{center}
\parbox{\imsizes}{\includegraphics[width=\imsizes]
       {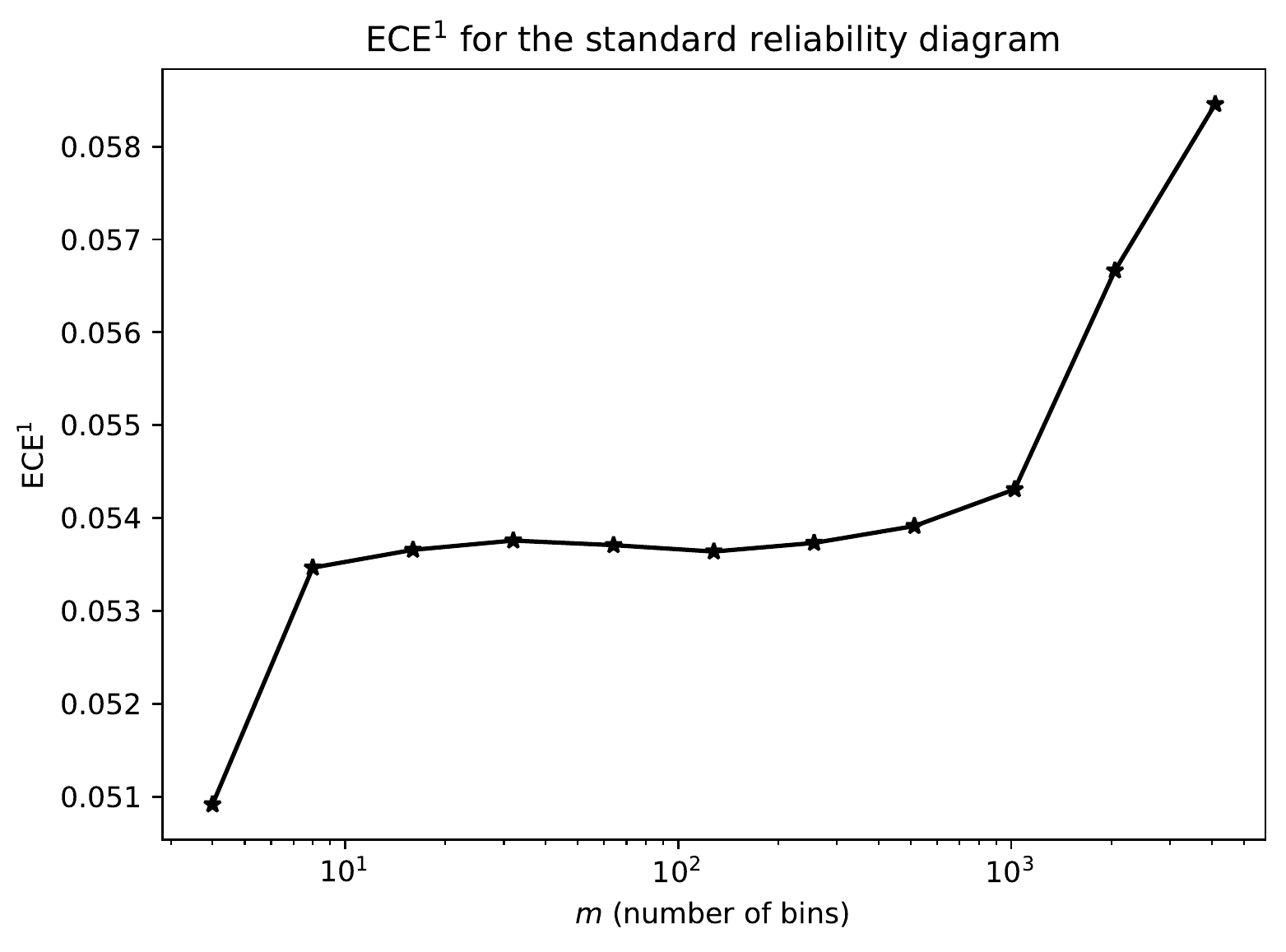}}
\hfil
\parbox{\imsizes}{\includegraphics[width=\imsizes]
       {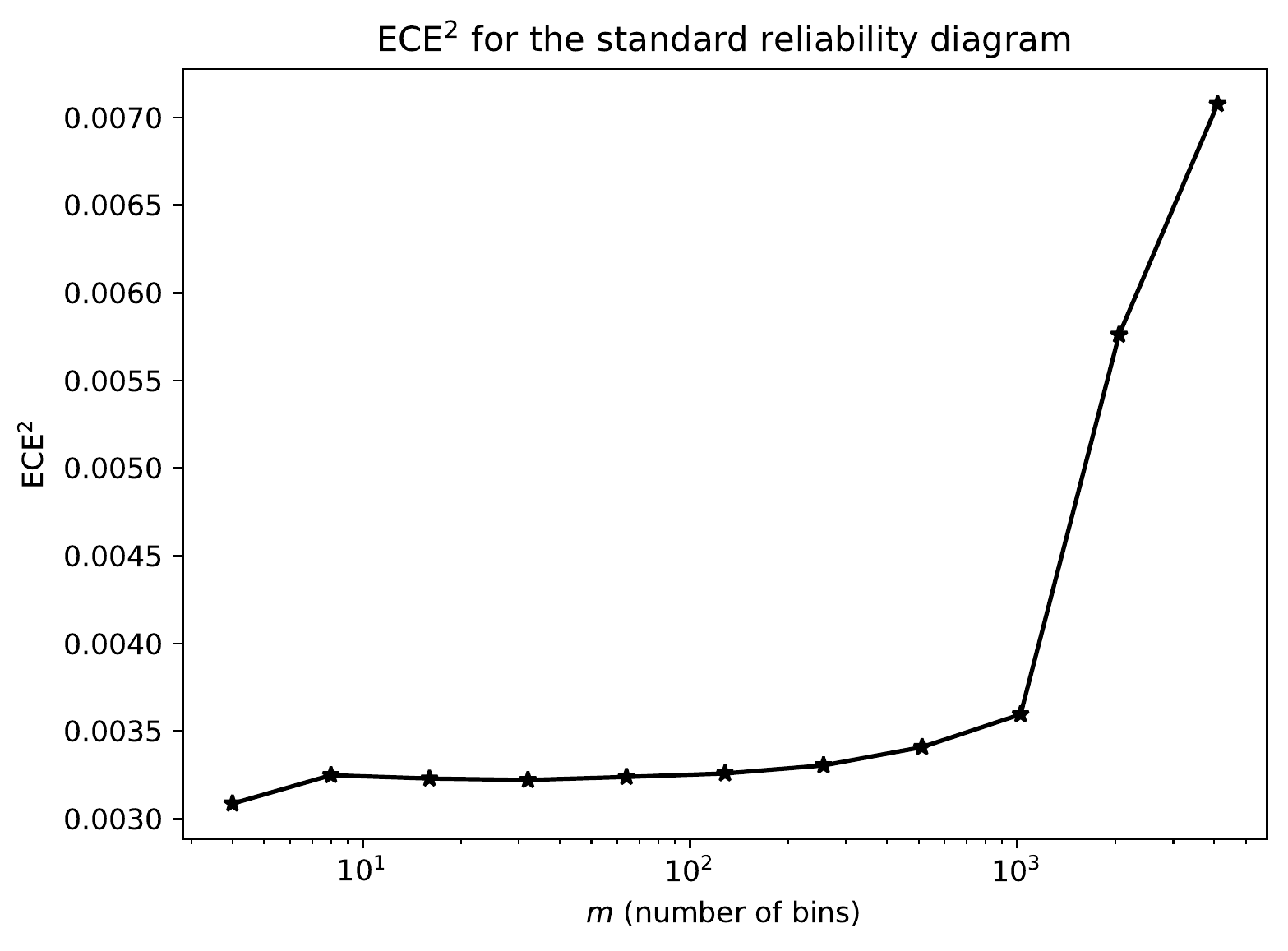}}

\parbox{\imsizes}{\includegraphics[width=\imsizes]
       {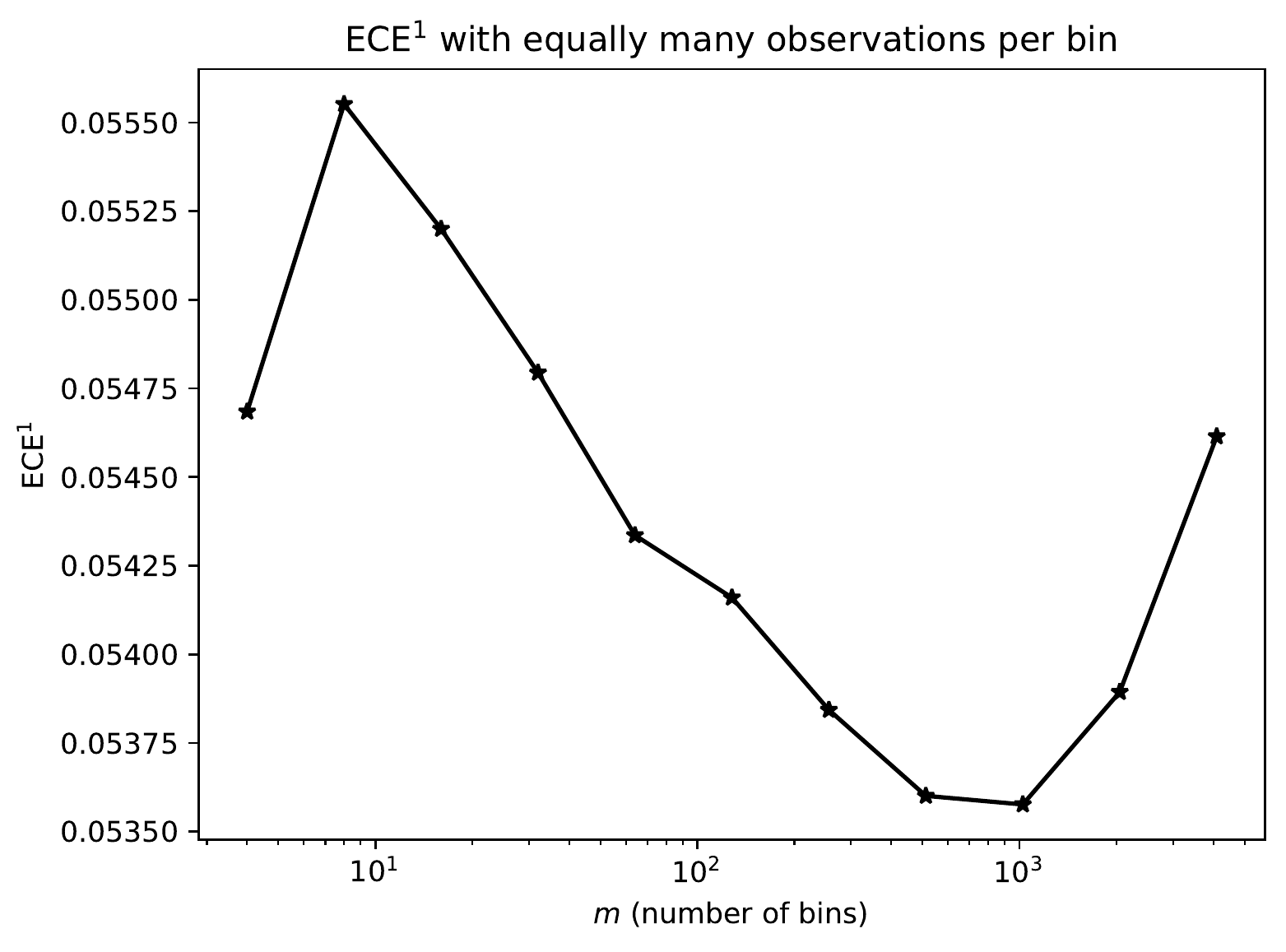}}
\hfil
\parbox{\imsizes}{\includegraphics[width=\imsizes]
       {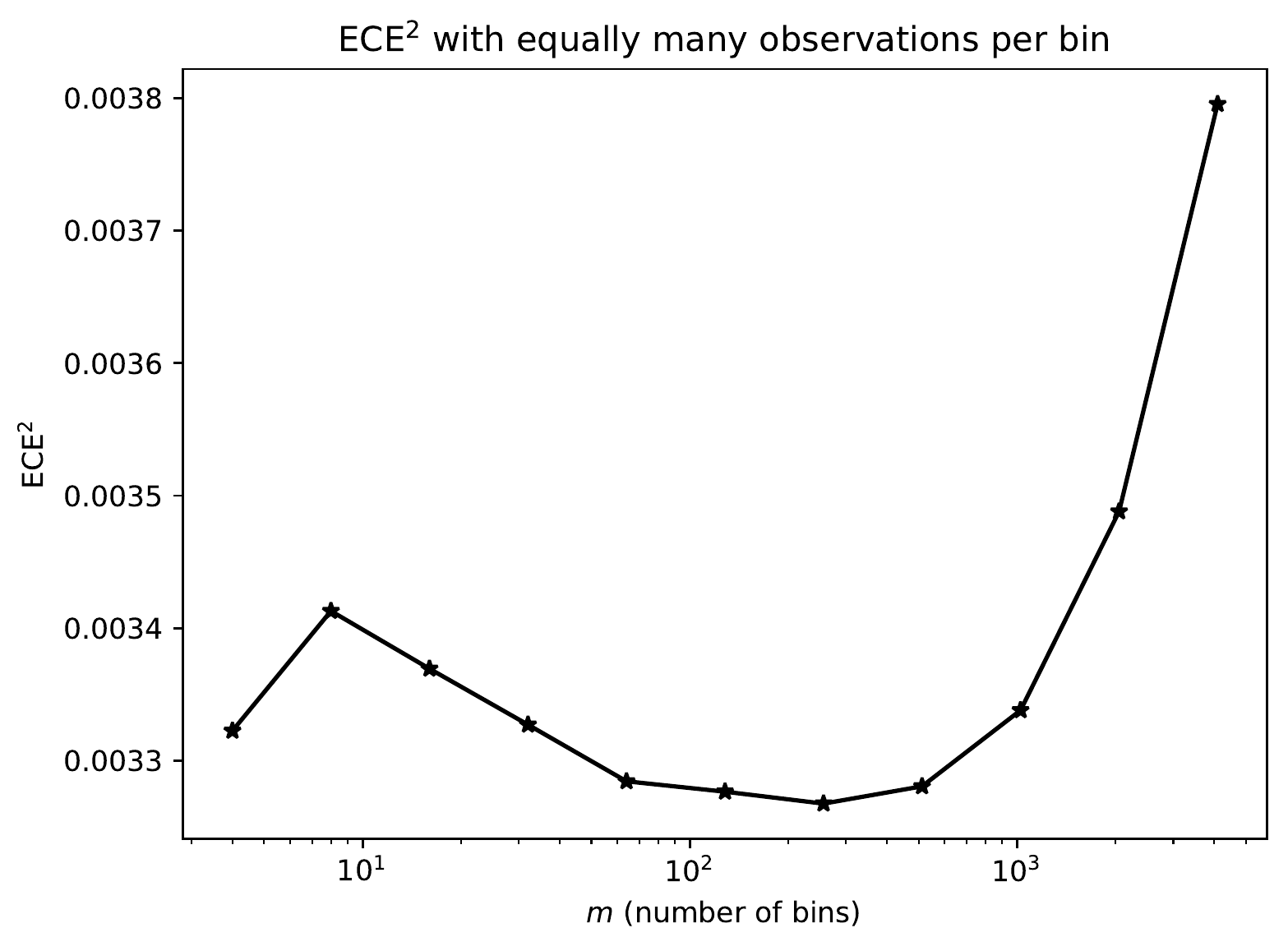}}
\end{center}
\caption{Empirical calibration errors
         for the full ImageNet-1000 training data set,
         with sample size $n =$ 1,281,167.}
\label{imagenetece}
\end{figure}

\begin{figure}
\begin{center}
\parbox{\imsize}{\includegraphics[width=\imsize]
       {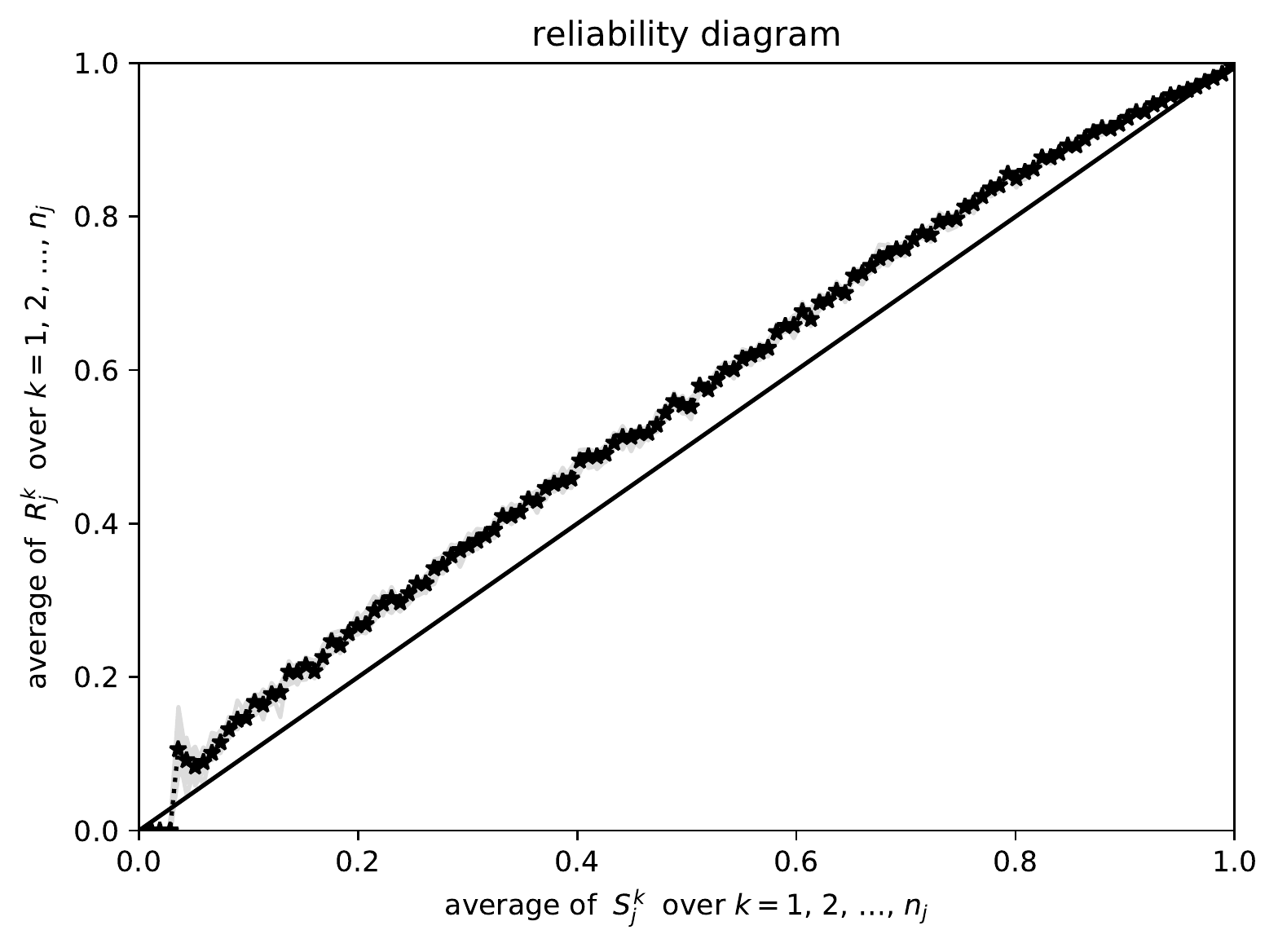}}

\parbox{\imsize}{\includegraphics[width=\imsize]
       {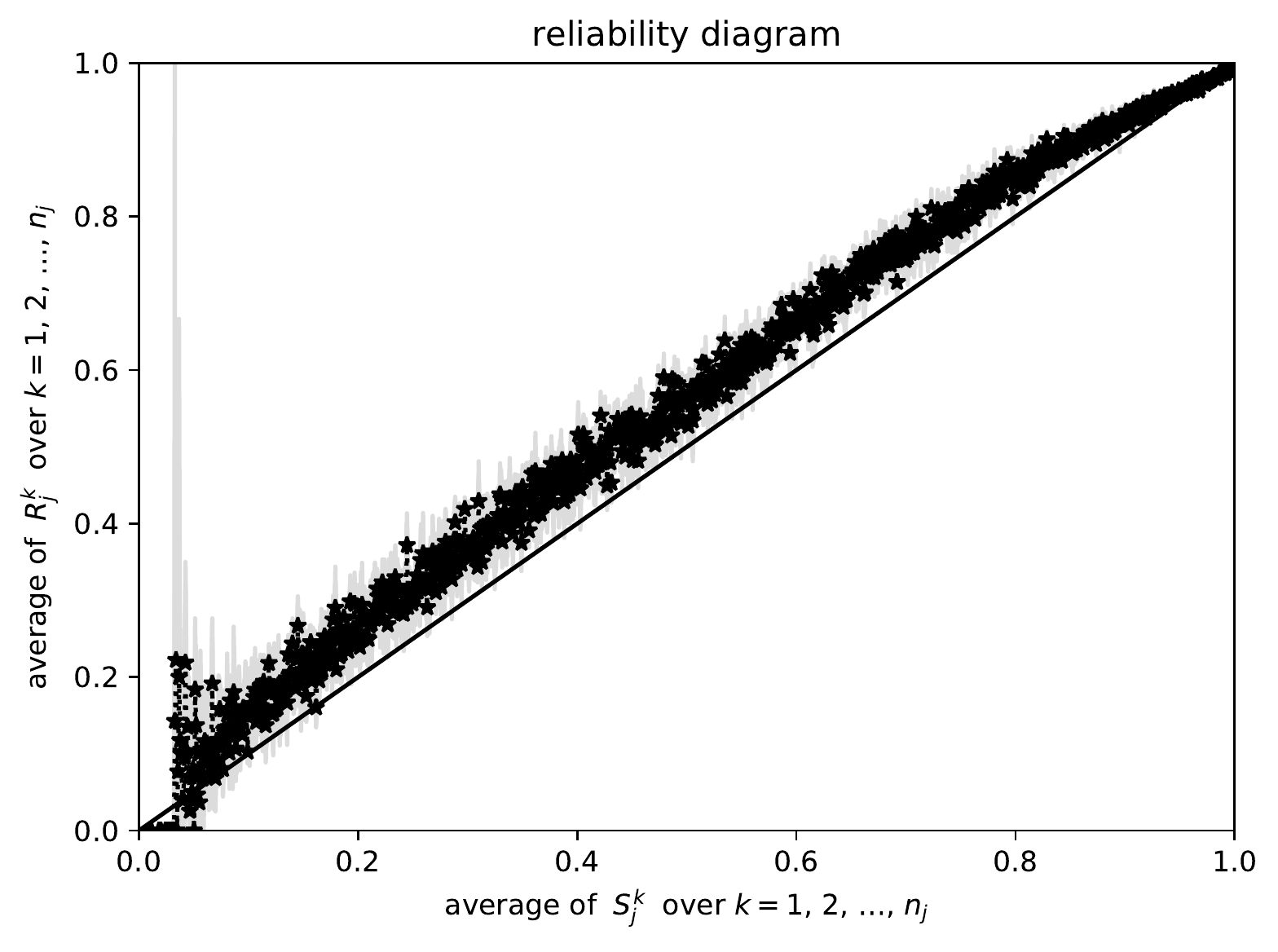}}
\end{center}
\caption{Reliability diagrams for the full ImageNet-1000 training data set,
         with the bins roughly equispaced.
         There are $m = 128$ bins in the upper plot
         and $m =$ 1,024 in the lower plot.}
\label{imagenetprob}
\end{figure}

\begin{figure}
\begin{center}
\parbox{\imsize}{\includegraphics[width=\imsize]
       {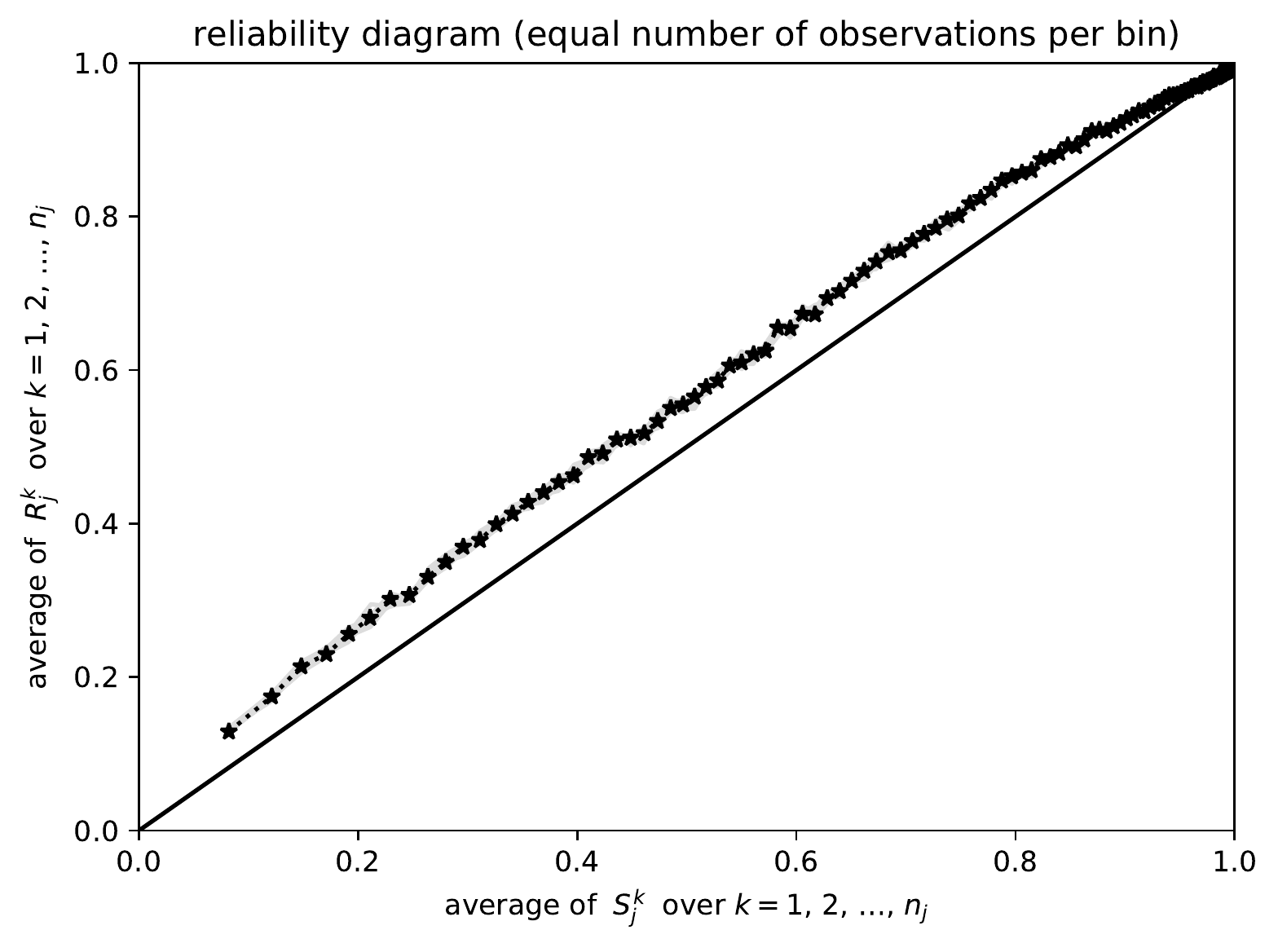}}

\parbox{\imsize}{\includegraphics[width=\imsize]
       {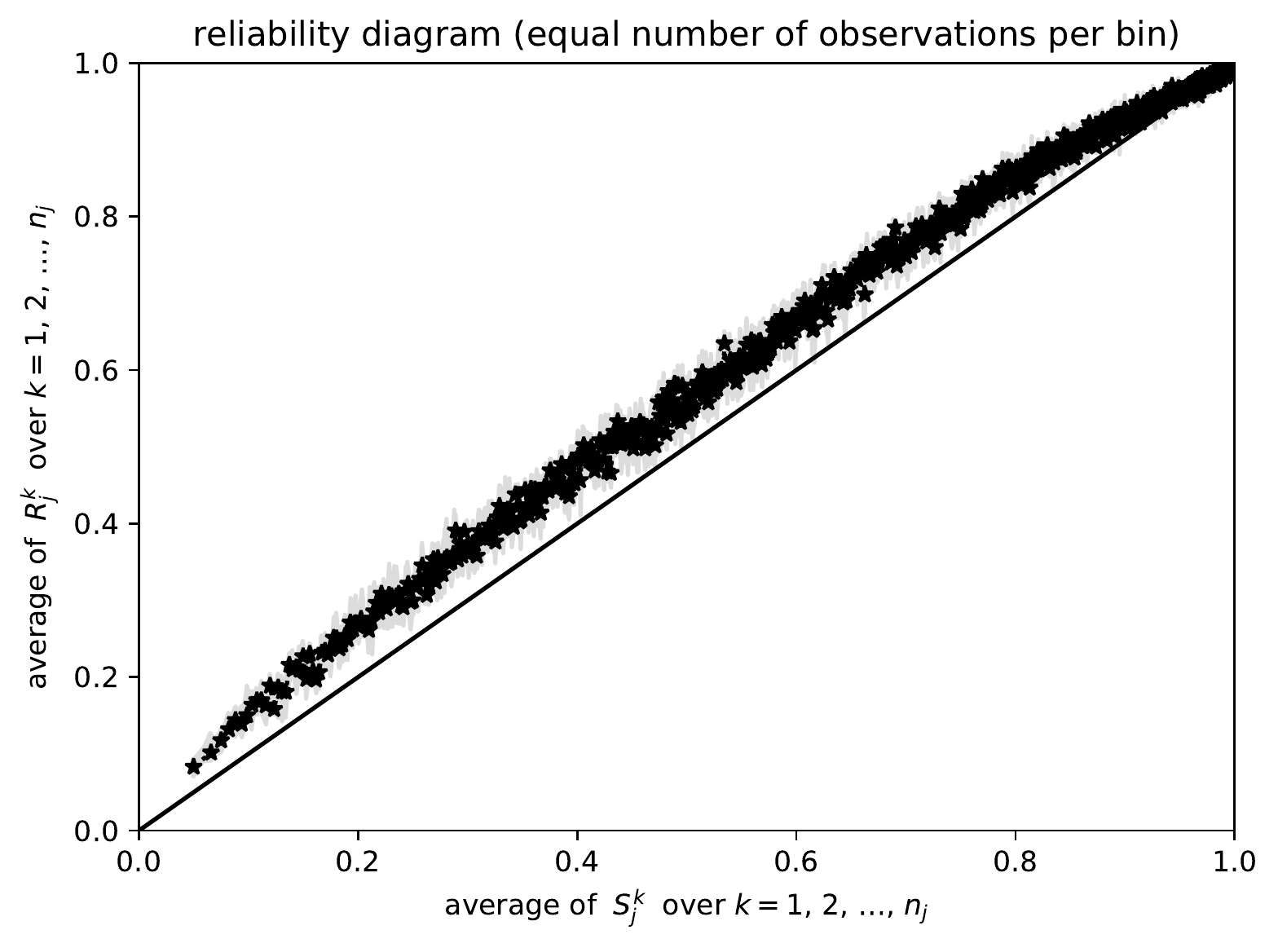}}
\end{center}
\caption{Reliability diagrams for the full ImageNet-1000 training data set,
         with an equal number of observations per bin.
         There are $m = 128$ bins in the upper plot
         and $m =$ 1,024 in the lower plot.}
\label{imagenetsamp}
\end{figure}

\begin{figure}
\begin{center}
\parbox{\imsize}{\includegraphics[width=\imsize]{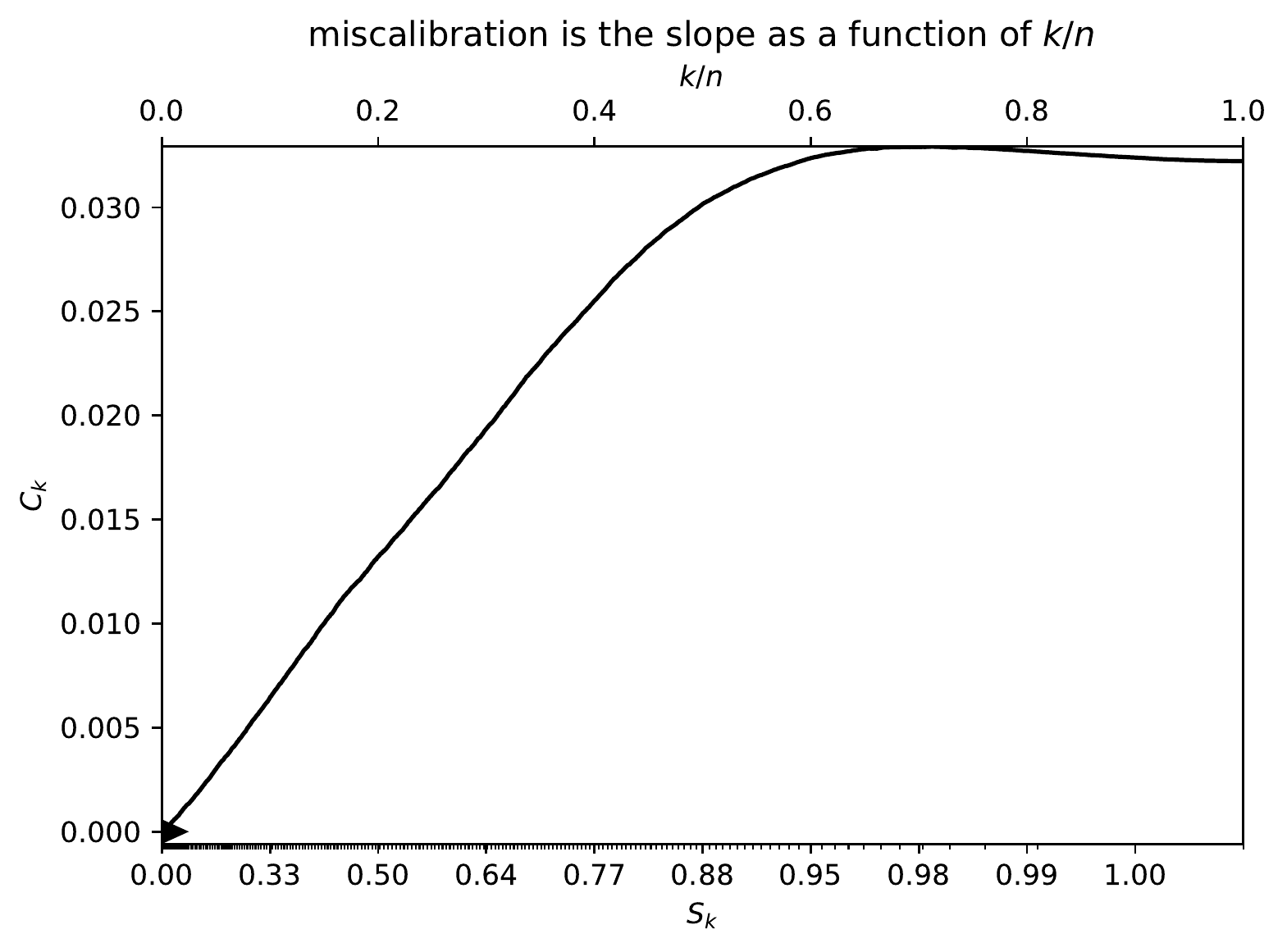}}
\end{center}
\caption{Cumulative plot for the full ImageNet-1000 training data set,
         with sample size $n =$ 1,281,167.
         The ECCE-MAD is $0.03306 / \sigma_n = 111.7$,
         and the ECCE-R is also $0.03306 / \sigma_n = 111.7$;
         these indicate profoundly statistically significant miscalibration,
         courtesy of the large number of observations
         (the actual effect size is more modest, as seen by the values
         without dividing by $\sigma_n$). Both associated asymptotic P-values
         are zero to double-precision accuracy.
}
\label{imagenetcum}
\end{figure}

\section{Conclusion}
\label{conclusion}

A trade-off between statistical confidence and power to resolve variations
as a function of score is inherent to the empirical calibration errors (ECEs)
based on binning, while the empirical {\it cumulative} calibration errors
(ECCEs) have no such explicit trade-off. The theory of Section~\ref{methods}
proves that this trade-off results in the ECEs requiring
infinitely denser observations than what the ECCEs require to attain
the same consistency and power against any fixed alternative distribution,
where ``infinitely denser'' means asymptotically, in the limit of large samples
(or in the limit of high statistical confidence).
Consonant with the asymptotic theory, the examples of Section~\ref{results}
illustrate at the finite sample sizes of greatest interest in practice
the drastically higher density required by the ECEs,
together with the ECEs' trade-off between confidence and resolving power.
The ECEs also exhibit an extreme dependence on the choice of bins,
with different choices of bins yielding significantly different values
for the ECE metrics; choosing among the possible binnings can be confusing,
yet makes all the difference. In contrast, the ECCEs yield trustworthy results
without needing such large numbers of observations
and without needing to set any parameters. Furthermore, P-values
(also known as ``attained statistical significance levels'')
that are asymptotically perfectly-calibrated in the limit of large sample sizes
are available for the ECCEs via the simple, convenient, efficient methods
of~\cite{tygert_pvals}. All in all, the ECEs are unreliable
and largely unusable, while the ECCEs are reliable and easy to use.

\section*{Acknowledgements}

We would like to thank Kamalika Chaudhuri and Isabel Kloumann.

\appendix
\section{Additional figures}

This appendix provides analogues for the samples sizes $n =$ 8,192
and $n =$ 131,072 of Figure~\ref{32768ece} from Subsection~\ref{synthetic}
(Figure~\ref{32768ece} corresponds to the sample size $n =$ 32,768).
Figure~\ref{8192ece} is for $n =$ 8,192;
Figure~\ref{131072ece} is for $n =$ 131,072.

\begin{figure}[b!]
\begin{center}
\parbox{\imsizes}{\includegraphics[width=\imsizes]
       {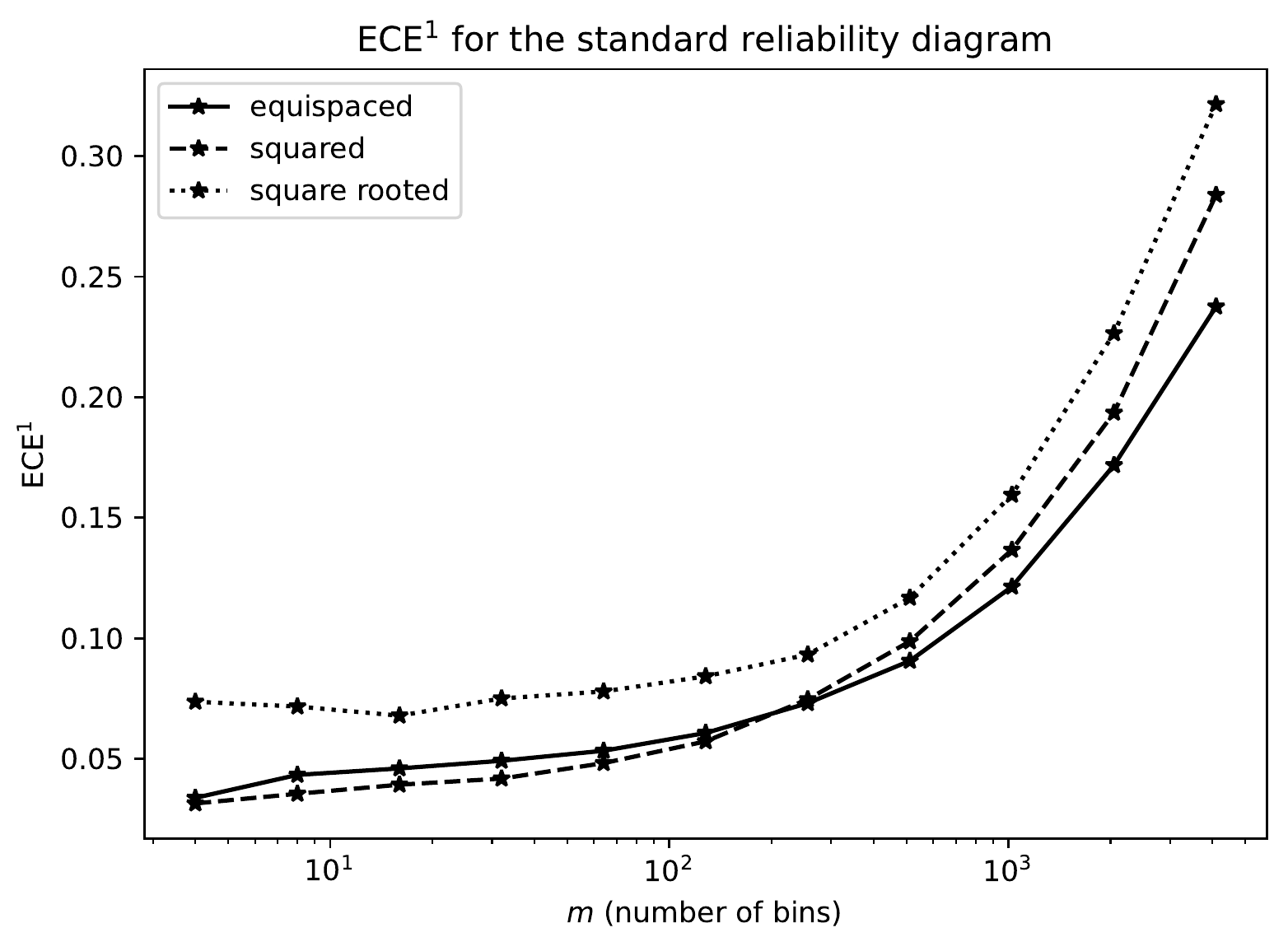}}
\hfil
\parbox{\imsizes}{\includegraphics[width=\imsizes]
       {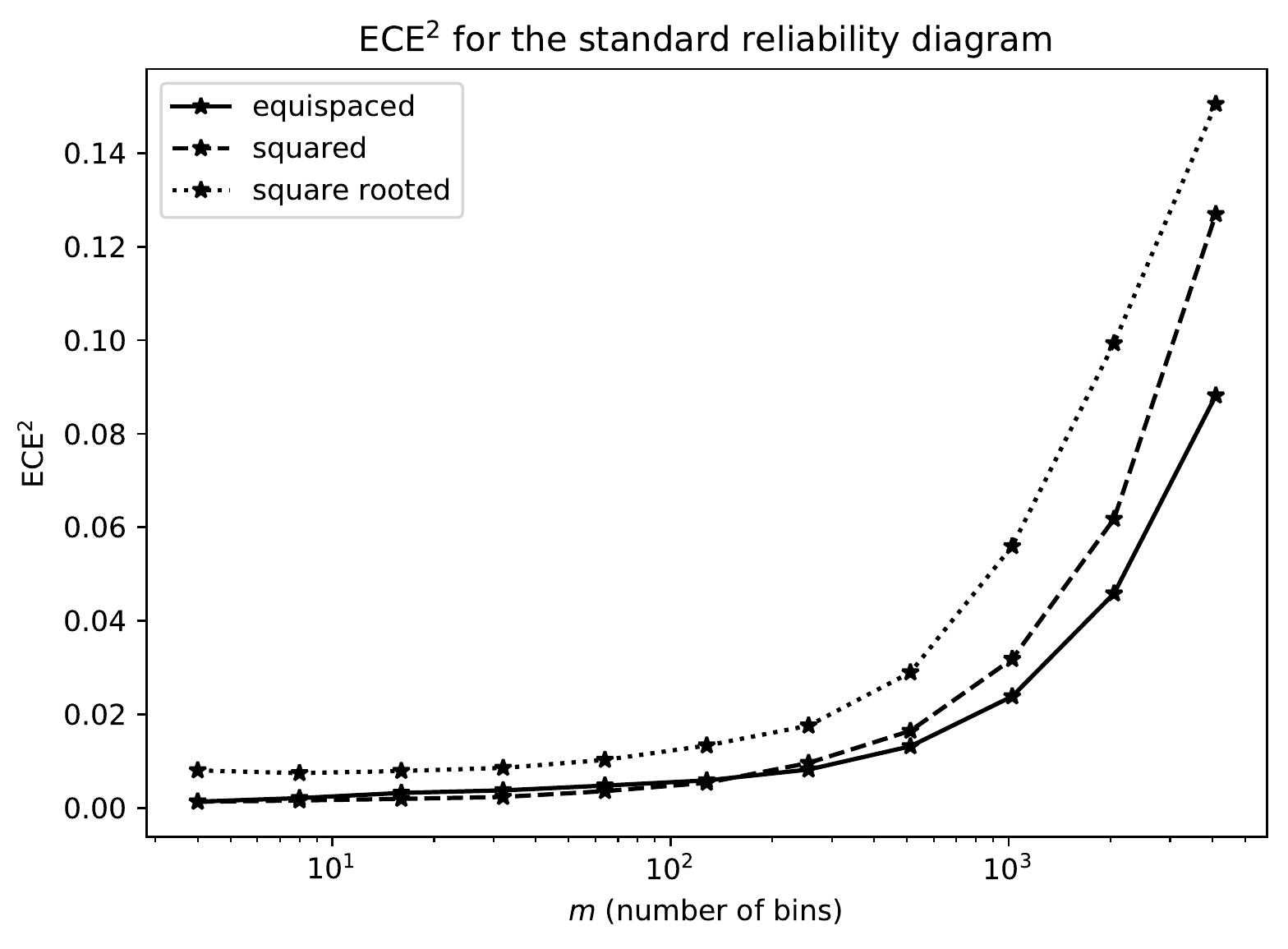}}

\parbox{\imsizes}{\includegraphics[width=\imsizes]
       {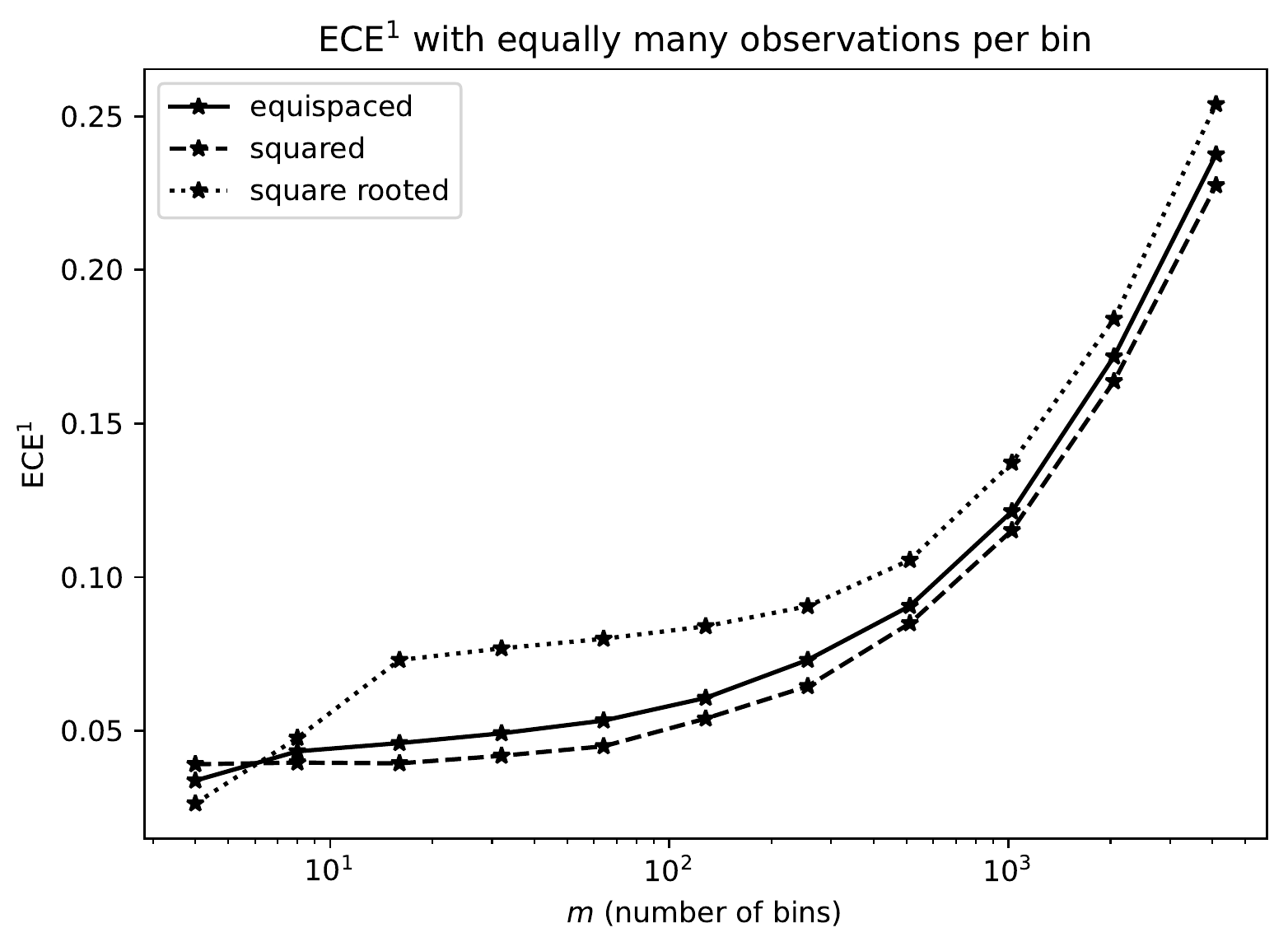}}
\hfil
\parbox{\imsizes}{\includegraphics[width=\imsizes]
       {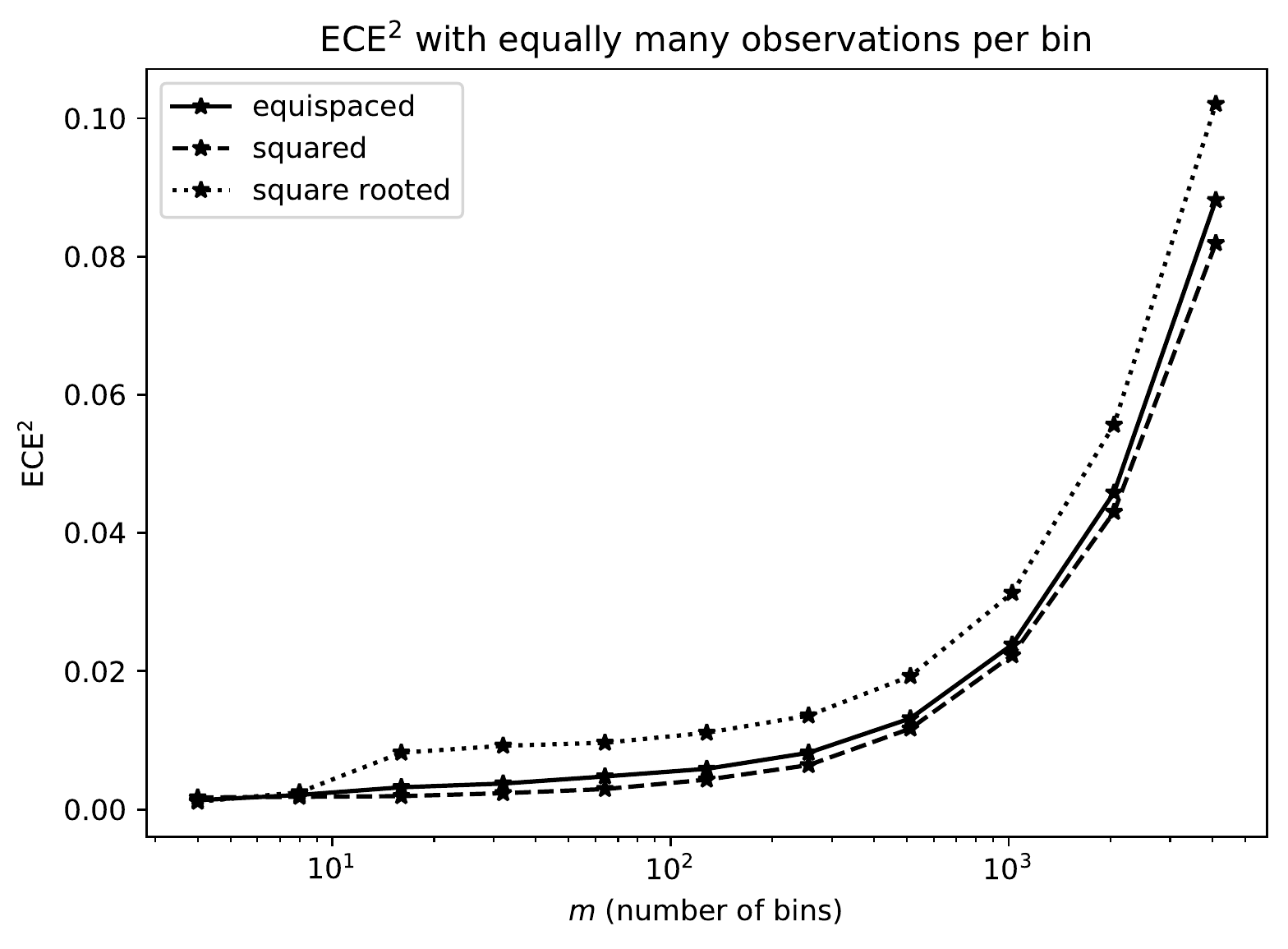}}
\end{center}
\caption{Empirical calibration errors for the synthetic data set
         with the sample size $n =$ 8,192; the scores are equispaced, squared,
         or square rooted, as indicated in the legends.}
\label{8192ece}
\end{figure}

\begin{figure}
\begin{center}
\parbox{\imsizes}{\includegraphics[width=\imsizes]
       {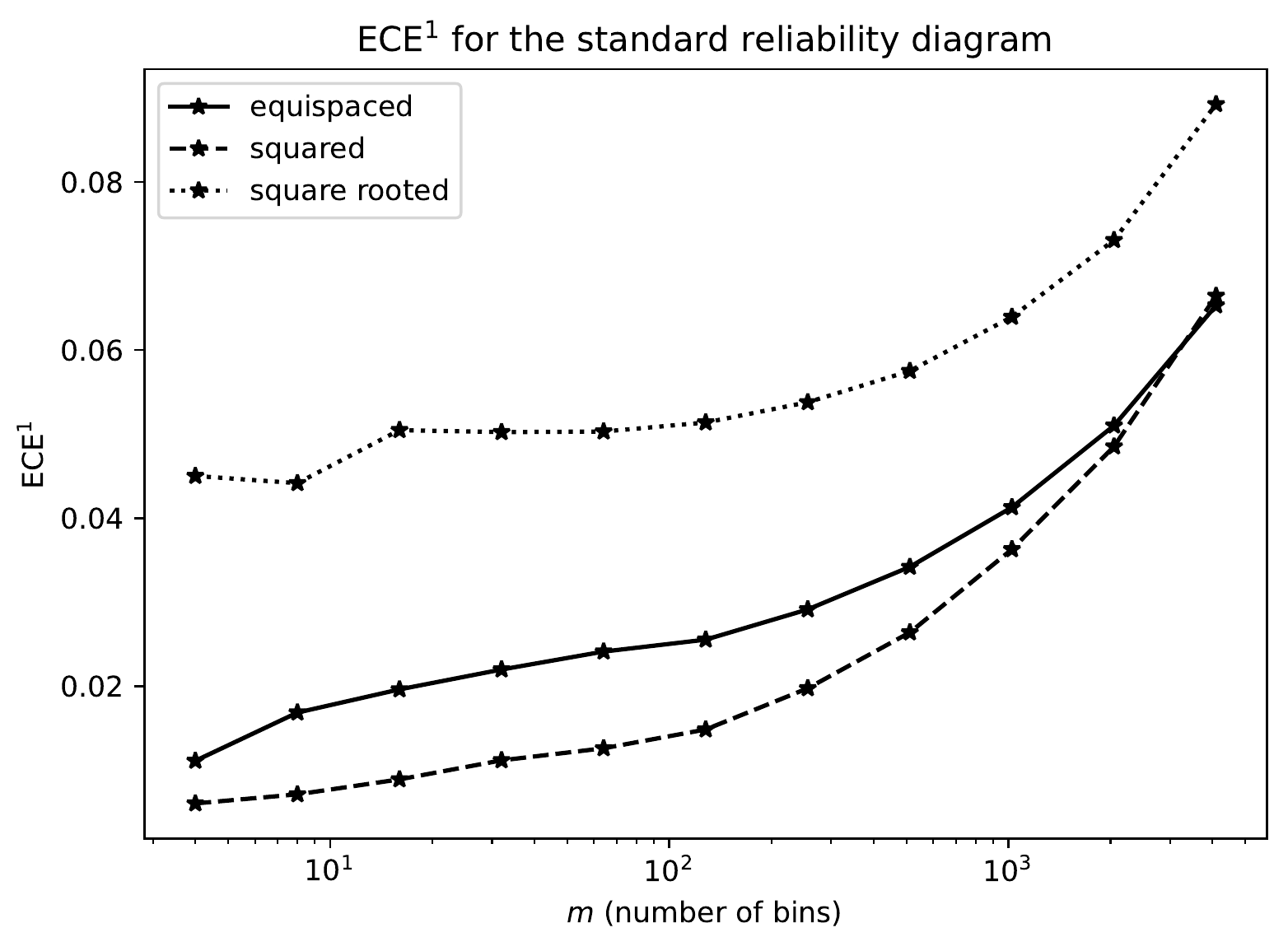}}
\hfil
\parbox{\imsizes}{\includegraphics[width=\imsizes]
       {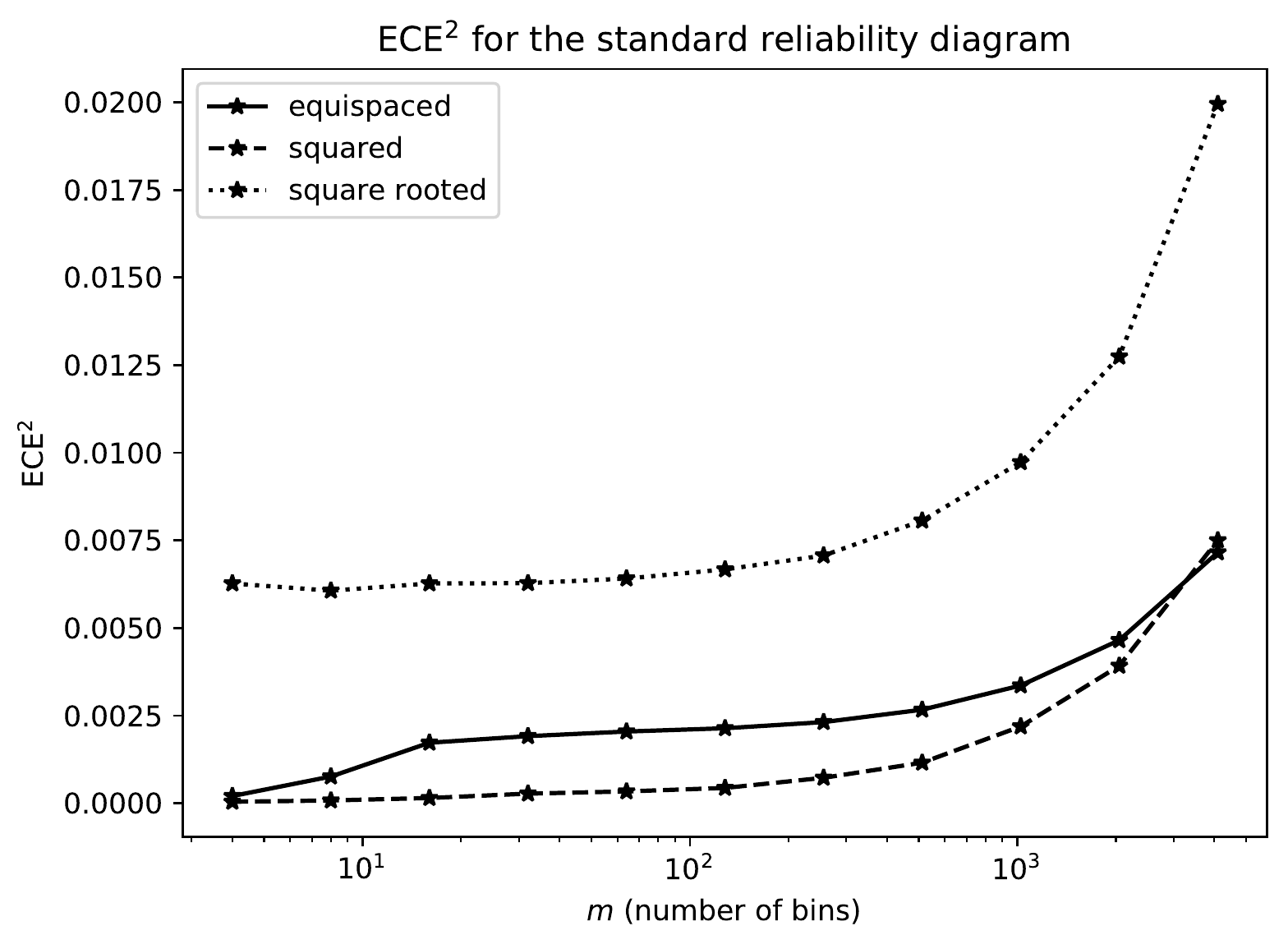}}

\parbox{\imsizes}{\includegraphics[width=\imsizes]
       {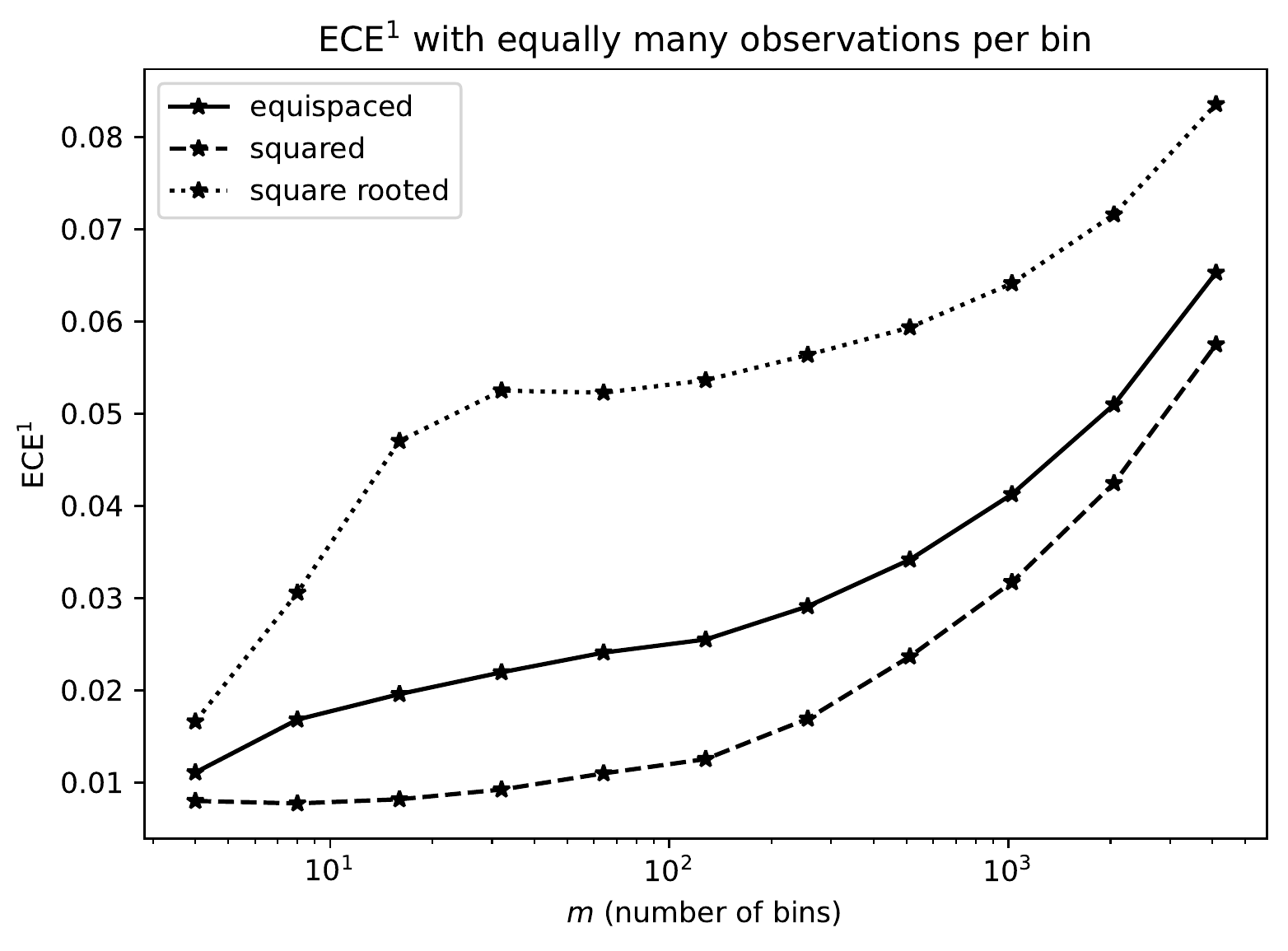}}
\hfil
\parbox{\imsizes}{\includegraphics[width=\imsizes]
       {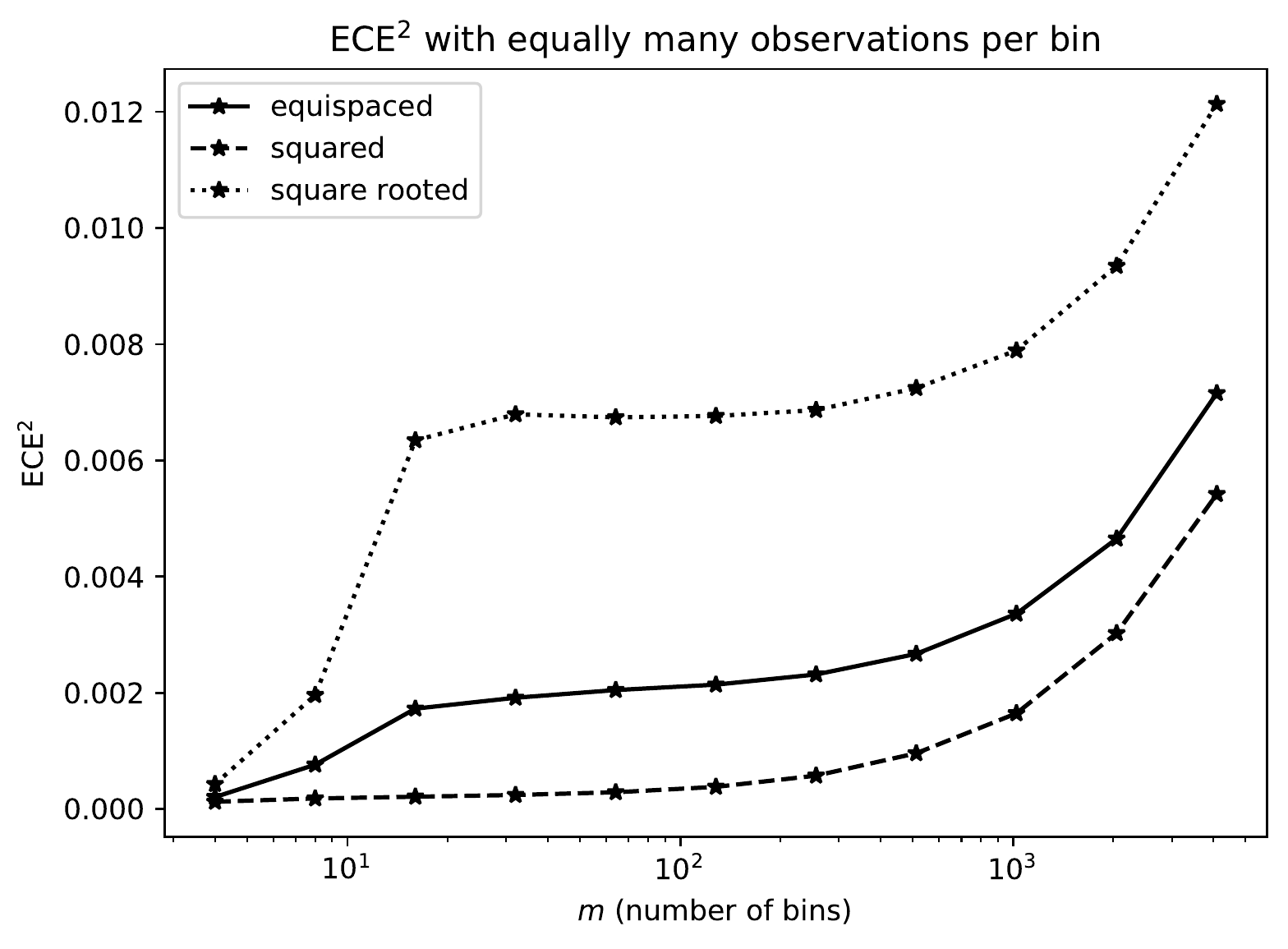}}
\end{center}
\caption{Empirical calibration errors for the synthetic data set
         with the sample size $n =$ 131,072; the scores are equispaced, squared,
         or square rooted, as indicated in the legends.}
\label{131072ece}
\end{figure}

\clearpage

\bibliography{ecevecce}
\bibliographystyle{siam}

\end{document}